\theoremstyle{plain}
\newtheorem{thm}{Theorem}[section]
\newtheorem{prop}[thm]{Proposition}
\newtheorem{lemma}[thm]{Lemma}
\newtheorem{conj}[thm]{Conjecture}
\newtheorem{cor}[thm]{Corollary}
\newtheorem{assumption}[thm]{Assumption}
\newtheorem{problem}[thm]{Problem}
\theoremstyle{remark}
\theoremstyle{definition}
\newtheorem{definition}[thm]{Definition}
\DeclarePairedDelimiter\ceil{\lceil}{\rceil}
\DeclarePairedDelimiter\floor{\lfloor}{\rfloor}
\newcommand{\NN}{{\mathbb N}}
\newcommand{\EE}{{\mathbb E}}
\newcommand{\E}{\mathbb{E}}
\newcommand{\sm}{\setminus}
\newcommand{\cG}{\mathcal{G}}
\newcommand{\cL}{\mathcal{L}}
\DeclarePairedDelimiter\norm{\lVert}{\rVert}%
\DeclarePairedDelimiter\abs{\lvert}{\rvert}%
\let\oldabs\abs
\def\abs{\@ifstar{\oldabs}{\oldabs*}}
\let\oldnorm\norm
\def\norm{\@ifstar{\oldnorm}{\oldnorm*}}
\DeclareMathOperator{\bip}{bip}
\DeclareMathOperator{\pc}{pc}
\title{Polynomial-to-exponential transition in $3$-uniform Ramsey numbers}
\author{Ruben Ascoli\thanks{School of Mathematics, Georgia Institute of Technology, Atlanta, GA 30332. Email: rascoli3@gatech.edu.} \and
Xiaoyu He\thanks{School of Mathematics, Georgia Institute of Technology, Atlanta, GA 30332. Email: xhe399@gatech.edu.} \and
Hung-Hsun Hans Yu\thanks{Department of Mathematics, Princeton University, Princeton, NJ 08544. Email: hansonyu@princeton.edu. }}
\date{\today}
\begin{document}

\maketitle

\begin{abstract}
Let $r_k(s, e; t)$ denote the smallest $N$ such that any red/blue edge coloring of the complete $k$-uniform hypergraph on $N$ vertices contains either $e$ red edges among some $s$ vertices, or a blue clique of size $t$. Erd\H os and Hajnal introduced the study of this Ramsey number in 1972 and conjectured that for fixed $s>k\geq 3$, there is a well defined value $h_k(s)$ such that $r_k(s, h_k(s)-1; t)$ is polynomial in $t$, while $r_k(s, h_k(s); t)$ is exponential in a power of $t$. Erd\H os later offered \$500 for a proof.
Conlon, Fox, and Sudakov proved the conjecture for $k=3$ and $3$-adically special values of $s$, and Mubayi and Razborov proved it for $s > k \geq 4$. We prove the conjecture for $k=3$ and all $s$, settling all remaining cases of the problem. %\xh{I am not sure if we want to keep the commented out lines after this.} % We also provide a new, shorter proof of the cases $k \geq 7$. Our proofs appeal to recent results on Ramsey numbers of tightly connected hypergraphs to reduce the problem to maximizing the number of edges in a $k$-uniform hypergraph whose tight components are $k$-partite.
%We do this by solving the following novel extremal problem exactly for all $n$: what is the maximum number of edges in an $n$-vertex $3$-uniform hypergraph such that all tight components of $H$ are tripartite?
%We do this by showing that for every $n$, the balanced iterated blowup of an edge has the maximum number of edges among $3$-uniform hypergraphs on $n$ vertices with all tight components tripartite. 
%\ra{TODO: decide on terminology -- balanced iterated blowup of an edge versus balanced complete iterated $k$-partite $k$-graph} %\hy{I think I would combine the current version with the commented-out version: We do this by considering the following novel Tur\'an-type problem exactly for all $n$: what is the maximum number of edges in an $n$-vertex $3$-uniform hypergraph such that all tight components of $H$ are tripartite?
%We show that balanced iterated blowup of an edge maximizes the number of edges, which may be of independent interest (or something like this---it depends on how hard we want to sell our result as a Tur\'an-type result).} \xh{I like Hans' wording modified slightly: 
We do this by solving a novel Tur\'an-type problem: what is the maximum number of edges in an $n$-vertex $3$-uniform hypergraph in which all tight components are tripartite?
We show that the balanced iterated blowup of an edge is an exact extremizer for this problem for all $n$.
%}
\end{abstract}

\section{Introduction}
The Ramsey number $r_k(s, t)$ is the smallest integer $N$ such that any red/blue coloring of the edges of $K_N^{(k)}$, the complete $k$-uniform hypergraph (or $k$-graph for short), contains either a red copy of $K_s^{(k)}$ or a blue copy of $K_t^{(k)}$.
The graph case $k=2$ has been the subject of intensive study and is one of the most active areas in modern combinatorics. Recent breakthroughs include the celebrated exponential improvement on the upper bound for the ``diagonal" Ramsey number $r_2(t, t)$ by Campos, Griffiths, Morris, and Sahasrabudhe \cite{CGMS}, as well as the establishment of the asymptotics (up to polylogarithmic factors) of the ``off-diagonal" Ramsey number $r_2(4, t)$ by Mattheus and Verstra\"ete \cite{MV24}. Despite the massive body of work, there remain polynomial gaps between lower and upper bounds on $r_2(s, t)$ for all fixed $s \geq 5$. 

Even less is known about hypergraph Ramsey numbers, the numbers $r_k(s, t)$ for $k\geq 3$. Towards the off-diagonal case of this problem, Erd\H os and Hajnal \cite{EH72} introduced the following more general definition in 1972.

\begin{definition} The Ramsey number $r_k(s, e; t)$ is the smallest integer $N$ such that any red/blue coloring of $K_N^{(k)}$ contains either $e$ red edges among some $s$ vertices, or a blue clique of size $t$. In particular, $r_k(s, \binom{s}{k}; t) = r_k(s, t)$, so this definition includes the classical Ramsey numbers.
\end{definition}

Erd\H os and Hajnal conjectured that for each $k\geq 3$ and for each fixed $s$, there is a transition point in $e$ where $r_k(s, e; t)$ jumps from being polynomial in $t$ to being exponential in a power of $t$. %They proved some small cases of this statement using the probabilistic method and connected the problem to other conjectures in hypergraph Ramsey theory.
To state precisely their conjecture, define the function $g_k(n)$ to be the maximum number of edges in an $n$-vertex iterated blowup of an edge. It satisfies $g_k(n) = 0$ for $n < k$, and for $n\geq k$, $$g_k(n) = \max_{s_1+\ldots+s_k = n} \left(\prod_{j=1}^k s_j + \sum_{j=1}^k g_k(s_j)\right),$$ where the maximum is over compositions of $n$ into $k$ positive integer parts. 
%\hy{I think the equation here is not that helpful. I'd either only say that they have a precise conjectural $g_k(n)$ or say that it is the maximum number of edges in an iterated blowup of an edge.} \xh{How is this?}
%\begin{definition} The family of \emph{iterated $k$-partite hypergraphs} is the minimal collection $\mathcal F_k$ of $k$-graphs which contains the following elements. The empty $k$-graphs on $1, 2, \ldots, k-1$ vertices are in $\mathcal F_k$, and the single edge is also in $\mathcal F_k$. If $H\in\mathcal F_k$, then the hypergraph obtained by replacing a vertex $v$ of $H$ with any hypergraph $G\in \mathcal F_k$ is also in $\mathcal F_k$. Here by replacing we mean that the new vertex set is $(V(H)\sm\{v\})\cup V(G)$, and the new edge set includes all edges of $H$ that do not use $v$ as well as the edge $(e\sm\{v\})\cup\{u\}$ for each edge $e\in E(H)$ that does use $v$ and for each vertex $u\in V(G)$.
%\end{definition}
%\begin{definition}[Equivalent definition of iterated $k$-partite hypergraphs] We refer to a complete $k$-partite $k$-graph as the \emph{blowup of an edge}. An \emph{iterated blowup of an edge} is formed from the blowup of an edge by (optionally) replacing each part of the blowup with another iterated blowup of an edge. We say that a $k$-graph is an \emph{iterated $k$-partite hypergraph} if it is contained in the iterated blowup of an edge.\end{definition}
%\ra{The first definition above is paraphrased from \cite{CFS}, while the second from \cite{CFG+Y}. Which one should we go with?}

Erd\H os and Hajnal \cite{EH72} showed that $r_k(s, g_k(s); t) = t^{O(1)}$, and they conjectured that requiring even one more red edge among $s$ vertices would cause the Ramsey number to grow exponentially in a power of $t$.
\begin{conj}[Erd\H os and Hajnal \cite{EH72}]\label{conj:main}
For all $s > k \geq 3$, we have $r_k(s, g_k(s)+1; t) = 2^{t^{\Omega(1)}}$.
\end{conj}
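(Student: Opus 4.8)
\medskip
\noindent\textbf{Proof proposal.}
Since the cases $k\ge 4$ are due to Mubayi and Razborov, and since the upper bound $r_3(s,g_3(s)+1;t)\le r_3(s,t)\le 2^{t^{O(1)}}$ follows from the Erd\H os--Rado bound together with $r_2(s-1,t-1)=t^{O(1)}$ for fixed $s$, it remains to prove the lower bound $r_3(s,g_3(s)+1;t)\ge 2^{t^{\Omega(1)}}$ for every fixed $s>3$. Equivalently, for some $c=c(s)>0$ and all large $t$ we must construct a red/blue coloring of $K_N^{(3)}$ with $N=2^{\Omega(t^{c})}$ containing neither $g_3(s)+1$ red edges among some $s$ vertices nor a blue $K_t^{(3)}$.

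The plan is to reduce this, using the Tur\'an theorem, to a single clean statement about $3$-graphs. Take the red $3$-graph $G$ to have the property that every tight component of $G$ is tripartite. This property passes to induced subgraphs: any tight walk inside $G[S]$ is also one inside $G$, so every tight component of $G[S]$ lies inside a tight component of $G$ and is tripartite, whence the Tur\'an theorem applied with $n=s$ gives $e(G[S])\le g_3(s)$ for every $s$-set $S$. Thus such a $G$ automatically has no $g_3(s)+1$ red edges among any $s$ vertices, and this holds simultaneously for all $s$. It remains only to arrange that the blue hypergraph has no $K_t^{(3)}$, i.e.\ that $\alpha(G)<t$. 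So everything reduces to: \emph{construct, on $N=2^{t^{\Omega(1)}}$ vertices, a $3$-graph in which every tight component is tripartite and whose independence number is polylogarithmic in $N$.}

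For this I would use the probabilistic method. A single tripartite tight component has independence number linear in its size, and the Tur\'an extremizer --- the iterated blowup of an edge --- has independence number polynomial in $N$ (namely $N^{\log_3 2}$ for the balanced one), which is precisely why the threshold $g_3(s)$ is sharp yet only yields the known polynomial bound for $r_3(s,g_3(s);t)$. The idea is instead to overlay $m=(\log N)^{O(1)}$ independent random tripartite ``gadgets'' $C_1,\dots,C_m$, where $C_i$ is the complete tripartite $3$-graph on a uniformly random vertex subset $V_i$ of size $N/m^{\Theta(1)}$ with a uniformly random balanced tripartition, and then lightly prune (deleting an edge from all but one gadget whenever some pair of vertices is covered by two gadgets). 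After pruning, distinct gadgets have disjoint $2$-shadows, so they and their pieces are exactly the tight components of $G=\bigcup_i C_i$ and each is tripartite; moreover the $2$-shadow budget $\binom N2$ comfortably accommodates the gadgets and only a vanishing fraction of edges is pruned. Finally, a fixed $q$-set $S$ with $q=(\log N)^{O(1)}$ can be independent in a gadget $C_i$ only if $S\cap V_i$ misses one of its three parts; a union bound over all $q$-sets $S$ and over the at most $3^m$ assignments of a missed part to each gadget, using that pruning removes only a negligible fraction of edges, shows that with high probability no $q$-set is independent in $G$. Hence $\alpha(G)=(\log N)^{O(1)}$ and the construction succeeds with $N$ as large as $2^{t^{\Omega(1)}}$.

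The main obstacle is the Tur\'an theorem itself: that the maximum number of edges in an $n$-vertex $3$-graph in which all tight components are tripartite equals $g_3(n)$ for \emph{every} $n$, with the balanced iterated blowup of an edge an exact extremizer. I would prove the upper bound by induction on $n$: given an extremal $G$, use a tripartition $(X_1,X_2,X_3)$ of a tight component of $G$ to produce a partition $[n]=Y_1\sqcup Y_2\sqcup Y_3$ in which every edge is either a transversal or contained in a single part, so that $e(G)\le |Y_1||Y_2||Y_3|+\sum_{j}e(G[Y_j])\le |Y_1||Y_2||Y_3|+\sum_{j}g_3(|Y_j|)\le g_3(n)$, and then run the equality analysis to force $G$ to be an iterated blowup of an edge, with a convexity argument picking out the balanced one. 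The delicate points --- and where I expect most of the work to lie --- are that the relevant tight component may span all of $[n]$ and need not be a \emph{complete} tripartite $3$-graph, so the partition must be chosen so that no ``mixed'' (two vertices in one part) edges survive; and that for general $n$ (not $3$-adically special) the recursion defining $g_3$ is optimized at an \emph{unbalanced} composition, so the induction must carry the exact optimal compositions rather than settling for the asymptotic count $g_3(n)\sim n^3/24$. With the Tur\'an theorem in hand, the probabilistic construction above is comparatively routine, and the two together settle all remaining cases of \Cref{conj:main}.
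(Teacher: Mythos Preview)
Your overall architecture matches the paper exactly: reduce \cref{conj:main} for $k=3$ to (i) a coloring of $K_N^{(3)}$ in which every red tight component is tripartite and the blue clique number is polylogarithmic, and (ii) the Tur\'an statement that any $n$-vertex $3$-graph with all tight components tripartite has at most $g_3(n)$ edges. For (i) the paper simply quotes \cite{CFG+Y} as a black box; your random-gadget sketch is in the same spirit and would likely work, so this part is fine.

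The genuine gap is your proposed proof of the Tur\'an theorem. You suggest picking the tripartition $(X_1,X_2,X_3)$ of some tight component and extending it to a partition $Y_1\sqcup Y_2\sqcup Y_3$ of $[n]$ so that every edge is transversal or inside a part. There is no mechanism here: distinct tight components share vertices but have \emph{incompatible} tripartitions in general, so no single tripartition of $[n]$ need kill all ``mixed'' edges, and nothing in your argument explains why the extremal $G$ is special. Proving that in an extremal configuration there \emph{is} a spanning complete tripartite colour class is precisely the content of the paper, and it is not obtained by looking at one tight component. The paper instead passes to the equivalent statement about tripartite edge colourings of $K_n$ (\cref{thm:monochromatic-k-cliques}), orients the shadow into a random tournament, and shows that having more than $g_3(n)$ monochromatic triangles forces the number of ``precyclic'' triangles and the colour-neighbourhood imbalances $\delta_c(v)$ to be so small that one colour must be a nearly spanning complete tripartite graph; the recoloring lemma (\cref{lemma: recoloring}) then closes the induction. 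This requires delicate quantitative control (the slack $d(n)=T(n)-g_3(n)$ is only $O(n\log n)$), separate ad hoc arguments for $n\in\{13,14,16,17\}$, and computer verification for $n<700$. None of this is visible from the ``take a tripartition of a tight component'' viewpoint. (A small side error: the maximum in the recursion for $g_3(n)$ is always attained at the \emph{balanced} composition, for every $n$; see the footnote after the definition of $g_k$. The difficulty for non-$3$-adically-special $n$ is not that the optimum is unbalanced, but that $d(n)>0$, so the cyclic-triangle bound $T(n)$ alone is not tight.)
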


Erd\H os reiterated the problem in 1990 \cite[Equation (28)]{Erd90} and offered \$500 for its solution.\footnote{In his paper \cite{Erd90}, Erd\H os discussed this problem along with several related conjectures, ambiguously offering \$500 ``for a proof or disproof of these conjectures." To our knowledge, the \$500 prize became attached to this specific problem in Chung's collection of Erd\H os problems \cite{Chung97}.} However, 
little progress was made on the problem until 2010, when Conlon, Fox, and Sudakov~\cite{CFS} proved \cref{conj:main} for infinitely many $s$ when $k = 3$.
It turns out that the cases that Conlon, Fox and Sudakov solved are precisely when $s$ is of the form $3^n$, $3^m+3^n$ or $3^m-3^n$ (see \cref{prop:sum-and-diff-of-exp} for a proof).
In 2021, Mubayi and Razborov~\cite{MR21} proved \cref{conj:main} for all $s>k\geq 4$ by reducing the conjecture to a certain combinatorial optimization problem about inducibility of rainbow tournaments. Their techniques become stronger as the uniformity $k$ grows larger, and the case $k=4$ required the use of computer verification. For the unsolved case $k=3$, they noted that ``many crucial calculations in this paper completely fall apart."

Our main theorem establishes the polynomial-to-exponential transition of $r_3(s, e; t)$ as conjectured by Erd\H os and Hajnal, completing the proof of \cref{conj:main}.

\begin{thm}\label{thm:main} For all $s>3$, we have $r_3(s, g_3(s)+1; t) = 2^{\Omega(t^{2/3})}$. 
\end{thm}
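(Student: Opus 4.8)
The plan is to prove the stated bound as a lower bound on the Ramsey number, i.e.\ to exhibit, for each fixed $s>3$ and all large $t$, a red/blue coloring of $K_N^{(3)}$ with $N=2^{\Omega(t^{2/3})}$ (the implied constant allowed to depend on $s$) that contains neither $g_3(s)+1$ red edges among any $s$ vertices nor a blue clique of size $t$. Taking red $=E(H)$ for a $3$-graph $H$ and blue $=$ complement, this is exactly the problem of constructing an $N$-vertex $3$-graph such that (i) every $S\in\binom{V(H)}{s}$ spans at most $g_3(s)$ edges of $H$, and (ii) $\alpha(H)<t$. My first step would be to reduce (i) to a structural condition via the Tur\'an-type theorem: it suffices to arrange that, for every $S\in\binom{V(H)}{s}$, all tight components of the induced $3$-graph $H[S]$ are tripartite, since then the Tur\'an theorem forces $e(H[S])\le g_3(s)$. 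The crucial point is that one needs this only for $s$-vertex subsets; one does \emph{not} want $H$ itself to have all tight components tripartite, since the extremizer of the Tur\'an problem — the balanced iterated blowup — has $\alpha=\Theta(N^{\log_3 2})$, which only yields a polynomial lower bound. So the target is a $3$-graph that is \emph{locally} (on every $s$ vertices) of the tripartite-tight-component type, but globally far from it, so that $\alpha(H)$ can be pushed down to polylogarithmic size.

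The heart of the argument is the construction of such an $H$ and the accompanying independence-number bound. I would build $H$ by a recursive, multi-scale procedure: at each of $\ell$ scales one imposes a tripartite-type pattern, but arranged with a ``global twist'' — for instance cyclically, as in a long tight cycle whose length is not divisible by $3$, or some higher-dimensional analogue — so that no tight component of $H$ is actually tripartite, yet any configuration on at most $s$ vertices meets each scale in only a tripartite ``arc,'' hence $H[S]$ has tripartite tight components. One must be careful that the patterns at different scales do not share pairs of vertices: if they did, their tight components would merge and create a \emph{short} non-tripartite obstruction, violating (i) for some small $s$. Given such a construction, the independence number would be estimated scale by scale, and optimizing the number of scales $\ell$ against $N$ — one expects $\log N$ and $\alpha(H)$ to scale as different powers of $\ell$ — should produce the relation $\alpha(H)=O\!\left((\log N)^{3/2}\right)$, equivalently $N=2^{\Omega(\alpha(H)^{2/3})}$, which is precisely what forces $\alpha(H)<t$ at $N=2^{\Omega(t^{2/3})}$.

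The main obstacle I anticipate is the construction itself, because genuine local tripartiteness of the tight components is in direct tension with a small independence number: if a tight component $T$ is tripartite with parts $A,B,C$, then $A\cup B$ is an independent set containing at least $\tfrac23|V(T)|$ vertices, and such locally large independent sets tend to amalgamate into a global one. Threading this needle — keeping the pieces at different scales simultaneously non-pair-sharing (so no small non-tripartite tight component arises, preserving (i)) and ``misaligned'' enough that no large common independent set survives across all scales (preserving a strong form of (ii)), while still admitting a clean recursive analysis — is where the real work lies, and it is this balance that should pin down the exponent $2/3$. A secondary, more routine point is to carry out the verification of (i) cleanly through the Tur\'an theorem, rather than through a direct and unwieldy edge count on every $s$-subset.
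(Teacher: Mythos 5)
There is a genuine gap. Your overall frame (a lower-bound construction whose red $3$-graph spans at most $g_3(s)$ edges on every $s$ vertices, verified through the Tur\'an-type theorem, together with independence number $O((\log N)^{3/2})$) matches the paper's reduction, but the entire quantitative heart of the argument --- the existence of an $N$-vertex $3$-graph with $\alpha(H)=O((\log N)^{3/2})$ and the required local structure --- is left as a ``recursive, multi-scale procedure with a global twist'' that is never constructed, and no independence-number estimate is actually carried out. In the paper this is precisely the content of the cited black box (\cref{thm:blackbox}, from \cite{CFG+Y}): a coloring of $K_N^{(3)}$ in which \emph{every} red $2$-tightly connected subgraph is tripartite and the largest blue clique has size $O((\log N)^{3/2})$; \cref{thm:main} then follows in a few lines by applying \cref{thm:iter-k-partite} with $n=s$ to the red graph induced on any $s$ vertices. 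Without either citing such a result or proving it, your proposal does not establish the bound.

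Moreover, the guiding principle you impose on the construction is mistaken, and it is what pushes you toward the unproved ``twisted'' construction. You argue that $H$ must \emph{not} globally have all tight components tripartite, because the Tur\'an extremizer (the balanced iterated blowup) has $\alpha=\Theta(N^{\log_3 2})$. But the construction need not be anywhere near extremal for the Tur\'an problem: the condition on $H$ is only an \emph{upper} bound on red edges in $s$-sets, and the property ``all tight components tripartite'' is perfectly compatible with polylogarithmic independence number --- indeed the coloring of \cref{thm:blackbox} has exactly this global property with $\alpha=O((\log N)^{3/2})$. Relatedly, the ``tension'' you anticipate is illusory: if a tight component $T$ has tripartition $A,B,C$, then $A\cup B$ is independent only with respect to the edges of $T$, not of $H$, since other tight components may place edges inside $A\cup B$; so locally tripartite tight components do not force large independent sets in $H$. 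Correcting this would simplify your plan (no pair-disjointness or misalignment machinery is needed), but you would still need to supply, or cite, the construction with the $(\log N)^{3/2}$ clique bound, which is the step your proposal is missing.
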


We also provide a short new proof of \cref{conj:main} for uniformities $k\geq 7.$
Explicitly, we show that $r_k(s, g_k(s)+1; t) = 2^{\Omega(t^{2/k})}$ for all $s>k\geq 7$.

A key ingredient to our proof both for $k=3$ and for $k\geq 7$ comes from recent work on Ramsey numbers of \emph{tightly connected hypergraphs} by the second and third authors and Conlon, Fox, Gunby, Mubayi, Suk, and Verstra\"ete~\cite{CFG+Y}, on which we elaborate in \cref{sec:tight components}. We reduce \cref{thm:main} to a Tur\'an-type problem about tightly connected hypergraphs, which have recently been the object of intense study. Breakthrough results of Balogh and Luo~\cite{BL23} and Lidick\'y, Mattes, and Pfender~\cite{LMP24} established the Turán density of the $3$-uniform tight cycle minus an edge, while Kam\v{c}ev, Letzter, and Pokrovskiy~\cite{KLP23} and Bodn\'ar, Le\'on, Liu, and Pikhurko~\cite{BLLP} determined the Tur\'an density of the tight cycle. 

\begin{definition}
For $t < k$, we say that a $k$-graph is \emph{$t$-tightly connected} if it has no isolated vertices and any two edges $e$ and $f$ can be joined by a sequence of edges $e = e_0, e_1, \ldots, e_s = f$ such that $|e_{i-1}\cap e_i| \geq t$ for each $i\in[s]$. Define a \emph{$t$-tight component} of $H$ to be a maximal subgraph of $H$ which is $t$-tightly connected. Since $t$-tight components form equivalence classes, the edges of a $k$-graph can be partitioned into $t$-tight components. For $3$-graphs, we shorten the terms $2$-tightly connected and $2$-tight components to just tightly connected and tight components.
\end{definition}

The main innovation in our proof is to reduce \cref{thm:main} to the following Tur\'an-type result, which we believe to be of independent interest.

\begin{thm}\label{thm:iter-k-partite}
For $k=3$ and $k\geq 7$, the maximum number of edges in an $n$-vertex $k$-graph $H$ with the property that all 2-tight components of $H$ are $k$-partite is $g_k(n)$.
\end{thm}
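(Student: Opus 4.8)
The plan is to prove both directions. The lower bound is easy: the balanced iterated blowup of an edge has $g_k(n)$ edges by definition of $g_k$, and one must check that every $2$-tight component of this construction is $k$-partite. This should follow from an inductive description of the iterated blowup: partition the $n$ vertices into $k$ parts $V_1, \dots, V_k$ of near-equal size, take all "crossing" edges (one vertex from each $V_j$), and recurse inside each $V_j$. The crossing edges form a single $k$-partite tight component (with parts $V_1, \dots, V_k$), and any two crossing edges share at most one vertex so actually this is a union of $1$-tight pieces; the key point is that a crossing edge and a recursive edge inside some $V_j$ share at most one vertex, hence lie in different $2$-tight components. So the $2$-tight components are exactly: the crossing part at each level of the recursion, each of which is $k$-partite, and we are done by induction.

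For the upper bound, let $H$ be an $n$-vertex $k$-graph in which every $2$-tight component is $k$-partite; I want $e(H) \le g_k(n)$. I would argue by strong induction on $n$. The natural move is to find a vertex partition $V_1, \dots, V_k$ of $V(H)$ such that every edge of $H$ is either crossing with respect to this partition or lies entirely inside one part, and then bound the number of crossing edges by $\prod |V_i|$ and apply induction to $H[V_i]$ for each $i$; optimizing over the part sizes gives exactly the recursion defining $g_k$. The heart of the matter is producing this partition. Here is where the hypothesis on $2$-tight components enters: each $2$-tight component $C$ is $k$-partite, so it comes with its own $k$-coloring of its vertex set; the problem is to \emph{amalgamate} the colorings of the different components into one global $k$-partition so that no edge ends up "bad" (i.e., using some part twice but not all parts). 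One should think of this as a constraint-satisfaction / graph-coloring problem: build an auxiliary structure recording, for each vertex, the colors it receives in the various components containing it, and show a consistent global assignment exists. A clean way to organize this: take a spanning structure (e.g., process components one at a time, or take a suitable "link" or "shadow" graph) and show that the $k$-partiteness constraints, because each is genuinely $k$-\emph{partite} rather than merely $k$-colorable with repeats, glue together — possibly after observing that two components overlap in at most a controlled way, e.g., in at most one vertex, or in a set that is independent in a relevant sense.

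I expect the main obstacle to be exactly this amalgamation step: showing that the local $k$-partitions can be made globally consistent. Naively, if two $2$-tight components shared many vertices with incompatible colorings, the argument would collapse, so the crux is a structural lemma limiting the interaction between distinct $2$-tight components — plausibly that any two of them meet in at most one vertex, or more precisely that the "component intersection graph" is sparse/acyclic enough that colorings can be propagated without conflict. Establishing such a structural fact (and checking it is robust enough to drive the induction, including the base cases $n < k$ and $n = k$ where $g_k(n) = 0$ or $g_k(k) = 1$) is where the real work lies; the counting and optimization over part sizes is then a routine unravelling of the recursion for $g_k$. I would also keep an eye on the distinction between the cases $k = 3$ and $k \ge 7$, since the excerpt suggests the intermediate uniformities $4 \le k \le 6$ genuinely fail this clean structure, which hints that the amalgamation lemma uses $k$ being either very small or large in an essential way.
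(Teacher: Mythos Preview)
Your lower bound is fine, but the upper bound strategy has a genuine gap: the ``amalgamation'' of the local $k$-partitions into a single global partition with the property you want (every edge either crossing or contained in one part) need not exist. Here is a concrete obstruction for $k=3$ on six vertices $\{1,\dots,6\}$. Let $C_1=\{123,124\}$, $C_2=\{345,346\}$, $C_3=\{561,562\}$. Each $C_i$ is tripartite and they are pairwise $2$-tightly disconnected (their shadows are edge-disjoint), so $H=C_1\cup C_2\cup C_3$ satisfies the hypothesis. For a global tripartition $(V_1,V_2,V_3)$, the pair of edges in $C_1$ forces either ($1,2$ in different parts and $3,4$ in the same part) or ($1,2,3,4$ all in one part); cyclically, the same dichotomy holds for $C_2$ with the roles of $(1,2)$ and $(3,4)$ replaced by $(3,4)$ and $(5,6)$, and for $C_3$ with $(5,6)$ and $(1,2)$. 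Chasing the three constraints around the cycle, one checks that the only solution puts all six vertices in a single part, which is useless for an induction on $n$. So the structural lemma you are hoping for is simply false, and no amount of ``controlled overlap'' hypotheses will rescue it: distinct $2$-tight components can meet in two or more vertices and can form arbitrarily long cycles of incompatibilities. (Incidentally, the cases $4\le k\le 6$ are not believed to fail --- the paper conjectures the theorem for all $k\ge 3$; what is missing there is a proof, not the statement.)

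The paper takes a completely different route. It passes to the $2$-shadow: since the shadows $\partial T_i$ of the $2$-tight components are pairwise edge-disjoint, colouring each edge of $K_n$ by the index of the component whose shadow contains it gives a \emph{$k$-partite edge-colouring} of $K_n$ (each colour class is a $k$-partite graph), and every hyperedge of $H$ becomes a monochromatic $K_k$. The problem thus reduces to showing that any $k$-partite edge-colouring of $K_n$ has at most $g_k(n)$ monochromatic $K_k$'s (Theorem~\ref{thm:monochromatic-k-cliques}). This is where the real work is, and it does \emph{not} go through a global partition of the vertex set. For $k\ge 7$ the argument uses Loomis--Whitney and convexity to show that in any extremal colouring the most common colour already forms a spanning complete $k$-partite subgraph, after which one inducts inside its parts. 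For $k=3$ the argument is far more delicate: one orients the shadow randomly to turn monochromatic triangles into cyclic triangles of a tournament, extracts tight inequalities on ``precyclic'' triangles and degree imbalances, handles $n\in\{13,14,16,17\}$ by hand via degree-sequence arguments, and verifies the remaining $n<700$ by computer before an asymptotic argument takes over. None of this resembles gluing local colourings; you should abandon the amalgamation plan and work instead with the shadow colouring.
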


Equivalently, if $\mathcal{T}_k$ is the family of all $2$-tightly connected non-$k$-partite $k$-graphs, then \cref{thm:iter-k-partite} states that $\textnormal{ex}(n, \mathcal{T}_k) = g_k(n)$ exactly, where the extremal number $\textnormal{ex}(n,\mathcal{T}_k)$ is the maximum number of edges in a $k$-graph that contains no element of $\mathcal{T}_k$ as a subgraph. To our knowledge, this is one of the first nontrivial hypergraph problems to be solved exactly where the extremizer is an iterated blowup. The interested reader should compare this to the results of Balogh and Luo~\cite{BL23} and Lidick\'y, Mattes, and Pfender~\cite{LMP24}, which imply that for a certain $H \in \mathcal{T}_3$, $\textnormal{ex}(n, H) = (1+o(1)) g_3(n)$.

While we believe \cref{thm:iter-k-partite} holds for $k=4, 5,$ and $6$ as well, our proof techniques do not readily extend to these uniformities. In \cref{sec:tight components}, we show that \cref{thm:main} (and the corresponding result for $k\geq 7$) is a consequence of \cref{thm:iter-k-partite}, and outline the remainder of the paper.

\section{Proof Outline}\label{sec:tight components}

%\xh{Belongs here: reduction to and statement of Thm 1.5 as a lemma.}
%\xh{Statements and proofs of Lemma 3.4-3.6 and Lemma 4.3 (restated without contradiction assumptions)}
%\ra{Frame as 3 key tools: reduction to Tur\'an-type theorem; tournaments; and degree sequences.}
Our proof of \cref{thm:main} has three main components. The first is the reduction to the Tur\'an-type result \cref{thm:iter-k-partite}. %The second is an analysis of the requirement that all tight components of a $3$-graph be tripartite. \hy{Is this supposed to refer to degree sequence? It is so vague that I do not know which component this is referring to. If so, would the following be better? } \ra{I'm ok with that change} \xh{I don't like either, how about "The second is an analysis of the possible degree sequences in a graph coloring where each color class is tripartite."} 
The second is an analysis of the possible degree sequences in a graph coloring where each color class is tripartite.
The third, and most substantial, is the study of cyclic triangles in a tournament obtained from randomly orienting the $2$-shadow of a $3$-graph. In this section, we explain in detail the reduction to \cref{thm:iter-k-partite} and then provide some preliminary lemmas for the other two proof components.

\subsection{Reduction to the Tur\'an-type problem}% and proof outline}

Questions about whether certain off-diagonal hypergraph Ramsey numbers are polynomial or superpolynomial have received significant attention in recent years \cite{CFG+noY, CFG+Y, CFH+stars}.  
An important family of hypergraphs in this context is the class of \emph{iterated $k$-partite $k$-graphs}. 

\begin{definition} We refer to a complete $k$-partite $k$-graph as the \emph{blowup of an edge}. An \emph{iterated blowup of an edge} is formed from the blowup of an edge by (optionally) replacing each part of the blowup with another iterated blowup of an edge. We say that a $k$-graph is \emph{iterated $k$-partite} if it is contained in the iterated blowup of an edge.\end{definition}

The function $g_k(n)$ is the maximum number of edges in an $n$-vertex iterated $k$-partite $k$-graph. That is, $g_k(n)$ is the number of edges in the balanced iterated blowup of an edge, which can be formed by partitioning the $n$ vertices into $k$ balanced parts and including all transversal edges, and then recursing within each part.\footnote{Erd\H os and Hajnal \cite{EH72} stated that it is easy to see that the maximum in the definition of $g(n)$ is given by letting the $s_j$ be as balanced as possible, i.e. they are all either $\floor{n/k}$ or $\ceil{n/k}$. Mubayi and Razborov~\cite{MR21} proved this formally.}

Observe that all $2$-tight components of an iterated $k$-partite $k$-graph are $k$-partite. Thus, \cref{thm:iter-k-partite} states that for $k=3$ and $k\geq 7$, the balanced iterated blowup of an edge is the densest $k$-graph with this property.

Rather than looking for any $e$ edges among $s$ vertices, one can fix a specific $k$-graph $H$ and define $r_k(H, t)$ to be the minimum $N$ such that any red/blue coloring of $K_N^{(k)}$ contains either a red copy of $H$ or a blue copy of $K_{t}^{(k)}$. To show that $r_k(s, g_k(s); t) = t^{O(1)}$, Erd\H os and Hajnal \cite{EH72} showed that if $H$ is an iterated $k$-partite $k$-graph, then $r_k(H, t)$ is polynomial in $t$. 
In \cite{CFG+Y}, the second and third authors and Conlon, Fox, Gunby, Mubayi, Suk, and Verstra\"ete asked if the converse of this statement also holds: if $H$ is not iterated $k$-partite, then $r_k(H, t)$ exhibits superpolynomial growth in $t$. They proved that this is the case when $H$ is 2-tightly connected, and for $k=3$ they also proved it in the case that $H$ is the union of two tightly connected subgraphs.

To obtain \cref{thm:main} from \cref{thm:iter-k-partite}, we use the main theorem from \cite{CFG+Y}.
\begin{thm}[{\cite[Theorem 2.1]{CFG+Y}}]\label{thm:blackbox}
For every $k\geq 3$ and positive integer $N$, there is a red/blue edge coloring of $K_N^{(k)}$ such that any red 2-tightly connected subgraph is $k$-partite and the largest blue clique has size $O((\log N)^{k/2})$.
\end{thm}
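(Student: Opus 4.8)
The final statement to prove is \cref{thm:main}: for all $s>3$, $r_3(s, g_3(s)+1;t) = 2^{\Omega(t^{2/3})}$. Since \cref{thm:iter-k-partite} is stated earlier in the excerpt, and \cref{thm:blackbox} is an imported black box, the plan is to use these together. First I would recall what \cref{thm:blackbox} gives for $k=3$: a red/blue coloring of $K_N^{(3)}$ in which every red $2$-tightly connected subgraph is tripartite ($3$-partite), and the largest blue clique has size $O((\log N)^{3/2})$. The strategy is to show that this coloring contains no red configuration of $g_3(s)+1$ edges among any $s$ vertices, which immediately yields $r_3(s, g_3(s)+1;t) > N$ whenever $t > C(\log N)^{3/2}$, i.e. $N = 2^{\Omega(t^{2/3})}$.

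The heart of the argument is the following deduction from \cref{thm:iter-k-partite}. Fix any $s$ vertices $S$, and let $H$ be the red $3$-graph induced on $S$ by the coloring from \cref{thm:blackbox}. Every tight component of $H$ is a tight component of the full red hypergraph restricted to $S$; I need to check that restricting to $S$ preserves the property ``every tight component is tripartite.'' A tight component of $H[S]$ is contained in a tight component of the whole red graph, hence is a subgraph of a tripartite $3$-graph, hence is itself tripartite (a subgraph of a $3$-partite $3$-graph is $3$-partite). Therefore $H$ is an $s$-vertex $3$-graph all of whose tight components are tripartite, so by \cref{thm:iter-k-partite} (with $k=3$), $H$ has at most $g_3(s)$ edges. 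Thus no $s$ vertices span $g_3(s)+1$ red edges, which is exactly the statement that the coloring avoids the first outcome in the definition of $r_3(s, g_3(s)+1;t)$.

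Combining: the coloring on $K_N^{(3)}$ has no $s$ vertices with $g_3(s)+1$ red edges and no blue clique of size $c(\log N)^{3/2}$ for an absolute constant $c$. Hence $r_3(s, g_3(s)+1;t) > N$ for every $t \le c(\log N)^{3/2}$. Given $t$, choose $N$ maximal with $c(\log N)^{3/2} \ge t$, i.e. $\log N \ge (t/c)^{2/3}$, so $N \ge 2^{(t/c)^{2/3}}$ works and $r_3(s, g_3(s)+1;t) \ge 2^{\Omega(t^{2/3})}$. For the matching upper bound $r_3(s, g_3(s)+1;t) = 2^{O(t^{2/3})}$, I would either invoke the standard argument of Erd\H os and Hajnal (the off-diagonal hypergraph Ramsey upper bound $r_3(s,t) \le 2^{t^{O(1)}}$, which dominates $r_3(s, g_3(s)+1;t)$), or note that the theorem as stated only claims the $\Omega$ lower bound, in which case this step is unnecessary; I would clarify which convention the paper uses and cite accordingly.

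The main obstacle is \textbf{not} in this short reduction but upstream: establishing \cref{thm:iter-k-partite} for $k=3$, i.e. that an $n$-vertex $3$-graph with all tight components tripartite has at most $g_3(n)$ edges, with the iterated blowup of an edge as the exact extremizer. That is the substantial combinatorial content of the paper — it requires the promised analysis of degree sequences in tripartite-component colorings and the study of cyclic triangles in a random tournament on the $2$-shadow — and the reduction above is only as strong as that theorem. Within the reduction itself, the one point requiring care is the claim that restriction to a vertex subset and passage to a sub-hypergraph both preserve ``all tight components tripartite''; this is routine but should be stated explicitly, since ``$3$-partite'' here means the vertex set of that component admits a partition into $3$ independent sets, and such a partition restricts to any subset.
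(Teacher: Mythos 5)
Your proposal does not address the statement you were asked to prove. The statement is \cref{thm:blackbox} itself: the existence, for every $k\geq 3$ and every $N$, of a red/blue coloring of $K_N^{(k)}$ in which every red $2$-tightly connected subgraph is $k$-partite and every blue clique has size $O((\log N)^{k/2})$. Your write-up opens by declaring that the target is \cref{thm:main} and then treats \cref{thm:blackbox} as an ``imported black box''; what you actually prove is the deduction of \cref{thm:main} from \cref{thm:iter-k-partite} and \cref{thm:blackbox}. That deduction is fine and in fact mirrors the paper's own proof of \cref{thm:main} almost verbatim (fix $s$ vertices, note that a tight component of the restriction sits inside a tight component of the whole red graph and a subgraph of a $k$-partite $k$-graph is $k$-partite, invoke \cref{thm:iter-k-partite} to cap the red edges at $g_3(s)$, and convert the $O((\log N)^{3/2})$ blue clique bound into $N=2^{\Omega(t^{2/3})}$; the theorem only claims the lower bound, so no matching upper bound is needed). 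But none of this says anything about why the coloring of \cref{thm:blackbox} exists.

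The genuine gap, relative to the statement in question, is therefore the entire content of the statement: you give no construction of the coloring, no argument that red tightly connected subgraphs are forced to be $k$-partite, and no bound on the blue clique number. For the record, the paper does not reprove this either: it cites \cite[Theorem 2.1]{CFG+Y}, remarking only that the proof there, stated for $k=3$, goes through unchanged for higher uniformities. So an acceptable treatment of this statement is a citation (possibly with a sentence justifying the extension to general $k$), whereas using the statement as a hypothesis to derive \cref{thm:main} is circular with respect to the task. If you intend to actually prove it, you would need to reproduce the construction and clique-number analysis from \cite{CFG+Y}, which is absent from your proposal.
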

We remark that this theorem was only stated for $k=3$, but the same proof goes through for all higher uniformities as well, as noted in the conclusion of \cite{CFG+Y}. 

%Define $f_k(N, s, e)$ to be the largest $t$ such that any red/blue coloring of $K_N^{(k)}$ contains either $e$ red edges among some $s$ vertices, or a blue clique of size $t$. %Erd\H os and Hajnal showed that $f_k(N, s, g_k(s)) = N^{\Omega(1)}$, while \cref{conj:main} states that $f_k(N, s, g_k(s)+1) = (\log N)^{O(1)}$. 
%\cref{thm:main} (and the corresponding result for $k\geq 7$) is equivalent to the statement $f_k(N, s, g_k(s)+1) = O((\log N)^{k/2})$. We show this now.

\begin{proof}[Proof of \cref{thm:main} and corresponding statement for $k\geq 7$ given \cref{thm:iter-k-partite}]
Let $M$ be the maximum positive integer such that there exists a red/blue coloring $\chi$ of $K_M^{(k)}$ with no red non-$k$-partite $2$-tightly connected subgraph and no blue $t$-clique. \cref{thm:blackbox} implies that $M \geq 2^{\Omega(t^{2/k})}$. Thus, it suffices to show that $r_k(s, g_k(s)+1; t) > M$. This follows quickly from \cref{thm:iter-k-partite}, since the same coloring $\chi$ is guaranteed to have at most $g_k(s)$ red edges among any $s$ vertices, and no blue $t$-clique.%$g_k(s)+1$ red edges among some set $U$ of $s$ vertices, then letting $H\subseteq K_N^{(k)}$ be the red induced subgraph on $U$, \cref{thm:iter-k-partite} implies that $H$ must have a $2$-tight component which is not $k$-partite.% Thus, when $N = r_k(s, g_k(s)+1; t)$, any red/blue coloring of $K_{N}^{(k)}$ has either this red non-$k$-partite $2$-tight component or a blue clique of size $t$, showing that $M \leq N$, as desired. %Let $N = r_k(s, g_k(s)+1; t)$, so a red/blue coloring of $K_N^{(k)}$ must have either $g_k(s)+1$ edges among $s$ vertices or a blue clique of size $t$. If it has a blue clique then by \cref{thm:iter-k-partite}, it has either 
%Take the red/blue coloring of $K_N^{(k)}$ guaranteed by \cref{thm:blackbox}; any red 2-tightly connected subgraph is $k$-partite and the largest blue clique has size $O((\log N)^{k/2})$. We claim that for any $s$, this coloring of $K_N^{(k)}$ does not have $g_k(s)+1$ red edges among any set of $s$ vertices. Suppose the claim is false, and let $H\subseteq K_N^{(k)}$ be the subgraph induced by these $v$ vertices and $g_k(s)+1$ red edges. By \cref{thm:iter-k-partite}, some 2-tight component of $H$ is not $k$-partite, a contradiction to the assumption that any red 2-tightly connected subgraph of $K_N^{(k)}$ is $k$-partite.
%
%We conclude that $f_k(N, s, g_k(s)+1) = O((\log N)^{k/2})$, as desired.
\end{proof}

In a similar vein to Mubayi and Razborov \cite{MR21}, \cref{thm:iter-k-partite} can be further reduced to an inducibility problem regarding edge coloring %($2$-uniform) 
graphs. Define a \emph{$k$-partite coloring} of a graph to be an edge coloring of the graph in which the color classes are all $k$-partite.
Define the \emph{(2-)shadow} of a hypergraph $H$, denoted $\partial H$, to be the (2-uniform) %\xh{I think we only need this parenthetical $0$ or $1$ time} 
graph with vertex set $V(H)$ and edge set consisting of pairs $u, v \in V(H)$ which lie in some edge of $H$ together. Observe that if $(T_i)_{i\geq 1}$ are the 2-tight components of $H$, then their shadows $\partial T_i$ are edge-disjoint. Thus, \cref{thm:iter-k-partite} follows immediately from the following theorem.

\begin{thm}\label{thm:monochromatic-k-cliques}
For $k=3$ and $k\geq 7$, any $k$-partite coloring of the complete graph $K_n$ has at most $g_k(n)$ monochromatic copies of $K_k$.
\end{thm}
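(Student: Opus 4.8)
Let me think about this carefully. We have $K_n$ with edges colored so that each color class is $k$-partite. We want to show the number of monochromatic $K_k$'s is at most $g_k(n)$, with equality for the balanced iterated blowup of an edge.

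First, let me understand the extremizer. The balanced iterated blowup: partition $[n]$ into $k$ balanced parts $V_1, \ldots, V_k$. All edges between different parts get... hmm, actually wait. We need a coloring of $K_n$. The monochromatic $K_k$'s should be the "transversal" edges. Let me think again.

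Actually in the iterated blowup of an edge as a hypergraph, the edges are transversals of the $k$-partition plus recursive edges within parts. As a graph coloring problem: we want a coloring of $K_n$ where monochromatic $K_k$'s correspond to these hyperedges. So: use one color, say color $0$, for all pairs $\{u,v\}$ with $u \in V_i$, $v \in V_j$, $i \neq j$? No — then color $0$'s class is complete $k$-partite, which IS $k$-partite, good. And the monochromatic $K_k$'s in color $0$ are exactly transversals of the partition: there are $\prod |V_i|$ of them. Then within each $V_i$ we recurse, using fresh colors. But wait — we also need: no monochromatic $K_k$ is "wasted". Within $V_i$, the recursion gives $g_k(|V_i|)$ monochromatic $K_k$'s. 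But are there cross monochromatic $K_k$'s using a fresh color from inside $V_i$ together with color-$0$ edges? A fresh color used inside $V_i$ is never used outside $V_i$, so any monochromatic $K_k$ in that color lies inside $V_i$. And color $0$ is $k$-partite with parts $V_1, \ldots, V_k$... but if we also recurse inside $V_1$ and reuse color $0$? We mustn't reuse color $0$ inside the parts, or color $0$'s class could fail to be $k$-partite. Actually if inside $V_1$ we use a partition $V_1 = V_{11} \cup \cdots \cup V_{1k}$ and color $0$ for its transversals, then globally color $0$ has parts... it's no longer $k$-partite. So we use fresh colors at each level. Total: $\prod|V_i| + \sum g_k(|V_i|) = g_k(n)$ for balanced parts. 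Good, this matches.

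Now the proof. I'd proceed by induction on $n$. The heart of the matter: given a $k$-partite coloring of $K_n$, I want to find a color $c$ whose color class $G_c$, being $k$-partite with parts $W_1, \ldots, W_k$ (a partition of a subset; extend to a partition of $[n]$ by distributing leftover vertices), such that "peeling off" $c$ and recursing on $W_1, \ldots, W_k$ accounts for everything. The number of monochromatic $K_k$'s splits as: those entirely within some $W_i$ (bounded by induction: $\sum_i (\text{\# mono } K_k \text{ in induced coloring on } W_i)$), plus those that are transversal to the partition $\{W_i\}$ or cross it in a more complex way. A monochromatic $K_k$ that is not inside any single $W_i$ must have vertices in $\geq 2$ parts. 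If it uses color $c$: since $G_c$ is $k$-partite with these parts, a $K_k$ in color $c$ must be a transversal — one vertex per part — contributing to the $\prod|W_i|$ term. If it uses a color $c' \neq c$: then this $K_k$ spans $\geq 2$ of the $W_i$ but none of its edges have color $c$; this is the problematic case. We need that the total count is still $\le \prod|W_i| + \sum g_k(|W_i|)$.

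So the real content is a \textbf{clever choice of the partition / color to peel}, combined with a counting inequality. I expect the argument runs: consider all monochromatic $K_k$'s; by convexity / an averaging or extremal argument, pick the color class whose $k$-partition "captures the most structure". Alternatively — and this is likely the actual route given the paper's emphasis — reduce to analyzing, for a fixed vertex or small set, the local structure: the \emph{link}. Fix a vertex $v$; its incident edges are colored, and monochromatic $K_k$'s through $v$ correspond to monochromatic $K_{k-1}$'s in the link coloring on $[n] \setminus \{v\}$ restricted appropriately — but the link coloring inherits a "$(k{-}1)$-partite-ish" property only in a limited sense, so a direct induction on $k$ is delicate (this is presumably why $k \in \{4,5,6\}$ resists).

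\medskip

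The plan, concretely:

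\textbf{Step 1.} Establish the lower bound: the balanced iterated blowup construction above realizes $g_k(n)$ monochromatic $K_k$'s, so the theorem is tight. (Routine.)

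\textbf{Step 2.} Set up induction on $n$. If $n < k$ there is nothing to prove. For the inductive step, analyze a $k$-partite coloring $\chi$ of $K_n$. Using the auxiliary combinatorial input — for $k=3$, the degree-sequence analysis of tripartite color classes mentioned in the outline, together with the random-orientation/cyclic-triangle argument; for $k \ge 7$, a cruder counting bound — identify a single color $c$ and a balanced-enough $k$-partition $W_1 \cup \cdots \cup W_k = [n]$ with the property that (i) $G_c \subseteq K[W_1, \ldots, W_k]$ (complete $k$-partite on these parts), and (ii) every monochromatic $K_k$ either lies inside a single $W_i$, or is a transversal of $\{W_i\}$ colored $c$, \emph{or} belongs to a controlled "defect" set.

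\textbf{Step 3.} Prove the counting inequality
\[
\#\{\text{mono }K_k\} \;\le\; \prod_{i=1}^k |W_i| \;+\; \sum_{i=1}^k \#\{\text{mono }K_k \text{ in } \chi|_{W_i}\},
\]
i.e.\ show the defect set is empty or is absorbed, crucially using that monochromatic $K_k$'s straddling several $W_i$ in a color $\ne c$ cannot be too numerous. This is where the structural lemmas about tripartite colorings (for $k=3$) or the uniformity-$\ge 7$ savings do the heavy lifting: a $K_k$ straddling the partition in color $c' \ne c$ forces $c'$'s color class to contain a non-$k$-partite tightly connected configuration, contradiction, unless it is accounted for inside some $W_i$.

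\textbf{Step 4.} Apply the inductive hypothesis to each $\chi|_{W_i}$ (a $k$-partite coloring of $K_{|W_i|}$), giving $\le g_k(|W_i|)$ each, and conclude
\[
\#\{\text{mono }K_k\} \;\le\; \prod_{i=1}^k |W_i| + \sum_{i=1}^k g_k(|W_i|) \;\le\; g_k(n)
\]
by definition of $g_k$.

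\medskip

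\textbf{Main obstacle.} The crux — and the reason uniformity matters — is Step 2/Step 3: proving that one can always choose a color and partition so that the "defect" monochromatic $K_k$'s (those straddling the partition but avoiding the chosen color) are absent or harmlessly re-absorbed into the recursion. For $k=3$ this is exactly the delicate part requiring the analysis of degree sequences in tripartite color classes and, most substantially, bounding cyclic triangles in a random orientation of the $2$-shadow; the inequality is tight (the balanced blowup meets it with equality at every level), so no slack is available and the argument must be essentially exact rather than asymptotic. I would expect to spend the bulk of the work there, with Steps 1 and 4 being short.
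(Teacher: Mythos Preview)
Your inductive framework (peel off one color's $k$-partition, recurse inside the parts) matches the paper's, but there is a genuine gap in Step~3. Your proposed mechanism for eliminating ``defect'' monochromatic $K_k$'s --- those in a color $c' \neq c$ straddling the parts $W_1,\ldots,W_k$ --- is incorrect. You write that such a $K_k$ would force $c'$'s color class to contain a ``non-$k$-partite tightly connected configuration, contradiction.'' But the partition $\{W_i\}$ is the $k$-partition of color~$c$, not of~$c'$; the color class of $c'$ has its own, unrelated $k$-partition, and a $K_k$ in color $c'$ straddling the $W_i$ is in no way forbidden by the hypothesis. (Tight connectivity is a hypergraph notion and does not enter this graph-coloring problem at all.) So there is no contradiction here, and no obvious way to ``absorb'' these defects into the recursion.

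The paper handles this not by bounding defects but by proving a stronger structural fact: in an extremal (or minimal-counterexample) coloring, there is a color $c^*$ whose class is a \emph{spanning complete} $k$-partite graph --- i.e.\ \emph{every} edge crossing its tripartition has color $c^*$. Under that hypothesis defects are trivially impossible, and your Step~4 goes through. For $k\ge 7$, this structural fact follows from the Loomis--Whitney inequality (bounding the number of $K_k$'s in a $k$-partite graph by a power of its edge count) plus a recoloring argument showing that in an optimal coloring, changing any remaining cross-edge to color $c^*$ cannot decrease the count. For $k=3$, establishing it is the bulk of the paper: the tournament/cyclic-triangle machinery shows the number of ``precyclic'' (non-monochromatic) triangles is tiny, which is leveraged through a long chain of lemmas (and computer verification for $n<700$, with $n\in\{13,14,16,17\}$ handled separately via degree sequences) to show that the non-$c^*$ edges across the tripartition are few enough that recoloring them preserves or increases the count. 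The shape of your outline is right, but the missing idea is that one must drive the number of cross-edges not colored $c^*$ all the way to zero (or to a point where recoloring is free), not merely control the defect $K_k$'s after the fact.
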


The majority of this paper is dedicated to prove \cref{thm:monochromatic-k-cliques}, especially for $k=3$.
In both the cases $k=3$ and $k\geq 7$, this is done by showing the structural result that in any counterexample to \cref{thm:monochromatic-k-cliques} there is a spanning complete $k$-partite color. Inducting within the parts of this subgraph arrives at a contradiction.
For $k\geq 7$, this is achieved using the Loomis-Whitney inequality and convexity. 
For $k=3$, we need to work significantly harder to prove this structural statement, as the convexity is much weaker in lower uniformity. We remark that for $k=3$ and sufficiently large $n$, one could prove this structural result using the stability results of Balogh and Luo~\cite{BL23}; we elaborate more on their work in the conclusion. 
%\hy{There are undefined objects floating around here. Do we define them?} \xh{We should spend some more time defining tight cycles, orientable hypergraphs, etc. I actually prefer to move everything in this paragraph except for the first sentence into the concluding remarks.}

The rest of this section and Sections \ref{sec:degree sequences} and \ref{sec:tournaments} are dedicated to proving \cref{thm:monochromatic-k-cliques} for $k=3$, while in Section \ref{sec:higher uniformity} (which is much less technical and can be read independently) we prove the theorem for $k\geq 7$. %provide a new proof of \cref{conj:main} for $k\geq 7$.

\subsection{Degree sequences}
%We begin with some elementary statements regarding edge colorings whose color classes are tripartite. %While the tournament approach gets stronger for larger $n$, this more direct approach is necessary to prove \cref{thm:monochromatic-k-cliques} for some small values of $n$. \xh{Is this paragraph necessary? I think by this section we can just dive into math and not have so much exposition. I prefer a single sentence "We begin with some elementary statements regarding edge colorings whose color classes are tripartite."}
%\hy{I think some intuition will still be useful. Otherwise this might feel like too many definitions in a row out of nowhere}
In this section, we prove some elementary properties of degree sequences in tripartite colorings (that is, edge colorings where the color classes are tripartite). 
\begin{definition}
Let $V = V(K_n)$. Let $\chi:E(K_n)\to \NN$ be a tripartite coloring.
For any $v\in V$ and color $c$, define the $c$ neighborhood of $v$, denoted $N_c(v)$, to be the set of neighbors $u$ of $v$ with $\chi(uv)=c$. 
Since each color class is tripartite, $N_c(v)$ induces a bipartite subgraph in color $c$. We denote $\cL_v$ to be the \emph{link} of $v$, i.e. the (colorless) bipartite graph which is the union over $c$ of these disjoint bipartite subgraphs.

For $i\geq 1$, let $c_i(v)$ be the $i^\text{th}$ most common color among edges incident to $v$ (with ties broken arbitrarily). We call $c_1(v)$ the \emph{primary color} of $v$ and $c_2(v)$ the 
\emph{secondary color} of $v$. Let $d^i(v) = |N_{c_i(v)}(v)|$. (In particular, $d^1(v)$ is the number of edges incident to $v$ in its primary color.% Different vertices can have different primary colors.
) Call the sequence $(d^1(v), d^2(v), \ldots)$ the \emph{degree sequence} of $v$. This is a finite sequence of positive integers whose length is the number of distinct colors appearing on edges incident to $v$.  Note that $\sum_{i\geq 1} d^i(v) = n-1$ for all $v\in V$. 
\end{definition}

The central observation is \cref{lem:upper-bound-d1} below, which states that in a tripartite coloring of $K_n$, on average the primary color of each vertex appears on at most $\frac 23 n$ of its incident edges. This statement is trivial when every vertex has the same primary color, but we do not have the luxury to make this assumption. %\xh{How is this?}

%\hy{Does is make sense to make the beginning of this paragraph a stand-alone definition?} \ra{Yeah, I support this. Maybe also include the previous paragraph in the definition?} \xh{How about: move these two entire paragraphs into a definition environment and stick it between the first and second sentences of the first paragraph of this section?} 

\begin{lemma}\label{lem:upper-bound-d1}
Given a tripartite coloring $\chi$ of $K_n$, we have
    \[
    \sum_{v\in V} d^1(v) \le \frac{2n^2}{3}.
    \]
\end{lemma}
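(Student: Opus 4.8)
The plan is to evaluate $\sum_{v\in V}d^1(v)$ by double counting over the edges of $K_n$. Call an edge $uv$ \emph{doubly primary} if $\chi(uv)=c_1(u)=c_1(v)$, \emph{singly primary} if $\chi(uv)$ equals the primary color of exactly one of $u,v$, and \emph{non-primary} otherwise, and let $p,q,z$ denote the number of edges of each type, so that $p+q+z=\binom n2$. Since an edge $uv$ of color $c$ is counted in $\sum_v d^1(v)$ exactly once for each endpoint whose primary color is $c$, we get $\sum_{v}d^1(v)=2p+q=\binom n2+p-z$. Hence it suffices to prove
\[
p-z\le \tfrac23 n^2-\binom n2=\frac{n(n+3)}{6}.
\]

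The key step is to compare $p$ and $z$ one primary color at a time. For each color $c$ let $S_c=\{v:c_1(v)=c\}$ and $n_c=|S_c|$; because every vertex has a unique primary color (ties broken arbitrarily), the sets $S_c$ partition $V$, so $\sum_c n_c=n$. Let $D_c$ be the number of edges inside $S_c$ of color $c$. These are precisely the doubly primary edges of color $c$, so $p=\sum_c D_c$. Every other edge inside $S_c$ has both endpoints with primary color $c$ yet a color other than $c$ itself, so it is non-primary; and since the $S_c$ are pairwise disjoint, these edges are distinct for distinct $c$, giving $z\ge\sum_c\bigl(\binom{n_c}2-D_c\bigr)$. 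Combining the two facts, $p-z\le\sum_c\bigl(2D_c-\binom{n_c}2\bigr)$.

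To finish, I would bound $D_c$ using the hypothesis: the color class $c$ is tripartite, hence so is its restriction to $S_c$, which is then a tripartite graph on $n_c$ vertices and has at most $n_c^2/3$ edges (if its parts have sizes $x,y,w$ with $x+y+w=n_c$ then $xy+yw+wx\le n_c^2/3$). Therefore
\[
2D_c-\binom{n_c}{2}\le \frac{2n_c^2}{3}-\frac{n_c(n_c-1)}{2}=\frac{n_c^2+3n_c}{6},
\]
and summing over $c$, using $\sum_c n_c^2\le\bigl(\sum_c n_c\bigr)^2=n^2$ and $\sum_c n_c=n$, yields $p-z\le\frac{n^2+3n}{6}$, exactly the required bound. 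The argument is short and I do not expect a serious obstacle: the one insight is the identity $\sum_v d^1(v)=\binom n2+p-z$, after which the color-by-color bookkeeping of doubly primary versus non-primary edges within each primary class $S_c$ does all the work, with tripartiteness entering only through the elementary estimate that a tripartite graph on $m$ vertices has at most $m^2/3$ edges. The one point to be careful about is that the $S_c$ genuinely partition $V$ (so there is no double counting in $p=\sum_c D_c$ and no overcounting in the lower bound for $z$). I would also note that every inequality in the chain is tight in the balanced iterated blowup of an edge, where all vertices share a single primary color, which is consistent with this being the extremizer.
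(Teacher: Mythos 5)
Your proof is correct and is essentially the paper's argument in different bookkeeping: the paper's auxiliary digraph $D^1$ has exactly $2p+q$ edges, and both proofs partition the vertices by primary color, use that the primary color restricted to its class is tripartite (hence at most $m^2/3$ of the pairs), treat all cross-class contributions trivially, and finish with the same elementary algebra. So this is the same decomposition and the same key tripartiteness estimate, just phrased as an edge-type count rather than via the digraph.
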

\begin{proof}
Create an auxiliary digraph $D^1$ on the same vertex set $V$ where we place a directed edge from $u$ to $v$ if $\chi(uv) = c_1(u)$. (We allow antiparallel edges $u\rightarrow v$ and $v\rightarrow u$.) It suffices to show that $e(D^1) \le \frac{2}{3}n^2$.

Partition $V$ into subsets $U_i$ where $u \in U_i$ if $c_1(u) = i$. Observe that internal to each $U_i$ we have $e(D^1[U_i]) \le \frac{2}{3}|U_i|^2$. This is because there are at most $2$ antiparallel edges on each color $i$ edge and no directed edges on each non-color-$i$ edge, and the color class $i$ is tripartite.

Between different $U_i$ and $U_j$, there cannot be antiparallel edges, so $e(D^1(U_i, U_j)) \le |U_i||U_j|$. We obtain in total
    \[
    e(D^1) \le \sum_i \frac{2}{3}|U_i|^2 + \sum_{i<j} |U_i||U_j| \le \frac{2}{3}\left(\sum_i|U_i|\right)^2 = \frac 23 n^2.\qedhere
    \]
    %since $\sum_i|U_i| = n$. 
    %\xh{Add an extra inequality $\le \frac{2}{3} (\sum U_i)^2 \le$ in the calculation for clarity.}
\end{proof}

The proof above lets us read off a slight improvement to the lemma if there are at least two distinct $U_i$'s.

\begin{lemma}\label{lem:improved-bound-d1}
    Given a tripartite coloring $\chi$ of $K_n$, if the primary colors of the vertices are not all the same, then
    \[
    \sum_{v\in V} d^1(v) %\le \frac{2n^2}{3} - \frac{1}{3} \sum_{i<j} |U_i||U_j| 
    \le \frac{2n^2}{3} - \frac{1}{3} (n-1).
    \]
\end{lemma}
\begin{proof}
Following the proof of \cref{lem:upper-bound-d1}, in the last inequality we obtain $$e(D^1) \leq \frac{2n^2}{3} - \frac{1}{3} \sum_{i<j} |U_i||U_j| 
\leq \frac{2n^2}{3} - \frac{1}{3} (n-1)$$ since $|U_i|, |U_j| \geq 1$ for some $i\neq j$ and $\sum_i |U_i| = n$.
\end{proof}

We also need a generalization of \cref{lem:upper-bound-d1} to multiple colors.
Although we only need the case $t=2$, we still state and prove the lemma in generality as the proof stays the same, and the lemma may be of independent interest.%\ra{Actually, we only use the following lemma with $k=2$. Decide later whether to keep it in full generality.} \hy{I think the full generality is pretty cute and worth keeping :) the proof does not simplify that much for k=2 anyways}

\begin{lemma} \label{lem:upper-bound-d1-d2}
    Given a tripartite coloring $\chi$ of $K_n$, for any $t\ge 1$,
    \[
    \sum_{v\in V} d^1(v) + \cdots + d^t(v) \le (1-3^{-t})n^2.
    \]
\end{lemma}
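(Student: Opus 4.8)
The plan is to mimic the proof of \cref{lem:upper-bound-d1} but with a more elaborate auxiliary digraph that simultaneously records the top $t$ colors at each vertex, and then to induct on $t$. For each vertex $u$, let $P_t(u) = \{c_1(u), \dots, c_t(u)\}$ be its set of top $t$ colors, and build a digraph $D^{\le t}$ in which we place a directed edge $u \to v$ whenever $\chi(uv) \in P_t(u)$. Since $d^1(u) + \cdots + d^t(u)$ counts exactly the edges incident to $u$ whose color lies in $P_t(u)$, we have $\sum_v (d^1(v) + \cdots + d^t(v)) = e(D^{\le t})$, so it suffices to bound $e(D^{\le t}) \le (1 - 3^{-t}) n^2$.

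First I would set up the recursion. Fix a color $c$ and let $W = W_c$ be the set of vertices $u$ for which $c \in P_t(u)$; equivalently, $c$ is among the $t$ most popular colors at $u$. The key point is that for $u \in W$, the color-$c$ neighborhood $N_c(u)$ accounts for one of the $t$ ``slots,'' and the remaining slots $P_t(u) \setminus \{c\}$ form a set of $t-1$ colors. Edges of $D^{\le t}$ split into two types: those lying on a color-$c$ edge of $K_n$, and those lying on an edge of some other color. For the first type, an edge $\{x,y\}$ of color $c$ contributes a directed edge $x\to y$ only if $x \in W$ (and similarly $y \to x$ only if $y \in W$), so these contribute at most $2\,e(\chi^{-1}(c)) \le \frac{2}{3} n^2 / 3 = \frac{2}{9}n^2$ — wait, this crude count is wasteful, and here is where the real argument must be organized more carefully by partitioning on primary colors as in \cref{lem:upper-bound-d1} and peeling off one color at a time.

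The cleaner inductive route I would actually pursue: partition $V$ into classes $U_i = \{u : c_1(u) = i\}$ by \emph{primary} color. Within $U_i$, every directed edge of $D^{\le t}$ either sits on a color-$i$ edge (of which there are at most $\frac{2}{3}|U_i|^2$ as a bipartite-in-color-$i$ count, times $2$ for antiparallel, giving $\frac{2}{3}|U_i|^2$ directed edges since color $i$ is tripartite) or sits on a non-color-$i$ edge, in which case it is governed by the colors $c_2(u), \dots, c_t(u)$ — a $(t-1)$-element set among the edges of $K_n[U_i]$ after deleting color $i$. Deleting color $i$ from $K_n[U_i]$ leaves a graph that is still tripartite-colored (it is a sub-coloring), and the relevant count on it is $\sum_{u \in U_i} (d^1 + \cdots + d^{t-1})$ evaluated in that sub-coloring — but $K_n[U_i]$ minus color $i$ is not complete, so I would instead apply the induction hypothesis to the complete graph on $U_i$ and absorb the discrepancy. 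Between distinct classes $U_i, U_j$ there are no antiparallel edges of $D^{\le t}$ restricted to colors $i$ and... actually antiparallel edges across classes \emph{can} occur once $t \ge 2$, so the across-class bound is simply $|U_i||U_j|$ per pair only for the $t=1$ layer. The honest plan is therefore: prove $e(D^{\le t}) \le \frac{2}{3} n^2 + \frac{1}{3} e(D^{\le t-1})$ by a charging argument that assigns each directed edge of $D^{\le t}$ either to the ``primary layer'' (bounded by $\frac{2}{3}n^2$ exactly as in \cref{lem:upper-bound-d1}) or, with a factor-$\frac{1}{3}$ savings coming from tripartiteness, to a copy of the $(t-1)$-problem; then $e(D^{\le t}) \le \frac{2}{3}n^2(1 + \frac{1}{3} + \cdots + \frac{1}{3^{t-1}}) = \frac{2}{3}n^2 \cdot \frac{1 - 3^{-t}}{1 - 1/3} = (1 - 3^{-t})n^2$, which is exactly the claimed bound and confirms the recursion is tight.

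The main obstacle is making the recursive step $e(D^{\le t}) \le \frac{2}{3}n^2 + \frac{1}{3} e(D^{\le t-1})$ precise: one must show that after accounting for the primary color of each vertex (the $\frac{2}{3}n^2$ term, which is exactly \cref{lem:upper-bound-d1}), the directed edges coming from colors $c_2, \dots, c_t$ can be reorganized into an instance of the $(t-1)$-color problem on a tripartite coloring, picking up a factor of $\frac{1}{3}$ rather than $1$ because within each primary class $U_i$ the color-$i$ edges are ``used up'' and the tripartite structure of color $i$ forces the non-color-$i$ edges inside $U_i$ to number at most $\binom{|U_i|}{2} - (\text{something}) $, while across classes the relevant colors partition more efficiently. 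I expect the bookkeeping — in particular, handling the fact that the residual graphs are not complete so the induction hypothesis does not apply verbatim — to require either a weighted/fractional version of the statement or a direct generalization of the $\sum_i \frac{2}{3}|U_i|^2 + \sum_{i<j}|U_i||U_j| \le \frac{2}{3}(\sum_i |U_i|)^2$ convexity step to $t$ layers. Once the recursion is in hand the geometric sum closes immediately.
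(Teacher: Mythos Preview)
Your proposal correctly identifies the auxiliary digraph $D^{\le t}$ and the target bound $e(D^{\le t})\le (1-3^{-t})n^2$, but the heart of the argument --- the recursion $e(D^{\le t}) \le \tfrac{2}{3}n^2 + \tfrac{1}{3}\,e(D^{\le t-1})$ --- is never established. You yourself flag this as ``the main obstacle,'' and the difficulty is real: once $t\ge 2$, antiparallel edges \emph{do} occur across distinct primary-color classes $U_i,U_j$ (e.g.\ whenever $\chi(uv)$ lies in the top $t$ of both endpoints), so the crisp $|U_i||U_j|$ cross term from \cref{lem:upper-bound-d1} is lost, and there is no visible mechanism producing the claimed factor $\tfrac13$ on the residual $(t-1)$-problem. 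The appeals to ``a weighted/fractional version'' or ``a direct generalization of the convexity step'' are wishes, not arguments; as written this is a plan with a missing idea, not a proof.

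The paper takes a completely different, non-inductive route. For each vertex $v$ it records, at each of the $t$ colors $c_1(v),\dots,c_t(v)$, the label in $\{0,1,2\}$ of the tripartition part containing $v$, yielding a string $s(v)\in\{0,1,2,\star\}^*$ with exactly $t$ non-$\star$ coordinates. It then draws a uniformly random $S\in\{0,1,2\}^*$ and double-counts in expectation the number $P$ of ordered pairs $(u,v)$ with $s(u)$ compatible with $S$ and $s(v)$ not. On one side, any ordered pair contributing a directed edge of $D^{\le t}$ has $\Pr[s(u)\text{ compatible}]=3^{-t}$ and, conditionally, $s(v)$ is forced incompatible with probability at least $2/3$ (or $1$ if both orientations are present), so summing over $(u,v)$ and $(v,u)$ gives $\E[P]\ge 3^{-t}e(D^{\le t})$. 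On the other side, $\E[P]=\E[X(n-X)]\le 3^{-t}(1-3^{-t})n^2$ where $X$ is the number of compatible vertices and $\E[X]=3^{-t}n$. Combining yields the bound directly. This random-string encoding is the idea your inductive scheme is missing; it replaces the unavailable partition-and-recurse step with a single linearity-of-expectation count.
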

\begin{proof}
%\ra{This proof has been edited somewhat, please reread} \xh{I checked it again and added a few details}
    Draw an auxiliary digraph $D^t$ where $u\rightarrow v$ if $\chi(uv)\in\{c_1(v), \ldots, c_t(v)\}$. Clearly $e(D^t) = \sum_{v\in V} d^1(v) + \cdots + d^t(v)$.
    
    For each color class, label the parts of the tripartition as $0$, $1$, and $2$. Encode each vertex $v$ as a string $s(v)$ in $\{0,1,2,\star\}^*$ with %at most 
    $t$ non-$\star$ coordinates as follows%\hy{I think we need to make sure that this is exactly $t$---see below}\ra{I see how that's convenient. But then maybe we should spell out that if a vertex sees fewer than $t$ colors then just choose some additional arbitrary colors and put $0$ in those coordinates?} \hy{sure I think this works} \ra{Ok I put this in}
    , where the coordinates correspond to colors used by $\chi$.  The non-$\star$ coordinates are the colors $\{c_1(v), \ldots, c_t(v)\}$% (or all colors incident to $v$ if there are fewer than $t$)
    , and the entry at each of these coordinates is the label for the part of the tripartition of that color that contains $v$. If $v$ sees fewer than $t$ colors, we choose arbitrary additional colors and consider $v$ to be in part $0$ of those colors so that $s(v)$ has exactly $t$ non-$\star$ coordinates.

    Let $S$ be a uniformly random string in $\{0, 1,2\}^*$, and say that $s'$ is \textit{compatible} with $S$ if all of its non-$\star$ coordinates agree with $S$. We bound in two ways the number $P$ of ordered pairs $(u,v)$ where $s(u)$ is compatible with $S$ and $s(v)$ is not.

    First, we claim that $\E[P]\ge 3^{-t}e(D^t)$ by linearity of expectation. Indeed, for a given pair $(u,v)$, the probability $s(u)$ is compatible with $S$ is $3^{-t}$, and conditioned on $s(u)$ being compatible with $S$, the probability that $s(v)$ is incompatible with $S$ is $1$ if $u\to v$ and $v\to u$ both lie in $D^t$, and at least $2/3$ if exactly one lie in $D^t$. Adding up over both $(u,v)$ and $(v,u)$ gives the desired lower bound.

    On the other hand, $\E[P] = \E[(n-X)X]$, where $X$ is the number of compatible strings, and $\E[X] = 3^{-t}n$. It follows by convexity that $\E[P] \le 3^{-t}(1-3^{-t})n^2$. Combining these two inequalities gives $e(D^t)\le (1-3^{-t})n^2$, as desired.
\end{proof}

\subsection{From colorings to tournaments}
In order to tackle Erd\H os and Hajnal's conjecture, Conlon, Fox, and Sudakov~\cite{CFS} used the function $T(n)$, defined as the maximum number of cyclic triangles in an $n$-vertex tournament, which satisfies the following explicit formula: %If the outdegrees of the vertices in a tournament are $d_1, \ldots, d_n$, then the tournament has exactly $\binom{n}{3}-\sum_{i=1}^n\binom{d_i}{2}$ cyclic triangles. %To maximize this quantity, we set the $d_i$ to be as nearly equal as possible: if $n$ is odd, we let $d_i = (n-1)/2$ for all $i$, and if $n$ is even, we let half the $d_i$ be $n/2$ and the other half be $(n-2)/2$. A tournament with these outdegrees always exists, 
%This quantity is maximized when $d_i$ are as equal as possible, giving 
$$T(n) = \begin{cases} \frac{n(n^2-1)}{24}&\text{ if $n$ is odd}\\\frac{n(n^2-4)}{24}&\text{ if $n$ is even}.\end{cases}$$ 
Given a tripartite coloring $\chi$ of $K_n$, for each color class, arbitrarily label the three parts as $1, 2, 3$ and orient all edges of that color in the directions $1\to 2\to 3\to 1$. In this way, every monochromatic triangle becomes a cyclic triangle, so in particular the number of monochromatic triangles under $\chi$ is at most $T(n)$. Conlon, Fox, and Sudakov defined the function $$d(n) = T(n) - g_3(n)$$ and proved an explicit recurrence for $d(n)$.

\begin{lemma}[{\cite[Lemma 5.4]{CFS}}]\label{lem:d(n) recursion}
We have $d(1)=d(2)=d(3)=0$, and for any positive integer $x$,
\begin{align*}
d(6x-2) &= 2d(2x-1)+d(2x)\\
d(6x-1) &= d(2x-1)+2d(2x)+x\\
d(6x) &= 3d(2x)\\
d(6x+1) &= 2d(2x)+d(2x+1)+x\\
d(6x+2) &= d(2x)+2d(2x+1)\\
d(6x+3) &=3d(2x+1).
\end{align*}
\end{lemma}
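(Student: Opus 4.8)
The plan is to establish \cref{lem:d(n) recursion} by a direct computation from the explicit formula for $T(n)$ stated above together with the recursion defining $g_3$. First I record the three instances of the $g_3$-recursion obtained from the fact (noted above, due to Mubayi and Razborov) that the maximum in its definition is attained at the balanced composition: for every integer $m\ge 1$,
\begin{align*}
g_3(3m) &= m^3 + 3g_3(m),\\
g_3(3m+1) &= m^2(m+1) + 2g_3(m) + g_3(m+1),\\
g_3(3m+2) &= m(m+1)^2 + g_3(m) + 2g_3(m+1).
\end{align*}
The base cases are immediate: $T(1)=T(2)=0$ and $g_3(1)=g_3(2)=0$ give $d(1)=d(2)=0$, while $T(3)=g_3(3)=1$ gives $d(3)=0$.

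For the six identities, I treat the residues of $n$ modulo $6$ separately. In each case I write $n=3m+r$ with $r\in\{0,1,2\}$, so that $m\in\{2x-1,2x,2x+1\}$; explicitly $6x-2=3(2x-1)+1$, $6x-1=3(2x-1)+2$, $6x=3(2x)$, $6x+1=3(2x)+1$, $6x+2=3(2x)+2$, and $6x+3=3(2x+1)$. The key observation is that, in each of these six cases, the recursive $g_3$-terms produced by the corresponding instance above are exactly $g_3(2x-1),g_3(2x),g_3(2x+1)$ with the same multiplicities as the coefficients of $d(2x-1),d(2x),d(2x+1)$ on the right-hand side of the claimed identity. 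Substituting $d=T-g_3$ throughout and using the $g_3$-recursion, all $g_3$-terms therefore cancel, and each identity reduces to a polynomial identity in $x$ among values of $T$, a product term, and (in two cases) a linear term. For instance $d(6x-2)=2d(2x-1)+d(2x)$ becomes
\[
T(6x-2)-(2x-1)^2(2x)=2T(2x-1)+T(2x),
\]
$d(6x)=3d(2x)$ becomes $T(6x)-(2x)^3=3T(2x)$, $d(6x-1)=d(2x-1)+2d(2x)+x$ becomes $T(6x-1)-(2x-1)(2x)^2=T(2x-1)+2T(2x)+x$, and $d(6x+1)=2d(2x)+d(2x+1)+x$ becomes $T(6x+1)-(2x)^2(2x+1)=2T(2x)+T(2x+1)+x$, with the cases $6x+2$ and $6x+3$ entirely analogous.

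It remains to verify these six polynomial identities, which I do by substituting the explicit formula for $T$ — the even branch at $6x-2$, $6x$, $6x+2$, $2x$, and the odd branch at $6x-1$, $6x+1$, $6x+3$, $2x-1$, $2x+1$ — and expanding. For example the first collapses to $x^3-x^2$ on both sides, $d(6x)=3d(2x)$ to $x^3-x$ on both sides, and $d(6x-1)=d(2x-1)+2d(2x)+x$ to $(2x^3-x^2+x)/2$ on both sides; the rest are similar. I do not anticipate any genuine obstacle here, as the content is purely bookkeeping. The only two points that require care are (i) confirming that the balanced composition of each of the six values $6x+\delta$ is the one listed, so that the correct instance of the $g_3$-recursion is applied and the cancellation of $g_3$-terms is exact, and (ii) keeping track of parities so that the correct (odd versus even) branch of the formula for $T$ is used on each side of each identity.
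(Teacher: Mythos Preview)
Your proof is correct. Note, however, that the paper does not prove this lemma at all: it is quoted verbatim from \cite[Lemma~5.4]{CFS}, so there is no in-paper argument to compare against. The route you take---invoking the balanced recursion for $g_3$ (justified by the Mubayi--Razborov result the paper cites) and the closed form for $T(n)$, then reducing each of the six cases to a polynomial identity in $x$---is exactly the natural computation, and your sample verifications (e.g.\ both sides of the $6x-2$ case equal $x^3-x^2$, both sides of the $6x-1$ case equal $(2x^3-x^2+x)/2$) check out. The two caveats you flag, namely matching the balanced composition of $6x+\delta$ to the correct instance of the $g_3$-recursion and tracking parities when evaluating $T$, are the only places to be careful, and you have handled them correctly.
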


Using this recursion, they showed that $d(n) = O(n\log n)$. They also observed that $d(n) = 0$ for ``nice'' values of $n$, proving \cref{conj:main} in the case that $s$ is one of these values. We characterize the numbers that are ``nice'' in this sense exactly.%They called these integers ``nice," and they listed the nice numbers up to $100$. In fact, we can characterize the nice numbers exactly.

\begin{prop}\label{prop:sum-and-diff-of-exp}
We have $d(n) = 0$ if and only if $n$ is a power of $3$ or the sum or difference of two powers of $3$.
\end{prop}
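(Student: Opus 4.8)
The plan is to prove the proposition by strong induction on $n$, driven by the recursion of \cref{lem:d(n) recursion}. Write $S$ for the set of positive integers equal to a power of $3$ or to a sum or difference of two powers of $3$; the goal is that $d(n)=0$ if and only if $n\in S$. Two elementary facts about $S$ will carry most of the argument: $(\mathrm{i})$ a sum or difference of two powers of $3$ is even, so the odd elements of $S$ are precisely the powers $3^a$; and $(\mathrm{ii})$ for every $a\ge 1$ both $3^a-1=3^a-3^0$ and $3^a+1=3^a+3^0$ lie in $S$ (as does $3^0+1=2=3^1-3^0$). From $(\mathrm{i})$ and the fact that $3^a\pm 3^b$ is divisible by $3$ exactly when $\min(a,b)\ge 1$, one reads off the behaviour of $S$ modulo $6$: for $n>1$ no element of $S$ is $\equiv 1$ or $\equiv 5\pmod 6$; an element of $S$ that is $\equiv 2$ (resp.\ $\equiv 4$) modulo $6$ equals $3^a-1$ (resp.\ $3^a+1$) for some $a\ge 1$; and an element of $S$ that is $\equiv 3\pmod 6$ equals $3^a$ for some $a\ge 1$.

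First, $d(n)\ge 0$ for every $n$: this is immediate by induction from \cref{lem:d(n) recursion}, since $d(1)=d(2)=d(3)=0$ and on each line all coefficients are positive and the extra terms $+x$ are nonnegative. Hence $d(n)=0$ forces every summand on the applicable line to vanish. The lines for $n=6x\pm1$ contain a term $+x$ with $x\ge1$, so $d(n)\ne0$ when $n\equiv 5\pmod6$, and when $n\equiv1\pmod6$ with $n>1$; by the previous paragraph no such $n$ lies in $S$, consistent with the claim. The base cases $n\in\{1,2,3\}$ have $d=0$ and lie in $S$, and every $n\ge4$ is covered by exactly one line of the recursion (each line being valid for its residue class once $x\ge1$), with all arguments smaller than $n$ and at least $1$, so the inductive hypothesis applies.

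For $n\equiv 0$ or $n\equiv 3\pmod6$ the recursion reads $d(n)=3\,d(n/3)$, so by the inductive hypothesis $d(n)=0\iff n/3\in S$; and a short check using $(\mathrm{i})$ together with the divisibility fact above shows $n\in S\iff n/3\in S$ for $n$ in these classes, since when $n\equiv3\pmod6$ both $n$ and $n/3$ are odd and each statement merely asserts the number is a power of $3$, and when $n\equiv0\pmod6$ scaling a sum or difference of two powers of $3$ by $3^{\pm1}$ again yields such a sum or difference. For $n\equiv4\pmod6$ the recursion is $d(n)=2\,d\!\left(\tfrac{n-1}{3}\right)+d\!\left(\tfrac{n+2}{3}\right)$ and for $n\equiv2\pmod6$ it is $d(n)=d\!\left(\tfrac{n-2}{3}\right)+2\,d\!\left(\tfrac{n+1}{3}\right)$, so $d(n)=0$ requires both arguments to lie in $S$. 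The crucial point is that $\tfrac{n-1}{3}$ (first case) and $\tfrac{n+1}{3}$ (second case) are odd, so by $(\mathrm{i})$ membership of that argument in $S$ already forces it to be a power $3^a$, whence $n=3^{a+1}+1$ (resp.\ $n=3^{a+1}-1$), which lies in $S$. Conversely, if $n=3^a+1\equiv4\pmod6$ (resp.\ $n=3^a-1\equiv2\pmod6$) then $a\ge 1$, and by $(\mathrm{ii})$ both arguments, namely $3^{a-1}$ and $3^{a-1}+1$ (resp.\ $3^{a-1}-1$), lie in $S$, so $d(n)=0$. This closes the induction.

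I do not expect a deep obstacle here: the work is bookkeeping. The places needing care are lining up the validity ranges of the six recursion lines (each requiring $x\ge1$) with the base cases $n\le3$ so that every $n$ is treated exactly once; proving the classification of $S$ modulo $6$ cleanly; and checking the equivalence $n\in S\iff n/3\in S$ for $n\equiv0,3\pmod6$. For these last two, the most transparent route is to encode $S$ through base-$3$ representations — exactly one nonzero digit, equal to $1$; or exactly two nonzero digits, both equal to $1$; or a single contiguous block of digits $2$ with all other digits $0$ — in which language dividing by $3$ is just deleting a trailing zero, and every check above reduces to a one-line inspection of digit patterns.
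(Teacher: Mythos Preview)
Your proof is correct and proceeds by essentially the same mechanism as the paper's: both arguments feed the recursion of \cref{lem:d(n) recursion} through a residue-modulo-$6$ analysis and then characterise $S$ in terms of base-$3$ structure. The packaging differs slightly: the paper condenses the induction into the single criterion that $d(n)=0$ iff no $t\ge0$ makes $\lfloor n/3^t\rfloor$ or $\lceil n/3^t\rceil$ at least $5$ and $\equiv\pm1\pmod6$, and then verifies this criterion via the balanced-ternary expansion of $n$; you instead carry out the six residue cases explicitly and characterise $S$ modulo $6$ by hand. Your version is longer but more self-contained, and your closing remark about encoding $S$ through base-$3$ digit patterns is exactly the balanced-ternary viewpoint the paper uses.
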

\begin{proof} 
We deduce directly from \cref{lem:d(n) recursion} that $d(n)=0$ if and only if there is no integer $t\geq 0$ such that $\floor{n/3^t}$ or $\ceil{n/3^t}$ is at least $5$ and congruent to $\pm1\pmod 6$. Write $n$ in balanced ternary form $n = 3^{a_1}\pm 3^{a_2}\pm\cdots$ where $a_1 > a_2 > \cdots \geq 0$. If the representation stops after at most $2$ terms, then by checking each $t\in\{0, 1, \ldots, a_1-1\}$ we see that no such $t$ exists. Otherwise, if $t=a_3$,  then either $\floor{n/3^t}$ or $\ceil{n/3^t}$ is $3^{a_1-a_3}\pm 3^{a2-a3}\pm 1$. Since $3^{a_1-a_3}\pm 3^{a_2-a_3}$ is divisible by $6$, either $\floor{n/3^t}$ or $\ceil{n/3^t}$ is $\pm 1\pmod 6$. 
\end{proof}

When $n$ is not nice, the function $d(n)$ will still be essential for our proof, as we now explain.
%For our purposes, the importance of the function $d(n)$ even for non-nice $n$ begins with the simple observation that since an optimal tripartite coloring has at least $g_3(n)$ monochromatic triangles, in any orientation of the edges, at most $d(n)$ cyclic triangles are allowed to be non-monochromatic. To explore the implications of this, we need several more definitions and notation. %\xh{I do not like all of the discussion in this section up to here. I think we can reduce it to two paragraphs or less, we do not need to say so much exposition by this point. On the other hand it needs to be stated explicitly why $T(n)$ is relevant at all to our problem.}
Throughout this section, let $\chi$ be a tripartite coloring of $K_n$. Recall that $N_c(v)$ is the color-$c$ neighborhood of $v$, and it induces a bipartite subgraph in color $c$.
%\hy{The name ``Tournament setup'' feels imprecise, though I don't know what is a better name.} \xh{How about ``The random orientation trick"?} \ra{Maybe we just don't put a name for these definitions?} \hy{yeah maybe just don't give a name.}
\begin{definition}\label{def:tournament setup}
Let $e^{(3)}(\chi)$ denote the number of monochromatic triangles in $\chi$. %\xh{let's have a shorter notation, maybe $\tau(\chi)$ or $e^{(3)}(\chi)$?}
For any color $c$ and vertex $v$, let $c(v)\in\{0,1,2\}$ be the label of the part $v$ belongs to in the tripartition of color $c$.

Let $N_{c,1}(v), N_{c,2}(v)$ be the two parts of $N_c(v)$ where $c(N_{c,j}(v)) \equiv c(v)+j\pmod 3$ for every $j\in[2]$. (Here we use the shorthand $c(S)$ for $c(u)$ where $u$ is any vertex in $S$; this shorthand is valid whenever all $u\in S$ have the same $c(u)$.)

A triangle $xyz$ is called \emph{$2$-precyclic} %\xh{Can we update this name? Maybe 2-bad and 3-bad are better and consistent with the $B_2$ and $B_3$ notations} 
if, possibly after permutation, $\chi(xy)=\chi(xz)\neq \chi(yz)$ and $\chi(xy)(y)\neq \chi(xy)(z)$.
A triangle $xyz$ is called \emph{$3$-precyclic} if its three edges are all of different colors.
A triangle is called \emph{precyclic} if it is $2$-precyclic or $3$-precylic.
Let $\pc_2$ be the number of $2$-precyclic triangles, and let $\pc_3$ be the number of $3$-precyclic triangles. Note that $\pc_2$ and $\pc_3$ are implicitly functions of $\chi$.%(The subscripts show the number of distinct colors in the triangles.)

%For any set of vertices $S$, any vertex $u\not\in S$ and any color $c$, let $b^{(c)}_S(u)$ be the number of edges from $u$ to $S$ not of color $c$. We drop the superscript if $c$ is clear from context. \ra{More convenient to define this later}

Let $\delta_c(v) = \abs{\abs{N_{c,1}(v)}-\abs{N_{c,2}(v)}}$ be the imbalance in the bipartition of $N_c(v)$.
%Let $M=\max\abs{N_{c,j}(v)}$, where the max is taken over vertices $v$, colors $c$, and $j\in [2]$. \ra{More convenient to define this later}
%The following two parameters will be specified later.
%Let $\Delta$ be an upper bound on $\delta_c(v)$ for all colors $c$ and all vertices $v$.
%Let $B$ be an upper bound for the quantity $2\pc_2+\pc_3.$

%For any nonnegative integers $\Delta$ and $B$, let $\cG_{\Delta,B}$ be the set of graphs $G$ with at least $g_3(n)+1$ monochromatic triangles, each vertex in at least $g_3(n)-g_3(n-1)+1$ monochromatic triangles, $2\pc_2+\pc_3\leq B$ and $\delta_c(v)\leq \Delta$ for any color $c$ and vertex $v$.
\end{definition}

For any extremal coloring $\chi$, $e^{(3)}(\chi)$ lies in the short interval $[g_3(n), T(n)]$ of length $d(n)$. The following key lemma shows that in this situation, the number of precyclic triangles is small. This lemma forms the main guide-rail in our approach for~\cref{thm:monochromatic-k-cliques}.

\begin{lemma}\label{lemma:basic-facts-general} Given a tripartite coloring $\chi$ of $K_n$, we have:
    \begin{enumerate}
        \item $2\pc_2+\pc_3\leq 4(T(n) - e^{(3)}(\chi))$.
        \item $\sum_{v\in V}\sum_c\delta_c(v)^2 + 4\pc_2+2\pc_3\leq \frac{n(n+1)(n-1)}{3} - 8e^{(3)}(\chi).$
    \end{enumerate}    
\end{lemma}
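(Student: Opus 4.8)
The plan is to set up a single global count that compares the number of cyclic triangles obtained from a *random* orientation of the shadow $K_n$ against the number of cyclic triangles obtained from the *structured* orientation coming from $\chi$ (the one where each monochromatic triangle becomes cyclic). Concretely, I would fix for each color class a cyclic orientation $1 \to 2 \to 3 \to 1$ of its three parts, and then take a uniformly random tournament $\vec{K_n}$ on $V$, coupled with $\chi$. The key is to classify each triangle $xyz$ of $K_n$ by its color pattern (rainbow, exactly-two-colors with the two equal-colored edges meeting at a vertex, exactly-two-colors otherwise, monochromatic, or "degenerate" where two vertices lie in the same part of a color used on an incident edge) and to compute, for each class, the probability that it is cyclic in the random tournament. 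Monochromatic triangles are always cyclic; rainbow ($3$-precyclic) and the "meeting" two-color ($2$-precyclic) triangles are cyclic with some probability strictly between $0$ and $1$ that can be read off by a short case check; and the remaining triangles are cyclic with probability exactly $1/4$. Summing over all triangles, $\mathbb{E}[\#\text{cyclic in }\vec{K_n}] \le T(n)$, and this expectation equals $e^{(3)}(\chi) + (\text{contribution from precyclic triangles}) + \frac14(\text{everything else})$; rearranging produces an inequality of the shape $a\cdot\pc_2 + b\cdot\pc_3 \le c\,(T(n)-e^{(3)}(\chi))$, which should yield part (1) after plugging in the exact cyclicity probabilities.

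For part (2) I would run the same idea but track a finer statistic: rather than merely whether each triangle is cyclic, I would use the identity expressing the *number* of cyclic triangles in a tournament in terms of out-degrees, namely $\#\text{cyclic} = \binom{n}{3} - \sum_v \binom{d^+(v)}{2}$. Applying this to the random tournament and conditioning on the orientations *within* each color class (which are forced, in the cyclic direction) while the cross-color edges stay random, the out-degree of $v$ splits into a deterministic part — governed by the sizes $|N_{c,1}(v)|$ and $|N_{c,2}(v)|$ across colors $c$, hence by the imbalances $\delta_c(v)$ — plus a random part from the other edges. Expanding $\binom{d^+(v)}{2}$ and taking expectations, the cross terms among same-color contributions produce exactly the $\sum_v\sum_c \delta_c(v)^2$ term (the balanced case $\delta_c(v)=0$ is when a color contributes its minimum to $\sum\binom{d^+}{2}$, i.e.\ its maximum to the cyclic count), while the precyclic triangles appear because they are the triangles for which the structured-vs-random comparison is not tight, contributing the $4\pc_2 + 2\pc_3$ term; the constant $\frac{n(n+1)(n-1)}{3}$ should fall out as $8\binom{n}{3}$ minus lower-order bookkeeping, matched against $8\,T(n) \le \frac{n(n^2-1)}{3}$ and the even/odd correction. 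The essential algebraic identity to verify is that, triangle by triangle, $8$ times (probability it is cyclic) plus the relevant $\delta$-contributions reassemble into $8$ times its indicator of being monochromatic plus the claimed coefficients.

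The main obstacle I anticipate is the careful case analysis of the two-color triangles: a triangle with $\chi(xy)=\chi(xz)=c \ne \chi(yz)$ behaves completely differently depending on whether $y$ and $z$ lie in the same part of color $c$'s tripartition or in different parts — only the latter is counted as $2$-precyclic — and one must check that in the former case the structured orientation already forces the two color-$c$ edges to point "consistently" so that the triangle is cyclic with probability exactly $1/4$ over the single remaining random edge, contributing nothing extra. Getting the bookkeeping of these degenerate triangles exactly right, and confirming that the $\delta_c(v)^2$ terms are not double-counted against the $\pc_2$ terms (they measure the imbalance of the *bipartite link* $N_c(v)$, whereas $2$-precyclic triangles at $v$ count cross-pairs within that link), is where the real work lies; everything else is linearity of expectation plus the closed form for $T(n)$.
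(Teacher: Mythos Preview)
Your high-level outline is right --- a random tournament, triangle-by-triangle cyclicity probabilities, then the out-degree identity --- but you have not pinned down the correct source of randomness, and the version you describe does not work.

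You speak of a ``uniformly random tournament $\vec{K_n}$'' with some edges ``forced'' and ``cross-color edges'' left random. There are no cross-color edges: every edge of $K_n$ has a unique colour and hence a forced cyclic direction once you fix the labelling of the parts. The paper's random tournament is obtained by choosing, independently for each colour $c$, a single random bit $y(c)\in\{1,2\}$ that decides whether colour $c$ is oriented $1\to 2\to 3\to 1$ or the reverse. With this per-colour randomness (not per-edge), the cyclicity probabilities come out exactly: monochromatic $=1$, $2$-precyclic $=\tfrac12$, $3$-precyclic $=\tfrac14$, and the degenerate two-colour triangles (those with $c(y)=c(z)$) have probability $0$, not $\tfrac14$ as you assert --- the apex $x$ is forced to be a source or a sink. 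Your claimed $\tfrac14$ for the remaining triangles would insert an uncontrolled positive term into the expectation and prevent the rearrangement from yielding part (1).

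For part (2) the same per-colour randomness is what makes the $\delta_c(v)^2$ term appear cleanly: the (signed) in-minus-out degree of $v$ is exactly $\delta(v)=\sum_c \pm\delta_c(v)$ with independent uniform signs, so $\mathbb{E}[\delta(v)^2]=\sum_c\delta_c(v)^2$ immediately. One then plugs this into the deterministic identity $\sum_v\delta(v)^2=\tfrac{n(n+1)(n-1)}{3}-8e(H)$ (valid for any tournament), takes expectations, and substitutes $\mathbb{E}[e(H)]=e^{(3)}(\chi)+\tfrac12\pc_2+\tfrac14\pc_3$. Your proposed decomposition of $d^+(v)$ into a ``deterministic part plus a random part from the other edges'' cannot be carried out because there are no other edges; the randomness lives in the signs $\pm$, one per colour, and that is the idea you are missing.
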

\begin{proof}
    %Direct the edges of $K_n$ so that if $u\to v$, then $\chi(uv)(v)\equiv \chi(uv)(u)\mod 3$. \xh{Use $\pmod$, also is there a plus one missing here?.}
    %Now independently for each $c$ with probability $\frac{1}{2}$, flip the orientation of all edges of color $c$. 
    %Let $T$ be the resulting random tournament where we forget the color. \xh{I think it's more transparent to rewrite the above three sentences as: Given $\chi$, define a random tournament $T$ on the same vertex set as follows. For each color $c$ pick a i.i.d. uniform random $y(c) \in \{1,2\}$ and direct edge $u \to v$ if $\chi(uv) = c$ and $c(u) \equiv c(v) + y(c) \pmod 3$.}
    Given $\chi$, define a random tournament $T$ on the same vertex set as follows. For each color $c$, independently pick a uniformly random $y(c) \in \{1,2\}$, and for each edge $uv$ with $\chi(uv) = c$, direct $u \to v$ if $c(u) \equiv c(v) + y(c) \pmod 3$.
    Let $H$ be the orientable hypergraph generated by $T$; that is, $H$ is a $3$-graph whose edges are the cyclic triangles of $T$. Edges of $H$ come from three possible types of triangles in $\chi$:
    \begin{itemize} 
    \item Each monochromatic triangle is always a cyclic triangle in $T$ and hence gives an edge in $H$.
    \item Each $2$-precyclic triangle becomes a cyclic triangle in $T$ with probability $\frac{1}{2}$.
    \item Each $3$-precyclic triangle becomes a cyclic triangle in $T$ with probability $\frac{1}{4}$.
    \end{itemize}
    As there are $e^{(3)}(\chi)$ monochromatic triangles in the graph, we obtain
    \[e^{(3)}(\chi)+\frac{1}{2}\pc_2+\frac{1}{4}\pc_3 = \EE[e(H)]\leq T(n).\]
    Rearranging, we get the first inequality.

    Let $V$ be the shared vertex set of $K_n$, $T$, and $H$. For each $v\in V$, let $\delta(v)$ be the difference between its in-degree and its out-degree in $T$.
    Then we know that $\delta(v) = \abs{\sum_c\pm \delta_c(v)}$ where each sign is independently and uniformly chosen.
    Note that each triangle in $T$ is either cyclic or contains two oriented vees (an oriented vee is a triple $uvw$ with the orientation of $vu$ being the same as the orientation of $vw$).
    By double-counting, we have   
    \[2\left(\binom{n}{3}-e(H)\right) = \sum_{v\in V}\binom{\frac{n-1+\delta(v)}{2}}{2}+\binom{\frac{n-1-\delta(v)}{2}}{2}.\]
    The right hand side can be rewritten as 
    \[\sum_{v\in V}\frac{1}{4}(n-1)^2+\frac{1}{4}\delta(v)^2-\frac{1}{2}(n-1) = \frac{1}{4}\left(n(n-1)(n-3)-\sum_{v\in V}\delta(v)^2\right)\]
    and so
    \[\sum_{v\in V}\delta(v)^2\leq \frac{4}{3}n(n-1)(n-2)-n(n-1)(n-3)-8e(H) = \frac{1}{3}n(n+1)(n-1)-8e(H).\]
    This holds surely, so it holds in expectation. Observe that
    \[\EE[\delta(v)^2]= \EE\left[\left(\sum_{c}\pm\delta_c(v)\right)^2\right] = \EE\left[\left(\sum_{c_1}\sum_{c_2} \pm\delta_{c_1}(v)\cdot\pm\delta_{c_2}(v)\right)\right] = \sum_{c}\delta_c(v)^2,\] where we used linearity of expectation in the last step. % to expand out the terms and then noticed that for $c_1\neq c_2$ the expectation of the term is $0$ since it is equally likely to be $+\delta_{c_1}(v)\delta_{c_2}(v)$ as $-\delta_{c_1}(v)\delta_{c_2}(v)$. \xh{I think we can safely skip this explanation, just say by linearity of expectation.} 
    Hence, we get 
    \[\sum_{v\in V}\sum_{c}\delta_c(v)^2\leq \frac{n(n+1)(n-1)}{3}-8\EE[e(H)] = \frac{n(n+1)(n-1)}{3}-8e^{(3)}(\chi)-4\pc_2-2\pc_3.\]
    We then rearrange to get the second inequality.
\end{proof}

%\cref{lemma:basic-facts-general} helps to upper bound the numbers of precyclic triangles and also the $\delta_c(v)$'s.
%Intuitively, the upper bound on the precyclic triangles helps us build a monochromatic almost-complete tripartite subgraph, while the upper bound on $\delta_c(v)$'s shows that the constructed tripartite subgraph is approximately balanced.
%This intuition will be carried out in \cref{sec:tournaments}.
For the optimal tripartite coloring $\chi$, $e^{(3)}(\chi)$ will be very close to $T(n)$, so \cref{lemma:basic-facts-general} gives us strong quantitative control over $\pc_2$, $\pc_3$, and the average value of $\delta_c(v)$.

%\xh{I prefer to write instead: For the extremal coloring $\chi$, $\tau(\chi)$ will be very close to $T(n)$, so \cref{lemma:basic-facts-general} gives us strong quantitative control over $\pc_2$, $\pc_3$, and $\delta_c(v)$.}

\section{Induction setup and small \texorpdfstring{$n$}{n}}\label{sec:degree sequences}
%We now focus on the $3$-uniform setting. %Recall from the introduction that to prove our main theorem, it suffices to show that in any coloring of the edges of the complete graph $K_n$ such that each color class is tripartite, there are at most $g_3(n)$ monochromatic triangles. 
%The proof of \cref{thm:monochromatic-k-cliques} for $k=3$ proceeds by induction on $n$. For the base cases $n = 0, 1, 2$, the statement is obvious as there no triangles and $g_3(n)=0$. Now we want to show that the statement holds for some $n\geq 3$, and we assume it to be true for all smaller $n$. We suppose that \cref{thm:monochromatic-k-cliques} is false, and our goal is to find a contradiction to that assumption. \xh{Maybe it is sufficient to say: We begin the proof of \cref{thm:monochromatic-k-cliques} by making the following assumption for contradiction.}

In this section, we set up the inductive proof of \cref{thm:monochromatic-k-cliques}. We begin by making the following assumption for contradiction.
\begin{assumption}\label{asm:contradiction}
\cref{thm:monochromatic-k-cliques} is false for $k=3$. Let $n\geq3$ denote the minimum $n$ for which it fails, and let $\chi$ be a tripartite coloring of $K_n$ with at least $g_3(n)+1$ monochromatic triangles.
\end{assumption}

As the tools from the previous section vary greatly in strength depending on the size and $3$-adic properties of $n$, we were unable to obtain a contradiction to \cref{asm:contradiction} by a single unified method. 
Instead, we single out the cases $n=13,14,16,17$ and deal with them in this section.
For the remaining values of $n\geq 3$ we use another strategy, which we carry out in \cref{sec:tournaments}. %-\ref{sec:main lemmas}.
The exact details for that approach vary for three different regimes of $n$, as we explain in Appendix \ref{sec:verifying assumptions}.

As an immediate consequence of \cref{asm:contradiction}, we may assume every vertex lies in many monochromatic triangles.

%\ra{todo: make all lemmas self-contained. e.g. for the following lemma: Suppose $n$ is minimal such that \cref{thm:monochromatic-k-cliques} is false for $k=3$, and let $\chi$ be a coloring of the edges of $K_n$ with at least $g_3(n)+1$ triangles. Then, ...}
%\ra{Alternatively: be very clear in the paragraphs above about what $n$ and $\chi$ are and state that all lemmas in this section refer to those particular $n$ and $\chi$.}

\begin{lemma}\label{lem:each-vertex-triangles}
Under the coloring $\chi$ from \cref{asm:contradiction}, each vertex $v\in V$ lies in at least $g_3(n) - g_3(n-1) + 1$ monochromatic triangles, and furthermore $\sum_{i\geq 1} \bip(d^i(v)) \geq g_3(n) - g_3(n-1) + 1$, where $\bip(x)\coloneqq \floor{\frac{x}{2}}\ceil{\frac{x}{2}}$.
\end{lemma}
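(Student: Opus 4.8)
The statement has two parts: a lower bound on the number of monochromatic triangles through any fixed vertex $v$, and the reformulation of that count as $\sum_{i\ge 1}\bip(d^i(v))$. The plan is to prove the count reformulation first, then derive the numeric bound from minimality in \cref{asm:contradiction} by a deletion argument.

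For the reformulation, fix $v\in V$ and consider a monochromatic triangle $vxy$ through $v$, say of color $c$. Then $x,y\in N_c(v)$, and since the color-$c$ class is tripartite, $x$ and $y$ lie in different parts of the bipartite graph induced on $N_c(v)$ (they cannot be in the same part of the tripartition as $v$, and they must be in different parts from each other since $xy$ is also color $c$). Conversely, any edge of color $c$ inside $N_c(v)$ — i.e.\ any edge of the bipartite graph on the two parts $N_{c,1}(v), N_{c,2}(v)$ — together with $v$ forms a monochromatic triangle. Hence the number of monochromatic triangles through $v$ in color $c$ equals the number of color-$c$ edges inside $N_c(v)$. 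To turn this into $\bip$, I would note that we are free to choose $\chi$ among extremal colorings; but more carefully, the cleanest route is: the number of monochromatic triangles through $v$ in color $c$ is at most $|N_{c,1}(v)|\cdot|N_{c,2}(v)| \le \bip(|N_c(v)|) = \bip(d)$ where $d$ is the $c$-degree of $v$ — wait, this gives an upper bound, not the claimed lower bound. So instead the right statement is that $v$ lies in at most $\sum_i \bip(d^i(v))$ monochromatic triangles, which combined with the minimality forces equality-like behavior. I would therefore re-read: the lemma claims $\sum_i \bip(d^i(v)) \ge g_3(n)-g_3(n-1)+1$, and separately the triangle count is $\ge g_3(n)-g_3(n-1)+1$; since the triangle count through $v$ is \emph{at most} $\sum_i \bip(d^i(v))$ (by the bipartite bound $ab\le\bip(a+b)$ applied to each color class, summing over colors, using $\sum_i d^i(v)=n-1$), the second inequality follows from the first.

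For the first inequality — that $v$ lies in at least $g_3(n)-g_3(n-1)+1$ monochromatic triangles — I would argue by minimality. Suppose some vertex $v$ lies in at most $g_3(n)-g_3(n-1)$ monochromatic triangles. Delete $v$ to obtain a tripartite coloring $\chi'$ of $K_{n-1}$ (restricting the color classes, which remain tripartite). The number of monochromatic triangles in $\chi'$ is at least $(g_3(n)+1) - (g_3(n)-g_3(n-1)) = g_3(n-1)+1$. This contradicts \cref{thm:monochromatic-k-cliques} for $n-1$, which holds by minimality of $n$ (note $n-1\ge 3$ requires $n\ge 4$; the cases $n=3$ would need to be checked directly, but $g_3(3)=1$ and $g_3(2)=0$, and a single monochromatic triangle on $3$ vertices is fine since the bound $g_3(3)=1$ is not exceeded, so \cref{asm:contradiction} cannot occur at $n=3$ — hence $n\ge 4$ and the deletion is legitimate). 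Combining the two parts: $g_3(n)-g_3(n-1)+1 \le (\text{triangles through }v) \le \sum_i \bip(d^i(v))$.

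\textbf{Main obstacle.} The only subtlety is bookkeeping: verifying that $ab \le \bip(a+b)$ for nonnegative integers (immediate, since $\bip(a+b)=\lfloor(a+b)/2\rfloor\lceil(a+b)/2\rceil$ is the max of $xy$ over $x+y=a+b$), and correctly handling the base case $n=3$ so that the deletion step is valid — i.e.\ confirming \cref{asm:contradiction} forces $n\ge 4$. Neither is genuinely hard; the essential content is just the deletion/minimality argument, and the identification of triangles-through-$v$-in-color-$c$ with edges inside the bipartite neighborhood $N_c(v)$.
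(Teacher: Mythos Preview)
Your proposal is correct and follows essentially the same approach as the paper: the deletion/minimality argument for the first inequality, and the observation that monochromatic triangles through $v$ in color $c$ are in bijection with color-$c$ edges inside the bipartite neighborhood $N_c(v)$, bounded by $\bip(|N_c(v)|)$, for the second. The paper's write-up is terser (it omits the $n=3$ discussion and the $ab\le\bip(a+b)$ remark), but the content is identical.
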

\begin{proof} 
Suppose $v$ is in at most $g_3(n) - g_3(n-1)$ triangles. Remove $v$ and restrict $\chi$ to the remaining $n-1$ vertices. The color classes are still tripartite, and there are at least $g_3(n-1)+1$ monochromatic triangles, violating the minimality of $n$ in \cref{asm:contradiction}. This gives the first part of the lemma.

Next, observe that since each color class $c$ is tripartite, each color neighborhood $N_c(v)$ induces a bipartite graph in color $c$ whose edges are in bijection with triangles containing $v$. Thus, the number of color-$c$ triangles containing $v$ is at most $\bip(|N_c(v)|)$. Summing over $c$ gives the second part of the lemma.
\end{proof}

\cref{lem:each-vertex-triangles} already significantly restricts the possible degree sequences in $\chi$. We make an additional minor simplification.
%To simplify the casework, we can impose one more restriction on allowable degree sequences as follows.

\begin{lemma}\label{lem:no-two-ones}
If \cref{asm:contradiction} holds, then there exists such a $\chi$ such that every vertex $v$ has at most one $1$ in its degree sequence.
\end{lemma}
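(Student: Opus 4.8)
\textbf{Proof proposal for \cref{lem:no-two-ones}.}

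The plan is to show that if some vertex $v$ has two $1$s in its degree sequence, we can locally recolor to reduce the number of distinct colors at $v$ without destroying any monochromatic triangles or the tripartiteness of the color classes. Suppose $v$ has two incident edges $vx$ and $vy$ whose colors $c = \chi(vx)$ and $c' = \chi(vy)$ each appear on exactly one edge at $v$ (so $c \neq c'$ and these are the unique $c$- and $c'$-edges at $v$). The key observation is that because $c$ appears only once at $v$, the vertex $v$ lies in \emph{no} monochromatic $c$-triangle; likewise none in color $c'$. So if we simply recolor the edge $vy$ from $c'$ to $c$, we do not lose any monochromatic triangle through $v$ (there were none in colors $c$ or $c'$ at $v$, and now there is still at most the one potential triangle $vxy$ if $\chi(xy) \in \{c, c'\}$ — and if anything we might \emph{gain} a triangle, which is fine since we only need to preserve the lower bound $g_3(n)+1$). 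No triangle not through $v$ is affected. It remains to check that color class $c$ stays tripartite after adding the edge $vy$: since $v$ had only the single $c$-edge $vx$ before, $v$ was either an isolated vertex of the $c$-class or in a component where it has degree $1$; we can place $v$ in whichever part of the (now possibly merged) tripartition is consistent. The one genuine subtlety is that adding $vy$ might \emph{merge} two tight components of color $c$ (the one containing $x$ and the one containing $y$, if $y$ was already incident to a $c$-edge), and we must ensure the merged component is still tripartite.

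Here is where I would be careful. Since $v$ had exactly one $c$-edge, in the $c$-class the vertex $v$ has degree $1$, so it lies in a single $c$-component $C_x$ (containing $x$), and $v$ is not a cut-structure obstruction: a vertex of degree $1$ can be assigned to any of the three parts of a tripartition of its component. After recoloring $vy$ to $c$, the new $c$-class is the old one with the edge $vy$ inserted. If $y$ is not incident to any other $c$-edge, then $\{v, y\}$ together with $C_x$ form one component that is clearly tripartite (place $y$ appropriately, then $v$ appropriately). If $y$ \emph{is} incident to a $c$-edge, then $C_x$ and $C_y$ get joined through the path $x - v - y$; but $v$ has degree $2$ in the merged graph with its two neighbors $x \in C_x$ and $y \in C_y$ in \emph{different} old components, so there is no cycle created through $v$, and a tripartition of $C_x$ and a tripartition of $C_y$ can always be glued at the degree-$2$ vertex $v$ (rotate the labels of $C_y$ so that $y$'s part differs from $x$'s part and then pick $v$'s part to be the third one). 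Hence the merged $c$-component is tripartite, and every other $c$-component and every $c'$-component is unchanged (the $c'$-class merely lost one edge, which cannot break tripartiteness).

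Thus each such recoloring step strictly decreases the total number of (vertex, color) incidences — equivalently, it decreases $\sum_v (\text{number of distinct colors at } v)$ — while keeping $\chi$ a tripartite coloring of $K_n$ with at least $g_3(n)+1$ monochromatic triangles. Since this quantity is a nonnegative integer, after finitely many steps we reach a coloring $\chi$ in which no vertex has two $1$s in its degree sequence, and \cref{asm:contradiction} still holds for this $\chi$. I expect the gluing-of-tripartitions argument at the degree-$2$ vertex to be the only place requiring genuine care; everything else is bookkeeping. One should also double-check the edge case $\chi(xy) \in \{c, c'\}$ (so that $vxy$ was or becomes monochromatic) — but as noted, gaining a monochromatic triangle only helps, and losing one is impossible since no $c$- or $c'$-triangle passed through $v$ beforehand.
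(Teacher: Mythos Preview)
Your approach matches the paper's: repeatedly merge two singleton colors at a vertex by recoloring one edge. The recoloring step and the preservation of monochromatic triangles are fine, but there are two gaps.

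\textbf{Termination.} Your potential $\sum_v |\{\text{colors incident to } v\}|$ need not strictly decrease. When you recolor $vy$ from $c'$ to $c$, the vertex $v$ loses one color, but $y$ can gain one: if $y$ already has another $c'$-edge and no $c$-edge, then after the recoloring $y$ still sees $c'$ and now also sees $c$, so the net change is zero. Thus your argument does not establish termination. The paper fixes this trivially by totally ordering the colors and always recoloring the larger-indexed singleton edge to the smaller color; then the sum of all edge labels strictly decreases at every step.

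\textbf{Tripartiteness.} Your component-merging discussion only treats the cases where $y$ has no other $c$-edge or where $C_x \neq C_y$; it does not cover the case where $x$ and $y$ already lie in the \emph{same} $c$-component via a path avoiding $v$, in which case adding $vy$ creates a cycle and your gluing-at-$v$ argument does not apply as stated. The clean one-line fix (which the paper uses) is: after the recoloring, $v$ has $c$-degree exactly $2$; the rest of the $c$-class (which is a subgraph of the old tripartite class) is $3$-colorable, and a degree-$2$ vertex can always be added to a $3$-coloring. No component analysis is needed.
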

\begin{proof}
    %Assign an arbitrary order to the collection of all colors used by $\chi$. Consider the following procedure.
    Starting from any $\chi$, consider the following procedure. Whenever there is $v$ that has exactly one incident edge of each of two colors $c_1 < c_2$, recolor the edge of color $c_2$ by $c_1$. Since $v$ has degree two in $c_1$ after this operation, the coloring remains tripartite.
    Since the sum of all colors in $\chi$ decreases after each step, this process eventually terminates. The resulting tripartite coloring $\chi'$ has no vertex with at least two $1$'s in its degree sequence. Finally, the number of monochromatic triangles can only increase throughout this process, so the end result $\chi'$ satisfies \cref{asm:contradiction} and the additional property desired.
    %Furthermore, since vertices of deg$\chi'$ is still tripartite. Indeed, suppose that at some step an edge incident to $v$ is recolored to color $c$. Before, $v$ had only one edge incident to $c$, so now it has only two; simply let $v$ be in a tripartition class of $c$ which is not the class of either of its two color-$c$ neighbors.
    %
    %We conclude that if some edge coloring $\chi$ of $K_n$ exists with at least $g_3(n)+1$ monochromatic triangles, then some coloring exists with at least $g_3(n)+1$ monochromatic triangles and with the property that the sequence $(d^1(v), d^2(v), \ldots)$ has at most one $1$ for every $v\in V$.
\end{proof}
We now obtain a contradiction to \cref{asm:contradiction} by hand for small values of $n$ not covered by our later arguments.

\begin{prop}\label{prop: small n} The minimum $n$ of \cref{asm:contradiction} is not $13, 14, 16,$ or $17$.
\end{prop}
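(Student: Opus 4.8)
The plan is to obtain a contradiction to \cref{asm:contradiction} for each of $n\in\{13,14,16,17\}$ by combining the degree-sequence restrictions from \cref{lem:each-vertex-triangles} and \cref{lem:no-two-ones} with the quantitative estimates of \cref{lemma:basic-facts-general}, specialized through the value $d(n) = T(n) - g_3(n)$. First I would compute the relevant small-case data: the exact values of $g_3(n)$ and $T(n)$ (hence $d(n)$) for $n \in \{13,14,16,17\}$ as well as the increment $g_3(n)-g_3(n-1)$. For each such $n$, \cref{lem:each-vertex-triangles} forces every vertex to lie in at least $g_3(n)-g_3(n-1)+1$ monochromatic triangles, so $\sum_{i\ge1}\bip(d^i(v)) \ge g_3(n)-g_3(n-1)+1$ while $\sum_i d^i(v) = n-1$. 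Since $\bip$ is (strictly) convex on integers, this is a strong constraint: it essentially forces $d^1(v)$ to be large (close to $n-1$), and combined with \cref{lem:no-two-ones} it leaves only a short explicit list of feasible degree sequences at each vertex. I would enumerate these.

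Next I would feed this into \cref{lemma:basic-facts-general}. Because $\chi$ has at least $g_3(n)+1$ monochromatic triangles, we have $T(n) - e^{(3)}(\chi) \le d(n) - 1$, so part (1) gives $2\pc_2 + \pc_3 \le 4(d(n)-1)$ and part (2) gives $\sum_{v}\sum_c \delta_c(v)^2 + 4\pc_2 + 2\pc_3 \le \frac{n(n+1)(n-1)}{3} - 8(g_3(n)+1)$. For these particular $n$ the right-hand side of the second inequality is small (on the order of a few units once $8g_3(n)$ is subtracted from $\tfrac13 n(n^2-1)$), which forces $\sum_v \sum_c \delta_c(v)^2$ to be tiny — so almost every color-neighborhood $N_c(v)$ is perfectly balanced, and there are very few precyclic triangles. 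On the other hand, the feasible degree sequences from the previous paragraph, together with the fact that a monochromatic triangle at $v$ in color $c$ uses one vertex from each side of the bipartition of $N_c(v)$, let me count monochromatic triangles of $\chi$ directly as roughly $\sum_c \bip(\cdot)$ summed over the dominant color classes; matching this against $e^{(3)}(\chi) \ge g_3(n)+1$ while respecting the balance/precyclic constraints produces a numerical contradiction. Concretely, I expect that in each case one shows the primary colors cannot all coincide (else $e^{(3)}(\chi) \le T(n)$ with equality analysis already failing, or the balanced-blowup count gives exactly $g_3(n)$), then \cref{lem:improved-bound-d1} shaves off an extra $\tfrac13(n-1)$ from $\sum_v d^1(v)$, and the resulting deficit is incompatible with every vertex being in $\ge g_3(n)-g_3(n-1)+1$ triangles.

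The main obstacle is the bookkeeping: these four values of $n$ are precisely the ones where $d(n)$ is small but nonzero (by \cref{prop:sum-and-diff-of-exp}, $13,14,16,17$ are not sums/differences of two powers of $3$, yet they are close to $27/2$ where the recursion of \cref{lem:d(n) recursion} keeps $d$ small), so the inequalities are tight and a crude bound will not close the gap — one must track the $\delta_c(v)^2$ terms, the split of triangles among color classes, and the exact feasible degree sequences simultaneously. I would organize the proof as a short case analysis on the multiset of primary colors (one dominant color vs.\ two or more), using \cref{lem:improved-bound-d1} and \cref{lem:upper-bound-d1-d2} with $t=2$ to bound $\sum_v(d^1(v)+d^2(v))$, and in the hardest sub-case (two roughly equal dominant colors) invoking the near-vanishing of $\sum_v\sum_c\delta_c(v)^2$ to pin down the bipartition sizes exactly and then count. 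If a fully by-hand argument proves too delicate for, say, $n=17$, a finite check over the (very short) list of degree-sequence-consistent colorings compatible with the \cref{lemma:basic-facts-general} bounds is a fallback, but I expect the convexity estimates above to suffice without it.
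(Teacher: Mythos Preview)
Your outline contains the right core, but it is wrapped in a detour that the paper does not take and that would not help you. The paper's proof for these four values uses \emph{only} the degree-sequence lemmas: enumerate the degree sequences compatible with \cref{lem:each-vertex-triangles} and \cref{lem:no-two-ones}, bound $\sum_v d^1(v)$ by $\lfloor 2n^2/3\rfloor$ (or by the sharper \cref{lem:improved-bound-d1} when the primary colors differ), deduce that many vertices sit at the bottom feasible sequence, and then check that $\sum_v(d^1(v)+d^2(v))$ exceeds $\lfloor 8n^2/9\rfloor$, contradicting \cref{lem:upper-bound-d1-d2}. No appeal to \cref{lemma:basic-facts-general} is made, and your numerical intuition that $\tilde d(n)$ is ``a few units'' is off --- it is $16,22,24,24$ for $n=13,14,16,17$, too coarse to force near-perfect balance of the $N_c(v)$ or to control precyclic triangles usefully at this scale. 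The second half of your plan (enumerate, then use \cref{lem:improved-bound-d1} and \cref{lem:upper-bound-d1-d2}) is exactly what closes the argument once you actually run the arithmetic.

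The one genuine gap is your handling of the ``all primary colors equal'' subcase. Your proposed dismissals --- equality analysis in $e^{(3)}(\chi)\le T(n)$, or that a single dominant color forces the balanced blowup --- do not work; a common primary color does not by itself determine the structure. The paper (needed only for $n=13$ and $16$) instead argues directly on part sizes: the lower bound on $d^1(v)$ forbids any part of size $\ge 6$ (resp.\ $\ge 7$), the alternative shapes $(5,5,3)$ (resp.\ $(6,6,4)$) are killed by the $\sum(d^1+d^2)$ bound, and in the remaining $(5,4,4)$ (resp.\ $(6,5,5)$) shape the largest part consists entirely of vertices with the minimal degree sequence, whose secondary color must then be common to all of them --- producing a monochromatic $K_5$ (resp.\ $K_6$), which cannot be tripartite. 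For $n=14,17$ no case split is needed at all.
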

\begin{proof}
We use the following table of values for $g_3(n)$, computed directly from its definition.
\[\begin{array}{c|c}
    n & g_3(n) \\
    \hline
    12 & 70 \\
    13 & 88 \\
    14 & 110 \\
    15 & 137 \\
    16 & 166 \\
    17 & 200 \\
\end{array}\]
In each case below, take $\chi$ to be the coloring guaranteed by \cref{asm:contradiction} and \cref{lem:no-two-ones}.% we can assume each vertex has at most one $1$ in its degree sequence. 
\paragraph{Case $n=13$.} By \cref{lem:each-vertex-triangles}, each vertex is contained in at least $g_3(13) - g_3(12) + 1 = 19$ monochromatic triangles under $\chi$. The only possible degree sequences satisfying \cref{lem:each-vertex-triangles} and \cref{lem:no-two-ones} are $(8,4), (9, 3),$ $(9,2,1),$ $(10,2),$ $(11,1),$ and $(12)$. By \cref{lem:upper-bound-d1}, we have $\sum_{v} d^1(v) \leq \floor{\frac{2}{3} (13)^2} = 112$, so at least $5$ vertices have degree sequence $(8, 4)$.

Suppose first that all 13 vertices have the same primary color $1$, say. We claim the tripartition for color $1$ has part sizes $(5,4,4)$. Indeed, any vertex in a part of size at least $6$ satisfies $d^1\leq 7$, contradiction. The only other tripartition of $13$ is $(5,5,3)$, but in this case, since $d^1\geq 8$ for all vertices, the tripartition must be complete in color $1$. But then all vertices have $d^1 \in \{8, 10\}$, so as seen by the allowable degree sequences, $d^1(v) + d^2(v) = 12$ for every $v$. This violates \cref{lem:upper-bound-d1-d2}, which implies $\sum_v d^1(v) + d^2(v) \le \floor{\frac{8}{9} (13^2)} = 150$.

With the claim proved, it is evident that the part of size $5$ contains only vertices with degree sequence $(8,4)$. But this means those $5$ vertices share a common secondary color % \xh{Did we define this yet?} \ra{No; we can just say non-primary instead of secondary since there are only 2 colors} \xh{I think we should define it, secondary is used later in n=16 when there are 3 colors} 
as well, leading to a monochromatic $K_5$ in that color, which is not tripartite.

Thus, we may assume the vertices do not all have the same primary color. \cref{lem:improved-bound-d1} now implies $\sum_{v} d^1(v) \leq 108$. Thus at least $9$ vertices have degree sequence $(8,4)$ and hence $d^1(v)+d^2(v) = 12$. As seen by the allowable degree sequences, the remaining $4$ vertices have $d^1(v)+d^2(v) \geq 11$, so in total $\sum_v d^1(v)+d^2(v) \geq 152$. But $\floor{\frac{8}{9}(13^2)} = 150$, so this contradicts \cref{lem:upper-bound-d1-d2}.

\paragraph{Case $n=14$.} By \cref{lem:each-vertex-triangles}, each vertex is contained in at least $g_3(14)-g_3(13)+1 = 23$ monochromatic triangles under $\chi$. The degree sequences satisfying \cref{lem:each-vertex-triangles} and \cref{lem:no-two-ones} are $(9,4), (10, 3),$ $(10,2,1),$ $(11,2),$ $(12,1)$, and $(13)$. \cref{lem:upper-bound-d1} implies $\sum_{v} d^1(v) \leq \floor{\frac{2}{3} (14)^2} = 130$, so at least $10$ vertices have degree sequence $(9, 4)$ and hence $d^1(v)+d^2(v) = 13$. As seen by the allowable degree sequences, the remaining $4$ vertices have $d^1(v)+d^2(v) \geq 12$, so in total $\sum_v d^1(v)+d^2(v) \geq 178$. But $\floor{\frac{8}{9}(14^2)} = 174$, so this contradicts \cref{lem:upper-bound-d1-d2}.

\paragraph{Case $n=16$.}
By \cref{lem:each-vertex-triangles}, each vertex is contained in at least $g_3(16) - g_3(15) + 1 = 30$ monochromatic triangles under $\chi$. The only possible degree sequences satisfying \cref{lem:each-vertex-triangles} and \cref{lem:no-two-ones} are $(10, 5), (11, 4),$ $(11, 3, 1),$ $(11, 2, 2),$ $(12, 3),$ $(12, 2, 1),$ $(13, 2),$ $(14, 1),$ and $(15)$. By \cref{lem:upper-bound-d1}, we have $\sum_{v} d^1(v) \leq \floor{\frac{2}{3} (16)^2} = 170$, so at least $6$ vertices have degree sequence $(10, 5)$.

Suppose first that all 16 vertices have the same primary color $1$, say. We claim the tripartition parts for color $1$ have sizes $(6,5,5)$. Indeed, any vertex in a part of size at least 7 satisfies $d^1\leq 9$, contradiction. The only other tripartition of $16$ is $(6,6,4)$, but in this case, since $d^1\geq 10$ for all vertices, the tripartition must be complete in the color $1$. But then 12 vertices have $d^1(v)=10$ and hence $d^2(v)=5$, while the remaining 4 vertices have $d^1(v) = 12$ and hence $d^2(v) \geq 2$ (as seen by the allowable degree sequences). Thus, $\sum_{v} d^1(v) + d^2(v) \geq 12\cdot15+4\cdot 14 = 236$. This violates \cref{lem:upper-bound-d1-d2}, which implies that this sum is at most $\floor{\frac{8}{9} (16^2)} = 227$.

With the claim proved, it is evident that the part of size $6$ contains only vertices with degree sequence $(10,5)$. But this means those $6$ vertices share a common secondary color as well, leading to a monochromatic $K_6$ in that color, which is not tripartite.

Thus, we may assume the vertices do not all have the same primary color. \cref{lem:improved-bound-d1} now implies $\sum_{v} d^1(v) \leq 165$. Thus at least $11$ vertices have degree sequence $(10,5)$ and hence $d^1(v)+d^2(v) = 15$. As seen by the allowable degree sequences, the remaining $5$ vertices have $d^1(v)+d^2(v) \geq 13$, so in total $\sum_v d^1(v)+d^2(v) \geq 230$. But $\floor{\frac{8}{9}(16^2)} = 227$, so this contradicts \cref{lem:upper-bound-d1-d2}.

\paragraph{Case $n=17$.}
By \cref{lem:each-vertex-triangles}, each vertex is contained in at least $g_3(17)-g_3(16)+1 = 35$ monochromatic triangles under $\chi$. The degree sequences satisfying \cref{lem:each-vertex-triangles} and \cref{lem:no-two-ones} are $(11,5), (12,4),$ $(12,3,1),$ $(12,2,2),$ $(13, 3),$ $(13,2,1),$  $(14, 2), (15,1),$ and $(16)$. \cref{lem:upper-bound-d1} implies $\sum_{v} d^1(v) \leq \floor{\frac{2}{3} (17)^2} = 192$, so at least $12$ vertices have degree sequence $(11, 5)$ and hence $d^1(v)+d^2(v) = 16$. As seen by the allowable degree sequences, the remaining $5$ vertices have $d^1(v)+d^2(v) \geq 14$, so in total $\sum_v d^1(v)+d^2(v) \geq 262$. But $\floor{\frac{8}{9}(17^2)} = 256$, so this contradicts \cref{lem:upper-bound-d1-d2}.
\end{proof}

We remark that while the proofs above also go through for other values of $n \leq 17$, they do not extend to large values of $n$. Indeed, one can easily check that $g_3(n) - g_3(n-1) +1 = n^2/8 +o(n^2)$, and as a result, many degree sequences satisfying \cref{lem:each-vertex-triangles} have $d^1(v) = n/2 + o(n)$. On the other hand, the proofs above relied on the allowable degree sequences forcing $d^1(v)$ to be close to $2n/3$. 
Our proof strategy for larger $n$ (which also works for small values of $n$ except $13, 14, 16,$ and $17$) uses the tournament approach, on which we elaborate next.

\section{Induction for large \texorpdfstring{$n$}{n}}\label{sec:tournaments}
%\xh{Rename this section, maybe "Induction for large $n$"?} 
%\xh{I combined what was previously sections 4 and 5, need to go back and update any outline/exposition/crefs that got broken.}

In this section we perform the bulk of the proof of \cref{thm:monochromatic-k-cliques} for $k=3$, reducing it to a handful of technical assumptions, which are verified in Appendix \ref{sec:verifying assumptions}. We mark lemmas that introduce such assumptions with $\mathbf{(\bigstar)}$ and state the assumption clearly at the beginning of the lemma. All of the arguments in this section are elementary; the main difficulty is meticulously tracking every constant, as the bounds are extremely close to tight for certain $n\le 100$. Some lemma statements are written in a more complicated form than absolutely necessary in order to make the computer program as transparent as possible.

We now restate our tournament lemma from \cref{sec:tight components} in the context of the coloring from \cref{asm:contradiction}. Recall the notation from \cref{def:tournament setup}. %\xh{Make some remark here or around the definition of $\pc_2$ and $\pc_3$ that they are implicitly functions of $\chi$.} 
For notational convenience, denote $$\tilde d(n) := \frac{n(n+1)(n-1)}{3} - 8(g_3(n)+1).$$ %\hy{Not sure if I like this notation. We can think about it.} \xh{We can make a remark that $\tilde d(n) = 8d(n)$ for odd $n$, and is slightly larger for even $n$, would that help?} \hy{The factor of $8$ might appear as weird but I don't have a better notation in mind either} \ra{It would be $\tilde d(n) = 8(d(n)+1)$ for odd $n$. I'm not sure it's worth making any remark to this effect.} \xh{Ok, let's keep it as is, we can replace the notation later if we come up with an alternative.}

\begin{lemma}\label{lemma:basic-facts} Let $n$ and $\chi$ be as in \cref{asm:contradiction}. We have:
    \begin{enumerate}[(a)]
        \item $2\pc_2+\pc_3\leq 4(d(n)-1)$.
        \item $\sum_{v\in V}\sum_c\delta_c(v)^2 + 4\pc_2+2\pc_3\leq \tilde d(n).$
    \end{enumerate}    
\end{lemma}
\begin{proof}
This is simply \cref{lemma:basic-facts-general} applied to the coloring $\chi$ from \cref{asm:contradiction}, noting that $e^{(3)}(\chi) \geq g_3(n)+1$.
\end{proof}

\cref{lemma:basic-facts} gives an upper bound on $2\pc_2+\pc_3$ and on $\max_{c,v}\delta_c(v)$.
With this in mind, we define the following. Recall that $n$ is fixed by \cref{asm:contradiction}.

\begin{definition}
For any nonnegative integers $\Delta$ and $P$, let $\cG_{\Delta,P}$ be the set of tripartite colorings $\chi$ of $K_n$ such that there are at least $g_3(n)+1$ monochromatic triangles, each vertex lies in at least $g_3(n)-g_3(n-1)+1$ monochromatic triangles, $2\pc_2+\pc_3\leq P$, and we have $\delta_c(v)\leq \Delta$ for any color $c$ and vertex $v$.
\end{definition}

%\xh{Here is my proposed rewrite of the next two corollaries. The idea is to hide the exact iteration strategy until later (maybe it can be spelled out in section 6, maybe just hidden in the code?)}
\begin{cor}\label{cor:calGempty}
If \cref{asm:contradiction} holds, then there exists nonnegative integers $\Delta, P$ with 
\begin{equation}\label{eq:corcond1} P = \min(P(\Delta), 4(d(n)-1))
\end{equation}
where
\begin{equation}\label{eq:corcond2}P(\Delta) \coloneqq\left\lfloor \frac{\tilde d(n)-\Delta^2}{2}\right\rfloor, \end{equation}
for which $\cG_{\Delta, P}$ is nonempty.
\end{cor}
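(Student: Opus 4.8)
The statement of \cref{cor:calGempty} is essentially a bookkeeping consequence of the lemmas already in hand, so the proof will just assemble them. Assume \cref{asm:contradiction} holds, and let $n$ and $\chi$ be as given there. By \cref{lem:each-vertex-triangles}, every vertex of $\chi$ lies in at least $g_3(n)-g_3(n-1)+1$ monochromatic triangles, and by hypothesis $\chi$ has at least $g_3(n)+1$ monochromatic triangles; these are exactly two of the four membership conditions defining $\cG_{\Delta,P}$, and they hold for \emph{every} choice of $\Delta$ and $P$. So the only thing to arrange is that the other two conditions, $2\pc_2+\pc_3\le P$ and $\delta_c(v)\le\Delta$ for all $c,v$, can be met simultaneously by some admissible pair $(\Delta,P)$ satisfying \eqref{eq:corcond1}--\eqref{eq:corcond2}.

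\textbf{Key steps.} First I would set $\Delta \coloneqq \max_{c,v}\delta_c(v)$, the actual maximal imbalance in $\chi$; then $\delta_c(v)\le\Delta$ holds by definition. Next, apply \cref{lemma:basic-facts}(b): since $\sum_{v}\sum_c\delta_c(v)^2\ge \Delta^2$ (each term is nonnegative, and at least one equals $\Delta^2$), we get
\[
\Delta^2 + 4\pc_2 + 2\pc_3 \le \tilde d(n),
\]
hence $2\pc_2+\pc_3 \le \frac{\tilde d(n)-\Delta^2}{2}$, and since the left side is an integer, $2\pc_2+\pc_3 \le \lfloor(\tilde d(n)-\Delta^2)/2\rfloor = P(\Delta)$. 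Meanwhile \cref{lemma:basic-facts}(a) gives $2\pc_2+\pc_3 \le 4(d(n)-1)$ directly. Combining, $2\pc_2+\pc_3 \le \min(P(\Delta),4(d(n)-1)) =: P$. With $\Delta$ and $P$ defined this way, \eqref{eq:corcond1} and \eqref{eq:corcond2} hold by construction, all four defining conditions of $\cG_{\Delta,P}$ are satisfied by $\chi$, so $\chi\in\cG_{\Delta,P}$ and in particular $\cG_{\Delta,P}\ne\emptyset$.

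\textbf{Obstacle.} There is really no mathematical obstacle here — the corollary is a repackaging of \cref{lemma:basic-facts} to produce a finite family of parameter pairs that a putative counterexample must inhabit, which later sections can then rule out by (computer-assisted) exhaustion. The only point requiring a moment's care is the rounding: one must observe that $2\pc_2+\pc_3$ is an integer to pass from the real inequality $2\pc_2+\pc_3\le(\tilde d(n)-\Delta^2)/2$ to the floor, and that $\Delta^2$ appearing in \eqref{eq:corcond2} is exactly the lower bound used for $\sum_v\sum_c\delta_c(v)^2$ — no slack is lost. I would also note explicitly that $\Delta$ and $P$ so defined are nonnegative integers (as required by the statement): $\Delta$ is an absolute difference of cardinalities hence a nonnegative integer, and $P$ is a minimum of two integers; one should check $P\ge 0$, which follows since $\pc_2,\pc_3\ge 0$ forces the right-hand bounds to be nonnegative whenever a valid $\chi$ exists (indeed $d(n)\ge 1$ for the relevant $n$, as otherwise \cref{prop:sum-and-diff-of-exp} together with the $t^{O(1)}$ bound would already contradict \cref{asm:contradiction}).
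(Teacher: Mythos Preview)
Your proposal is correct and follows essentially the same approach as the paper: set $\Delta=\max_{c,v}\delta_c(v)$, then invoke \cref{lemma:basic-facts}(a) and (b) (the latter via $\sum_{v,c}\delta_c(v)^2\ge\Delta^2$) to bound $2\pc_2+\pc_3$ by both $4(d(n)-1)$ and $P(\Delta)$, and conclude $\chi\in\cG_{\Delta,P}$. If anything you are slightly more careful than the paper, which writes ``$P=P(\Delta)$'' and then asserts \eqref{eq:corcond1} without explicitly taking the minimum; your extra remarks on the integer floor and on $P\ge 0$ are valid and not needed for the paper's level of detail.
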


\begin{proof}
Suppose \cref{asm:contradiction} holds, and let $\chi$ be the tripartite coloring it guarantees with at least $g_3(n)+1$ monochromatic triangles where each vertex is in at least $g_3(n)-g_3(n-1)+1$ monochromatic triangles.
Let $\Delta = \max_{c,v}\delta_c(v)$, and $P=P(\Delta)$.
By \cref{lemma:basic-facts}, we know that the pair $\Delta, P$ satisfies condition \eqref{eq:corcond1}, and that $2\pc_2 + \pc_3 \le P$.
Hence, $\chi\in \cG_{\Delta, P}$, so $\cG_{\Delta, P}$ is nonempty.
\end{proof}

Thus, to complete the contradiction for a given value of $n$, it suffices to prove that for every choice of $\Delta, P$ satisfying the conditions of \cref{cor:calGempty}, the set $\cG_{\Delta,P}$ is empty. % For $n\leq 699$, we check this with the help of a computer program. For $n\geq 700$, the same holds, but we show this by hand. See \cref{sec:verifying assumptions} for details.
We next describe a concrete way of showing that some $\cG_{\Delta, P}$ is empty%, which we use for both $n \leq 699$ and $n\geq 700$
.

\begin{definition}\label{def:v*c*}
Given $\chi\in \cG_{\Delta, P}$, let vertex $v^*\in V$ and color $c^*$ be chosen to maximize $\abs{N_{c^*}(v^*)}$ under $\chi$. Take the coloring $\chi \in \cG_{\Delta, P}$ to be the one which minimizes the total number of edges of $K_n$ not colored $c^*$ under this definition.

Let $U$ be the set of vertices that are in the same part as in $v^*$ with respect to the color $c^*$, and let $X^*, Y^*$ be the other two parts.
Let $X = N_{c^*}(v^*)\cap X^*$ and $\overline{X} = X^*\backslash X$, and define $Y$ and $\overline{Y}$ similarly.
\end{definition}

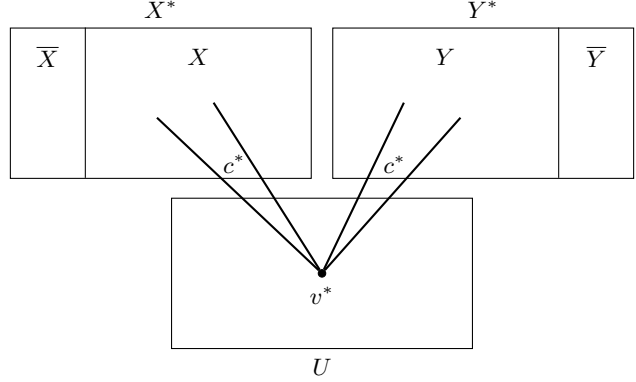
\begin{figure}[htb!]\label{fig:parts}
\centering
\begin{tikzpicture}[scale=1, every node/.style={font=\small}]

% Box dimensions
\def\boxwidth{4cm}
\def\boxheight{2cm}
\def\largefrac{0.75}
\def\smallfrac{0.25}

% X^* box in top-left
\node[draw, minimum width=\boxwidth, minimum height=\boxheight, anchor=north west, label=above:$X^*$] (Xstar) at (0,0) {};

% Partition X^*: X on right, \overline{X} on left
\coordinate (Xsplit) at ($(Xstar.north west)!\smallfrac!(Xstar.north east)$);
\draw (Xsplit) -- ($(Xsplit)+(0,-\boxheight)$);
\node at ($($(Xstar.north west)!\smallfrac/2!(Xstar.north east)$)+(0,-11pt)$) {$\overline{X}$};
\node at ($($(Xstar.north west)!\smallfrac+0.5*\largefrac!(Xstar.north east)$)+(0,-11pt)$) {$X$};

% Y^* box in top-right
\node[draw, minimum width=\boxwidth, minimum height=\boxheight, anchor=north east, label=above:$Y^*$] (Ystar) at (8.3,0) {};

% Partition Y^*: Y on left, \overline{Y} on right
\coordinate (Ysplit) at ($(Ystar.north west)!\largefrac!(Ystar.north east)$);
\draw (Ysplit) -- ($(Ysplit)+(0,-\boxheight)$);
\node at ($($(Ystar.north west)!\largefrac/2!(Ystar.north east)$)+(0,-11pt)$) {$Y$};
\node at ($($(Ystar.north west)!\largefrac+0.5*\smallfrac!(Ystar.north east)$)+(0,-11pt)$) {$\overline{Y}$};

% U box in bottom-center
\node[draw, minimum width=4cm, minimum height=2cm, anchor=north, label=below:$U$] (U) at ($(Xstar.south)!-0.5cm!270:(Xstar.south)!0.5!(Ystar.south)$) {};

% Vertex v^*
\node[circle, draw, fill=black, inner sep=1pt, label=below:$v^*$] (vstar) at (U.center) {};

% Points in X (right 75% of X^*)
\coordinate (Xpt1) at ($($(Xsplit)!\largefrac/3!(Xstar.north east)$)+(0.2cm,-1.2cm)$);
\coordinate (Xpt2) at ($($(Xsplit)!\largefrac*2/3!(Xstar.north east)$)+(0.2cm,-1cm)$);

% Points in Y (left 75% of Y^*)
\coordinate (Ypt1) at ($($(Ystar.north west)!\largefrac/3!(Ysplit)$)+(0.2cm,-1cm)$);
\coordinate (Ypt2) at ($($(Ystar.north west)!\largefrac*2/3!(Ysplit)$)+(0.2cm,-1.2cm)$);

% Undirected edges from v^* to X
\draw[thick] (vstar) -- (Xpt1); 
\node at ($ (vstar)!0.5!(Xpt1) + (-2pt, 12pt) $) {$c^*$};
\draw[thick] (vstar) -- (Xpt2);

% Undirected edges from v^* to Y
\draw[thick] (vstar) -- (Ypt1); 
\node at ($ (vstar)!0.5!(Ypt1) + (12pt, 9pt) $) {$c^*$};
\draw[thick] (vstar) -- (Ypt2);

\end{tikzpicture}
\caption{Schematic of the parts described by \cref{def:v*c*}.}
\end{figure}

\begin{lemma}\label{lemma: recoloring}
    Let $\chi, c^*, X^*, Y^*,$ and $U$ be as in \cref{def:v*c*}, and let $m=\min(\abs{X^*},\abs{Y^*},\abs{U})$. There must be more than $2m$ edges across $X^*, Y^*, U$ not colored $c^*$.
\end{lemma}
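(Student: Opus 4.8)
The plan is to argue by contradiction: suppose there are at most $2m$ edges across the three parts $X^*, Y^*, U$ not colored $c^*$. The goal is to produce a coloring $\chi' \in \cG_{\Delta,P}$ which either has strictly more edges colored $c^*$ (contradicting the minimality in \cref{def:v*c*}) or violates one of the defining properties of $\cG_{\Delta,P}$. The natural move is to recolor all the ``bad'' edges across $X^*, Y^*, U$ with color $c^*$; since each of $X^*, Y^*, U$ is a part in the $c^*$-tripartition, making all transversal edges color $c^*$ keeps color class $c^*$ tripartite, and any edge we recolor to $c^*$ is a transversal edge, so the new color class $c^*$ remains a valid tripartite graph. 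The edges internal to $X^*$, $Y^*$, or $U$ are left untouched, so those color classes stay tripartite as well; hence $\chi'$ is a valid tripartite coloring.

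Next I would check that $\chi'$ still satisfies the constraints defining $\cG_{\Delta,P}$, or else derive the contradiction directly from the edge count. The key is to compare the number of monochromatic triangles. Recoloring a transversal edge $xy$ (with $x \in X^*$, $y \in Y^*$, say) from some color $c \ne c^*$ to $c^*$ destroys at most the monochromatic triangles through $xy$ in color $c$ — there are at most $\min(|X^*|, |Y^*|) \le m$ such, since the third vertex must lie in the (single) other part of $c$'s tripartition that meets both $X^*$ and... actually more carefully, the third vertex lies in $U$-side or elsewhere, bounded by $m$ — wait, one must be careful here. The cleaner accounting: each recolored edge lies in at most $m$ monochromatic triangles in its old color (third vertex ranges over a set of size at most $m$, namely the intersection of the appropriate $c$-part with the third of $X^*,Y^*,U$). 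So at most $2m \cdot m = 2m^2$ monochromatic triangles are lost. On the other hand, after recoloring, \emph{all} transversal triples across $X^*, Y^*, U$ become $c^*$-monochromatic triangles; the number of such triples that were \emph{not} already monochromatic is at least $|X^*||Y^*||U| - (\text{old count})$, and since at most $2m$ transversal edges were miscolored, the number of transversal triples missing at least one non-$c^*$ edge is at most $2m \cdot \max(|X^*|,|Y^*|,|U|)$... this bound is too weak. The right bound is that at most $2m$ edges are bad, each bad edge kills at most $\min$ of the two remaining part sizes $\le$ the relevant size of triples through it, but I want a gain of more than the $2m^2$ loss. I expect the correct inequality to be: number of new $c^*$-triangles created $\ge |X^*||Y^*||U| - 2m \cdot (\text{second-largest part size})$, and we need this to exceed $2m^2$ plus possibly the loss; using $|X^*|,|Y^*|,|U| \ge m$ and that two of them are $\ge$ the third, $|X^*||Y^*||U| \ge m \cdot (\text{product of the two larger}) \ge m \cdot (\text{second largest})^2$, which should dominate. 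This will force $\chi'$ to have \emph{more} monochromatic triangles than $\chi$, so $\chi'$ still has at least $g_3(n)+1$ of them.

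The remaining checks for $\chi' \in \cG_{\Delta,P}$ are the per-vertex triangle count, the precyclic bounds $2\pc_2 + \pc_3 \le P$, and the imbalance bound $\delta_c(v) \le \Delta$. For these I would observe that the recoloring only affects vertices in $X^* \cup Y^* \cup U$ and only along the $\le 2m$ recolored edges; precyclic triangles and imbalances are controlled because converting edges to the color $c^*$ (in which $X^*, Y^*, U$ is already a tripartition) tends to reduce, not increase, the relevant quantities — a triangle that becomes monochromatic is no longer precyclic, and $\delta_{c^*}$ changes are absorbed since $v^*$'s neighborhood was chosen maximal. Actually the cleanest route here may be to not verify membership in $\cG_{\Delta,P}$ at all, but instead to note that the recolored coloring $\chi'$ has strictly more edges of color $c^*$ while still satisfying \emph{only} the weaker requirement needed for the minimality clause — but re-reading \cref{def:v*c*}, the minimizing coloring is taken within $\cG_{\Delta,P}$, so membership does need checking. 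I therefore expect the main obstacle to be the bookkeeping that $\chi'$ remains in $\cG_{\Delta,P}$ — in particular re-establishing $\delta_c(v) \le \Delta$ and the precyclic bound after recoloring — and pinning down the exact triangle-gain inequality with the factor-of-$2m$ hypothesis so that the strict increase in $c^*$-edges genuinely yields a contradiction. Handling the possibility that some of $|X^*|, |Y^*|, |U|$ are very unbalanced (so $m$ is much smaller than the others) is where the ``more than $2m$'' threshold is exactly calibrated, and getting that constant right is the delicate point.
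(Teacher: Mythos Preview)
Your overall shape --- recolor all transversal edges to $c^*$ and argue the result has at least as many monochromatic triangles --- is the paper's opening move too. But you have the contradiction mechanism wrong, and your triangle accounting has genuine errors.

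\textbf{The contradiction.} You spend most of your plan trying to verify that the recolored $\chi'$ lies in $\cG_{\Delta,P}$, so as to invoke the minimality clause of \cref{def:v*c*}. This is both hard (there is no reason $\delta_c(v)\le\Delta$ or the precyclic bound should survive recoloring) and unnecessary. The point you are missing is that in $\chi'$ \emph{every} transversal edge is colored $c^*$, so every monochromatic triangle of $\chi'$ is either a transversal $c^*$-triangle or lies entirely inside one of $X^*,Y^*,U$. Hence
\[
e^{(3)}(\chi') \;=\; |X^*||Y^*||U| \;+\; (\text{monochromatic triangles inside the three parts}).
\]
By the minimality of $n$ in \cref{asm:contradiction} (the induction hypothesis), the internal count is at most $g_3(|X^*|)+g_3(|Y^*|)+g_3(|U|)$, so $e^{(3)}(\chi')\le g_3(n)$. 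Combined with $e^{(3)}(\chi')\ge e^{(3)}(\chi)\ge g_3(n)+1$, this is the contradiction. No membership check in $\cG_{\Delta,P}$ is needed.

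\textbf{The triangle count.} Your assertion that a recolored edge $xy$ lies in at most $m$ old-color monochromatic triangles is false: if $\chi(xy)=c\neq c^*$, the third vertex of a $c$-triangle through $xy$ can sit anywhere in $V$, not only in the remaining part of the $c^*$-tripartition. Your back-of-envelope gain/loss estimate (lose $\le 2m^2$, gain $\ge |X^*||Y^*||U|-2m\cdot\max$) does not close, as you yourself notice. The paper handles this with a weight function: for each bad edge $ab\in E$ and each vertex $x$, a weight $w_{ab}(x)$ is defined so that (i) the change $W(xuv)$ in the monochromaticity indicator of each triple dominates $w_{xu}(v)+w_{uv}(x)+w_{vx}(u)$, and (ii) $\sum_{x\notin e} w_e(x)\ge m-\tfrac12|E|$ for every bad edge $e$. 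Summing gives $e^{(3)}(\chi')-e^{(3)}(\chi)\ge |E|\bigl(m-\tfrac12|E|\bigr)\ge 0$ when $|E|\le 2m$. This is where the constant $2m$ is exactly calibrated, and it is not recoverable from the crude per-edge bounds you attempted.
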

\begin{proof}
%\xh{I reworded some of the proof below.}
Let $\chi'$ be the coloring obtained from $\chi$ by recoloring all edges across $X^*, Y^*, U$ with color $c^*$. We claim that if at most $2m$ edges are recolored, then $\chi'$ has $e^{(3)}(\chi') \geq e^{(3)}(\chi)$
To see that this claim implies the lemma, note that $\chi'$ has $|X^*||Y^*||U|$ monochromatic triangles of color $c^*$, and by the minimality of $n$ in \cref{asm:contradiction}, there are at most $g_3(|X^*|)+g_3(|Y^*|)+g_3(|U|)$ monochromatic triangles contained within the three parts. Therefore, by definition of $g_3(n)$, we have $e^{(3)}(\chi) \leq e^{(3)}(\chi') = |X^*||Y^*||U| + g_3(|X^*|)+g_3(|Y^*|)+g_3(|U|) \leq g_3(n)$. But $\chi\in \cG_{\Delta, P}$ is required to have at least $g_3(n)+1$ monochromatic triangles, a contradiction.

We now prove the claim.
    Let $E$ 
    be the set of edges across $X^*,Y^*,U$ not of color $c^*$.
    Recall the notation $1_{S}$ for the indicator of the statement $S$, which is $1$ if $S$ holds and $0$ otherwise. For each $ab\in E$, we define $w_{ab}(x)$ for all $x\in V\backslash \{a,b\}$ as follows: %\xh{Have we define indicator functions yet?}\hy{I don't think so, should we do it here?}
    \begin{itemize}
        \item if $c^*(x)\not\in \{c^*(a),c^*(b)\}$, then $w_{ab}(x) = 1-\frac{1}{2}\left(1_{\chi(ax)\neq c^*}+1_{\chi(bx)\neq c^*}\right)$;
        \item if $c^*(x)=c^*(a)$, then $w_{ab}(x) = -\frac{1}{2}\cdot 1_{\chi(bx)\neq c^*}$;
        \item if $c^*(x)=c^*(b)$, then $w_{ab}(x) = -\frac{1}{2}\cdot 1_{\chi(ax)\neq c^*}.$
    \end{itemize}

    We also define $w_{ab}(x)=0$ if $ab\not\in E$.

    %Now consider recoloring all edges in $E$ with color $c^*$.
    For any three distinct vertices $xuv$, let $W(xuv)$ %\xh{Can we drop the subscript on $\Delta$?}\hy{Actually maybe $\Delta$ is not a good notation... not sure what a better notation would be}
    equal $1$ if $xuv$ is not monochromatic in $\chi$ and monochromatic in $\chi'$, $-1$ if it is monochromatic in $\chi$ but not monochromatic in $\chi'$, and $0$ otherwise.
    We show that
    \[W(xuv)\geq w_{xu}(v)+w_{uv}(x)+w_{vx}(u).\]
    Note that if $c^*(x)=c^*(u)=c^*(v)$ then both sides are 0.
    If $c^*(x),c^*(u),c^*(v)$ are all distinct, let $e_{xuv}$ %\xh{I don't like $N$ for such a small number} 
    be the number of edges $uv,vx,xu$ in $E$.
    Then $W(xuv) \geq 1_{e_{xuv}\in \{1, 2\}}$ and the right hand side equals exactly $1_{e_{xuv} \in \{1, 2\}}$. %$N(1-\frac{1}{2}(N-1))$. \xh{I prefer to write simply that the right hand side equals exactly $1_{N \in \{1,2\}}$.}
    Substituting $e_{xuv}=0,1,2,3$, this verifies the inequality is true in this case. 

    Finally, if $c^*(x)\neq c^*(u)=c^*(v)$, let $e_{xuv}'$ be the number of edges $xu,xv$ in $E$.
    Then $W(xuv)\geq -1_{e_{xuv}'=2}$ and $w_{uv}(x)=0$.
    We also know $w_{xv}(u)+w_{xu}(v)$ is $0$ if $e_{xuv}'\in\{0,1\}$, and it is $-\frac{1}{2}-\frac{1}{2}=-1$ if $e_{xuv}'=2$.
    Therefore the inequality holds in this case as well.

    Adding everything together, we have
    \[e^{(3)}(\chi') - e^{(3)}(\chi) = \sum_{\{x,u,v\}\in \binom{V}{3}}W(xuv) \ge \sum_{\{x,u,v\}\in \binom{V}{3}}\left(w_{xu}(v)+w_{uv}(x)+w_{vx}(u)\right)=\sum_{e\in E}\sum_{x\not\in e}w_e(x).\]
    Now by the definitions of the weights, we know that
    \[\sum_{x\not\in e}w_e(x)\geq m-\frac{1}{2}\abs{E}\geq 0\]
    for any $e\in E$.
    This concludes the proof.
\end{proof}

Thus, to contradict \cref{asm:contradiction}, we just need to show that (using the notation of \cref{def:v*c*}) $\chi$ has at most $2\min(|X^*|, |Y^*|, |U|)$ edges not of color $c^*$ across $X^*$, $Y^*$, and $U$. In words, we show the single color $c^*$ forms a nearly complete tripartite subgraph by repeatedly exploiting the fact that edges of any other color contribute precyclic triangles.

We first prove some bounds on $\abs{X}$ and $\abs{Y}$. Recall that $\bip(x) = \floor{\frac{x}{2}}\ceil{\frac{x}{2}}$ is the maximum number of edges in a bipartite graph on $x$ vertices.
For our convenience later, we list the desired properties that $\abs{X},\abs{Y}$ should satisfy in the following definition.
% \xh{I prefer to put the definition of admissible pair in a def environment separately beforehand.}

\begin{definition}\label{def:admissible-pair}
    A pair of positive integers $(s,t)$ is an \emph{admissible pair} if they satisfy the following four conditions.
    \begin{enumerate}[(a)]
        \item  %If $\lceil\frac{12g_3(n)}{n(n-1)}\rceil\geq \lceil n/2\rceil$, then
        $s+t \geq \lceil n/2\rceil$.
        \item $\abs{s-t}\leq\Delta$.
        \item Let $S$ be the smallest possible sum of positive integers $d_1,d_2$ with $d_2\leq d_1\leq \abs{s}+\abs{t}$ and 
        %\[\bip(d_1) + \left\lfloor\frac{n-1-d_1}{d_2}\right\rfloor\bip(d_2)+\bip(n-1-d_1\pmod {d_2}) \geq g_3(n)-g_3(n-1)+1.\]
        \[\bip(d_1) + q\bip(d_2)+\bip(r) \geq g_3(n)-g_3(n-1)+1,\] where $q$ and $r$ are the unique nonnegative integers such that $n-1-d_1 = qd_2+r$ and $r < d_2$. %\ra{Is this ok? Alternative wordings: ``where $q$ and $r$ are the quotient and remainder when $n-1-d_1$ is divided by $d_2$"; or ``where $q = \floor{\frac{n-1-d_1}{d_2}}$ and $r = n-1-d_1-qd_2$".} \xh{The current one is best.}
        Then, $S\leq \frac{8}{9}n$. %\xh{Is it better to write this condition as (c) There exists positive integers $d_2 \le d_1 \ldots$ such that ... and $d_1 + d_2 \le \frac 89 n$? Can move the definition of the letter $S$ into the proof.} \ra{Someone reading the code would be happier if it's written as it currently is.}%\hy{The display equation is hard to read. Should we just say $n-1-d_1=qd_2+r$ or something like that?} \xh{I prefer that as well.}
        \item $|s||t|+\bip(n-1-|s|-|t|) \geq g_3(n)-g_3(n-1)+1.$
        %\xh{I think throughout 3. and 4. the $\bip$ inputs is off by a factor of $2$, there should not be an $/2$ in the denominators.}
    \end{enumerate}

\end{definition}

\begin{lemma}[$\bigstar$]\label{lemma:possible-X-Y} Assume that $\lceil\frac{12g_3(n)}{n(n-1)}\rceil\geq \lceil n/2\rceil$. Then, $(|X|, |Y|)$ is an admissible pair.
\end{lemma}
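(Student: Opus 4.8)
\textbf{Proof proposal for \cref{lemma:possible-X-Y}.}

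The plan is to verify each of the four conditions in \cref{def:admissible-pair} for the pair $(|X|, |Y|)$, where $X, Y$ are as in \cref{def:v*c*}. Recall that $v^*$ and $c^*$ were chosen to maximize $|N_{c^*}(v^*)|$, and that $X \cup Y = N_{c^*}(v^*)$ with $X \subseteq X^*$, $Y \subseteq Y^*$. The key observations are: first, $|X| + |Y| = |N_{c^*}(v^*)| = d^1(v^*)$ is the largest color neighborhood anywhere in $\chi$; second, $|X| - |Y|$ up to sign is exactly $\delta_{c^*}(v^*)$, which is controlled by $\Delta$ via \cref{asm:contradiction} (i.e.\ $\chi \in \cG_{\Delta, P}$); and third, triangles through $v^*$ of color $c^*$ are in bijection with edges of the bipartite graph between $X$ and $Y$, of which there are at most $\bip(|X|+|Y|)$, while triangles through $v^*$ of color $c^*(v^*)$ in \emph{other} colors are governed by the link structure.

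First I would handle condition (b): since $\chi \in \cG_{\Delta, P}$ and $\delta_{c^*}(v^*) = \big||N_{c^*,1}(v^*)| - |N_{c^*,2}(v^*)|\big| = \big||X| - |Y|\big|$ (the two parts of $N_{c^*}(v^*)$ are precisely $X$ and $Y$, since they lie in the two parts $X^*, Y^*$ of the $c^*$-tripartition other than $U$), we get $\big||X|-|Y|\big| \le \Delta$ immediately. Next, condition (a): I claim $|X| + |Y| = d^1(v^*) \ge \lceil n/2 \rceil$. This is where the hypothesis $\lceil \frac{12 g_3(n)}{n(n-1)} \rceil \ge \lceil n/2 \rceil$ enters. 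By \cref{lem:upper-bound-d1-d2} with $t=1$ (i.e.\ \cref{lem:upper-bound-d1}) applied to the \emph{other direction}: actually the cleaner route is that $v^*$ achieves the maximum of $|N_c(v)|$ over all $v, c$, and by averaging, $d^1(v^*) \ge \max_v d^1(v) \ge \frac{1}{n}\sum_v d^1(v)$; but that gives a lower bound going the wrong way. Instead, I would count monochromatic triangles: $e^{(3)}(\chi) \le \sum_c e(\text{color class } c)$-type bounds won't directly work either, so the right tool is that each color class $c$ has at most $\bip(a_c)\bip(b_c)\cdots$-style triangle counts; more precisely, summing over vertices, $3 e^{(3)}(\chi) = \sum_v (\text{mono.\ triangles through } v) \le \sum_v \sum_i \bip(d^i(v)) \le \sum_v \bip(d^1(v)) \cdot(\text{something})$. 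The cleanest bound: each monochromatic triangle in color $c$ lies in the blown-up tripartite structure, so color class $c$ with part sizes $a,b,c'$ contributes $abc' \le \bip(\lceil(a+b+c')\cdot 2/3\rceil)\cdot(\ldots)$; I would instead use that the number of monochromatic triangles through $v^*$ is at least $g_3(n) - g_3(n-1) + 1$ by \cref{lem:each-vertex-triangles}, hence $\bip(|X|+|Y|) \ge \bip(d^1(v^*)) \ge$ (number of $c^*$-triangles through $v^*$), and combined with $e^{(3)}(\chi) \ge g_3(n)+1$ and the pigeonhole that \emph{some} color $c^*$ at \emph{some} vertex must be large, one extracts $d^1(v^*) \ge \lceil 12 g_3(n) / (n(n-1))\rceil$. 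Precisely: $3 e^{(3)}(\chi) \le n \cdot \binom{n-1}{2}$ is too weak; better, $e^{(3)}(\chi) \le$ (max over tournaments) but we want a bound forcing a large color neighborhood. The argument I expect: since $e^{(3)}(\chi) \ge g_3(n)+1 > g_3(n) \ge \frac{n(n-1)}{12}\cdot(\text{avg color-class density})$ — I would make this rigorous by noting each edge is in $n-2$ triangles and at most $2/3$ of the pairs in a color neighborhood of size $d$ span a monochromatic triangle, giving $\sum_v \sum_c \bip(|N_c(v)|) \ge 3e^{(3)}(\chi)$, and since $\sum_{v,c}|N_c(v)| = n(n-1)$ while $\bip$ is convex with $\bip(d) \le d\cdot d^1(v^*)/ 4 \cdot(\ldots)$ — concavity of $d \mapsto \bip(d)/d$ is false, but $\bip(d) \le \frac{d \cdot D}{2}$ when $d \le D := d^1(v^*)$ fails too; the honest bound is $\bip(d) \le \frac{d^2}{4} \le \frac{d D}{4}$, so $3e^{(3)}(\chi) \le \sum_{v,c}\frac{|N_c(v)| D}{4} = \frac{n(n-1)D}{4}$, yielding $D \ge \frac{12 e^{(3)}(\chi)}{n(n-1)} \ge \frac{12(g_3(n)+1)}{n(n-1)} > \frac{12 g_3(n)}{n(n-1)}$, hence $D \ge \lceil \frac{12 g_3(n)}{n(n-1)}\rceil \ge \lceil n/2 \rceil$ by hypothesis. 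This establishes (a).

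For condition (c), I would apply \cref{lem:each-vertex-triangles} to the vertex $v^*$ itself — but more cleverly, to understand the \emph{secondary} structure. The quantity $S$ in (c) is, by design, a lower bound extracted from the inequality $\sum_{i\ge 1}\bip(d^i(v)) \ge g_3(n) - g_3(n-1)+1$ of \cref{lem:each-vertex-triangles}: writing $d_1 = d^1(v), d_2 = d^2(v)$ and bounding $\sum_{i \ge 3}\bip(d^i(v))$ by the worst case where all remaining degree is split into copies of size $d_2$ (which maximizes $\sum \bip$ subject to the total being $n-1-d_1$ and each part $\le d_2$), one sees that $d^1(v) + d^2(v)$ must be at least $S$, the minimal such sum. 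Applying \cref{lem:upper-bound-d1-d2} with $t=2$: $\sum_v (d^1(v)+d^2(v)) \le \frac{8}{9}n^2$, so the \emph{average} of $d^1(v)+d^2(v)$ is at most $\frac{8}{9}n$, and since every vertex has $d^1(v)+d^2(v) \ge S$, we get $S \le \frac{8}{9}n$. This is exactly condition (c), and it does not even reference $X, Y$ — it is a global consequence of \cref{lem:each-vertex-triangles} and \cref{lem:upper-bound-d1-d2}. Finally, condition (d): the bipartite graph on $X \cup Y$ in color $c^*$ has $|X||Y|$ edges if complete, and in general the monochromatic triangles through $v^*$ split as ($c^*$-triangles, counted by $e(\cL_{v^*}$ restricted to color $c^*) \le |X||Y|$) plus (triangles in other colors through $v^*$, each living in a color neighborhood disjoint from $N_{c^*}(v^*)$, so spanning at most $n-1-|X|-|Y|$ vertices, contributing at most $\bip(n-1-|X|-|Y|)$ total since those neighborhoods partition the remaining $n-1-|X|-|Y|$ vertices and $\sum \bip(a_i) \le \bip(\sum a_i)$). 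Since \cref{lem:each-vertex-triangles} gives that the total is $\ge g_3(n)-g_3(n-1)+1$, we conclude $|X||Y| + \bip(n-1-|X|-|Y|) \ge g_3(n)-g_3(n-1)+1$, which is (d).

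The main obstacle I anticipate is \textbf{condition (a)}, specifically pinning down the correct averaging inequality that converts ``$e^{(3)}(\chi)$ is large'' into ``$d^1(v^*)$ is large,'' since the naive bound $\bip(d) \le d^2/4$ combined with $\sum_{v,c}|N_c(v)| = n(n-1)$ must be balanced carefully against the convexity loss; the factor $12$ and the exact form $\lceil 12 g_3(n)/(n(n-1))\rceil$ in the hypothesis suggest the intended chain is $3 e^{(3)}(\chi) \le \sum_{v,c} \bip(|N_c(v)|) \le \frac{D}{4}\sum_{v,c}|N_c(v)| = \frac{D \cdot n(n-1)}{4}$, and I would need to double-check whether the factor is $3$ (each triangle has $3$ vertices) or whether monochromatic triangles through $v$ in color $c$ are bounded by $\bip(|N_c(v)|)$ exactly (yes, as in the proof of \cref{lem:each-vertex-triangles}). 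The conditions (b), (c), (d) are comparatively routine given the lemmas already proved. I would also need to be slightly careful that $X$ and $Y$ are genuinely nonempty positive integers and that they really are the two bipartition classes of $N_{c^*}(v^*)$ — this follows because $U, X^*, Y^*$ are the three parts of the $c^*$-tripartition, $v^* \in U$, and $N_{c^*}(v^*) \subseteq X^* \cup Y^*$, so $N_{c^*,1}(v^*) = N_{c^*}(v^*) \cap X^* = X$ and $N_{c^*,2}(v^*) = N_{c^*}(v^*) \cap Y^* = Y$ (up to relabeling $j \in \{1,2\}$).
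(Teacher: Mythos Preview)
Your proposal is correct and takes essentially the same approach as the paper: part (a) via the averaging chain $3e^{(3)}(\chi) \le \sum_{v,c}\bip(|N_c(v)|) \le \frac{D}{4}\,n(n-1)$ with $D=|X|+|Y|$ (the paper localizes to a single vertex with maximal link but reaches the identical inequality), part (b) from $\delta_{c^*}(v^*)\le\Delta$, and parts (c) and (d) by combining \cref{lem:each-vertex-triangles} with convexity/superadditivity of $\bip$ and \cref{lem:upper-bound-d1-d2}. The one omission worth flagging is in (c): you should explicitly note $d^1(w)\le |X|+|Y|$ for every $w$ (since $v^*,c^*$ was chosen to maximize $|N_c(v)|$), so that $(d^1(w),d^2(w))$ genuinely satisfies the constraint $d_1\le |s|+|t|$ in the definition of $S$ --- the paper makes this point explicit, and your claim that condition (c) ``does not even reference $X,Y$'' is not quite right for this reason.
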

\begin{proof}
    By definition, $v^*$ and $c^*$ are chosen to maximize $|X|+|Y|$. So, to show that $\abs{X}+\abs{Y}\geq \lceil n/2\rceil$, it suffices to find a vertex $v'\in V$ and a color $c'$ with $\abs{N_{c'}(v')}\geq n/2$.
    Take $v'$ to be the vertex with the most edges in its link $\cL_{v'}$.
    Then, using the fact that $\cL_{v'}$ is bipartite, %\xh{All the $v$'s in the next two lines should be $v'$?}
    \[\frac{3g_3(n)}{n}\leq e(\cL_{v'})\leq \frac{1}{4}\sum_{c'}\abs{N_{c'}(v')}^2\leq \frac{n-1}{4}\max_{c'}N_{c'}(v').\]
    Therefore
    \[\abs{X}+\abs{Y}\geq \max_{c'}N_{c'}(v')\geq \frac{12g_3(n)}{n(n-1)}\]
    and the inequality holds with ceiling on the right as well.
    By the assumption, it is at least $\lceil n/2\rceil$, as desired.

    Part (b) follows from the fact that $\abs{\abs{X}-\abs{Y}} = \delta_{c^*}(v^*)\leq \Delta$ by definition of $\cG_{\Delta, P}$.

    For part (c), we show that $d^1(w)+d^2(w)\geq S$ for every vertex $w$.
    This immediately implies that $S\leq \frac{8}{9}n$ by \cref{lem:upper-bound-d1-d2}.
    By the definition of $S$, it suffices to show that $d^1(w)$ and $d^2(w)$ satisfy the constraints on $d_1$ and $d_2$ in the definition of $S$.
    By our choice of $X$ and $Y$, we have $d^2(w)\leq d^1(w)\leq \abs{X}+\abs{Y}$.
    We also know that $w$ is in at most
    \[\sum_{c'}\bip(\abs{N_{c'}(w)})  = \sum_{i\geq1}\bip(d^i(w))\]
    monochromatic triangles.
    As $d^3(w)+d^4(w)+\cdots  = n-1-d^1(w)-d^2(w)$ and $d^3(w),d^4(w),\ldots \leq d^2(w)$, by the convexity of $\bip(x) = \lfloor\frac{x}{2}\rfloor\lceil \frac{x}{2}\rceil$, this is maximized when all but at most one nonzero $d^i(w)$ are equal to $d^2(w)$ for $i>2$. So, with $q$ and $r$ the unique nonnegative integers such that $n-1-d^1(w)=qd^2(w)+r$ and $r<d^2(w)$, this shows that
    \[\bip(d^1(w)) + q\bip(d^2(w))+\bip(r) \geq g_3(n)-g_3(n-1)+1\]
    since $w$ is in at least $g_3(n)-g_3(n-1)+1$ monochromatic triangles.
    By the definition of $S$, we thus have $d^1(w)+d^2(w)\geq S$, as desired.
    
    For part (d), recall that $v^*$ must be in at least $g_3(n)-g_3(n-1)+1$ monochromatic triangles. The number of such triangles of color $c^*$ is at most $|X||Y|$, and the number of monochromatic triangles containing $v^*$ not of color $c^*$ is at most $\sum_{i\geq 2} \bip(d^i(v^*))$. %if we denote $d^2(v^*), d^3(v^*), \ldots$ as the degree in other colors of $v^*$ \xh{This is already defined that way right?}, then 
    Since also $\sum_{i\geq 2}d^i(v^*) = n-1-|X|-|Y|$, by convexity of $\bip(x)$, the sum $\sum_{i\geq 2} \bip(d^i(v^*))$ is maximized when $d^2(v^*) = n-1-|X|-|Y|$ and $v^*$ sees no further colors.
    Here, the fact that $\abs{X}+\abs{Y}\geq\frac{n-1}{2}$ ensures that $d^2(v^*)\leq d^1(v^*)$.
\end{proof}

We next define an elementary function $F$ which, roughly speaking, gives a lower bound on the number of precyclic triangles containing a given vertex, given its degree in color $c^*$ to each of the two other parts of $c^*$.% \ra{I replaced bad with precyclic in \cref{def:tournament setup} so I think this works now, although it does hide what $C$ does.}%\ra{We previously defined bad to mean 2-precyclic or 3-precylic, so I think this works, although it does hide what $C$ does.} \xh{Do we need to use "bad" or can we just say "precyclic" means $2$-precyclic or $3$-precyclic?}\

\begin{definition}\label{def:F}
For any $A,B,C> 0$, define a \textit{(real) $(A, B, C)$-composition} to be a pair $(a, b)$ of compositions $a_1+\ldots+a_\ell = A$ and $b_1+\ldots+b_\ell = B$ into nonnegative real numbers, of the same length $\ell$, such that $a_i + b_i \leq C$ for each $i\in[\ell]$. Write $(a, b) \vdash (A, B, C)$ if $(a, b)$ is an $(A, B, C)$-composition. Define $$F(A,B,C) = \inf_{(a, b)\vdash (A, B, C)} \sum_{i\neq j} a_i b_j.$$
%\[F(A,B,C)=\inf_{\substack{t\in \NN\\ a_1,b_1,\ldots, a_t,b_t\in \RR_{\geq 0}\\a_1+\cdots+a_t= A\\ b_1+\cdots+b_t= B\\ a_i+b_i\leq C\;\forall i\in[t]}}\sum_{i\neq j}a_ib_j.\]
We define $F(A,B,C)=0$ if $A$ or $B$ is nonpositive. %\xh{This subscript is nasty, let's come up with some cleaner notation. Maybe: "Define a \textit{(real) $(A,B,C)$-composition} to be a pair $(a,b)$ of compositions $a_1 + \cdots + a_t = A$ and $b_1 + \cdots + b_t = B$ into nonnegative real numbers, of the same length $t$, for which $a_i + b_i \le C$ termwise. We write $(a,b)\vdash (A,B,C)$ if $(a,b)$ is an $(A,B,C)$-composition. Let $F(A,B,C) = \inf_{(a,b)\vdash (A,B,C)} \sum a_i b_j$, where the infimum is over $(A,B,C)$-compositions."}
\end{definition}

We first state three facts about $F$. The proofs are routine and can be found in Appendix \ref{sec:proof of F lemma}.

\begin{restatable}{lemma}{flemma}\label{lemma:elementary} The following are true.
\begin{enumerate}[(a)]
    \item The infimum in the definition of $F(A,B,C)$ is attained by $\ell=\lceil (A+B)/C\rceil$, $a_i+b_i=C$ for all $i<\ell$, and $a_i-b_i=a_j-b_j$ for all $i<j$ with the possible exception of $j=\ell$. If $a_1-b_1 \ne a_\ell-b_\ell$, then $\min(a_\ell,b_\ell)=0$ and $a_\ell-b_\ell$ is between $0$ and $a_1-b_1$.

    Moreover, there is exactly one minimizer satisfying the above.
    \item The function $F(A,B,C)$ is increasing in $A$ and $B$, and decreasing in $C$.
    \item We have
    \[\frac{B'}{B}F(A,B,C)\geq F(A,B',C)\]
    for any $B\geq B'>0$.
\end{enumerate}
\end{restatable}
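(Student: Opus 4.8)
The statement to prove is \cref{lemma:elementary}, which collects three facts about the function $F(A,B,C) = \inf_{(a,b)\vdash(A,B,C)} \sum_{i\neq j} a_i b_j$. I would prove the three parts in order, since (b) and (c) both lean on the characterization in (a).

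\textbf{Part (a): the exact minimizer.} Fix $A,B,C>0$. First I would observe that we may restrict attention to a fixed length $\ell$: increasing $\ell$ (by splitting a coordinate into two) can only help or stay the same, and once $\ell$ is large enough the constraint $a_i+b_i\le C$ is no longer binding on all coordinates, so $\ell = \lceil (A+B)/C\rceil$ suffices (and is necessary, since $\sum(a_i+b_i)=A+B$ forces $\ell \ge (A+B)/C$). For fixed $\ell$, rewrite the objective: $\sum_{i\neq j} a_ib_j = \left(\sum_i a_i\right)\left(\sum_j b_j\right) - \sum_i a_ib_i = AB - \sum_i a_i b_i$, so minimizing $\sum_{i\neq j}a_ib_j$ is the same as \emph{maximizing} $\sum_i a_i b_i$ subject to $\sum a_i = A$, $\sum b_i = B$, $a_i+b_i\le C$, $a_i,b_i\ge 0$. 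This is a clean convex-type optimization. I would argue via an exchange/smoothing argument: if some coordinate $i<\ell$ has $a_i+b_i<C$, then (since we are maximizing a bilinear form and the feasible region is a polytope) we can push mass around to make $a_i+b_i=C$ without decreasing $\sum a_ib_i$; this forces $a_i+b_i=C$ for all but possibly one index, which we call $\ell$. On the set where $a_i+b_i=C$, write $a_i = (C+x_i)/2$, $b_i=(C-x_i)/2$, so $a_ib_i = (C^2-x_i^2)/4$, and maximizing $\sum a_ib_i$ means minimizing $\sum x_i^2$ subject to $\sum x_i$ fixed (determined by $A$, $B$, and the leftover coordinate) — by convexity this is achieved when all $x_i$ are equal, i.e. $a_i-b_i$ is constant across these coordinates. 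The leftover coordinate $\ell$ then absorbs the remainder; a short case analysis (is $a_\ell\ge 0$? is $b_\ell\ge 0$?) gives that either $a_\ell-b_\ell$ equals the common value or one of $a_\ell,b_\ell$ hits the boundary $0$, with $a_\ell-b_\ell$ between $0$ and $a_1-b_1$. Uniqueness follows because the quadratic $\sum x_i^2$ has a strict minimizer on each face.

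\textbf{Parts (b) and (c): monotonicity.} For (b), monotonicity in $C$ is immediate from the definition: enlarging $C$ enlarges the feasible set of compositions, so the infimum can only decrease. Monotonicity in $A$ (and by symmetry $B$) I would get from the explicit minimizer in (a): writing $F(A,B,C)$ out via the formula, it is a piecewise-polynomial function that one checks is increasing; alternatively, and more cleanly, given an optimal $(A,B,C)$-composition $(a,b)$ and $A'>A$, one can construct a feasible $(A',B,C)$-composition by adding a new coordinate (or augmenting the last) with $a$-part $A'-A$ and $b$-part $0$, which adds $(A'-A)\sum_{j} b_j = (A'-A)B \ge 0$ to the objective — wait, that shows the value at $A'$ is \emph{at least} some quantity, which isn't quite a lower bound on $F(A',B,C)$; so here I really do want to use the characterization from (a) and a direct comparison of the optimal configurations, or a continuity-plus-derivative argument. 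For (c), the inequality $\tfrac{B'}{B}F(A,B,C)\ge F(A,B',C)$ for $B\ge B'>0$: take the optimal $(A,B,C)$-composition $(a,b)$ from (a) and scale the $b$-coordinates by $B'/B$, giving a composition with $b$-sum $B'$ and each $a_i+b_i' \le a_i+b_i\le C$ (since $B'/B\le 1$), hence feasible for $(A,B',C)$; its objective is $\sum_{i\ne j}a_i(B'/B)b_j = (B'/B)\sum_{i\ne j}a_ib_j = (B'/B)F(A,B,C)$, and since $F(A,B',C)$ is the infimum it is at most this. That is exactly the claim.

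\textbf{Main obstacle.} The real work is all in part (a): nailing down the exact minimizer with the correct boundary behavior of the last coordinate, and the uniqueness claim. The reduction to maximizing $\sum a_ib_i = AB - F$ over a polytope makes the structure transparent, but the smoothing argument needs to be executed carefully — one has to verify that pushing a coordinate to the boundary $a_i+b_i=C$ genuinely doesn't decrease $\sum a_ib_i$ (this uses that the gradient of the bilinear form points "outward" appropriately), and the case analysis for which of $a_\ell, b_\ell$ can be forced to $0$ is fiddly. Once (a) is pinned down, (b) and (c) are short. Since the excerpt defers these proofs to an appendix and calls them "routine," I would keep the writeup correspondingly brief, emphasizing the $\sum_{i\ne j}a_ib_j = AB - \sum_i a_ib_i$ identity and the convexity/smoothing steps, and leaving the boundary bookkeeping to a lemma-internal case split.
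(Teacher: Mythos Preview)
Your overall approach matches the paper's closely: the same identity $\sum_{i\ne j}a_ib_j = AB-\sum_i a_ib_i$ reduces (a) to maximizing $\sum_i a_ib_i$, a smoothing argument forces $a_i+b_i=C$ for $i<\ell$, and then a convexity argument forces the differences $a_i-b_i$ to be equal (the paper uses a direct two-coordinate perturbation rather than your $\sum x_i^2$ formulation, but these are equivalent). Your scaling argument for (c) is exactly the paper's.

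The one place where you stumble is monotonicity in $A$ for part (b). You correctly notice that adding a new coordinate with $a$-mass $A'-A$ gives only an upper bound on a feasible point's value, not on the infimum, and you then retreat to ``use the explicit formula from (a) or a derivative argument.'' That would work but is heavier than necessary. The paper instead uses the \emph{same scaling trick you already used for (c)}: for $A'<A$, take any $(A,B,C)$-composition $(a,b)$ and replace each $a_i$ by $\tfrac{A'}{A}a_i$. Since $\tfrac{A'}{A}a_i+b_i\le a_i+b_i\le C$, this is an $(A',B,C)$-composition, and its objective is $\tfrac{A'}{A}\sum_{i\ne j}a_ib_j$. Taking the infimum over $(a,b)$ gives $F(A',B,C)\le \tfrac{A'}{A}F(A,B,C)\le F(A,B,C)$. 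So the monotonicity in $A$ and $B$ follows from the same one-line argument as (c), with no need to invoke the structure from (a).
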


We now introduce more notation. Recall that $N_{c,1}(v)$ and $N_{c,2}(v)$ are the two parts of the color-$c$ neighborhood of vertex $v$.

\begin{definition}
Define $M:=\max\abs{N_{c,j}(v)}$, where the max is taken over vertices $v$, colors $c$, and $j\in [2]$.
For any set of vertices $S$, any vertex $u\not\in S$ and any color $c$, let $b^{(c)}_S(u)$ be the number of edges from $u$ to $S$ not of color $c$.
We drop the superscript if $c$ is clear from context. Also, for any sets of vertices $S_1, S_2, S_3$, let $\pc_2(S_1, S_2, S_3)$ and $\pc_3(S_1, S_2, S_3)$ denote the number of $2$-precyclic and $3$-precyclic triangles (respectively) with one vertex in each of $S_1, S_2$, and $S_3$. We shorten $\pc_i(\{v\}, S_1, S_2)$ to simply $\pc_i(v, S_1, S_2)$. 
\end{definition}

We now use the function $F$ to argue that certain colored neighborhoods of a vertex are large.

\begin{lemma}\label{lemma:bipartite-to-tripartite}
    Suppose that $S_1,S_2\subseteq V$ are two disjoint sets of vertices and $c$ is a color such that $c(v)$ is constant on each of $S_1$ and $S_2$ with $c(S_1)\neq c(S_2)$.
    Suppose that there are at most $b$ pairs $(s_1,s_2)\in S_1\times S_2$ with $\chi(s_1s_2)\neq c$.
    Let $Q$ be a nonnegative integer. 
    \begin{enumerate}[(a)]
        \item For any $v\in V$ with $c(v)\not\in \{c(S_1),c(S_2)\}$, if $2\pc_2(v, S_1, S_2) + \pc_3(v, S_1, S_2) \leq Q$ and
        \[F\left(\abs{S_1}, \abs{S_2}-\left\lfloor \frac{Q+2b}{2\abs{S_1}}\right\rfloor,M\right)>Q+b,\]
        then $b_{S_2}(v)\leq \frac{1}{2}\abs{S_2}$.
        \item If $b_{S_2}(v)\leq \frac{1}{2}\abs{S_2}$ for some $v\in V$ with $c(v)\not\in \{c(S_1),c(S_2)\}$, then $\pc_2(v, S_1, S_2) \geq\abs{S_1}b_{S_2}(v)-b$. %\xh{Is part (b) ever used separately from part (a)? If not, can we just combine them?}
    \end{enumerate}
\end{lemma}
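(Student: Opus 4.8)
The plan is to prove both parts by carefully counting precyclic triangles through $v$ whose other two vertices lie in $S_1$ and $S_2$, and comparing this count against the structural hypotheses. Throughout, set $S_1' = S_1$, and for each $i \in \{1,2\}$ write $N_i = N_c(v) \cap S_i$ for the color-$c$ neighbors of $v$ in $S_i$, and $\overline{N_i} = S_i \setminus N_i$ for the non-$c$ neighbors, so $|\overline{N_2}| = b_{S_2}(v)$ and similarly for $S_1$. The key structural observation is that for $s_1 \in S_1$ and $s_2 \in S_2$ with $\chi(s_1 s_2) = c$, the triangle $v s_1 s_2$ is precyclic unless it is monochromatic or fails the ``different parts'' condition: more precisely, if $\chi(v s_1) = \chi(v s_2) = c$ then (since $c(s_1) \neq c(s_2)$) it is monochromatic; if exactly one of $\chi(v s_1), \chi(v s_2)$ equals $c$ then the triangle has two colors and is $2$-precyclic (the two $c$-edges share the vertex $s_1$ or $s_2$, and since $c(s_1) \neq c(s_2)$ the relevant inequality of parts holds); and if neither equals $c$ we get a contribution to $\pc_2$ or $\pc_3$ as well. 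So essentially every edge from $v$ into $S_2$ that is \emph{not} color $c$ produces, together with a color-$c$ edge to $S_1$, a precyclic triangle.

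For part (b), I would argue as follows. Take $v$ with $c(v) \notin \{c(S_1), c(S_2)\}$ and $b_{S_2}(v) \le \tfrac12|S_2|$. Each vertex $s_2 \in \overline{N_2}$ (i.e.\ $\chi(v s_2) \neq c$) pairs with each $s_1 \in N_1$ such that $\chi(s_1 s_2) = c$ to give a $2$-precyclic triangle $v s_1 s_2$: indeed $\chi(v s_1) = c = \chi(s_1 s_2)$ but $\chi(v s_2) \neq c$, and $c(s_1) \neq c(s_2)$ gives the required part-inequality $c(s_1)(s_1) \neq c(s_1)(s_2)$ — wait, one must phrase this via the color $\chi(vs_1)=c$; here the two equal edges are $vs_1$ and $s_1 s_2$ meeting at $s_1$, the third edge $v s_2$ has a different color, and the condition to check is $c(v) \neq c(s_2)$, which holds since $c(v) \notin \{c(S_1), c(S_2)\}$. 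For a fixed $s_2 \in \overline{N_2}$, the number of $s_1 \in S_1$ with $\chi(s_1 s_2) = c$ is at least $|S_1|$ minus the number of non-$c$ edges from $s_2$ to $S_1$; summing the latter over all $s_2 \in S_2$ (not just $\overline{N_2}$) is at most $b$. Hence the total number of such $2$-precyclic triangles is at least $|S_1| \cdot |\overline{N_2}| - b = |S_1| b_{S_2}(v) - b$, which is the claimed bound $\pc_2(v, S_1, S_2) \ge |S_1| b_{S_2}(v) - b$.

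For part (a), the plan is a proof by contradiction: assume $b_{S_2}(v) > \tfrac12|S_2|$ and $2\pc_2(v,S_1,S_2) + \pc_3(v,S_1,S_2) \le Q$, and derive a violation of the displayed $F$-inequality. The idea is to count $3$-precyclic triangles $v s_1 s_2$ with $s_1 \in S_1$, $s_2 \in S_2$: these arise when $\chi(v s_1)$, $\chi(v s_2)$, $\chi(s_1 s_2)$ are three distinct colors. Restricting to edges $s_1 s_2$ of color $c$ (which is all but $b$ of the $|S_1||S_2|$ pairs), and to $s_1, s_2$ that are non-$c$ neighbors of $v$ via \emph{different} colors, we get $3$-precyclic triangles. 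But I should instead think of this as applying the definition of $F$: partition $\overline{N_1}$ and $\overline{N_2}$ by the color of the edge to $v$; for color class $d \neq c$, let $a_d = $ (number of non-$c$ edges from $v$ to $S_1$ of color $d$) and $b_d = $ (number to $S_2$ of color $d$); since color $d$ is tripartite and $v$ occupies one part, $a_d + b_d$ is at most the size of the union of the other two parts of color $d$, which is at most... hmm, more carefully $a_d \le |N_{d,1}(v)| + |N_{d,2}(v)|$ but I want a bound of the form $a_d + b_d \le$ something $\le M$-related. Actually the relevant bound is that the neighbors of $v$ in color $d$ split into two parts each of size $\le M$; and within $S_1 \cup S_2$, since $\chi$-edges of color $c$ dominate, the contribution works out so that $(a_d, b_d)$ forms (after discarding the $\le b$ bad pairs and accounting for the $\lfloor (Q+2b)/(2|S_1|)\rfloor$ slack coming from the $\pc_2$ budget $Q$) a real $(|S_1|, |S_2| - \lfloor (Q+2b)/(2|S_1|)\rfloor, M)$-composition, while $\sum_{d \neq d'} a_d b_{d'}$ counts (a lower bound on) the number of $3$-precyclic triangles, which is $\le \pc_3(v,S_1,S_2) \le Q$. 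Then $F(|S_1|, |S_2| - \lfloor(Q+2b)/(2|S_1|)\rfloor, M) \le Q$, but wait we need to also subtract $b$ and this should contradict $F(\cdots) > Q + b$. The main obstacle — and the step I expect to require the most care — is precisely this bookkeeping: correctly identifying which $(a_d, b_d)$ form a valid $(A,B,C)$-composition after we throw away the $\le b$ non-$c$ pairs $s_1 s_2$ and account for the assumption $b_{S_2}(v) > \tfrac12|S_2|$ and the $2\pc_2$ budget (which is what produces the $\lfloor (Q+2b)/(2|S_1|) \rfloor$ correction term: vertices $s_1 \in S_1$ with many non-$c$ edges to $v$ would create too many $2$-precyclic triangles unless we restrict to a subset of $S_2$), and then invoking the monotonicity of $F$ from \cref{lemma:elementary}(b) to go from the reduced parameters to the stated ones. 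I would organize this by first bounding $\pc_2(v, S_1, S_2)$ from below in terms of $b_{S_1}(v)$ (using the part (b)-style argument in the other direction) to control how large $b_{S_1}(v)$ can be given the budget $Q$, then feeding the remaining structure into $F$.
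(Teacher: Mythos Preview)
Your overall architecture is right --- both parts come from counting precyclic triangles with one vertex in each of $\{v\}, S_1, S_2$ --- but there are genuine gaps in each part.

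\textbf{Part (b).} You only count pairs $(s_1,s_2)\in N_1\times \overline{N_2}$, then silently switch to ``$s_1\in S_1$'' when summing, which over-counts: if $s_1\in\overline{N_1}$ and $s_2\in\overline{N_2}$ with $\chi(s_1s_2)=c$, the triangle $vs_1s_2$ need not be $2$-precyclic (the two non-$c$ edges meet at $v$, and the part condition in that third color can fail). Your argument honestly yields only $\pc_2\ge (|S_1|-b_{S_1}(v))\,b_{S_2}(v)-b$. Notice you never invoke the hypothesis $b_{S_2}(v)\le\tfrac12|S_2|$; that is the tell. The paper instead starts from the symmetric bound
\[
\pc_2(v,S_1,S_2)\ge b_{S_1}(v)\bigl(|S_2|-b_{S_2}(v)\bigr)+\bigl(|S_1|-b_{S_1}(v)\bigr)b_{S_2}(v)-b,
\]
and then uses $b_{S_2}(v)\le|S_2|-b_{S_2}(v)$ to replace the first factor, collapsing the right side to $|S_1|\,b_{S_2}(v)-b$.

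\textbf{Part (a).} Two issues. First, the $\lfloor(Q+2b)/(2|S_1|)\rfloor$ term does not arise from controlling $b_{S_1}(v)$: under the contradiction hypothesis $b_{S_2}(v)>\tfrac12|S_2|$, the same symmetric inequality above gives $\pc_2\ge |S_1|(|S_2|-b_{S_2}(v))-b$, which combined with $2\pc_2\le Q$ forces $b_{S_2}(v)\ge|S_2|-\lfloor(Q+2b)/(2|S_1|)\rfloor$. That lower bound on $b_{S_2}(v)$ is what feeds into the second argument of $F$ via monotonicity. Second --- and this is the key missing idea --- you must partition by the pair $\sigma(s)=(\chi(vs),\,\chi(vs)(s))$, i.e.\ by colour \emph{and part}, not by colour alone. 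With your colour-only partition, $a_d+b_d$ is bounded only by $|N_d(v)|\le 2M$, which is too weak to form an $(|S_1|,b_{S_2}(v),M)$-composition. With the (colour, part) partition, all vertices with the same $\sigma$-value lie in a single $N_{c',j}(v)$, so $a_j+b_j\le M$ on the nose. Moreover, the composition runs over all of $S_1$ (including $N_1$) and $S_2':=\overline{N_2}$, not over $\overline{N_1}\times\overline{N_2}$; pairs with $\sigma(s_1)\ne\sigma(s_2)$ and $\chi(s_1s_2)=c$ are then shown to be precyclic (possibly $2$- or $3$-precyclic), and there are at least $F(|S_1|,b_{S_2}(v),M)-b>Q$ of them, contradicting $2\pc_2+\pc_3\le Q$.
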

\begin{proof}
    Let $v$ be a vertex with $c(v)\not\in \{c(S_1),c(S_2)\}$.
    Note that for every $(s_1,s_2)\in S_1\times S_2$, if exactly one of $\chi(vs_1),\chi(vs_2)$ is equal to $c$, then $vs_1s_2$ is a $2$-precyclic triangle unless $\chi(s_1s_2)\neq c$.
    Therefore, we have
    \begin{align}\label{eq:pc2 lower bound}\pc_2(v, S_1, S_2) \geq b_{S_1}(v)(\abs{S_2}-b_{S_2}(v))+(\abs{S_1}-b_{S_1}(v))b_{S_2}(v)-b.\end{align}

    Let us first prove the second item.
    If $b_{S_2}(v)\leq \frac{1}{2}\abs{S_2}$, then we know that $b_{S_2}(v)\leq \abs{S_2}-b_{S_2}(v)$.
    So, from \eqref{eq:pc2 lower bound}, we obtain
    \[\pc_2(v, S_1, S_2) \geq b_{S_1}(v)b_{S_2}(v)+(\abs{S_1}-b_{S_1}(v))b_{S_2}(v)-b = \abs{S_1}b_{S_2}(v)-b,\] as desired.

    We now prove the first item by contradiction.
    Assume that $b_{S_2}(v)>\frac{1}{2}\abs{S_2}$.
    Then we have $b_{S_2}(v) > \abs{S_2}-b_{S_2}(v)$, %\xh{this inequality is flipped?}\ra{fixed}
    so from \eqref{eq:pc2 lower bound} we obtain 
    \begin{align*}
        \pc_2(v, S_1, S_2) &\geq  b_{S_1}(v)(\abs{S_2}-b_{S_2}(v))+(\abs{S_1}-b_{S_1}(v))(\abs{S_2}-b_{S_2}(v))-b\\&=  \abs{S_1}(\abs{S_2}-b_{S_2}(v))-b.
    \end{align*}
    By the assumption, we know that this is at most $Q/2$, which shows that
    \[\abs{S_2}-b_{S_2}(v) \leq \frac{Q+2b}{2\abs{S_1}},\]
    and so 
    \[b_{S_2}(v)\geq \abs{S_2}-\left\lfloor \frac{Q+2b}{2\abs{S_1}}\right\rfloor\]
    using the integrality of $b_{S_2}(v)$.

    Let $S_2'$ be the subset of $S_2$ containing vertices $s_2$ with $\chi(vs_2)\neq c$.
    Then by definition $\abs{S_2'} = b_{S_2}(v)$. 
    
    For any $s\in S_1\cup S_2'$, we denote $\sigma(s) := (c', c'(s))$ where $c'=\chi(vs)$. That is, $\sigma(s)$ is the ordered pair consisting of the color $\chi(vs)$ as well as the label of the part of $s$ in that color's tripartition.
    We show that for any $(s_1,s_2)\in S_1\times S_2'$, if $\sigma(s_1)\neq \sigma(s_2)$ %\xh{This notation is sufficiently confusing to merit some more explanation, for example, we could add a line "define the signature of a vertex $s \in S_1 \cup S_2'$ to be the pair $\sigma(s)\coloneqq (c, c(v))$, where $c=\chi(vs)$"}
    and $\chi(s_1s_2)=c$, then $vs_1s_2$ is a precyclic triangle.
    If $\chi(vs_1)=c$, then since $s_2\in S_2'$ and so $\chi(vs_2)\neq c$, we know that $vs_1s_2$ is a $2$-precyclic triangle.
    We now just need to deal with the case $\chi(vs_1)\neq c$.
    If $\chi(vs_1)\neq \chi(vs_2)$, we see that $vs_1s_2$ is a $3$-precyclic triangle.
    If $\chi(vs_1)=\chi(vs_2)=c'$, then we must have $c'(s_1)\neq c'(s_2)$ and so $vs_1s_2$ is a $2$-precyclic triangle as $\chi(s_1s_2)=c$ by our assumption.

    Let $N$ be the number of pairs $(s_1,s_2)\in S_1\times S_2'$ with $\sigma(s_1)\neq \sigma(s_2)$. We now show that $N > Q+b$. %\xh{We only show $N > Q + b$ later?}
    Let $(c_1,i_1),\ldots, (c_\ell,i_\ell)$ be the possible values of the ordered pair $\sigma(s)$ for $s\in S_1\cup S_2'$.
    %Suppose that the $\ell$ possibilities are $(c_1,i_1),\ldots, (c_\ell,i_\ell)$.
    For any $j\in[\ell]$, let $a_j$ be the number of $s_1\in S_1$ with $\sigma(s_1)=(c_j,i_j)$ and let $b_j$ be the number of $s_2\in S_2'$ with $\sigma(s_2)=(c_j,i_j)$.
    Then we know that $\sum_{j}a_j=\abs{S_1}$ and $\sum_jb_j = \abs{S_2'} = b_{S_2}(v)$.
    Moreover, for each $j\in[\ell]$ we know that there are at least  $a_j+b_j$ vertices $u$ in the graph with $\chi(vu) = c_j$ and $c_j(u)) = i_j$.
    Therefore if $k\in\{1, 2\}$ is such that $i_j\equiv c_j(v)+k\pmod 3$, then there are at least $a_j+b_j$ vertices in $N_{c_j,k}(v)$, showing that $a_j+b_j\leq M$ by the definition of $M$.  We conclude that $(a,b)$ is a $(|S_1|, b_{S_2}(v), M)$-composition.
    Since the number of pairs $(s_1,s_2)\in S_1\times S_2'$ with $\sigma(s_1)\neq \sigma(s_2)$ is simply
    $\sum_{i\neq j} a_ib_j,$
    by the definition of $F$ we have
    \[N\geq F\left(\abs{S_1},b_{S_2}(v),M\right)\geq F\left(\abs{S_1},\abs{S_2}-\left\lfloor\frac{Q+2b}{2\abs{S_1}}\right\rfloor,M\right)>Q+b.\]
    Here we used \cref{lemma:elementary}(b) and the assumption of the current lemma.
    Now among the $N$ pairs, there are at most $b$ pairs $(s_1,s_2)$ with $\chi(s_1s_2)\neq c$.
    Therefore there are more than $Q$ pairs $(s_1, s_2)\in S_1\times S_2'$ that form a precyclic triangle with $v$, which is a contradiction.
\end{proof}

We can use \cref{lemma:bipartite-to-tripartite} to begin bounding the number of edges not colored $c^*$ between the parts $U, X^*,$ and $Y^*$. We need a simple upper bound on $M$.
\begin{lemma}\label{lemma:First-step} We have $M \leq \frac{\abs{X}+\abs{Y}+\Delta}{2}$.
\end{lemma}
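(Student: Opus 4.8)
The plan is to bound $M = \max |N_{c,j}(v)|$ by unpacking what a large value of $M$ would mean and comparing it against the definition of $v^*$ and $c^*$. Recall that $v^*$ and $c^*$ were chosen precisely to maximize $|N_{c^*}(v^*)| = |X| + |Y|$ over all vertices and colors. So for any vertex $w$ and color $c$, we have $|N_c(w)| = |N_{c,1}(w)| + |N_{c,2}(w)| \leq |X| + |Y|$. The claim to prove is the stronger statement that neither individual part $|N_{c,j}(w)|$ can exceed $\frac{|X|+|Y|+\Delta}{2}$.

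First I would observe that for the particular vertex $w$ and color $c$ achieving the maximum in the definition of $M$, the two parts have sizes $|N_{c,1}(w)|$ and $|N_{c,2}(w)|$ whose difference in absolute value is exactly $\delta_c(w)$. Since $\chi \in \cG_{\Delta, P}$, we have $\delta_c(w) \leq \Delta$. Writing $a = |N_{c,1}(w)|$ and $b = |N_{c,2}(w)|$ with $a \geq b$ (so $a = M$), we have $a - b \leq \Delta$ and $a + b \leq |X| + |Y|$ from the maximality of $v^*, c^*$. Adding these two inequalities gives $2a \leq |X| + |Y| + \Delta$, i.e., $M = a \leq \frac{|X|+|Y|+\Delta}{2}$, which is exactly the desired bound.

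**The main (and only) obstacle** is making sure the two ingredients are stated correctly: that the definition of $v^*, c^*$ really gives $|N_c(w)| \leq |X| + |Y|$ for all $w, c$ — which is immediate since $|X| + |Y| = |N_{c^*}(v^*)|$ was chosen to be maximal — and that $\delta_c(w) \leq \Delta$ holds for the maximizing pair, which is part of the definition of $\cG_{\Delta, P}$. There is a small subtlety in that $M$ could a priori be attained at a pair $(w, c)$ where one of the two parts $N_{c,j}(w)$ is empty; but then $M = |N_c(w)| \leq |X| + |Y| \leq \frac{|X| + |Y| + \Delta}{2}$ trivially (using $\Delta \geq |X| + |Y| \geq 0$ is not needed — rather, when one part is empty $M = |N_c(w)| \le |X|+|Y|$, and we separately need $|X| + |Y| \le \frac{|X|+|Y|+\Delta}{2}$, which requires $|X|+|Y| \le \Delta$; this edge case is cleanly absorbed by the general argument above since $b = 0$ gives $a - b = a \le \Delta$ automatically when $a$ is the max part, so $2a = a + b + (a-b) \le (|X|+|Y|) + \Delta$ still holds). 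So the uniform argument handles all cases. I would write this as a three-line proof: fix the maximizing $(w,c)$, name the part sizes $a \geq b$, cite maximality of $(v^*, c^*)$ for $a + b \leq |X| + |Y|$ and the definition of $\cG_{\Delta,P}$ for $a - b = \delta_c(w) \leq \Delta$, and add.
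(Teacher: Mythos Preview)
Your proposal is correct and takes essentially the same approach as the paper: both use the maximality of $(v^*,c^*)$ to get $|N_{c,1}(w)|+|N_{c,2}(w)|\le |X|+|Y|$ and the definition of $\cG_{\Delta,P}$ to get $\big||N_{c,1}(w)|-|N_{c,2}(w)|\big|=\delta_c(w)\le\Delta$, then add. Your digression on the ``one part empty'' edge case is unnecessary, since the identity $\max(a,b)=\tfrac{(a+b)+|a-b|}{2}$ already handles it uniformly (as you eventually note).
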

\begin{proof}
By definition of $X$ and $Y$, for any vertex $v\in V$ and any color $c$, we have $\abs{N_c(v)}\leq \abs{X}+\abs{Y}$.
We also know that
\[\max\left(\abs{N_{c,1}(v)}, \abs{N_{c,2}(v)}\right) = \frac{\abs{N_c(v)}+\delta_c(v)}{2}\leq \frac{\abs{X}+\abs{Y}+\Delta}{2}\]
since $\delta_c(v)\leq \Delta$ by definition of $\cG_{\Delta,P}$.
Taking the max over $v$ and $c$ of both sides yields $M\leq \frac{\abs{X}+\abs{Y}+\Delta}{2}$.
\end{proof}

Let $b_{XY}$ be the number of edges between $X$ and $Y$ not colored $c^*$, and let $p:=\pc_2(U\sm\{v^*\}, X, Y)$. For the remainder of this section, let $b_S(v)$ refer to $b_S^{(c^*)}(v)$ unless otherwise specified. Recall the definition of admissible pairs from \cref{def:admissible-pair}.

\begin{lemma}[$\bigstar$]\label{lemma:Second-step}
    Suppose the assumption in \cref{lemma:possible-X-Y} holds, and assume that for all admissible pairs $(s, t)$, we have
    \[F\left(s,t-\left\lfloor \frac{P}{2s}\right\rfloor, \left\lfloor\frac{s+t+\Delta}{2}\right\rfloor\right)>P.\]
    Then, $b_X(u)\leq \frac{1}{2}\abs{X}$ and $b_Y(u)\leq \frac{1}{2}\abs{Y}$ for all $u\in U$.
    If this is the case, then
    \begin{equation}\label{eq:bound-UX-UY}
        \sum_{u\in U}b_X(u)\leq \frac{\abs{U}b_{XY}+p}{\abs{Y}} \quad\text{ and }\quad
        \sum_{u\in U}b_Y(u)\leq \frac{\abs{U}b_{XY}+p}{\abs{X}}.
    \end{equation}
\end{lemma}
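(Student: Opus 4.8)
\textbf{Proof proposal for \cref{lemma:Second-step}.}

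The plan is to apply \cref{lemma:bipartite-to-tripartite} with $S_1 = X$, $S_2 = Y$ (and symmetrically $S_1 = Y$, $S_2 = X$), $c = c^*$, $b = b_{XY}$, and $Q = P$, once for each $u \in U$ playing the role of $v$. First I would check that the hypotheses of \cref{lemma:bipartite-to-tripartite}(a) are met: the sets $X \subseteq X^*$ and $Y \subseteq Y^*$ are disjoint, $c^*$ is constant on each (they lie in distinct parts $X^*, Y^*$ of the tripartition of $c^*$), and $b_{XY}$ bounds the number of non-$c^*$ pairs between them by definition. For each $u \in U$ we have $c^*(u) = c^*(v^*) \notin \{c^*(X), c^*(Y)\}$, so $u$ is a legitimate choice of $v$. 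The quantity $2\pc_2(u, X, Y) + \pc_3(u, X, Y)$ is at most $2\pc_2 + \pc_3 \le P$ since these are among all precyclic triangles of $\chi$, so the hypothesis $2\pc_2(v,S_1,S_2)+\pc_3(v,S_1,S_2) \le Q$ holds with $Q = P$.

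The key step is to verify the $F$-inequality hypothesis of \cref{lemma:bipartite-to-tripartite}(a). By \cref{lemma:First-step}, $M \le \frac{|X|+|Y|+\Delta}{2}$, and since $F$ is decreasing in its third argument by \cref{lemma:elementary}(b), it suffices to check
\[
F\!\left(|X|, |Y| - \left\lfloor \frac{P + 2b_{XY}}{2|X|}\right\rfloor, \left\lfloor\frac{|X|+|Y|+\Delta}{2}\right\rfloor\right) > P + b_{XY}.
\]
Here I would want to reduce to the cleaner inequality assumed in the lemma statement, namely $F(s, t - \lfloor P/(2s)\rfloor, \lfloor(s+t+\Delta)/2\rfloor) > P$ for admissible pairs $(s,t)$. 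By \cref{lemma:possible-X-Y} (whose assumption we are given), $(|X|, |Y|)$ is an admissible pair, so we may take $s = |X|$, $t = |Y|$. The subtlety is the extra $b_{XY}$ on both sides: one needs that the number of non-$c^*$ edges between $X$ and $Y$ is not too large. I would bound $b_{XY}$ by noting that each such edge, together with $v^*$, forms a $2$-precyclic triangle (since $\chi(v^*x) = \chi(v^*y) = c^*$ while $\chi(xy) \ne c^*$ and $x, y$ are in different parts of $c^*$), so $b_{XY} \le \pc_2 \le P/2$; hence $P + 2b_{XY} \le 2P$ and $P + b_{XY} \le \frac{3}{2}P$, and a short monotonicity argument using \cref{lemma:elementary}(b) together with the hypothesis should close the gap. (This is the step where I expect the constants to need care — quite possibly the intended reading is that the hypothesis as literally stated, combined with these $b_{XY} \le P/2$ bounds and the monotonicity of $F$, is exactly enough, and the lemma is phrased with the clean $F(s, t-\lfloor P/2s\rfloor, \cdot) > P$ precisely so that the computer check is transparent.) Applying \cref{lemma:bipartite-to-tripartite}(a) then gives $b_Y(u) \le \frac{1}{2}|Y|$ for all $u \in U$, and the symmetric application (swapping the roles of $X$ and $Y$, using admissibility is symmetric in $s,t$ up to $|s-t|\le\Delta$) gives $b_X(u) \le \frac{1}{2}|X|$.

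For the second assertion, once $b_X(u) \le \frac{1}{2}|X|$ and $b_Y(u) \le \frac{1}{2}|Y|$ hold for every $u \in U$, I would invoke \cref{lemma:bipartite-to-tripartite}(b): for each $u \in U \setminus \{v^*\}$ (and for $v^*$ itself, where $b_X(v^*) = b_Y(v^*) = 0$ by definition of $X, Y$) we get $\pc_2(u, X, Y) \ge |X|\, b_Y(u) - b_{XY}$ and, symmetrically, $\pc_2(u, X, Y) \ge |Y|\, b_X(u) - b_{XY}$. Summing the second of these over all $u \in U$ and using $\sum_{u \in U} \pc_2(u, X, Y) = \pc_2(U, X, Y) = p$ (since the $v^*$ term vanishes) yields $|Y| \sum_{u \in U} b_X(u) - |U| b_{XY} \le p$, i.e. $\sum_{u\in U} b_X(u) \le \frac{|U| b_{XY} + p}{|Y|}$; the bound on $\sum_u b_Y(u)$ follows by the symmetric computation. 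The main obstacle, as noted, is purely bookkeeping: making the passage from the literal $F$-hypothesis of the lemma to the inequality actually needed by \cref{lemma:bipartite-to-tripartite}(a) airtight, tracking the floor functions and the $b_{XY}$ terms without slack — everything else is a direct substitution into results already established.
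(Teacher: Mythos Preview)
Your overall plan matches the paper's, but there is a genuine gap at exactly the spot you flagged as a ``subtlety.'' With your choice $Q = P$ and $b = b_{XY}$, the hypothesis of \cref{lemma:bipartite-to-tripartite}(a) becomes
\[
F\!\left(|X|,\, |Y| - \left\lfloor \tfrac{P+2b_{XY}}{2|X|}\right\rfloor,\, M\right) > P + b_{XY},
\]
whereas the lemma's assumption only gives $F(|X|, |Y|-\lfloor P/(2|X|)\rfloor, M) > P$. Monotonicity of $F$ in its second argument goes the \emph{wrong} way here: enlarging the subtracted term can only make $F$ smaller, and you need it to exceed a \emph{larger} threshold $P + b_{XY}$. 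No monotonicity argument closes this; the hypothesis is simply not strong enough for your choice of $Q$.

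The fix, which is what the paper does, is to take $Q = P - 2b_{XY}$ rather than $Q = P$. You already observed the key fact making this legitimate: the $b_{XY}$ non-$c^*$ edges between $X$ and $Y$ each form a $2$-precyclic triangle with $v^*$, and these triangles do not involve any $u \in U\setminus\{v^*\}$. Hence for such $u$ one has $2\pc_2(u,X,Y) + \pc_3(u,X,Y) \le P - 2b_{XY}$. With this $Q$, the required inequality becomes $F(|X|, |Y| - \lfloor P/(2|X|)\rfloor, M) > P - b_{XY}$, which follows immediately from the assumed $>P$. The $b_{XY}$'s cancel by design.

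One minor bookkeeping slip in your second part: $\pc_2(U,X,Y)$ is not $p$; by definition $p = \pc_2(U\setminus\{v^*\},X,Y)$ and $\pc_2(v^*,X,Y) = b_{XY}$, so $\pc_2(U,X,Y) = p + b_{XY}$. The clean way (as in the paper) is to sum \cref{lemma:bipartite-to-tripartite}(b) only over $u \in U\setminus\{v^*\}$, getting $|X|\sum_{u \ne v^*} b_Y(u) \le (|U|-1)b_{XY} + p$, and then use $b_Y(v^*)=0$ to include $v^*$ on the left and $b_{XY}\ge 0$ to bound $(|U|-1)b_{XY}\le |U|b_{XY}$ on the right.
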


\begin{proof}
    By \cref{lemma:possible-X-Y} and \cref{lemma:First-step}, we know that $(\abs{X},\abs{Y})$ form an admissible pair and $M\leq \lfloor\frac{\abs{X}+\abs{Y}+\Delta}{2}\rfloor$.
    The assumption implies that
    \[F\left(\abs{X},\abs{Y}-\left\lfloor \frac{P}{2s}\right\rfloor, M\right)\geq F\left(\abs{X},\abs{Y}-\left\lfloor \frac{P}{2s}\right\rfloor, \left\lfloor\frac{\abs{X}+\abs{Y}+\Delta}{2}\right\rfloor\right)>P,\] where the first inequality above used \cref{lemma:elementary}(b).

    If $u = v^*$, then by definition of $X$ and $Y$, $b_X(u) = b_Y(u) = 0$, and the first conclusion of the lemma holds trivially. So, assume $u\in U\sm\{v^*\}$. Since $v^*$ creates $b_{XY}$ $2$-precyclic triangles with $X$ and $Y$ and there are at most $P$ total precyclic triangles with $2$-precyclic triangles being counted twice, we know that $u$ creates at most $P-2b_{XY}$ precyclic triangles with $X$ and $Y$ with $2$-precyclic triangles being counted twice. %\xh{Again I think we want a notation for "number of precyclic triangles across 3 parts U, V, W." Perhaps just define $\textnormal{pc}_2(U,V,W)$, $\textnormal{pc}_3(U,V,W)$, $\textnormal{pc}_2(u,V,W)$, $\textnormal{pc}_3(u,V,W)$}
    Thus, apply \cref{lemma:bipartite-to-tripartite} with $S_1=X,$ $S_2=Y,$ $b=b_{XY}$, and $Q = P-2b_{XY}$. We obtain that $b_Y(u)\leq \frac{1}{2}\abs{Y}$ and $u$ creates at least $\abs{X}b_Y(u)-b_{XY}$ $2$-precyclic triangles with $X$ and $Y$.
    Summing over $u\in U\sm\{v^*\}$, we have
    \[\abs{X}\sum_{u\in U\sm\{v^*\}}b_Y(u)\leq (\abs{U}-1)b_{XY}+p,\] by definition of $p$. Since $b_Y(v^*) = 0$ and $b_{XY}\geq 0$, we conclude that \[\abs{X}\sum_{u\in U}b_Y(u)\leq \abs{U}b_{XY}+p.\]
    Rearranging gives half of \eqref{eq:bound-UX-UY}, and swapping $X$ and $Y$ gives us $b_X(u)\leq \frac{1}{2}\abs{X}$ and the other half of \eqref{eq:bound-UX-UY}. (Note that the assumption of the lemma also holds with $s=|Y|$ and $t=|X|$ by symmetry.)
\end{proof}

So far, we have seen lower bounds on $|X|$ and $|Y|$. The next lemma helps us lower bound $|U|$. Recall that $\pc_2$ and $\pc_3$ are the number of $2$-precyclic and $3$-precyclic triangles under $\chi$, respectively. We use the standard notation $(x)_+ = \max(x, 0)$ and write $(x)_+^2$ for $((x)_+)^2$.
\begin{lemma}\label{lemma:u-lower-bound}
    The part sizes satisfy
    \begin{align*} \abs{X}\left(\abs{Y}-\frac{b_{XY}}{\abs{X}}-\abs{U}\right)_+^2+\abs{Y}\left(\abs{X}-\frac{b_{XY}}{\abs{Y}}-\abs{U}\right)_+^2 + \left(|X|-|Y|\right)^2\leq \tilde d(n)-4\pc_2-2\pc_3.\end{align*} %\xh{I don't think the double parens are necessary}
    %\xh{Make a variable for the expression $\frac{n(n+1)(n-1)}{3}-8(g_3(n)+1)$ that appears repeatedly, will make this inequality fit in one line. It is closely related to $d(n)$ but not exactly the same, so maybe something that looks similar like $D(n)$, $\delta(n)$ or $\bar{d}(n)$?}
\end{lemma}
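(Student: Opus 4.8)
The plan is to apply the second inequality of \cref{lemma:basic-facts} in the form $\sum_{v\in V}\sum_c \delta_c(v)^2 + 4\pc_2 + 2\pc_3 \le \tilde d(n)$, and to bound the left-hand side from below by keeping only a few carefully chosen terms. The term $(|X|-|Y|)^2$ is just $\delta_{c^*}(v^*)^2$, one summand of $\sum_v\sum_c\delta_c(v)^2$, so it suffices to show that the remaining two terms on the left-hand side of the claimed inequality are bounded by $\sum_{v}\delta_{c^*}(v)^2$ summed over $v$ in appropriate parts (disjoint from $v^*$), plus possibly using $4\pc_2$. More precisely, I expect the two terms $|X|(|Y| - b_{XY}/|X| - |U|)_+^2$ and $|Y|(|X|-b_{XY}/|Y| - |U|)_+^2$ to arise from lower-bounding $\sum_{u}\delta_{c^*}(u)^2$ over $u$ in one of the parts, where the key point is that for a typical vertex $u\in X$, its color-$c^*$ neighborhood is split between $U$, $Y$, and possibly $\overline Y$, and the imbalance $\delta_{c^*}(u)$ is at least roughly $|Y| - |U|$ minus an error coming from edges not colored $c^*$.

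The key steps, in order, are as follows. First, fix the part, say $X$; for each $u\in X$, the part of $N_{c^*}(u)$ in the direction of $U$ has size at most $|U|$ (since $U$ has $|U|$ vertices), while the part in the direction of $Y^*$ has size at least $|Y| - b_X(\text{stuff})$ — more carefully, the number of $c^*$-edges from $u$ to $Y$ is at least $|Y| - b_Y(u)$ where $b_Y(u)$ counts non-$c^*$ edges from $u$ to $Y$; note $u$ here plays the role previously played by vertices of $U$, and one gets $\sum_{u\in X} b_Y(u) \le \text{(something)}\cdot$. Actually, by symmetry of the roles of $U, X^*, Y^*$ one should run \cref{lemma:Second-step} (or its underlying argument) with $X$ in place of $U$, giving $\sum_{u\in X} b_U(u) \le (|X| b_{?} + \dots)/\dots$; the cleanest route is to directly observe that the number of non-$c^*$ edges incident to $X$ that go to $Y$ equals $b_{XY}$ and to $U$ equals the count of non-$c^*$ edges across $X,U$. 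Then $\delta_{c^*}(u) \ge (|Y| - (\text{non-}c^*\text{ edges from }u\text{ to }Y)) - |U|$ when this is positive, so by convexity (power mean / Cauchy–Schwarz) $\sum_{u\in X}\delta_{c^*}(u)^2 \ge |X|\big(\text{avg}\big)^2 \ge |X|(|Y| - b_{XY}/|X| - |U|)_+^2$, using that the average of non-$c^*$ edge counts from $X$ to $Y$ is $b_{XY}/|X|$ and that $t\mapsto (t)_+^2$ composed with the linear lower bound is convex. The symmetric statement with $X$ and $Y$ swapped gives the second term, and these use disjoint sets of vertices ($X$ and $Y$), so the two lower bounds add. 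Finally, combine: $\sum_{v}\sum_c\delta_c(v)^2 \ge \delta_{c^*}(v^*)^2 + \sum_{u\in X}\delta_{c^*}(u)^2 + \sum_{u\in Y}\delta_{c^*}(u)^2 \ge (|X|-|Y|)^2 + |X|(\cdots)_+^2 + |Y|(\cdots)_+^2$, and plug into \cref{lemma:basic-facts}(b).

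The main obstacle I anticipate is handling the error terms precisely: a vertex $u\in X$ may have $c^*$-edges going to $\overline Y$ or $\overline X$, and its color-$c^*$ neighborhood's tripartition might not be "aligned" the way I described — in particular I need that for $u \in X$ the relevant two parts of $N_{c^*}(u)$ are exactly "towards $Y^*$" and "towards $U$", which is forced by the tripartition structure of color $c^*$ (the three parts are $U, X^*, Y^*$, so $u\in X^*$ has its $c^*$-neighbors split between $U$ and $Y^*$). This means $|N_{c^*,1}(u)| \le |U|$ and $|N_{c^*,2}(u)| = |N_{c^*}(u)\cap Y^*| \ge |Y| - (\text{non-}c^*\text{ edges }u\text{ to }Y)$, so $\delta_{c^*}(u) \ge |N_{c^*}(u)\cap Y^*| - |U| \ge |Y| - (\text{that error}) - |U|$. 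Summing the errors over $u \in X$ gives exactly $b_{XY}$ (every non-$c^*$ edge between $X$ and $Y$ is counted once), which is why the $b_{XY}/|X|$ term appears after dividing by $|X|$ for the average. The convexity step then needs $(t)_+^2$ to be convex (it is) and Jensen applied to $t_u := (|Y| - \text{err}_u - |U|)$, whose average is $|Y| - b_{XY}/|X| - |U|$; since $(\cdot)_+^2$ is convex, $\frac1{|X|}\sum (t_u)_+^2 \ge \big(\frac1{|X|}\sum t_u\big)_+^2$ — here one must be slightly careful that Jensen for the convex function $(\cdot)_+^2$ gives $\ge (\overline t)_+^2$ (true since $\phi(\overline t) \le \frac1{|X|}\sum\phi(t_u)$ for convex $\phi$). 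Everything else is bookkeeping; no further cleverness is required beyond citing \cref{lemma:basic-facts}(b).
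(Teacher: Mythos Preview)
Your proposal is correct and follows essentially the same route as the paper: for each $x\in X$ you lower-bound $\delta_{c^*}(x)\ge (|Y|-b_Y(x)-|U|)_+$ using that the two parts of $N_{c^*}(x)$ lie in $U$ and $Y^*$, apply Jensen for the convex function $(\cdot)_+^2$ together with $\sum_{x\in X}b_Y(x)=b_{XY}$, do the symmetric argument over $Y$, add in $\delta_{c^*}(v^*)^2=(|X|-|Y|)^2$, and plug into \cref{lemma:basic-facts}(b). The brief detour through \cref{lemma:Second-step} is unnecessary (as you yourself note), and your final argument is exactly the paper's.
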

\begin{proof}
    For every $x\in X$, its $c^*$ neighborhood in $Y$ has size at least $\abs{Y} - b_Y(x)$, and its $c^*$ neighborhood in $U$ has size at most $\abs{U}$, so we have that $\delta_{c^*}(x)\geq (\abs{Y}-b_Y(x)-\abs{U})_+$.
    Therefore
    \[\sum_{x\in X}\delta_{c^*}(x)^2\geq \sum_{x\in X}(\abs{Y}-b_Y(x)-\abs{U})_+^2.\]
    Note that the function $x\mapsto (x)_+^2$ is convex, and so by Jensen's inequality the right hand side is at least
    \[\abs{X}\left(\abs{Y}-\frac{\sum_{x\in X}b_Y(x)}{\abs{X}}-\abs{U}\right)_+^2=\abs{X}\left(\abs{Y}-\frac{b_{XY}}{\abs{X}}-\abs{U}\right)_+^2.\]
    %A similar bound holds for $\sum_{y\in Y}\delta_{c^*}(y)^2$. 
    By the same argument swapping $X$ and $Y$, we also have
    \[
    \sum_{y\in Y}\delta_{c^*}(y)^2\geq \abs{Y}\left(\abs{X}-\frac{b_{XY}}{\abs{Y}}-\abs{U}\right)_+^2.
    \]
    By \cref{lemma:basic-facts}(b), we have
    \begin{align*} \tilde d(n)-4\pc_2-2\pc_3&\geq \sum_{x\in X}\delta_{c^*}(x)^2+\sum_{y\in Y}\delta_{c^*}(y)^2 + \delta_{c^*}(v^*)^2 \\
    &\geq \abs{X}\left(\abs{Y}-\frac{b_{XY}}{\abs{X}}-\abs{U}\right)_+^2+\abs{Y}\left(\abs{X}-\frac{b_{XY}}{\abs{Y}}-\abs{U}\right)_+^2 + \left(|X|-|Y|\right)^2,\end{align*}
    proving the desired inequality.
\end{proof}

Now, we combine several lemmas to prove that, assuming certain conditions on part sizes, there are few edges not colored $c^*$ between $\overline{X},Y$ and between $X, \overline{Y}$.
\begin{lemma}[$\bigstar$]\label{lemma:Third-step}
    Suppose the assumptions of \cref{lemma:possible-X-Y} and \cref{lemma:Second-step} hold, and assume that $\tilde d(n) < n^2/4$ and that for all admissible pairs $(s,t)$ 
   % \xh{This assumption is confusing to parse, I don't know if we have freedom to choose $s$ and $t$ or if they are exactly $s = |X|$, $t = |Y|$?} 
   and nonnegative integers $k$ with $0\leq k\leq \floor{P/2}$, the following holds: If there exists a positive integer $w$ at most $n-s-t$ such that 
    \begin{align}\label{eq:w-required}s\left(t-\frac{k}{s}-w\right)_+^2+t\left(s-\frac{k}{t}-w\right)_+^2+(s-t)^2\leq \tilde d(n)-4k,\end{align}
    then the minimum such $w$ satisfies
    \begin{align}\label{eq:w-inequality1} F\left(s,w-\frac{\lfloor\frac{P}{2}\rfloor-k+\frac{wk}{t}}{s},\left\lfloor\frac{s+t+\Delta}{2}\right\rfloor\right)>P-2k+\frac{wk}{t}\end{align}
    and
    \begin{align}\label{eq:w-inequality2} F\left(w,s-\frac{\lfloor\frac{P}{2}\rfloor-k+\frac{wk}{t}}{w},\left\lfloor\frac{s+t+\Delta}{2}\right\rfloor\right)>P-2k+\frac{wk}{t}.\end{align}

    Then, we can conclude that for all $x\in \overline{X}$ we have $b_Y(x)\leq \frac{1}{2}\abs{Y}$ and $b_U(x)\leq \frac{1}{2}\abs{U}$, and for all $y\in \overline{Y}$ we have $b_X(y)\leq \frac{1}{2}\abs{X}$ and $b_U(y)\leq \frac{1}{2}\abs{U}$.
\end{lemma}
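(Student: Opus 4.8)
The target of this lemma is the structural claim that the color-$c^*$ back-degrees from $\overline{X}$ to $Y$ and to $U$ (and, symmetrically, from $\overline{Y}$ to $X$ and to $U$) are each at most half the relevant part. The engine is \cref{lemma:bipartite-to-tripartite}(a): given a vertex $v$ and a bipartite pair $(S_1,S_2)$ of constant $c^*$-color avoiding $c^*(v)$, if $v$ lies in few precyclic triangles with $(S_1,S_2)$, the pair has few non-$c^*$ edges, and an $F$-type inequality holds, then $v$ has small back-degree to $S_2$. The four conclusions match the four instances of the hypotheses \eqref{eq:w-inequality1}--\eqref{eq:w-inequality2} obtained from the two admissible pairs $(s,t)=(|X|,|Y|)$ and $(s,t)=(|Y|,|X|)$ -- both admissible by \cref{lemma:possible-X-Y} together with the fact that \cref{def:admissible-pair} is symmetric in $s,t$. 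Concretely, \eqref{eq:w-inequality2} for $(|Y|,|X|)$ will bound $b_Y(x)$ for $x\in\overline{X}$ via $S_1 = $ a size-$w_0$ subset of $U$ and $S_2=Y$; \eqref{eq:w-inequality1} for $(|Y|,|X|)$ will bound $b_U(x)$ via $S_1=Y$, $S_2=U$; and the two instances for $(|X|,|Y|)$ handle $b_X(y)$ and $b_U(y)$ for $y\in\overline{Y}$ symmetrically.

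The plan, for each of the four applications, is to supply the three numerical inputs that \cref{lemma:bipartite-to-tripartite}(a) requires and check they are dominated by the quantities in the hypotheses. The first input is a lower bound on $|U|$. Using the identity $\pc_2(v^*,X,Y)=b_{XY}$ -- a non-$c^*$ edge of $X\times Y$ is $2$-precyclic with $v^*$ exactly because $c^*(X^*)\neq c^*(Y^*)$, and no other $2$-precyclic triangle on $\{v^*\}\cup X\cup Y$ exists -- together with $\pc_2\ge\pc_2(U,X,Y)=b_{XY}+p$, one gets $\tilde d(n)-4\pc_2-2\pc_3\le\tilde d(n)-4k$ for a suitable $k$ (e.g.\ $k=b_{XY}+p$ or $k=\pc_2$, both at most $\lfloor P/2\rfloor$). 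Plugging $s=|X|$, $t=|Y|$, $w=|U|$ into \cref{lemma:u-lower-bound} then shows \eqref{eq:w-required} holds with $w=|U|$, so the minimal valid $w$, call it $w_0$, satisfies $w_0\le|U|$; since \eqref{eq:w-required} and admissibility are symmetric in $s,t$, the same $w_0$ serves the pair $(|Y|,|X|)$. The hypotheses thus hand us \eqref{eq:w-inequality1}--\eqref{eq:w-inequality2} at $w=w_0$, and by monotonicity of $F$ (\cref{lemma:elementary}(b)), the bound $M\le\lfloor\tfrac{|X|+|Y|+\Delta}{2}\rfloor$ from \cref{lemma:First-step}, and $|U|\ge w_0$, the $F$-value in each application is at least the one in the matching hypothesis.

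The second input is the precyclic budget: the triangles through $v^*$ and $X,Y$; through $U\setminus\{v^*\}$ and $X,Y$; and through the chosen vertex of $\overline{X}$ (resp.\ $\overline{Y}$) and the two parts, are pairwise vertex-disjoint (the last family has no vertex in $X$, resp.\ in $Y$), so $2\pc_2(v,S_1,S_2)+\pc_3(v,S_1,S_2)\le P-2b_{XY}-2p$ for the relevant $v$. The third input is the number $b$ of non-$c^*$ pairs between $S_1$ and $S_2$: \eqref{eq:bound-UX-UY} bounds $b_{UX},b_{UY}$ by essentially $\tfrac{|U|b_{XY}+p}{|Y|}$ and $\tfrac{|U|b_{XY}+p}{|X|}$, and for the $b_Y(x),b_X(y)$ conclusions one takes $S_1$ to be the $w_0$ vertices of $U$ with fewest non-$c^*$ edges to $Y$ (resp.\ to $X$), which scales this bound down by $\tfrac{w_0}{|U|}$ so that it matches the second argument of $F$ in \eqref{eq:w-inequality2}. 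Feeding these three inputs, together with the floor arithmetic built into \eqref{eq:w-required}--\eqref{eq:w-inequality2}, into \cref{lemma:bipartite-to-tripartite}(a) yields its hypothesis, whose conclusion is exactly the claimed back-degree bound.

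The hard part -- and the reason the statement carries so many floors and the auxiliary quantities $\lfloor P/2\rfloor$ and $wk/t$ -- is the meticulous tracking of constants in this last step, above all for the two $b_U$ conclusions: there $S_2=U$ cannot be shrunk, so the bound on $b$ scales with $|U|$ rather than with $w_0$, and closing the gap $|U|-w_0$ forces one to harvest the extra precyclic triangles created when $b_{XY}>0$ (each non-$c^*$ edge of $X\times Y$ is precyclic not only with $v^*$ but with most $u\in U$), which both tightens the budget and, via \cref{lemma:u-lower-bound}, keeps $|U|$ close to $w_0$. Balancing these competing estimates down to the last $\pm\tfrac12$ is where essentially all of the work lies; everything else is bookkeeping around \cref{lemma:u-lower-bound}, \cref{lemma:Second-step}, \cref{lemma:First-step}, \cref{lemma:bipartite-to-tripartite}, and \cref{lemma:elementary}.
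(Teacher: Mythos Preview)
Your overall plan is right: specialize the assumptions to $s=|X|$, $t=|Y|$ (and symmetrically), take $k=b_{XY}+p\le\lfloor P/2\rfloor$, use \cref{lemma:u-lower-bound} to certify that $w=|U|$ satisfies \eqref{eq:w-required} and hence $|U|\ge w_0$, and then feed \eqref{eq:w-inequality1}--\eqref{eq:w-inequality2}, \cref{lemma:First-step}, and the budget bounds from \cref{lemma:Second-step} into four instances of \cref{lemma:bipartite-to-tripartite}(a). Your subset-of-$U$ trick for the $b_X(y)$ and $b_Y(x)$ conclusions is a legitimate variant (the paper instead applies \cref{lemma:bipartite-to-tripartite} with $S_1=U$ itself and uses \cref{lemma:elementary}(c) to scale), and it would go through.

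The genuine gap is in the two $b_U$ conclusions. There, as you note, $S_2=U$ cannot be shrunk, so the non-$c^*$ edge count $b$ between $S_1$ and $U$ is controlled only by \eqref{eq:bound-UX-UY}, which gives $b\le\frac{|U|(b_{XY}+p)}{|Y|}$, i.e.\ a quantity proportional to $|U|$, not to $w_0$. Consequently the threshold you must beat in \cref{lemma:bipartite-to-tripartite}(a) is $Q+b=P-2k+\tfrac{|U|k}{|Y|}$, whereas the hypothesis \eqref{eq:w-inequality1} only delivers $F(\cdot,\alpha w_0-\beta,\cdot)>P-2k+\tfrac{w_0 k}{|Y|}$. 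Monotonicity of $F$ in its second argument (\cref{lemma:elementary}(b)) increases the left side when you pass from $w_0$ to $|U|$, but it says nothing about how \emph{much} it increases; you need the extra gain to cover the increment $\tfrac{(|U|-w_0)k}{|Y|}$ on the right. Your proposed fix --- harvesting more precyclic triangles from non-$c^*$ edges of $X\times Y$ with vertices of $U$ --- does not buy anything new: those triangles are precisely what $p=\pc_2(U\setminus\{v^*\},X,Y)$ already counts, and they are already absorbed into $k$.

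The missing ingredient is \cref{lemma:elementary}(c): the sublinearity $\frac{B'}{B}F(A,B,C)\ge F(A,B',C)$. Applied with $B'=\alpha w_0-\beta$ and $B=\alpha|U|-\beta$ (one first checks $\alpha>0$ using $\tilde d(n)<n^2/4$), it gives
\[
F\bigl(|X|,\alpha|U|-\beta,M\bigr)\ \ge\ \frac{\alpha|U|-\beta}{\alpha w_0-\beta}\Bigl(P-2k+\tfrac{w_0 k}{|Y|}\Bigr)\ \ge\ \frac{|U|}{w_0}\Bigl(P-2k+\tfrac{w_0 k}{|Y|}\Bigr)\ \ge\ P-2k+\tfrac{|U|k}{|Y|},
\]
the middle inequality using $\beta\ge 0$, $|U|\ge w_0$. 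The same scaling, now in the first argument of $F$, handles the $b_X(y)$ case when one takes $S_1=U$ rather than a subset. Once you replace ``monotonicity'' by this scaling step, your argument closes exactly as in the paper.
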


\begin{proof}
    We know that $s=\abs{X}$, $t=\abs{Y}$ and $k=b_{XY}+p$ satisfy all the conditions; in particular, $b_{XY}$ and $p$ count disjoint sets of $2$-precyclic triangles, ensuring that $k \leq \floor{P/2}$. 
    By \cref{lemma:u-lower-bound}, we know that if $|X|=s$ and $|Y|=t$, then $w=|U|$ satisfies \eqref{eq:w-required}. (In particular, replacing $b_{XY}$ with $b_{XY}+p$ in \cref{lemma:u-lower-bound} only makes the left hand side smaller, and replacing $\pc_3$ with $0$ and $\pc_2$ with $k$ only makes the right hand side larger.) As a consequence, if $|X|=s$, $|Y|=t$, and $b_{XY}+p=k$, then a minimum integer $w$ satisfying \eqref{eq:w-required} exists, and we have $|U|\geq w$. %\xh{I am still concerned about this preceding paragraph. First we should spell out why $k=b_{XY} + p$ satisfies the condition of being less than $P/2$.}\ra{Done}\xh{Next, the last two sentences would make more sense if they just read "By Lemma 4.13, we know that $w=|U|$ satisfies the (numbered equation reference), so such a minimum $w$ exists."} %\xh{I am still confused about this preceding paragraph. Are we showing that $w$ always exists, or that something else is trivial if $w$ doesn't exist?}

    %\xh{Reverse the following calculation to add signposting. You should start with: We now show $b_U(y) \le \frac{1}{2} |U|$. By Lemma 4.11, it suffices to check that...}\ra{Done}
    We now show that $b_U(y) \leq \frac{1}{2}|U|$ for all $y\in \overline Y$. To do so, we apply \cref{lemma:bipartite-to-tripartite}(a) with $S_1=X, S_2=U, Q=P-2(b_{XY}+p)$, 
    %\xh{Split these parentheticals out into a full sentence explaining what is being checked} 
    and $b=\frac{\abs{U}(b_{XY}+p)}{\abs{Y}}$. This choice of $Q$ is valid since $b_{XY}$ and $p$ count $2$-precyclic triangles which do not use any $y\in\overline Y$, so for $y\in\overline Y$ we have $2\pc_2(y, X, U) + \pc_3(y, X, U) \leq Q$, as needed. The choice of $b$ is valid as well since \eqref{eq:bound-UX-UY} in \cref{lemma:Second-step} ensures that there are at most $b$ pairs $(x,u)\in X\times U$ with $\chi(xu)\neq c^*$. Thus, setting $\alpha = 1-\frac{b_{XY}+p}{\abs{X}\abs{Y}}$, $\beta = \frac{\lfloor{P/2}\rfloor-(b_{XY}+p)}{\abs{X}}$ and $\gamma =P-2(b_{XY}+p)$, to apply the lemma it remains to show that %\xh{I think define the shorthands $\alpha, \beta, \gamma$ before writing the equation to make it more readable. }
    \begin{align}\label{eq:needtoshow1}F\left(\abs{X},\alpha|U|-\beta, M\right)>\gamma+\frac{\abs{U}(b_{XY}+p)}{\abs{Y}}.\end{align}
    We claim that $\alpha  > 0$. Indeed, $k \leq P/2 \leq \frac{\tilde d(n)-\Delta^2}{4}$ by \eqref{eq:corcond1}. 
    Furthermore, by \cref{lemma:possible-X-Y} we have that $(\abs{X},\abs{Y})$ is an admissible pair, which shows that $\abs{X}+\abs{Y} \geq n/2$ and $\abs{\abs{X}-\abs{Y}} \leq \Delta$, and so $\abs{X}\abs{Y} \geq n^2/16 - \Delta^2/4$. Hence the assumption $\tilde d(n) < n^2/4$ gives us the claim. Recalling that $F(A,B,C) = 0$ if $A\leq 0$ or $B\leq 0$, the assumption \eqref{eq:w-inequality1} implies that $\alpha w - \beta > 0$, and then since $\alpha>0$ and $|U|\geq w$, we have $\alpha |U| - \beta > 0$ as well.
    
    Since $M\leq \lfloor \frac{\abs{X}+\abs{Y}+\Delta}{2}\rfloor$ by \cref{lemma:First-step} and $F(A, B, C)$ is decreasing in $C$, \eqref{eq:w-inequality1} tells us that
    \[\frac{b_{XY}+p}{\abs{Y}}w+\gamma<F(\abs{X},\alpha w-\beta, M)\leq \frac{\alpha w-\beta}{\alpha\abs{U}-\beta}F(\abs{X},\alpha\abs{U}-\beta,M),\]
    where for the second inequality we used \cref{lemma:elementary}(c).
    Hence
    \[F(\abs{X},\alpha\abs{U}-\beta,M)> \frac{\alpha\abs{U}-\beta}{\alpha w-\beta}\left(\frac{b_{XY}+p}{\abs{Y}}w+\gamma\right)\geq \frac{\abs{U}}{w}\left(\frac{b_{XY}+p}{\abs{Y}}w+\gamma\right)\geq \frac{b_{XY}+p}{\abs{Y}}\abs{U}+\gamma,\]
    %\xh{It is opaque why $\alpha |U| - \beta$ and $\alpha w - \beta$ are positive, which is needed to make these inequalities true/sensible.}\hy{So I think we should say that if $P-2k+wk/t < 0$ then it does not matter; if $P-2k+wk/t\geq 0$ then the assumption forces $\alpha w-\beta > 0$ and so $\alpha |U|-\beta > 0$ as well (okay but we also need to prove $\alpha>0$ which is $st < k$, whose proof is also absent}
    establishing \eqref{eq:needtoshow1}. This verifies all assumptions for \cref{lemma:bipartite-to-tripartite}(a), so we conclude that $b_U(y) \le \frac{1}{2} |U|$ for all $y \in \overline{Y}$.
    
    %Set $\alpha = 1-\frac{k}{ts}$, $\beta = \frac{\lfloor{P/2}\rfloor-k}{s}$ and $\gamma =P-2k$.
    %Since $M\leq \lfloor \frac{\abs{X}+\abs{Y}+\Delta}{2}\rfloor$ and $F(A, B, C)$ is decreasing in $C$, the assumption of the lemma tells us that
    %\[\frac{b_{XY}+p}{\abs{Y}}w+\gamma<F(\abs{X},\alpha w-\beta, M)\leq \frac{\alpha w-\beta}{\alpha\abs{U}-\beta}F(\abs{X},\alpha\abs{U}-\beta,M),\]
    %where for the second inequality we used \cref{lemma:elementary}(c).
    %Hence
    %\[F(\abs{X},\alpha\abs{U}-\beta,M)> \frac{\alpha\abs{U}-\beta}{\alpha w-\beta}\left(\frac{b_{XY}+p}{\abs{Y}}w+\gamma\right)\geq \frac{\abs{U}}{w}\left(\frac{b_{XY}+p}{\abs{Y}}w+\gamma\right)\geq \frac{b_{XY}+p}{\abs{Y}}\abs{U}+\gamma.\]
    %\xh{It is opaque why $\alpha |U| - \beta$ and $\alpha w - \beta$ are positive, which is needed to make these inequalities true/sensible.}
    %Plugging in the definitions of $\alpha,$ $\beta,$ and $\gamma$, we get
    %\[F\left(\abs{X},\abs{U}-\frac{\lfloor\frac{P}{2}\rfloor-(b_{XY}+p)+\frac{\abs{U}(b_{XY}+p)}{\abs{Y}}}{\abs{X}}, M\right)>P-2(b_{XY}+p)+\frac{\abs{U}(b_{XY}+p)}{\abs{Y}}.\]
    %Apply \cref{lemma:bipartite-to-tripartite} with $S_1=X, S_2=U, Q=P-2(b_{XY}+p)$ and $b=\frac{\abs{U}(b_{XY}+p)}{\abs{Y}}$ (which is valid by \eqref{eq:bound-UX-UY} in \cref{lemma:Second-step}), and we see that for each $y\in \overline{Y}$ we have $b_U(y)\leq \frac{1}{2}\abs{U}$.

    Now we similarly prove $b_X(y)\leq \frac{1}{2}\abs{X}$ for all $y\in \overline{Y}$. %\xh{similarly here, start by explaining what it suffices to check using lemma 4.11} 
    We apply
    \cref{lemma:bipartite-to-tripartite}(a) with $S_1=U$, $S_2=X$, $Q=P-2(b_{XY}+p)$ and $b = \frac{\abs{U}(b_{XY}+p)}{\abs{Y}}$. To do so, it remains to show that
    \begin{align}\label{eq:needtoshow2}
        F\left(\abs{U}, \abs{X}-\frac{b_{XY}+p}{\abs{Y}}-\frac{\floor{P/2}-(b_{XY}+p)}{\abs{U}},M\right)
        %> \frac{\abs{U}}{w}\left(\frac{b_{XY}+p}{\abs{Y}}w+\gamma\right)
        \geq \gamma+\frac{\abs{U}(b_{XY}+p)}{\abs{Y}}.
    \end{align}
    From the assumption \eqref{eq:w-inequality2} and the fact that $M\leq \frac{\abs{X}+\abs{Y}+\Delta}{2}$, we have
    \begin{align*}\frac{b_{XY}+p}{\abs{Y}}w+\gamma &< F\left(w,\abs{X}-\frac{b_{XY}+p}{\abs{Y}}-\frac{\floor{P/2}-(b_{XY}+p)}{w},M\right)\\&\leq \frac{w}{\abs{U}}F\left(\abs{U}, \abs{X}-\frac{b_{XY}+p}{\abs{Y}}-\frac{\floor{P/2}-(b_{XY}+p)}{\abs{U}},M\right),\end{align*}
    %\hy{I guess we have a similar issue here}
    where in the second line we used the monotonicity of $F(A,B,C)$ with respect to $B$, the symmetry between $A$ and $B$, and \cref{lemma:elementary}(c). Thus, we have
    \[F\left(\abs{U}, \abs{X}-\frac{b_{XY}+p}{\abs{Y}}-\frac{\floor{P/2}-(b_{XY}+p)}{\abs{U}},M\right)
    > \frac{\abs{U}}{w}\left(\frac{b_{XY}+p}{\abs{Y}}w+\gamma\right),\] which establishes \eqref{eq:needtoshow2} since $|U|\geq w$. Thus we conclude that $b_X(y) \leq \frac{1}{2}|X|$ for all $y\in \overline Y$, as desired.

    % From the second inequality given by the assumption, we have
    % \[\frac{b_{XY}+p}{\abs{Y}}w+\gamma < F\left(w,\abs{X}-\frac{b_{XY}+p}{\abs{Y}}-\frac{\floor{P/2}-(b_{XY}+p)}{w},M\right)\]
    % where we again use that $M\leq \frac{\abs{X}+\abs{Y}+\Delta}{2}$.
    % By the monotonicity of $F(A,B,C)$ with respect to $B$, the symmetry between $A$ and $B$, and \cref{lemma:elementary}(c), we have
    % \begin{align*}
    %    F\left(w,\abs{X}-\frac{b_{XY}+p}{\abs{Y}}-\frac{\floor{P/2}-(b_{XY}+p)}{w},M\right)\leq \frac{w}{\abs{U}}F\left(\abs{U}, \abs{X}-\frac{b_{XY}+p}{\abs{Y}}-\frac{\floor{P/2}-(b_{XY}+p)}{\abs{U}},M\right),
    % \end{align*}
    % and so
    % \begin{align*}
    %     F\left(\abs{U}, \abs{X}-\frac{b_{XY}+p}{\abs{Y}}-\frac{\floor{P/2}-(b_{XY}+p)}{\abs{U}},M\right)
    %     > \frac{\abs{U}}{w}\left(\frac{b_{XY}+p}{\abs{Y}}w+\gamma\right)\geq P-2(b_{XY}+p)+\frac{\abs{U}(b_{XY}+p)}{\abs{Y}}.
    % \end{align*}
    % Apply \cref{lemma:bipartite-to-tripartite} with $S_1=U$, $S_2=X$, $Q=P-2(b_{XY}+p)$ and $b = \frac{\abs{U}(b_{XY}+p)}{\abs{Y}}$. We obtain that $b_X(y)\leq \frac{1}{2}\abs{X}$ for each $y\in \overline{Y}$.

    By symmetry between $X$ and $Y$, we also have $b_U(x)\leq \frac{1}{2}\abs{U}$ and $b_Y(x)\leq \frac{1}{2}\abs{Y}$, completing the proof.
\end{proof}

The next lemma provides an upper bound on the number of certain kinds of edges not colored $c^*$.

\begin{lemma}\label{lem:sum-Xy-bound}
Suppose the assumptions of \cref{lemma:possible-X-Y}, \cref{lemma:Second-step}, and \cref{lemma:Third-step} hold. Then,
\begin{equation*}
    \sum_{y\in \overline{Y}}b_X(y)\leq \frac{\abs{Y}\pc_2(U, X, \overline{Y})+\left(\abs{U}b_{XY}+p\right)\abs{\overline{Y}}}{\abs{U}\abs{Y}} \ \  \text{ and } \ \  \sum_{y\in \overline{Y}}b_U(y)\leq \frac{\abs{Y}\pc_2(U, X, \overline{Y})+\left(\abs{U}b_{XY}+p\right)\abs{\overline{Y}}}{\abs{X}\abs{Y}}.
\end{equation*}
\end{lemma}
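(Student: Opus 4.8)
The plan is to derive both bounds from part (b) of \cref{lemma:bipartite-to-tripartite}, using the ``at most $\frac12$'' inequalities from \cref{lemma:Third-step} to meet its hypothesis and the edge-count bound from \cref{lemma:Second-step} to control the error term $b$. Throughout, recall that $c^*$ is constant on each of $U$, $X$, $\overline{Y}$ (as each is contained in a single part of the $c^*$-tripartition), and that these three labels are pairwise distinct; this is exactly the structural input that \cref{lemma:bipartite-to-tripartite} requires.

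First I would bound $\sum_{y\in\overline Y} b_X(y)$. Apply \cref{lemma:bipartite-to-tripartite}(b) with $S_1 = U$, $S_2 = X$, $c = c^*$, and $v = y$ for each $y\in\overline Y$. The vertex $y$ satisfies $c^*(y)\notin\{c^*(U),c^*(X)\}$ since $y\in Y^*$; the condition $b_X(y)\le\frac12|X|$ holds by \cref{lemma:Third-step}; and the number of pairs $(u,x)\in U\times X$ with $\chi(ux)\ne c^*$ is $\sum_{u\in U}b_X(u)$, which is at most $\frac{|U|b_{XY}+p}{|Y|}$ by \eqref{eq:bound-UX-UY} in \cref{lemma:Second-step}, so we may take $b = \frac{|U|b_{XY}+p}{|Y|}$. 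The conclusion of part (b) then reads $\pc_2(y,U,X)\ge |U|\,b_X(y) - \frac{|U|b_{XY}+p}{|Y|}$. Summing over $y\in\overline Y$ and noting $\sum_{y\in\overline Y}\pc_2(y,U,X) = \pc_2(U,X,\overline Y)$ gives
\[
\pc_2(U,X,\overline Y) \;\ge\; |U|\sum_{y\in\overline Y}b_X(y) \;-\; \frac{\bigl(|U|b_{XY}+p\bigr)|\overline Y|}{|Y|},
\]
and rearranging yields the first claimed inequality after multiplying through by $\frac{|Y|}{|U||Y|}$.

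For $\sum_{y\in\overline Y} b_U(y)$, I would repeat the argument with the roles of the two sets swapped: apply \cref{lemma:bipartite-to-tripartite}(b) with $S_1 = X$, $S_2 = U$, $c = c^*$, $v = y$. Now the hypothesis $b_U(y)\le\frac12|U|$ is again supplied by \cref{lemma:Third-step}, and the relevant edge count is still $\sum_{u\in U}b_X(u)\le\frac{|U|b_{XY}+p}{|Y|}$, so the same $b$ works. Part (b) gives $\pc_2(y,X,U)\ge |X|\,b_U(y) - \frac{|U|b_{XY}+p}{|Y|}$; summing over $y\in\overline Y$, using $\pc_2(X,U,\overline Y) = \pc_2(U,X,\overline Y)$, and rearranging produces the second inequality. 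The only ``difficulty'' here is bookkeeping — checking that the color-constancy and label-distinctness hypotheses of \cref{lemma:bipartite-to-tripartite} are met and that the correct quantity is fed in as $b$ — so I do not expect any genuine obstacle; the content is entirely front-loaded into \cref{lemma:Second-step} and \cref{lemma:Third-step}.
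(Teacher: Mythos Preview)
Your proposal is correct and follows essentially the same argument as the paper: apply \cref{lemma:bipartite-to-tripartite}(b) to each $y\in\overline{Y}$ with $(S_1,S_2)=(U,X)$ and then with $(S_1,S_2)=(X,U)$, feeding in $b=\frac{|U|b_{XY}+p}{|Y|}$ from \cref{lemma:Second-step} and the $\tfrac12$-bounds from \cref{lemma:Third-step}, then sum and rearrange. The paper's proof is identical in structure and invokes the same lemmas at the same points.
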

\begin{proof}
    % For every $y\in \overline{Y}$, we know that each $x\in X$ and $u\in U$ with $\chi(xu)=c^*$ and exactly one of $\chi(xy), \chi(uy)$ being $c^*$ form a $2$-precyclic triangle with $y$.
    % As a consequence, we have %the total number of $2$-precyclic triangles formed this way %\xh{A little ambiguous, are we just saying $\pc_2(\overline{Y}, X, U) \ge \ldots$?} is lower bounded by \xh{Generally, write "is at least" or "is greater than" (if strict) instead of "is lower bounded by", similarly for upper bounded by. Among other things this removes the amibiguity about strictness.}
    % \[\pc_2(U, X, \overline Y) \geq \sum_{y\in \overline{Y}}\left(b_X(y)(\abs{U}-b_U(y))+b_U(y)(\abs{X}-b_X(y))-\frac{\abs{U}b_{XY}+p}{\abs{Y}}\right)_+,\]
    % where we used \cref{lemma:Second-step} to obtain that the number of edges between $X$ and $U$ not colored $c^*$ is at most $\frac{\abs{U}b_{XY}+p}{\abs{Y}}$.
    % Using \cref{lemma:Third-step}, we have $$\sum_{y\in \overline{Y}} (-2b_X(y)b_U(y) + b_U(y)|X|) \geq \sum_{y\in \overline{Y}} \left(-2\left(\frac{1}{2}|X|\right)b_U(y) + b_U(y)|X|\right) \geq 0.$$ So, we obtain
    % \[\pc_2(U, X, \overline Y)\geq -\frac{(\abs{U}b_{XY}+p)\abs{\overline{Y}}}{\abs{Y}}+\sum_{y\in \overline{Y}}b_X(y)\abs{U}.\]

    For each $y\in\overline{Y}$, by part (b) of \cref{lemma:bipartite-to-tripartite} with $S_1=U$, $S_2= X$, $v=y$ and $b=\frac{\abs{U}b_{XY}+p}{\abs{Y}}$ (which is valid by \cref{lemma:Second-step}), we have
    \[\pc_2(y,U,X)\geq b_X(y)\abs{U}-\frac{\abs{U}b_{XY}+p}{\abs{Y}}.\]
    Summing over $y\in\overline{Y}$, we get
    \[\pc_2(U,X,\overline{Y})= \sum_{y\in\overline{Y}}\pc_2(y,U,X)\geq -\frac{(\abs{U}b_{XY}+p)\abs{\overline{Y}}}{\abs{Y}}+\sum_{y\in\overline{Y}}b_X(y)\abs{U}.\]
    By rearranging the above inequality, the first part of the lemma follows. The same exact argument holds with the roles of $X$ and $U$ swapped, so we also obtain the second part of the lemma. 
\end{proof}

For the next lemma, we introduce some more notation for conciseness. Let $\eta$ be any upper bound on the quantity $\frac{\floor{P/2}}{\abs{X}\abs{Y}}.$ %\xh{This is strange, it reads as if the rest of the proof takes $\eta$ as a variable. Will we take $\eta = 1/8$ everywhere? If so, maybe not worth defining, just write $7/8$ everywhere (use a macro for $7/8$ if we're not confident about it and might need to change later).} (If \eqref{eta<=1/8} holds, we can take $\eta = 1/8$.)
Set 
    \begin{equation}\label{eq:overlineM}\overline{M}=\min\left(\left\lfloor \frac{\floor{P/2}-b_{XY}}{(1-\eta)\left(\frac{n}{4}-\frac{\Delta}{2}\right)}\left(1+\frac{1}{\abs{U}}\right)\right\rfloor, \abs{X^*}+\abs{Y^*}-\left\lceil\frac{n}{2}\right\rceil\right).\end{equation}
    Furthermore, using the standard notation $(x)_- = \min(x, 0)$, set $\overline{X}_{\max} = \overline{M}+(\abs{X^*}-\abs{Y^*}+\Delta)_-$ and $\overline{Y}_{\max} = \overline{M}+(\abs{Y^*}-\abs{X^*}+\Delta)_-$.
    Set $X_{\min} = \abs{X^*}-\overline{X}_{\max}$ and $Y_{\min} = \abs{Y^*}-\overline{Y}_{\max}$.

%The next lemma states several more restrictions on the sizes of $X^*, X, \overline{X}$ and $Y,' Y, \overline{Y}$.

\begin{lemma}\label{lem:overlineM}
Suppose the assumptions of \cref{lemma:possible-X-Y}, \cref{lemma:Second-step}, and \cref{lemma:Third-step} hold. Then, we have the following inequalities. %The following inequalities hold. %\xh{Highlight whether these are unconditional or conditional on some assumptions like previous lemmas.} 
\begin{enumerate}[(a)]
    \item $\abs{\abs{X^*}-\abs{Y^*}}\leq \overline{M}+\Delta$.
    \item $\abs{\overline{X}} \leq \overline{X}_{\max}$;\ \  $\abs{\overline{Y}} \leq \overline{Y}_{\max}$;\ \  $|X| \geq X_{\min}$;\ \  and $|Y|\geq Y_{\min}$.
    \item $X_{\min}\left((1-\eta)Y_{\min}-\abs{U}\right)_+^2+Y_{\min}\left((1-\eta)X_{\min}-\abs{U}\right)_+^2\leq \tilde d(n)-4b_{XY}$. %eq:u-lower-bound}
    \item %\label{eq:u-upper-bound}
    $X_{\min}\left((1-\eta)\abs{U}-\abs{Y^*}\right)_+^2+Y_{\min}\left((1-\eta)\abs{U}-\abs{X^*}\right)_+^2\leq \tilde d(n)-4b_{XY}$. %\xh{Can we introduce and use the relatively standard notation $(x)_+$ for $\max(x, 0)$?}
\end{enumerate}
\end{lemma}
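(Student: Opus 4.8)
The plan is to prove the single size bound $\max(|\overline X|,|\overline Y|)\le\overline M$ first, after which parts (a) and (b) follow formally, and then to deduce (c) and (d) by feeding the resulting bounds into \cref{lemma:u-lower-bound} and \cref{lemma:basic-facts}(b) respectively. Throughout I would abbreviate $q:=\lfloor P/2\rfloor$ and record the ``free'' inputs: since $\chi\in\cG_{\Delta,P}$ we have $2\pc_2+\pc_3\le P$, so $\pc_2\le q$; by \cref{lemma:possible-X-Y} the pair $(|X|,|Y|)$ is admissible, so $|X|+|Y|\ge\lceil n/2\rceil$ and $\bigl|\,|X|-|Y|\,\bigr|\le\Delta$, whence $|X|,|Y|\ge\tfrac n4-\tfrac\Delta2$; and each non-$c^*$ edge between $X$ and $Y$ forms a $2$-precyclic triangle with $v^*$, so $b_{XY}\le\pc_2\le q$, and since those triangles are disjoint from the ones counted by $p=\pc_2(U\sm\{v^*\},X,Y)$ we also get $b_{XY}+p\le q$. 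I would also assume, as is implicit in the definition of $\overline M$, that $\eta<1$ and $\tfrac n4>\tfrac\Delta2$.

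The heart of the argument is the bound $|\overline Y|\le\overline M$ (the bound on $|\overline X|$ is symmetric). The key observation is that for every $y\in\overline Y$ and every $z\in X$ with $\chi(yz)=c^*$, the triangle $v^*yz$ is $2$-precyclic: its two $c^*$-edges are $v^*z$ and $yz$ (note $\chi(v^*y)\neq c^*$ as $y\notin N_{c^*}(v^*)$), and $v^*,y$ lie in the distinct $c^*$-parts $U,Y^*$. This yields $\pc_2(v^*,\overline Y,X)\ge|\overline Y||X|-\sum_{y\in\overline Y}b_X(y)$. Now I would substitute the upper bound on $\sum_{y\in\overline Y}b_X(y)$ from \cref{lem:sum-Xy-bound}, which re-introduces $\pc_2(U,X,\overline Y)$, and use $\pc_2(v^*,\overline Y,X)\le\pc_2(U,X,\overline Y)\le\pc_2-b_{XY}\le q-b_{XY}$ (the middle inequality because the triangles counted by $\pc_2(U,X,\overline Y)$ and the $b_{XY}$ triangles through $v^*$ are disjoint). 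Collecting the self-referential $\pc_2(U,X,\overline Y)$ onto one side produces the factor $1+\tfrac1{|U|}$ and the inequality
\[
(q-b_{XY})\Bigl(1+\tfrac1{|U|}\Bigr)\ \ge\ |\overline Y|\Bigl(|X|-\tfrac{b_{XY}}{|Y|}-\tfrac{p}{|U||Y|}\Bigr).
\]
Since $|U|\ge1$ and $b_{XY}+p\le q$, the right-hand bracket is at least $|X|-\tfrac q{|Y|}\ge(1-\eta)|X|\ge(1-\eta)(\tfrac n4-\tfrac\Delta2)$, using $\eta\ge q/(|X||Y|)$. Dividing through and using integrality of $|\overline Y|$ yields exactly the first term of the minimum defining $\overline M$, while $|\overline Y|\le|\overline X|+|\overline Y|=|X^*|+|Y^*|-|X|-|Y|\le|X^*|+|Y^*|-\lceil n/2\rceil$ gives the second; hence $|\overline Y|\le\overline M$. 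Part (a) is then immediate from the decomposition $|X^*|-|Y^*|=(|X|-|Y|)+(|\overline X|-|\overline Y|)$, and the $(\cdot)_-$ truncations in $\overline X_{\max},\overline Y_{\max}$ are arranged precisely so that $|\overline X|\le\overline X_{\max}$, $|\overline Y|\le\overline Y_{\max}$ follow from $|\overline X|,|\overline Y|\le\overline M$ together with $|X|-|Y|\le\Delta$ (splitting on the sign of $|X^*|-|Y^*|+\Delta$); then $|X|=|X^*|-|\overline X|\ge X_{\min}$ and likewise $|Y|\ge Y_{\min}$, proving (b).

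For (c) I would quote \cref{lemma:u-lower-bound}, discard the nonnegative $(|X|-|Y|)^2$, and bound $\tilde d(n)-4\pc_2-2\pc_3\le\tilde d(n)-4b_{XY}$; then $|X|\ge X_{\min}$ and $|Y|-\tfrac{b_{XY}}{|X|}\ge|Y|-\eta|Y|=(1-\eta)|Y|\ge(1-\eta)Y_{\min}$ (using $b_{XY}\le q$ and $q/|X|\le\eta|Y|$), so monotonicity and convexity of $(\cdot)_+^2$ give the claim; the case $\min(X_{\min},Y_{\min})<0$ is trivial since the left side is then $\le0\le\tilde d(n)-4b_{XY}$. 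For (d) I would run the analogous estimate ``in the other direction'': for $x\in X$ the $c^*$-neighbourhood of $x$ meets $U$ in at least $|U|-b_U(x)$ vertices and $Y^*$ in at most $|Y^*|$ vertices, so $\delta_{c^*}(x)\ge(|U|-|Y^*|-b_U(x))_+$; summing over $x\in X$, using convexity and $\sum_{x\in X}b_U(x)=\sum_{u\in U}b_X(u)\le\tfrac{|U|b_{XY}+p}{|Y|}$ from \eqref{eq:bound-UX-UY} together with $|U|b_{XY}+p\le(|U|-1)b_{XY}+q\le|U|q$, I obtain $\sum_{x\in X}\delta_{c^*}(x)^2\ge X_{\min}\bigl((1-\eta)|U|-|Y^*|\bigr)_+^2$; adding the symmetric bound over $Y$ and invoking \cref{lemma:basic-facts}(b) gives (d).

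The main obstacle will be the bootstrapping step in the proof of $|\overline Y|\le\overline M$: $\pc_2(U,X,\overline Y)$ appears simultaneously as a precyclic count we want to force small and as a cost we must pay through \cref{lem:sum-Xy-bound}, and pinning down the exactly-right constant $\tfrac{\lfloor P/2\rfloor-b_{XY}}{(1-\eta)(n/4-\Delta/2)}(1+1/|U|)$ requires carrying $b_{XY}$, $p$, the $1/|U|$ contributions, and the floor functions along without absorbing anything into asymptotic notation, since these estimates are essentially tight for the small $n$ that must ultimately be checked. Everything else is bookkeeping---propagating $\eta$ and the two branches of each $(\cdot)_-$ truncation through the definitions of $\overline X_{\max},\overline Y_{\max},X_{\min},Y_{\min}$.
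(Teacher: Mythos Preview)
Your proposal is correct and follows essentially the same route as the paper: first establish $|\overline X|,|\overline Y|\le\overline M$ by combining the observation that each $c^*$-edge from $X$ to $\overline Y$ yields a $2$-precyclic triangle through $v^*$ with the bound from \cref{lem:sum-Xy-bound}, then collect the self-referential $\pc_2(U,X,\overline Y)$ term to produce the $1+\tfrac1{|U|}$ factor; parts (a) and (b) are then formal, (c) comes from \cref{lemma:u-lower-bound}, and (d) from the analogous imbalance estimate fed into \cref{lemma:basic-facts}(b). The only cosmetic difference is that the paper bounds the bracket by substituting $|U|b_{XY}+p\le|U|\lfloor P/2\rfloor$ directly in the \cref{lem:sum-Xy-bound} output rather than carrying $b_{XY}/|Y|$ and $p/(|U||Y|)$ separately, but the resulting inequality is the same.
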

\begin{proof}
    We first show that $\abs{\overline{X}}, \abs{\overline{Y}}\leq \overline{M}$.
    %\xh{I don't like that we have this numbered equation being referred to outside the context window.}
    Note that for every $(x,y)\in X\times \overline{Y}$ with $\chi(xy)= c^*$, the triangle $v^*xy$ is $2$-precyclic.
    Therefore we have 
    \[\pc_2(U, X, \overline{Y})\geq \abs{X}\abs{\overline{Y}}-\sum_{y\in \overline{Y}}b_X(y)\geq \abs{X}\abs{\overline{Y}}-\frac{\abs{Y}\pc_2(U, X, \overline{Y})+\lfloor\frac{P}{2}\rfloor\abs{U}\abs{\overline{Y}}}{\abs{U}\abs{Y}},\]
    where we used \cref{lem:sum-Xy-bound} and the fact that $b_{XY}+p\leq \lfloor\frac{P}{2}\rfloor$. (Indeed, the quantities $b_{XY} = \pc_2(v^*, X, Y)$, $p = \pc_2(U\setminus\{v^*\}, X, Y),$ and $\pc_2(U, X, \overline Y)$ count disjoint sets of $2$-precyclic triangles.)
    Rearranging, we get
    \[\pc_2(U, X, \overline{Y})\left(1+\frac{1}{\abs{U}}\right)\geq \abs{\overline{Y}}\left(\abs{X}-\frac{\floor{P/2}}{\abs{Y}}\right)\geq (1-\eta)\abs{X}\abs{\overline{Y}}\geq (1-\eta)\left(\frac{n}{4}-\frac{\Delta}{2}\right)\abs{\overline{Y}},\]
    where we used the fact that $(\abs{X},\abs{Y})$ is an admissible pair (parts (a) and (b) from \cref{def:admissible-pair}) to obtain
    \[\abs{X}\geq \frac{\abs{X}+\abs{Y}-\Delta}{2}\geq \frac{n}{4}-\frac{\Delta}{2}.\]
    This gives 
    \[\abs{\overline{Y}}\leq \left\lfloor \frac{\lfloor\frac{P}{2}\rfloor-b_{XY}}{(1-\eta)\left(\frac{n}{4}-\frac{\Delta}{2}\right)}\left(1+\frac{1}{\abs{U}}\right)\right\rfloor.\]
    To show that $\abs{\overline{Y}}\leq \overline{M}$, it remains to show that $\abs{\overline{Y}}\leq \abs{X^*}+\abs{Y^*}-\lceil \frac{n}{2}\rceil$.
    This is clear as
    \[\left\lceil\frac{n}{2}\right\rceil \leq \abs{X}+\abs{Y}\leq \abs{X^*}+\abs{Y^*}-\abs{\overline{Y}}.\]
    Rearranging, we get the desired inequality, which gives $\abs{\overline{Y}}\leq \overline{M}$.
    An analogous argument gives $\abs{\overline{X}}\leq \overline{M}$.

    Since $\abs{\overline{X}},\abs{\overline{Y}}\leq \overline{M}$, we have
    \[\Delta\geq \abs{\abs{X}-\abs{Y}}\geq \abs{\abs{X^*}-\abs{Y^*}}-\overline{M},\]
    so $\abs{\abs{X^*}-\abs{Y^*}}\leq \overline{M}+\Delta,$ proving (a).

    Next, let us show (b).
    To show $\abs{\overline{X}}\leq \overline{X}_{\max}$, note that
    \[-\Delta\leq \abs{X}-\abs{Y}\leq (\abs{X^*}-\abs{\overline{X}})-(\abs{Y^*}-\overline{M}),\]
    and rearranging gives
    \[\abs{\overline{X}}\leq \overline{M}+\abs{X^*}-\abs{Y^*}+\Delta.\]
    This shows $\abs{\overline{X}}\leq \overline{X}_{\max}$.
    Similarly $\abs{\overline{Y}}\leq \overline{Y}_{\max}$.
    Since $X$ and $\overline{X}$ partition $X^*$, we immediately obtain
    \[\abs{X} = \abs{X^*}-\abs{\overline{X}}\geq \abs{X^*}-\overline{X}_{\max} = X_{\min}\]
    and similarly $\abs{Y}\geq Y_{\min}$.

    Next, for part
    (c), from the definition of $\eta$ we observe that 
    \[\left(\abs{X}-\frac{b_{XY}}{\abs{Y}}-\abs{U}\right)_+\geq \left(\left(1-\frac{\floor{P/2}}{\abs{X}\abs{Y}}\right)\abs{X}-\abs{U}\right)_+\geq \left((1-\eta)\abs{X}-\abs{U}\right)_+\]
    and similarly
    \[\left(\abs{Y}-\frac{b_{XY}}{\abs{x}}-\abs{U}\right)_+\geq \left((1-\eta)\abs{Y}-\abs{U}\right)_+.\]
    Combined with $\abs{X}\geq X_{\min}$ and $\abs{Y}\geq Y_{\min}$, part (c) follows directly from \cref{lemma:u-lower-bound}.

    To prove (d), we use a similar idea as in the proof of \cref{lemma:u-lower-bound}. %\xh{I am confused on where the $b_{XY}$ in this inequality comes from, I understand why it was $4\pc_2 + 2\pc_3$ in the analogous bound Lemma 4.13. Can we deduce (d) as a direct corollary of 4.13?} \ra{Well, $b_{XY} \leq \pc_2$ since if $(x, y)\in X\times Y$ has $\chi(xy)\neq c$, then $v^*xy$ is $2$-precyclic. So this inequality is weaker but still useful for our purposes. And I checked and couldn't make this follow from 4.13.} \xh{Ok, can we spell out the moment we use 4.1(b) then?}
    For any $x\in X$, we have $\delta_{c^*}(x)\geq (\abs{U}-b_U(x)-\abs{Y^*})_+$.
    Therefore
    \[\sum_{x\in X}\delta_{c^*}(x)^2\geq \sum_{x\in X}(\abs{U}-b_U(x)-\abs{Y^*})_+^2.\]
    Using Jensen's inequality and \cref{lemma:Second-step}, the above is at least 
    \[\abs{X}\left(\abs{U}-\frac{\abs{U}b_{XY}+p}{\abs{X}\abs{Y}}-\abs{Y^*}\right)_+^2.\]
    Since
    \[\frac{\abs{U}b_{XY}+p}{\abs{X}\abs{Y}}\leq \frac{\floor{P/2}}{\abs{X}\abs{Y}}\abs{U}\leq \eta\abs{U},\]
    we know that 
    \[\abs{X}\left(\abs{U}-\frac{\abs{U}b_{XY}+p}{\abs{X}\abs{Y}}-\abs{Y^*}\right)_+^2\geq X_{\min}\left((1-\eta)\abs{U}-\abs{Y^*}\right)_+^2.\]
    Analogous inequalities hold with $X$ and $Y$ flipped. Combining this with \cref{lemma:basic-facts}(b) and the fact that $b_{XY} \leq \pc_2$, we have
    \begin{align*}\tilde d(n)-4b_{XY}&\geq \sum_{x\in X}\delta_{c^*}(x)^2+\sum_{y\in Y}\delta_{c^*}(y)^2 \\
    &\geq X_{\min}\left((1-\eta)\abs{U}-\abs{Y^*}\right)_+^2+Y_{\min}\left((1-\eta)\abs{U}-\abs{X^*}\right)_+^2,\end{align*}
    proving (d).
\end{proof}

%\xh{I really don't like the notations $\textup{Coeff}_{b_{XY}}$ and $b_{\max from x}$. For the former, it is common in generatingfunctionology to use square brackets to denote coefficients, perhaps we could write $C[b_{XY}]$ or $\textup{Coeff}[b_{XY}]$? A strict upgrade to the latter would just be to write $\max_X b_U$, it is then transparent what this is. }\hy{Yes let's do $\max_Xb_U$ for $b_{\max from x}$. For $\textup{Coeff}_{b_XY}$ I guess it is slightly awakard that we also have $\textup{Coeff}_{\lfloor P/2\rfloor -b_{XY}}$ so the standard bracket wouldn't be technically correct. We should check but if we haven't used $C$ for a while I think $C[b_{XY}]$ and $C[\lfloor P/2\rfloor -b_{XY}]$ can be good, with the only caveat being that $C[\lfloor P/2\rfloor -b_{XY}]$ will still be really long to fit :(.}

To state the next lemma concisely, we introduce three more notations. Define: \begin{align}\label{eq:coeffs and tau} C[b_{XY}] &:= \left(1+\frac{\overline{X}_{\max}}{X_{\min}}+\frac{\overline{Y}_{\max}}{Y_{\min}}+\frac{\abs{U}(\abs{X^*}+\abs{Y^*})}{X_{\min}Y_{\min}}\right);\nonumber\\
C[\floor{P/2}-b_{XY}]&:=\left(1+\frac{\overline{X}_{\max}}{X_{\min}}+\frac{\overline{Y}_{\max}}{Y_{\min}}\right)\left(\frac{1}{X_{\min}}+\frac{1}{Y_{\min}}+\frac{1}{\abs{U}}\right);\nonumber\\
\tau &:= \left\lfloor C[b_{XY}] \cdot b_{XY}+C[\floor{P/2}-b_{XY}]\cdot \left(\floor{P/2}-b_{XY}\right)+\overline{X}_{\max}\overline{Y}_{\max}\right\rfloor.\end{align}

The final lemma gives a concrete upper bound on the number of edges not colored $c^*$ across the tripartition of color $c^*$. This bound depends on $b_{XY}$, so the second part of the lemma serves to bound $b_{XY}$.

\begin{lemma}
\label{lem:tau-bad}
Suppose the assumptions of \cref{lemma:possible-X-Y}, \cref{lemma:Second-step}, and \cref{lemma:Third-step} hold. Then the number of edges across $U, X^*, Y^*$ not colored $c^*$ is at most $\tau$. Furthermore, with $\max_{X^*} b_U := \max_{x\in X^*}b_U(x)$ and similarly for $\max_{Y^*} b_U$, we have %$b_{\textup{max from }x}$ and $b_{\textup{max from }y}$ be integers such that $b_{U}(x)\leq b_{\textup{max from }x}$ for every $x\in X^*$ and $b_{U}(y)\leq b_{\textup{max from }y}$ for every $y\in Y^*$. Then, we have
\begin{align} (\abs{U}-\max_{X^*}b_U-\max_{Y^*}b_U)b_{XY}&\leq \floor{P/2}\label{eq:bounding bad 1}\end{align}
and
\begin{equation} \label{eq:bounding bad 2}(\abs{U}-\max_{X^*}b_U-\max_{Y^*}b_U)b_{XY} \leq \binom{b_{XY}}{2}+b_{XY}\left(\overline{X}_{\max}+\overline{Y}_{\max}+\min(\max_{X^*}b_U,\max_{Y^*}b_U)\right).\end{equation}
%\\
%(\abs{U}-b_{\textup{max from }x}-b_{\textup{max from }y})b_{XY} &\leq \binom{b_{XY}}{2}+b_{XY}\left(\overline{X}_{\max}+\overline{Y}_{\max}+\min(b_{\textup{max from }x},b_{\textup{max from }y})\right).\label{eq:bounding bad 2}\end{align} \ra{Save this for next section}
\end{lemma}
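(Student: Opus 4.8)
The plan is to estimate the number of edges not colored $c^*$ across the tripartition $U, X^*, Y^*$ by splitting each such edge into one of three types according to which two parts it joins, and bounding each type separately using the structural facts established in Lemmas~\ref{lemma:possible-X-Y}--\ref{lem:overlineM}. Write $B_{X^*Y^*}$, $B_{UX^*}$, $B_{UY^*}$ for the number of non-$c^*$ edges between the respective pairs of parts. Recall the refinement $X^* = X \sqcup \overline X$, $Y^* = Y \sqcup \overline Y$; each of $X, Y, \overline X, \overline Y$ has size controlled by Lemma~\ref{lem:overlineM}(b) (namely $\abs{\overline X} \le \overline X_{\max}$, $\abs X \ge X_{\min}$, etc.).

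First I would handle $B_{X^*Y^*}$, decomposing it over the four subproducts $X\times Y$, $X\times \overline Y$, $\overline X\times Y$, $\overline X\times\overline Y$. The $X\times Y$ contribution is exactly $b_{XY}$. For $X\times\overline Y$, Lemma~\ref{lem:sum-Xy-bound} bounds $\sum_{y\in\overline Y} b_X(y)$ by $\frac{\abs Y\, \pc_2(U,X,\overline Y) + (\abs U b_{XY}+p)\abs{\overline Y}}{\abs U\abs Y}$; I would further bound $\pc_2(U,X,\overline Y) \le \abs U \abs{\overline X}\abs{\overline Y}$ trivially... no, more carefully, $\pc_2(U,X,\overline Y)$ is at most (number of triples) which is $\abs U\cdot\abs X\cdot\abs{\overline Y}$, but this is too lossy; instead the right move is to observe that every $2$-precyclic triangle in $\pc_2(U,X,\overline Y)$ other than the ones forced through $v^*$ must have a non-$c^*$ edge, and there are few of those. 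Concretely, $\pc_2(U, X, \overline Y)$ triangles arise from $(u,x,y)\in U\times X\times\overline Y$ where $v^*xy$-type logic applies; the cleanest bound is $\pc_2(U,X,\overline Y) \le \abs U\abs{\overline Y}\cdot\max_{y\in\overline Y} b_X(y)$ combined with the already-established $b_X(y)\le\tfrac12\abs X$ from Lemma~\ref{lemma:Third-step}, but to get the claimed form of $\tau$ one should instead substitute $\abs{\overline Y}\le\overline Y_{\max}$, $\abs X\ge X_{\min}$, $\abs Y\ge Y_{\min}$, $\abs U\le\abs U$, $p\le\floor{P/2}-b_{XY}$, and $\abs{\overline X}\abs{\overline Y}\le\overline X_{\max}\overline Y_{\max}$ everywhere. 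The $\overline X\times\overline Y$ contribution is at most $\abs{\overline X}\abs{\overline Y}\le\overline X_{\max}\overline Y_{\max}$. Symmetric reasoning (swapping $X\leftrightarrow Y$) handles $\overline X\times Y$.

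Next I would bound $B_{UX^*} + B_{UY^*}$. By definition of $X$ and $Y$ (as the $c^*$-neighborhoods of $v^*$ inside $X^*, Y^*$), there are \emph{no} non-$c^*$ edges from $v^*$ into $X^*\cup Y^*$; for $u\in U\setminus\{v^*\}$, Lemma~\ref{lemma:Second-step} gives $\sum_{u\in U} b_X(u)\le\frac{\abs U b_{XY}+p}{\abs Y}$ and symmetrically for $b_Y$, while Lemma~\ref{lem:sum-Xy-bound} (applied with roles of $X$ and $U$ swapped, giving the ``$\sum_{y\in\overline Y} b_U(y)$'' bound) controls the edges from $U$ to $\overline X$ and $\overline Y$. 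Summing $B_{UX^*} = \sum_{u}b_X(u) + (\text{edges }U\text{--}\overline X)$ and likewise for $Y^*$, substituting the crude size bounds from Lemma~\ref{lem:overlineM}(b), and collecting terms according to whether they scale with $b_{XY}$, with $\floor{P/2}-b_{XY}$, or are the pure product term $\overline X_{\max}\overline Y_{\max}$, yields exactly the coefficients $C[b_{XY}]$, $C[\floor{P/2}-b_{XY}]$ in~\eqref{eq:coeffs and tau}, so the total $B_{X^*Y^*}+B_{UX^*}+B_{UY^*}$ is at most $\tau$ after taking the floor (the left side being an integer). The bookkeeping of which term contributes to which coefficient is the main obstacle: one must be careful that $\frac{\overline X_{\max}}{X_{\min}}$, $\frac{\overline Y_{\max}}{Y_{\min}}$, $\frac{\abs U(\abs{X^*}+\abs{Y^*})}{X_{\min}Y_{\min}}$ appear with the right groupings, using $\abs{\overline Y}\le\overline Y_{\max}$, $\abs X\ge X_{\min}$, $\abs Y\ge Y_{\min}$ monotonically at each step, and that $p$ is absorbed into $\floor{P/2}-b_{XY}$ via the disjointness $b_{XY}+p+\pc_2(U,X,\overline Y)+\dots \le \floor{P/2}$ noted in the proof of Lemma~\ref{lem:overlineM}.

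Finally, for~\eqref{eq:bounding bad 1} and~\eqref{eq:bounding bad 2}: fix the $b_{XY}$ edges between $X$ and $Y$ that are not colored $c^*$; write $e=x_ey_e$ for such an edge. For any $u\in U$ with $\chi(ux_e)=\chi(uy_e)=c^*$, the triangle $ux_ey_e$ is $2$-precyclic (one edge of color $c^*$ with $c^*(x_e)\ne c^*(y_e)$... wait, need $\chi(ux_e)=\chi(uy_e)$ and the $c^*$-parts of $x_e,y_e$ distinct, which holds since $x_e\in X^*, y_e\in Y^*$). The number of $u\in U$ failing $\chi(ux_e)=c^*$ or $\chi(uy_e)=c^*$ is at most $b_{X^*}(\cdot)+b_{Y^*}(\cdot)$ summed appropriately; bounding uniformly, at least $\abs U - \max_{X^*}b_U - \max_{Y^*}b_U$ choices of $u$ work for each of the $b_{XY}$ edges, giving $(\abs U-\max_{X^*}b_U-\max_{Y^*}b_U)b_{XY}$ such $2$-precyclic triangles. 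These are pairwise distinct across edges $e$ unless two edges $e,e'$ share the vertex in $X$ or in $Y$ — but all of them lie in $\pc_2(U,X,Y)$, which together with the disjoint families $b_{XY}=\pc_2(v^*,X,Y)$ and $p=\pc_2(U\setminus\{v^*\},X,Y)$ is itself bounded by $\floor{P/2}$, giving~\eqref{eq:bounding bad 1}; here one should double-check whether $v^*$ itself is among the valid $u$'s (it has $\chi(v^*x_e)=c^*$ only if $x_e\in X$, which it is, so $v^*$ works and is counted in $\pc_2(v^*,X,Y)=b_{XY}$, requiring the count to be adjusted by at most an $O(b_{XY})$ additive term absorbed into the floor). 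For~\eqref{eq:bounding bad 2}, instead of bounding by $\floor{P/2}$, count the precyclic triangles $ux_ey_e$ directly: the ``$u=v^*$'' ones number $b_{XY}$, and the overcount from two edges sharing an $X$- or $Y$-endpoint is at most $\binom{b_{XY}}{2}$, while edges with an endpoint in $\overline X$ or $\overline Y$ (rather than $X$ or $Y$) — no, $e$ is between $X$ and $Y$ by definition, so instead the extra term $b_{XY}(\overline X_{\max}+\overline Y_{\max}+\min(\max_{X^*}b_U,\max_{Y^*}b_U))$ accounts for the slack when passing from $X^*,Y^*$ to $X,Y$ in the uniform bound on the number of good $u$. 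I expect this last double-counting argument, and in particular pinning down exactly which triangles are overcounted to land on $\binom{b_{XY}}{2}$ plus the stated linear correction, to be the most delicate part of the proof.
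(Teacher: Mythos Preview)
Your plan for the $\tau$ bound and for \eqref{eq:bounding bad 1} is essentially the paper's proof. The decomposition of the non-$c^*$ edges into the eight pieces ($X$--$Y$, $X$--$\overline Y$, $\overline X$--$Y$, $\overline X$--$\overline Y$, and the four $U$-pieces), the use of Lemmas~\ref{lemma:Second-step} and~\ref{lem:sum-Xy-bound} to bound each sum, and the collection of coefficients via the disjointness inequality $b_{XY}+p+\pc_2(U,\overline X,Y)+\pc_2(U,X,\overline Y)\le\floor{P/2}$ are exactly what the paper does. Two minor cleanups: (i) there is no overcounting in \eqref{eq:bounding bad 1} --- distinct edges $xy$ and $x'y'$ together with the same $u$ give distinct triangles $uxy\ne ux'y'$, so the $b_{XY}(\abs U-\max_{X^*}b_U-\max_{Y^*}b_U)$ triangles are genuinely distinct $2$-precyclic triangles and sit inside $\pc_2\le\floor{P/2}$; (ii) $v^*$ is always a valid $u$ (it is $c^*$-adjacent to all of $X\cup Y$ by definition), but this needs no adjustment --- its triangles are part of the $\pc_2$ count like any other.

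Your approach to \eqref{eq:bounding bad 2}, however, has a genuine gap. You try to obtain the right-hand side by a direct inclusion--exclusion on precyclic triangles, proposing that $\binom{b_{XY}}{2}$ comes from ``two edges sharing an endpoint'' and the linear term from ``slack in the uniform bound''. Neither interpretation works: the right-hand side does not bound any family of precyclic triangles, and nothing in the precyclic-triangle budget $\floor{P/2}$ produces those particular terms. In the paper, \eqref{eq:bounding bad 2} is proved instead by a \emph{recoloring argument} that exploits the minimality clause in Definition~\ref{def:v*c*} (that $\chi$ is chosen in $\cG_{\Delta,P}$ to have the fewest non-$c^*$ edges). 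One recolors all $b_{XY}$ non-$c^*$ edges between $X$ and $Y$ to color $c^*$. The left-hand side lower-bounds the number of monochromatic $c^*$-triangles \emph{created}: for each recolored edge $xy$, every $u\in U$ with $\chi(ux)=\chi(uy)=c^*$ now completes a $c^*$-triangle. The right-hand side upper-bounds the number of monochromatic triangles \emph{destroyed}: at most $\binom{b_{XY}}{2}$ use two recolored edges, and for a destroyed triangle using exactly one recolored edge $xy$, the third vertex cannot lie in $X\cup Y$ (one incident edge would already be $c^*$), so it lies in $\overline X$, $\overline Y$, or among the at most $\min(\max_{X^*}b_U,\max_{Y^*}b_U)$ vertices of $U$ that could possibly be non-$c^*$-adjacent to both $x$ and $y$. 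If \eqref{eq:bounding bad 2} failed, the recolored coloring would have at least as many monochromatic triangles but strictly fewer non-$c^*$ edges, contradicting the minimality of $\chi$. This use of the extremal choice of $\chi$ is the key device your plan is missing.
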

\begin{proof}
The number of edges not colored $c^*$ across $X^*,Y^*,U$ is at most
    \begin{align}\label{eq:edges not c^*} b_{XY}+\sum_{u\in U}\left(b_X(u)+b_Y(u)\right)+\sum_{x\in \overline{X}}\left(b_U(x)+b_Y(u)\right)+\sum_{y\in \overline{Y}}\left(b_U(y)+b_X(y)\right)+\abs{\overline{X}}\abs{\overline{Y}}.\end{align}
    Using \cref{lemma:Second-step}, the first sum is at most
    \[\left(\abs{U}b_{XY}+p\right)\left(\frac{1}{\abs{X}}+\frac{1}{\abs{Y}}\right).\]
    
    For the last sum, we apply \cref{lem:sum-Xy-bound} to obtain an upper bound on each of $\sum_{y\in \overline{Y}}b_X(y)$ and $\sum_{y\in \overline{Y}}b_U(y)$.
    In total, the last sum is at most 
    \[\left(\pc_2(U, X, \overline{Y})+\frac{(\abs{U}b_{XY}+p)\abs{\overline{Y}}}{\abs{Y}}\right)\left(\frac{1}{\abs{X}}+\frac{1}{\abs{U}}\right).\]
    Symmetrically, the middle sum in \eqref{eq:edges not c^*} is at most
    \[\left(\pc_2(U, \overline{X}, Y)+\frac{(\abs{U}b_{XY}+p)\abs{\overline{X}}}{\abs{X}}\right)\left(\frac{1}{\abs{Y}}+\frac{1}{\abs{U}}\right).\]
    Since there are at most $\floor{P/2}$ $2$-precyclic triangles and $b_{XY}, p, \pc_2(U, \overline{X}, Y),$ and $\pc_2(U, X, \overline{Y})$ all count disjoint sets of $2$-precyclic triangles, we have
    \[b_{XY}+p+\pc_2(U, \overline{X}, Y)+\pc_2(U, X, \overline{Y})\leq \floor{P/2}.\]
    Therefore
    \[p+\pc_2(U, \overline{X}, Y)+\pc_2(U, X, \overline{Y})\leq \floor{P/2}-b_{XY}.\]
    Let us now compute the coefficient of $b_{XY}$ in the sum of the five terms in \eqref{eq:edges not c^*}.
    It is
    \begin{align*} &1+\abs{U}\left(\frac{1}{\abs{X}}+\frac{1}{\abs{Y}}+\frac{\abs{\overline{X}}}{\abs{X}}\left(\frac{1}{\abs{Y}}+\frac{1}{\abs{U}}\right)+\frac{\abs{\overline{Y}}}{\abs{Y}}\left(\frac{1}{\abs{X}}+\frac{1}{\abs{U}}\right)\right)\\
    &=1+\frac{\abs{\overline{X}}}{\abs{X}}+\frac{\abs{\overline{Y}}}{\abs{Y}}+\frac{\abs{U}(\abs{X}+\abs{\overline{X}}+\abs{Y}+\abs{\overline{Y}})}{\abs{X}\abs{Y}}\leq C[b_{XY}].\end{align*}
    We also observe that the coefficients of $p,\pc_2(U, \overline{X}, Y)$ and $\pc_2(U, X, \overline{Y})$ are all at most $C[\floor{P/2}-b_{XY}]$.
    Therefore the number of edges across the parts not colored $c^*$ is at most
    \[\left\lfloor C[b_{XY}]\cdot b_{XY}+C[\floor{P/2}-b_{XY}]\cdot(\floor{P/2}-b_{XY})+\abs{\overline{X}}\abs{\overline{Y}}\right\rfloor\leq \tau.\]

    We now prove \eqref{eq:bounding bad 1}.
    For any edge $(x,y)\in X\times Y$ that is not of color $c^*$, we see that there are at least $\abs{U}-b_U(x)-b_U(y)$ choices of $u$ so that $\chi(ux)=\chi(uy)=c^*$.
    For each of these choices, $uxy$ is a $2$-precyclic triangle. Since $b_U(x)\leq \max_{X^*}b_U$ and $b_U(y) \leq \max_{Y^*}b_U$, this proves \eqref{eq:bounding bad 1}.
    
    To prove \eqref{eq:bounding bad 2}, 
    we claim that if it doesn't hold, then recoloring all the $b_{XY}$ edges with color $c^*$ cannot decrease the number of monochromatic triangles, which contradicts that $\chi$ is the coloring in $\cG_{\Delta, P}$ with the fewest edges not colored $c^*$ (see \cref{def:v*c*}). Indeed, the left hand side of \eqref{eq:bounding bad 2} is the minimum number of monochromatic triangles we create.
    To bound the number of monochromatic triangles we destroy, note that there are trivially at most $\binom{b_{XY}}{2}$ such triangles that use two edges from the $b_{XY}$ edges. To use just one $b_{XY}$ edge, the third vertex of the triangle cannot be in $X$ or $Y$ because this triangle would have at least one edge colored $c^*$ and at least one not colored $c^*$. So the third vertex can be in $\overline{X}$, in $\overline{Y}$, or in $U$, but there are at most $\min(\max_{X^*}b_U,\max_{Y^*}b_U)$ vertices in $U$ that could create a monochromatic triangle with an edge not colored $c^*$.
\end{proof}

This last lemma finally gives us a concrete upper bound on the number of edges not colored $c^*$ across $X^*, Y^*,$ and $U$. We use it to finish the proof.

\begin{proof}[Proof of \cref{thm:monochromatic-k-cliques} for $k=3$] Recall that by \cref{cor:calGempty} and \cref{lemma: recoloring}, to contradict \cref{asm:contradiction}, it suffices to show that whenever $\Delta, P$ satisfy the conditions of \cref{cor:calGempty} and $\chi\in \cG_{\Delta, P}$, there are at most $2\min(|X^*|, |Y^*|, |U|)$ edges not colored $c^*$ across the parts $X^*, Y^*$, and $U$. By \cref{lem:tau-bad}, it thus suffices to verify that the assumptions of \cref{lemma:possible-X-Y}, \cref{lemma:Second-step}, and \cref{lemma:Third-step} all hold and that $\tau\leq 2\min(|X^*|, |Y^*|, |U|)$. %\xh{This needs to be more self-contained. Maybe something like "Assuming Asm 3.1, and using [set of all lemmas used], it suffices to check the assumptions of [set of all starred lemmas] and to demonstrate that $\tau \le $ [put exact bound needed] to obtain that $c^*$ is spanning complete tripartite, by Lemma 4.5. This is a contradiction because blah blah induction so we are done.} By \cref{lem:tau-bad} and \cref{lemma: recoloring}, it remains to demonstrate that the quantity $\tau$ is small. 
This is indeed the case in all allowable configurations for $n\not\in\{13, 14, 16, 17\}$ (while these four values of $n$ were handled in \cref{prop: small n}). We describe how to verify this in Appendix \ref{sec:verifying assumptions}, using the lemmas in this section together with a computer-assisted calculation.
\end{proof}

\section{Higher Uniformity}\label{sec:higher uniformity}
To prove \cref{thm:monochromatic-k-cliques} for $k\geq 7$, we use an elementary approach involving convexity. Specifically, the Loomis-Whitney inequality gives an upper bound on the number of $k$-cliques in a $k$-partite graph with a prescribed number of edges. This leads to a lower bound on the number of edges of the most common color required to achieve at least $g_k(n)$ monochromatic copies of $K_k$. Then, we show that once we already have many edges of the most common color, recoloring the remaining edges crossing that color's $k$-partition can only improve the coloring. By induction, this proves the extremal coloring is iterated $k$-partite.

Note that \cref{thm:monochromatic-k-cliques} is obvious for $n < k$ since there are no copies of $K_k$ in $K_n$. So, for the base case for the induction, we show that the theorem holds for all $k\geq 3$ and $n\in[k, k(k-1)]$.

\begin{lemma}\label{lem:k to k(k-1)}
    For any integers $k\geq 3$ and $n\in[k,k(k-1)]$, any $k$-partite coloring of $K_n$ has at most $g_k(n)$ monochromatic copies of $K_k$.
\end{lemma}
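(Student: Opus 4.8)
The plan is to argue by strong induction on $n$, the base case $n<k$ being trivial since then $K_n$ contains no $K_k$ and $g_k(n)=0$. Fix $k\le n\le k(k-1)$ and write $n=qk+r$ with $0\le r<k$; the hypothesis $n\le k(k-1)$ forces $1\le q\le k-1$, so the balanced $k$-partition of $n$ has all parts of size at most $q+1\le k$. Consequently no recursion occurs in the formula for $g_k(n)$, and $g_k(n)=q^{k-r}(q+1)^r$ is simply the number of transversal copies of $K_k$ in the balanced complete $k$-partite graph on $n$ vertices. Fix a $k$-partite coloring $\chi$ of $K_n$ achieving the maximum number of monochromatic copies of $K_k$ and, among those, maximizing the number of edges $e_{c^*}$ in its most popular color $c^*$; the goal is to show this maximum is at most $g_k(n)$.

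The first ingredient is a Loomis--Whitney estimate: in any $k$-partite graph the number of transversal $K_k$'s is at most $\bigl(\prod_{i<j}e_{ij}\bigr)^{1/(k-1)}$, where $e_{ij}$ is the number of edges between parts $i$ and $j$; by AM--GM this is at most $(e/\binom k2)^{k/2}$ for a $k$-partite graph with $e$ edges. Setting $f(x):=(x/\binom k2)^{k/2}$, a convex increasing function vanishing at $0$, the number of monochromatic $K_k$'s under $\chi$ is at most $\sum_c f(e_c)$, where $e_c$ is the number of edges of color $c$, $\sum_c e_c=\binom n2$, and each $e_c$ is at most $E:=\binom n2-\mu_k(n)$ with $\mu_k(n)=(k-r)\binom q2+r\binom{q+1}2$ the least possible number of within-part edges of a $k$-partition of $[n]$. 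The role of $n\le k(k-1)$ is that $\mu_k(n)$ is then small: one computes $E=q^2\binom k2$ when $r=0$, and in general $f(E)\ge g_k(n)$ because the balanced complete $k$-partite graph is itself a coloring realizing $g_k(n)$ edges of a single color; thus the total budget $\binom n2$ exceeds $E$ only by $O_k(1)$. Since $f$ is convex and $\sum_c f(e_c)$ must exceed $g_k(n)$, this pins the most popular color down: $e_{c^*}$ lies within a bounded number of edges of $E$, i.e.\ $c^*$ is a complete $k$-partite graph with parts $V_1,\dots,V_k$ missing only $O_k(1)$ of the cross-edges of its $k$-partition.

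In this near-extremal situation I would finish by a recoloring move. If some edge $uv$ with $u\in V_i$, $v\in V_j$, $i\ne j$, fails to have color $c^*$, recolor it to $c^*$; every color class stays $k$-partite. Because $c^*$ is so dense, the number of copies of $K_k$ that become monochromatic in $c^*$ upon adding $uv$ is at least the number of monochromatic $K_k$'s in the old color of $uv$ that are destroyed, so the total number of monochromatic $K_k$'s does not decrease while $e_{c^*}$ strictly increases, contradicting the choice of $\chi$. Hence $c^*$ is a spanning complete $k$-partite color; its parts are nonempty (otherwise $c^*$ supports no $K_k$ and the remaining colors, confined to $k-1$ parts, are handled by the same recursion), each has size at most $n-(k-1)<n$, every non-$c^*$ edge lies inside a single $V_i$, and therefore every monochromatic $K_k$ is either one of the $\prod_i|V_i|$ copies of color $c^*$ or lies entirely within one $V_i$. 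The induction hypothesis inside each $V_i$ together with the defining recursion for $g_k$ then yields
\[
\#\{\text{monochromatic }K_k\}\ \le\ \prod_{i=1}^k|V_i|\ +\ \sum_{i=1}^k g_k(|V_i|)\ \le\ g_k(n),
\]
completing the induction.

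The step I expect to be the main obstacle is the transition between the two ingredients: extracting from the Loomis--Whitney/convexity bound the statement that $c^*$ is within $O_k(1)$ edges of a complete $k$-partite graph, and then verifying that in that regime the recoloring move is genuinely non-decreasing in the number of monochromatic $K_k$'s. This is exactly the tight regime and exactly where $n\le k(k-1)$ is used; for $k=3$ in particular the bound $\sum_c f(e_c)$ overshoots $g_k(n)$ by a genuine constant rather than a fraction, so the structural fact that non-$c^*$ edges are trapped inside parts of size $<k$ (and hence support no $K_k$) is doing essential work in closing the final gap, and every constant must be tracked carefully.
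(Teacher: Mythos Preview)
Your approach has a genuine gap, and it is precisely the one you flagged yourself: the Loomis--Whitney/convexity step is too weak to force $c^*$ to be a nearly complete spanning $k$-partite graph in this regime. Concretely, take $k=3$ and $n=5$, so $g_3(5)=4$ and $\binom{5}{2}=10$. The bound $\sum_c f(e_c)\le \frac{\binom{n}{2}}{m}f(m)$ with $m=e_{c^*}$ only yields $m\ge 5$, i.e.\ $c^*$ may have just half the edges, and the remaining five edges can, by Loomis--Whitney alone, still contribute $\lfloor(5/3)^{3/2}\rfloor=2$ triangles while $c^*$ also contributes $2$, for a total of $4=g_3(5)$. So nothing in your argument rules out a hypothetical extremal coloring with two colors of five edges each, and then the recoloring move need not help: recoloring a cross-edge $uv$ to $c^*$ gains at most the number of $c^*$-$K_{k-2}$'s in the remaining part (here, at most one vertex) but can destroy a triangle in the old color. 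The same slackness persists for all small $k$; indeed, the paper uses your Loomis--Whitney/recolor strategy only for $k\ge 7$ and $n\ge k(k-1)+1$, and explicitly notes that these estimates fall apart for small uniformity.

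The paper's proof of this lemma is quite different and cleaner. It inducts on $n$ and works vertex by vertex: a minimal counterexample has every vertex $v$ lying in at least $g_k(n)-g_k(n-1)+1=g_{k-1}(n-\lceil n/k\rceil)+1$ monochromatic $K_k$'s, while the number through $v$ is at most $\sum_c g_{k-1}(n_c)$ where $n_c$ is the color-$c$ degree of $v$. Convexity of $g_{k-1}$ then forces $\max_c n_c\ge n-k+2$ (otherwise $\sum_c g_{k-1}(n_c)\le g_{k-1}(n-k+1)$, and the key inequality $\lceil n/k\rceil<k$ from $n\le k(k-1)$ gives the contradiction). Hence every other color has degree at most $k-2$ at $v$, so all monochromatic $K_k$'s through $v$ share a single ``dominating'' color; grouping vertices by dominating color and applying convexity of $g_k$ finishes. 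This avoids Loomis--Whitney entirely and exploits $n\le k(k-1)$ directly through $\lceil n/k\rceil<k$, which is why it works uniformly for all $k\ge 3$.
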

\begin{proof}
Recall that $g_k(n)$ is the number of edges in the $n$-vertex balanced iterated blowup of an edge. For $n \leq k(k-1)$, this is simply the balanced complete $k$-partite $k$-graph, and hence 
    \[g_k(n) = \left(\frac{n}{k}-p_{n,k}\right)^{k(1-p_{n,k})}\left(\frac{n}{k}+1-p_{n,k}\right)^{kp_{n,k}},\]
    where $p_{n,k} = \frac{n}{k}-\lfloor\frac{n}{k}\rfloor$.
    %This is the maximum of $n_1\cdots n_k$ subject to $n_1+\cdots +n_k=n$ and $n_i$ are nonnegative integers.
We proceed by induction on $n$. The statement is obvious when $n=k$; we have $g_n(k)=1$ and we achieve one copy of $K_k$ in $K_n$ by coloring all edges the same color.
    Now suppose that the statement holds for $n-1$ for some $k+1\leq n\leq k(k-1)$, and suppose for the sake of contradiction that there is a $k$-partite coloring of $K_n$ with more than $g_k(n)$ monochromatic copies of $K_k$.
    We know that each vertex $v$ is in contained in at least $g_k(n)-g_k(n-1)+1$ monochromatic copies of $K_k$, or else by removing $v$ we would have a coloring of $K_{n-1}$ with at least $g_k(n-1)+1$ monochromatic copies of $K_k$, violating the induction hypothesis.
    
    We can think of the balanced $k$-partite $k$-graph on $n$ vertices as being obtained from that on $n-1$ vertices by adding a vertex to a smallest part. We thus see that
    \[g_k(n)-g_k(n-1) = g_{k-1}\left(n-\left\lceil\frac{n}{k}\right\rceil\right).\]
    Fix an arbitrary vertex $v$. For each color $c$, let $n_c$ be the number of edges incident to $v$ of color $c$.
    The number of monochromatic copies of $K_k$ containing $v$ is at most
    \[\sum_{c}g_{k-1}(n_c)\]
    by the $k$-partiteness of each color.
    Since we also have
    \[g_{k-1}(t)-g_{k-1}(t-1) = g_{k-2}\left(t-\left\lceil\frac{t}{k-1}\right\rceil\right),\]
    we know that $g_{k-1}(t)$ is convex.
    Now if $\max_c n_c\leq n-k+1$, then by convexity we know that
    \[\sum_{c}g_{k-1}(n_c)\leq g_{k-1}(n-k+1)\]
    as $(n-1)-(n-k+1)=k-2<k-1$.
    Combining this with the lower bound on the number of monochromatic copies of $K_k$ containing $v$, we have
    \[g_{k-1}(n-k+1)\geq g_{k-1}\left(n-\left\lceil\frac{n}{k}\right\rceil\right)+1,\]
    which shows that
    \[n-k+1\geq n-\left\lceil \frac{n}{k}\right\rceil+1,\]
    or equivalently $\lceil \frac{n}{k}\rceil \geq k$.
    This is a contradiction to $n\leq k(k-1)$, which shows that there must be some color $c$ with $n_c\geq n-k+2$.
    As an immediate corollary, any other $c'$ must have $n_{c'}\leq k-2$, and so for each vertex $v$, all monochromatic copies of $K_k$ containing $v$ must have the same color.
    Call this the \emph{dominating color} of $v$.

    Now group the vertices based on their dominating color.
    Let $n_1,n_2,\ldots, n_s$ be the number of vertices with their dominating color being $c_1,\ldots, c_s$.
    Then there can be at most
    $\sum_{i=1}^{s}g_k(n_i)$
    monochromatic copies of $K_k$ as no monochromatic $K_k$ can cross the groups, and the monochromatic $K_k$'s within a group must all have the same color.
    Using convexity once again, this is maximized when $s=1$ and $n_1=n$, which contradicts the assumption that there are more than $g_k(n)$ monochromatic $K_k$'s.
\end{proof}

We next state the lemma which completes the proof by induction for large $k$. 
\begin{lemma}\label{lem:big n k}
    Let $k\geq 4$ and $n\geq k(k-1)+1$. Suppose
    \begin{equation}\label{eq:alphabeta}
    \begin{split}
        \alpha &= 1 - \left(1-\frac{1}{k}\right)^{\frac{k}{k-2}}\left(1-\frac{k^3}{2n^2}\right)^{\frac{2}{k-2}}, \\
        \beta &= \frac{k^3}{2n^2} + \left(\frac{\alpha k}{k-1}\right)^{k/2}
    \end{split}
    \end{equation}
    are both in $(0,1)$, $\gamma>0$ is the unique positive solution to $e^{-2\gamma}(1+\gamma)^2 = 1-\beta$, and
    \begin{align}\label{eq:gammarequirement}
    \alpha^{\frac{k-2}{2}} \left(\frac{k}{k-3}\right)^{k-2} < e^{-2\gamma}.
    \end{align}
    Then, in any $k$-partite coloring of $K_n$ which maximizes the number of monochromatic copies of $K_k$, the most commonly occurring color must form a complete spanning $k$-partite subgraph.
\end{lemma}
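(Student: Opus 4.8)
\emph{Plan.} The idea is to show that the most common colour $c^*$ is so dominant that its colour class must be \emph{exactly} the complete spanning $k$-partite graph on its parts: if some transversal pair received a colour $\neq c^*$, recolouring it by $c^*$ would create more monochromatic $K_k$'s than it destroys, contradicting optimality. Fix a maximizing $k$-partite colouring $\chi$ of $K_n$ and let $N_\chi$ be its number of monochromatic $K_k$'s; since the balanced iterated blowup of an edge is a $k$-partite colouring attaining $g_k(n)$, we have $N_\chi\geq g_k(n)$. Let $c^*$ be a most common colour, $e^*$ the number of edges of colour $c^*$, and fix a partition $V=V_1\cup\dots\cup V_k$ witnessing that the colour class $G$ of $c^*$ is $k$-partite, with $n_\ell:=|V_\ell|$.

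First I would show that $c^*$ is overwhelmingly dominant. Since every colour class is $k$-partite, the Loomis--Whitney inequality together with AM--GM shows that a colour spanning $e$ edges lies on at most $(e/\binom k2)^{k/2}$ monochromatic $K_k$'s. Summing over colours, pulling out one factor of $e_c/\binom k2$ and using $e_c\leq e^*$, one gets $g_k(n)\leq N_\chi\leq \binom n2\,(e^*)^{k/2-1}/\binom k2^{k/2}$. Comparing this with the elementary lower bound $g_k(n)\geq (n/k)^k\big(1-\tfrac{k^3}{2n^2}\big)$ (obtained by estimating the transversal edges of the balanced complete $k$-partite graph) and unwinding the definition of $\alpha$ in \eqref{eq:alphabeta} yields $e^*\geq (1-\alpha)\binom n2$. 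Hence each remaining colour spans at most $\binom n2-e^*\leq \alpha\binom n2$ edges, and the same Loomis--Whitney bound now gives that the number of \emph{non-$c^*$} monochromatic $K_k$'s is at most $\big(\alpha\binom n2/\binom k2\big)^{k/2}\leq (n/k)^k\big(\tfrac{\alpha k}{k-1}\big)^{k/2}$.

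Next I would pin down the part sizes and the colour-$c^*$ degrees. Every monochromatic $c^*$-clique is transversal for $V_1,\dots,V_k$ (two of its vertices in a common $V_\ell$ would span a non-$c^*$ edge), so there are at most $\prod_\ell n_\ell$ of them; subtracting the non-$c^*$ count from $N_\chi\geq g_k(n)$ and using the lower bound on $g_k(n)$ gives $\prod_\ell n_\ell\geq (n/k)^k(1-\beta)$ with $\beta$ as in \eqref{eq:alphabeta}. Since $\prod_\ell n_\ell\leq (n/k)^k$ by AM--GM, convexity forces each $n_\ell$ into a narrow multiplicative window about $n/k$, quantified precisely by the equation $e^{-2\gamma}(1+\gamma)^2=1-\beta$ defining $\gamma$ (the square reflecting that the decisive estimate later involves the \emph{common} $c^*$-neighbourhood of two vertices). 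Then, deleting any vertex $v\in V_i$ and invoking the inductive hypothesis, $v$ lies on at least $g_k(n)-g_k(n-1)\geq (n/k)^{k-1}(1-o(1))$ monochromatic $K_k$'s, while it lies on at most $\prod_{\ell\neq i}|N_{c^*}(v)\cap V_\ell|$ monochromatic $c^*$-cliques and on at most the global total of non-$c^*$ monochromatic $K_k$'s. Comparing these and using the $\gamma$-estimate on the $n_\ell$, one deduces that the deficiency $\sum_{\ell\neq i}\big(n_\ell-|N_{c^*}(v)\cap V_\ell|\big)$ is small---at most $\gamma$---for every $v$; in particular the number $|B|$ of edges of colour $\neq c^*$ running between distinct parts is $O(n\gamma)$.

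Finally, suppose $G$ is not complete spanning $k$-partite, so some edge $e=uv$ with $u\in V_i,\ v\in V_j$, $i\neq j$, has $\chi(e)=c'\neq c^*$. Recolouring $e$ by $c^*$ destroys only the monochromatic $c'$-cliques through $e$, of which there are at most $\big(\min(\deg_{c'}(u),\deg_{c'}(v))/(k-2)\big)^{k-2}\leq \big((n/k)(1+o(1))/(k-2)\big)^{k-2}$ by the previous step, and it creates all transversal $c^*$-cliques through $e$, of which there are at least $\prod_{\ell\neq i,j}|N_{c^*}(u)\cap N_{c^*}(v)\cap V_\ell|$ minus a correction of order $|B|\cdot(n/k)^{k-4}$ for $(k-2)$-tuples spoilt by another $B$-edge; using $|N_{c^*}(u)\cap N_{c^*}(v)\cap V_\ell|\geq n_\ell-O(\gamma)$ and $|B|=O(n\gamma)$, this is at least $(n/k)^{k-2}(1-o(1))$. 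Since $(k-2)^{k-2}\to\infty$, the gain strictly exceeds the loss, contradicting maximality; hence $B=\emptyset$ and $c^*$ forms the complete spanning $k$-partite subgraph on $V_1,\dots,V_k$. (When $n$ is only moderately larger than $k$ it is cleaner to recolour the whole block $B$ at once and bound the total loss by the global count of non-$c^*$ monochromatic $K_k$'s from Step~1; the point of \eqref{eq:gammarequirement} is exactly to guarantee that one of the two comparisons always closes.) The hard part is this dovetailing of the last two steps: the gain and the loss are both of order $(n/k)^{k-2}$, separated only by the factor $(k-2)^{k-2}$, so everything hinges on controlling the imbalance of the parts, the per-vertex deficiency, and $|B|$ simultaneously---calibrating these bounds, rather than any single clever idea, is the real work, and is the purpose of the quantities $\alpha,\beta,\gamma$ and the hypotheses \eqref{eq:alphabeta}--\eqref{eq:gammarequirement}.
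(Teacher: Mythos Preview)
Your Step~1 is exactly the paper's auxiliary Lemma~\ref{lem:LW ineq time}: Loomis--Whitney gives $e^*\ge(1-\alpha)\binom n2$ and hence at least $(1-\beta)(n/k)^k$ monochromatic $c^*$-copies of $K_k$. From there, however, you miss the idea that actually drives the lemma.

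The paper does not use induction or per-vertex deficiencies at all. It recolours every crossing edge to $c^*$ and, for an arbitrary recoloured edge $uv$ with $u\in P_1,\ v\in P_2$, compares the number $A$ of monochromatic $K_k$'s through $uv$ before recolouring with the number $B=\prod_{\ell\ne1,2}n_\ell$ after. The loss $A$ is bounded by a \emph{second} application of Loomis--Whitney, now to $(k-2)$-cliques in the non-$c^*$ colours: any monochromatic $K_k$ through $uv$ contributes a monochromatic non-$c^*$ $K_{k-2}$ on its remaining vertices, so
\[
A\ \le\ \Bigl(\tfrac{\alpha\binom n2}{\binom{k-2}{2}}\Bigr)^{(k-2)/2}\ \le\ \alpha^{(k-2)/2}\Bigl(\tfrac{k}{k-3}\Bigr)^{k-2}\Bigl(\tfrac nk\Bigr)^{k-2},
\]
which is precisely the left side of \eqref{eq:gammarequirement}. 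For the gain one uses $B\cdot|P_1||P_2|\ge(1-\beta)(n/k)^k$ together with $|P_1||P_2|\le(1+\gamma)^2(n/k)^2$ (this two-part bound, not a statement about all $n_\ell$ or any vertex, is the actual role of $\gamma$), yielding $B\ge e^{-2\gamma}(n/k)^{k-2}$. Hypothesis \eqref{eq:gammarequirement} is then literally $A<B$.

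Your route instead appeals to the inductive hypothesis on $g_k(n-1)$, which the lemma does not provide and the paper's proof does not use; the ensuing ``deficiency $\le\gamma$'' claim both misreads what $\gamma$ controls and is not justified (your own accounting would give a bound of order $(\alpha k/(k-1))^{k/2}(n/k)^2$, not $\gamma$). Your Step-3 loss bound $(\deg_{c'}/(k-2))^{k-2}$ then produces a comparison separated by a factor $(k-2)^{k-2}$ rather than by \eqref{eq:gammarequirement}; since at the decisive case $k=7$, $n=43$ the two sides of \eqref{eq:gammarequirement} agree to three decimal places, the inequality you arrive at is not the one that needs to close. The key missing ingredient is that second Loomis--Whitney step, applied to $K_{k-2}$'s.
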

The main theorem follows quickly from this lemma and the induction hypothesis.
\begin{proof}[Proof of \cref{thm:monochromatic-k-cliques} for $k\geq 7$.]
By induction, we may assume the theorem holds for smaller values of $n$. If the conclusion of \cref{lem:big n k} holds, then the number of copies of $K_k$ in the most common color $c$ is $\prod_{j=1}^k s_j$, where $s_j$ are the part sizes of color class $c$. All remaining monochromatic copies of $K_k$ must lie within those parts, so by the induction hypothesis, there are at most $\sum_{j=1}^k g_k(s_j)$ monochromatic copies of $K_k$ not of color $c$, for a total of $\prod_{j=1}^k s_j + \sum_{j=1}^k g_k(s_j) \leq g_k(n)$ monochromatic copies of $K_k$.

Since \cref{lem:k to k(k-1)} proved the theorem for $n\leq k(k-1)$, to complete the proof by induction it suffices to show that any $n\geq k(k-1)+1$ satisfies the conditions of \cref{lem:big n k}. In fact, we claim that it suffices to check only $n=k(k-1)+1$. To see this, note that for $k$ fixed, if $n$ increases, then $\alpha$ decreases (and stays in $(0,1)$), so also $\beta$ decreases (and stays in $(0,1)$). Then $\gamma$ also decreases; indeed, the function $f(\gamma) = e^{-2\gamma}(1+\gamma)^2$ is decreasing for $\gamma>0$, and $f(0)=1$. Thus, if $\alpha^{(k-2)/2} \left(\frac{k}{k-3}\right)^{k-2} < e^{-2\gamma}$ holds for $n=k(k-1)+1$, it must also hold for all larger $n$ as well since the left hand side decreases and the right hand side increases.

Substituting $n=k(k-1)+1$, we obtain 
\begin{align*}
    \alpha &= 1-\left(1-\frac{1}{k}\right)^{\frac{k}{k-2}}\left(1-\frac{k^3}{2(k(k-1)+1)^2}\right)^{\frac{2}{k-2}}, \\
    \beta &= \frac{k^3}{2(k(k-1)+1)^2} + \left(\frac{\alpha k}{k-1}\right)^{k/2}.
\end{align*}
We claim that if $k \geq 7$ increases, both $\alpha$ and $\beta$ decrease. First note that $1-1/k$ increases and $k/(k-2)$ decreases, so $(1-1/k)^{k/(k-2)}$ increases. Then, we compute
$$\frac{d}{dk}\left(\frac{k^3}{2(k(k-1)+1)^2}\right) = \frac{-k^6 + 3k^4 - 4k^3 + 3k^2}{2(k(k-1)+1)^4},$$ which is clearly negative when $k\geq 7$. Thus, $1-\frac{k^3}{2(k(k-1)+1)^2}$ increases; also, $2/(k-2)$ is decreasing, so this concludes the proof that %$(1-1/k)^{k/(k-2)}$ and $(1-k^3/(2(k(k-1)+1)^2))^{2/(k-2)}$
$\alpha$ decreases. For $\beta$, the derivative computation above shows that the first term is decreasing. Since $\alpha$ decreases, the factor $\alpha^{k/2}$ in the second term decreases, and as it is a standard fact that $(1+1/x)^{x+1}$ decreases to $e$ for positive $x$, we have that $(k/(k-1))^{k/2}$ decreases as well, proving the claim. %\ra{Xiaoyu - to what extent should we justify these? Hans's comments suggest some justification might be helpful. If full rigor is desired I can write out a couple sentences.} \hy{I think we need to be a bit more careful here. The way I see this is that we first factor out $1-1/k$, which is increasing. The rest is something like $(\cdots)^{2/(k-2)}$, where the $\cdots$ is increasing but smaller than $1$ and $2/(k-2)$ is decreasing, and so the entire thing is increasing as well. } %\xh{These inline formulas are egregious, either reformat them to fit better (e.g. $(k/(k-1))^{k/2}$ and don't use left right on inline parens) or put them on new lines} 
%Also, when $k$ increases, %$\frac{k^3}{2(k(k-1)+1)^2}$ decreases, as do $\alpha^{k/2}$ and $\left(\frac{k}{k-1}\right)^{k/2}$
%both terms in the expression for $\beta$ decrease \hy{This time I don't see why this is true. The first term is believable but probably requires some explicit computation. The second term comes terribly close as we are basically asking whether $\alpha^k(1+1/(k-1))^{k}$ is decreasing in $k$ or not, whereas the expression $(1+1/(k-1))^{k-1}$ is increasing in $k$. I don't know if I believe in the claim actually.}; 

With the claim proved, note that since $\beta$ decreases, $\gamma$ decreases as well. %, and both $\alpha$ and $\beta$ remain in $(0,1)$. 
In \eqref{eq:gammarequirement}, the right hand side then increases, and $\alpha^{(k-2)/2}$ decreases. Although the factor $(k/(k-3))^{k-2}$ is increasing in $k\geq 5$, it is at most $e^3$ for all $k\geq5$. So, it suffices for the stronger inequality $\alpha^{(k-2)/2}e^3 < e^{-2\gamma}$ to hold; if it does hold for some $k$, then it holds for all larger $k$ as well.

Thus, to prove the result, we just need to demonstrate that the conditions hold for $k = 7$ and $n=k(k-1)+1=43$. Solving numerically (and rounding to 4 decimal places), we obtain $\alpha =0.2249, \beta=0.102,\gamma =0.3648,$ and for the final inequality, $\alpha^{(k-2)/2} e^3 =0.4817$ while $e^{-2\gamma} =0.4821$, meeting the conditions of the lemma.
\end{proof}

It remains to prove \cref{lem:big n k}, for which we need two helper lemmas.
The first is a short computation that helps control constants later on.

\begin{lemma}\label{lem:computation}
    For any $k\geq 3$ and $n\ge k(k-1)+1$, we have $g_k(n) \ge \left(1-\frac{k^3}{2n^2}\right)\frac{n^k}{k^k}$.
\end{lemma}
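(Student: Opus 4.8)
The plan is to discard the iterated recursion entirely and bound $g_k(n)$ from below by a single complete $k$-partite $k$-graph on a balanced partition. Since $g_k \ge 0$ and the recursion defining $g_k(n)$ takes a maximum over all compositions $s_1 + \cdots + s_k = n$ into positive parts, for \emph{any} such composition one has $g_k(n) \ge \prod_{j=1}^k s_j$. I would apply this to the balanced composition: write $n = qk + r$ with $q = \lfloor n/k\rfloor$ and $0 \le r \le k-1$, and take $r$ parts equal to $q+1$ and $k-r$ parts equal to $q$. If $r = 0$ then already $g_k(n) \ge q^k = (n/k)^k$, which is stronger than the claim, so the only case with content is $1 \le r \le k-1$.

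In that case I would factor out $(n/k)^k$. Using $q = (n/k)(1 - r/n)$ and $q+1 = (n/k)(1 + (k-r)/n)$, set $a = r/n$ and $b = (k-r)/n$, so that
\[
g_k(n) \;\ge\; q^{k-r}(q+1)^r \;=\; \Bigl(\tfrac nk\Bigr)^{k}(1-a)^{k-r}(1+b)^{r}.
\]
The hypothesis $n \ge k(k-1)+1$ gives $n \ge 2k$ for $k \ge 3$, hence $a, b \le k/n \le \tfrac12$, which is precisely the range in which the elementary bound $\log(1+x) \ge x - x^2$ holds (for $|x|\le \tfrac12$). Applying it to both factors yields
\[
\log\!\bigl[(1-a)^{k-r}(1+b)^{r}\bigr] \;\ge\; \bigl(rb - (k-r)a\bigr) \;-\; (k-r)a^2 \;-\; r b^2 .
\]

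The one observation that makes this work is that the first-order term cancels: $rb = r(k-r)/n = (k-r)a$. What remains is $-(k-r)a^2 - r b^2 = -\,kr(k-r)/n^2 \ge -k^3/(4n^2)$, using $r(k-r)\le k^2/4$. Exponentiating and using $e^{-x}\ge 1-x$ then gives $g_k(n) \ge (n/k)^k e^{-k^3/(4n^2)} \ge \bigl(1-\tfrac{k^3}{4n^2}\bigr) n^k/k^k$, comfortably stronger than the claimed $\bigl(1-\tfrac{k^3}{2n^2}\bigr) n^k/k^k$. There is no genuine obstacle; the only points needing a moment's care are verifying $a,b\le\tfrac12$ (the sole place the lower bound on $n$ is used) and the cancellation of the linear terms, which is what upgrades the error from order $k^2/n$ down to order $k^3/n^2$.
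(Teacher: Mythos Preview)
Your argument is correct and, like the paper's, starts from the single-level lower bound $g_k(n)\ge q^{k-r}(q+1)^r$ for the balanced partition. The routes then diverge. The paper applies a \emph{smoothing} (``pushing'') argument: each time two factors $(n/k-x)(n/k+y)$ are replaced by $(n/k)(n/k+y-x)$, the product increases by $xy\le 1/4$ times the remaining $k-2$ factors, which is at most $2(n/k)^{k-2}$ by AM--GM; after at most $k$ such steps one reaches $(n/k)^k$, losing at most $\tfrac{k}{2}(n/k)^{k-2}$ in total. You instead take logarithms and use $\log(1+x)\ge x-x^2$ on each factor, then observe that the first-order contributions cancel exactly (since $rb=(k-r)a=r(k-r)/n$), leaving only the second-order error $-kr(k-r)/n^2\ge -k^3/(4n^2)$. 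Your calculation is slightly sharper (it yields $1-k^3/(4n^2)$ rather than $1-k^3/(2n^2)$) and makes the source of the $O(k^3/n^2)$ error---first-order cancellation---completely explicit, whereas the paper's smoothing is perhaps more robust in that it never needs the log inequality or the range restriction $|x|\le 1/2$. Both are short and elementary; yours is arguably the cleaner of the two.
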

\begin{proof}
    We have that \[
    g_k(n) \ge \left(\frac{n}{k} - p_{n,k}\right)^{k(1-p_{n,k})}\left(\frac{n}{k} + 1 - p_{n,k}\right)^{kp_{n,k}}
    \]
    where $p_{n,k} = \frac{n}{k} - \floor{\frac{n}{k}}$. Let the right hand product be $R$ and consider the ``pushing" operation where we replace two factors $(n/k-x)(n/k+y)$ in the product with $(n/k)(n/k+y-x)$. The value of $R$ increases by $xy \le \frac{1}{4}$ times the product of the remaining terms, which one can use the AM-GM inequality to check that it is at most $((n+1)/k)^{k-2} \le 2\frac{n^{k-2}}{k^{k-2}}$. (For the last inequality, note that $((n+1)/n)^{k-2} \leq e^{(k-2)/n} \leq 2$ since $n$ is large enough.) After at most $k-1$ pushing operations, all terms are equal, so we obtain that the original $R$ satisfies
    \[
    R \ge \frac{n^k}{k^k} - \frac{k}{4} \cdot 2 \frac{n^{k-2}}{k^{k-2}} = \left(1-\frac{k^3}{2n^2}\right)\frac{n^k}{k^k}
    \]
    as desired.
\end{proof}
We now use the Loomis-Whitney inequality to lower bound the number of edges and copies of $K_k$ in the most common color.
\begin{lemma}\label{lem:LW ineq time}
    For $k\geq 3$ and $n\ge k(k-1)+1$, in an edge coloring of $K_n$ such that all color classes are $k$-partite which maximizes the number of monochromatic copies of $K_k$, the most common color must have at least $(1-\alpha)\binom{n}{2}$ edges and at least $(1-\beta)\frac{n^k}{k^k}$ monochromatic copies of $K_k$, where $\alpha$ and $\beta$ are given by \eqref{eq:alphabeta}. %\hy{Do we need to write the definition out again?}
    % \begin{align*}
    %     1-\alpha &= \left(1-\frac{1}{k}\right)^{\frac{k}{k-2}}\left(1-\frac{k^3}{2n^2}\right)^{\frac{2}{k-2}}, \\
    %     \beta &= \frac{k^3}{2n^2} + \left(\frac{\alpha k}{k-1}\right)^{k/2}.
    % \end{align*}
\end{lemma}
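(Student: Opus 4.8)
The plan is to combine the Loomis--Whitney inequality, AM--GM, convexity of $x\mapsto x^{k/2}$, and the estimate $g_k(n)\ge(1-\frac{k^3}{2n^2})\frac{n^k}{k^k}$ from \cref{lem:computation}. Fix a $k$-partite coloring $\chi$ of $K_n$ maximizing the number $N$ of monochromatic copies of $K_k$; since the balanced iterated blowup of an edge, viewed as a $k$-partite coloring, realizes $g_k(n)$ monochromatic copies of $K_k$, we have $N\ge g_k(n)$. For each color $c$ choose a $k$-partition of the color-$c$ class with pairwise edge counts $e^{(c)}_{ij}$; every copy of $K_k$ inside this class has exactly one vertex in each part, so the Loomis--Whitney inequality (equivalently Shearer's entropy inequality with the cover by all pairs of coordinates) bounds the number $N_c$ of color-$c$ copies of $K_k$ by $\prod_{i<j}(e^{(c)}_{ij})^{1/(k-1)}$. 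Since $\binom{k}{2}/(k-1)=k/2$, AM--GM converts this into $N_c\le(m_c/\binom{k}{2})^{k/2}$, where $m_c=\sum_{i<j}e^{(c)}_{ij}$ is the number of color-$c$ edges.

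For the first claim, write $m^*=\max_c m_c$ for the size of the most common color. Summing over $c$ and using $m_c^{k/2}\le m_c(m^*)^{(k-2)/2}$ (valid since $k\ge 3$) together with $\sum_c m_c=\binom{n}{2}$ yields $g_k(n)\le N\le\binom{k}{2}^{-k/2}(m^*)^{(k-2)/2}\binom{n}{2}$. Plugging in $\binom{n}{2}\le n^2/2$ and $g_k(n)\ge(1-\frac{k^3}{2n^2})\frac{n^k}{k^k}$, then solving for $m^*$ and simplifying exponents using $\frac{2}{k-2}(1-\frac{k}{2})=-1$ and $\frac{2}{k-2}\cdot\frac{k}{2}=\frac{k}{k-2}$, gives exactly $m^*\ge\frac{n^2}{2}(1-\frac1k)^{k/(k-2)}(1-\frac{k^3}{2n^2})^{2/(k-2)}=\frac{n^2}{2}(1-\alpha)\ge(1-\alpha)\binom{n}{2}$ (the last inequality using $1-\alpha\le 1$; the claimed bound is vacuous if $1-\alpha<0$).

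For the second claim, let $c^*$ be the most common color. Its $m^*\ge(1-\alpha)\frac{n^2}{2}$ edges leave at most $\binom{n}{2}-(1-\alpha)\frac{n^2}{2}\le\alpha\frac{n^2}{2}$ edges in total for the other colors, so rerunning the convexity estimate over the colors $c\ne c^*$ --- now bounding each such $m_c$ by the remaining total $\binom{n}{2}-m^*$ --- gives $\sum_{c\ne c^*}N_c\le\binom{k}{2}^{-k/2}(\binom{n}{2}-m^*)^{k/2}\le\binom{k}{2}^{-k/2}(\alpha\frac{n^2}{2})^{k/2}=(\frac{\alpha k}{k-1})^{k/2}\frac{n^k}{k^k}$. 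Hence the number of color-$c^*$ copies of $K_k$ is $N-\sum_{c\ne c^*}N_c\ge g_k(n)-(\frac{\alpha k}{k-1})^{k/2}\frac{n^k}{k^k}\ge(1-\frac{k^3}{2n^2}-(\frac{\alpha k}{k-1})^{k/2})\frac{n^k}{k^k}=(1-\beta)\frac{n^k}{k^k}$, using the definition of $\beta$ in \eqref{eq:alphabeta}.

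Every step is elementary and there is no real obstacle, only bookkeeping. The single point demanding care is matching the algebra to the precise forms of $\alpha$ and $\beta$: one should consistently estimate $\binom{n}{2}\le n^2/2$ (never $n(n-1)/2$) so that the $n^2$-factors cancel cleanly, and one should check that the Loomis--Whitney bound genuinely governs \emph{all} copies of $K_k$ inside a $k$-partite color class --- which it does, since any such copy is forced to be transversal to the $k$ parts. The convexity step $m_c^{k/2}\le m_c(m^*)^{(k-2)/2}$ silently uses $k\ge 2$, so everything is valid throughout the stated range $k\ge 3$.
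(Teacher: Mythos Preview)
Your proof is correct and follows essentially the same approach as the paper: both apply Loomis--Whitney to bound $N_c\le(m_c/\binom{k}{2})^{k/2}$, use the power-mean/convexity trick $\sum_c m_c^{k/2}\le m^*{}^{(k-2)/2}\sum_c m_c$ together with \cref{lem:computation} to extract the lower bound on $m^*$, and then rerun the convexity bound on the remaining colors to control $\sum_{c\ne c^*}N_c$ and deduce the second claim. One tiny slip: the justification for $\frac{n^2}{2}(1-\alpha)\ge(1-\alpha)\binom{n}{2}$ should read ``using $1-\alpha\ge 0$'' rather than ``$1-\alpha\le 1$'', but you clearly have the right idea since you note the bound is vacuous when $1-\alpha<0$.
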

\begin{proof}
    Let $e_i$ be the number of edges of color $i$, and let $t_i$ be the number of copies of $K_k$ in color $i$. Each color class is $k$-partite, so by the Loomis-Whitney inequality (see e.g. \cite[Corollary 15.7.5]{ProbMethod}), we have 
    \begin{equation}\label{eq:LW bound}
    t_i \le \left(\frac{e_i}{\binom{k}{2}}\right)^{k/2}.
    \end{equation}
    %\ra{How standard is using Shearer's lemma like this? Should we state it as a separate lemma and elaborate on how it's being applied here?}\hy{I feel like we should be more specific here. Also this is really just Loomis--Whitney with convexity rather than Shearer's.} \xh{I agree that this is just Loomis-Whitney, we should say Loomis-Whitney everywhere.}
    
    On the other hand, the maximizing coloring must have at least $g_k(n) \ge \left(1-\frac{k^3}{2n^2}\right)\frac{n^k}{k^k}$ total monochromatic copies of $K_k$ (here we applied \cref{lem:computation}), so we obtain
    \[
    \sum_i \left(\frac{e_i}{\binom{k}{2}}\right)^{k/2} \ge \left(1-\frac{k^3}{2n^2}\right)\frac{n^k}{k^k}.
    \]
    We have also that $\sum e_i = \binom{n}{2}$. Letting $m = \max e_i$, we obtain
    \[
    \frac{m^{k/2-1}}{\binom{k}{2}^{k/2}} \binom{n}{2}= \frac{m^{k/2-1}}{\binom{k}{2}^{k/2}} \sum_i e_i \ge \sum_i \left(\frac{e_i}{\binom{k}{2}}\right)^{k/2} \ge \left(1-\frac{k^3}{2n^2}\right)\frac{n^k}{k^k}.
    \]
    Solving for $m$ and using the bounds $\binom{n}{2} \leq n^2/2$ and $\binom{k}{2} \geq (k-1)^2/2$, we obtain
    \[
    m \ge \left(1-\frac{1}{k}\right)^{\frac{k}{k-2}}\left(1-\frac{k^3}{2n^2}\right)^{\frac{2}{k-2}} \frac{n^2}{2} \ge (1-\alpha)\binom{n}{2}.
    \]
    This proves the first half of the lemma. For the second half, observe that the number of edges not of the most common color must then be at most $\alpha \binom{n}{2}$. Using the bound \eqref{eq:LW bound}, the number of monochromatic copies of $K_k$ not in the most common color is at most $\sum\left(e_i/\binom{k}{2}\right)^{k/2}$, where the sum is over colors other than the most common one. By convexity, this sum is maximized by letting one $e_i$ equal $\alpha\binom{n}{2}$ and the others equal $0$, so we obtain that there are at most
    \[
    \left(\frac{\alpha \binom{n}{2}}{\binom{k}{2}}\right)^{k/2} \le \left(\frac{\alpha k }{k-1}\right)^{k/2} \frac{n^k}{k^k}
    \]
    monochromatic copies of $K_k$ that are not of the most common color. Subtracting from 
    \[
    g_k(n) \ge \left(1-\frac{k^3}{2n^2}\right)\frac{n^k}{k^k}
    \]
    gives a lower bound on the number of $K_k$'s in the most common color, proving the second part.
\end{proof}
Finally, we use this result to prove \cref{lem:big n k}.
\begin{proof}[Proof of \cref{lem:big n k}]
 Recolor every edge crossing the $k$-partition of the most common color $c^*$ by $c^*$. To finish the proof, we show that any recolored edge $uv$ ends up in more monochromatic copies of $K_k$ than it starts with.

    The number $A$ of monochromatic copies of $K_k$ that $uv$ lies in before recoloring is at most the number of monochromatic copies of $K_{k-2}$ in all the colors except $c^*$. For $k\ge 4$, \cref{lem:LW ineq time} together with the Loomis-Whitney inequality gives
    \[
    A \le \left(\frac{\alpha\binom{n}{2}}{\binom{k-2}{2}}\right)^{(k-2)/2} \le \alpha^{(k-2)/2} \left(\frac{k}{k-3}\right)^{k-2} \frac{n^{k-2}}{k^{k-2}}.
    \]

    %\hy{I think the guiding text in this paragraph is wrong. We want to estimate $\gamma_1$ and $\gamma_2$, so we upper bound the maximum possible number of monochromatic copies \emph{before} recoloring. By Lemma 5.4 it has to be somewhat large, showing that $\gamma_1$ and $\gamma_2$ have to be somewhat small. In particular, it has nothing to do with $B$.}
    Suppose $u,v$ lie in two parts $P_1, P_2$ of size $(1+\gamma_1)\frac{n}{k}, (1+\gamma_2)\frac{n}{k}$ of the $k$-partition of $c^*$.
    We show that $P_1$ and $P_2$ cannot be far larger than average.
    Indeed, the total number of copies of $K_k$ colored $c^*$ before recoloring is at most
    \[
    |P_1||P_2|\left(\frac{n-|P_1|-|P_2|}{k-2}\right)^{k-2}=(1+\gamma_1)(1+\gamma_2)\left(1-\frac{\gamma_1 + \gamma_2}{k-2}\right)^{k-2}\frac{n^k}{k^k}\le (1+\bar{\gamma})^2e^{-2\bar{\gamma}} \frac{n^k}{k^k},
    \]
    where $\bar{\gamma} = \frac{\gamma_1+\gamma_2}{2}$. By \cref{lem:LW ineq time},  this quantity must be at least $(1-\beta)\frac{n^k}{k^k}$, so we obtain
    \[(1+\bar{\gamma})^2e^{-2\bar{\gamma}} \ge 1-\beta.
    \]
    Since the left hand side is a decreasing function in $\bar{\gamma}>0$, it follows that $\bar{\gamma} \le \gamma$ as defined in the lemma statement.

    Now we lower bound $B$, the number of monochromatic copies of $K_k$ containing $uv$ after recoloring. Observe that $B$ is exactly the number of $K_{k-2}$'s crossing the $k-2$ parts other than $P_1$ and $P_2$. Therefore, we obtain
    \[
    B(1+\gamma)^2\frac{n^2}{k^2} \ge B(1+\bar{\gamma})^2\frac{n^2}{k^2} \ge B|P_1||P_2| \ge (1-\beta)\frac{n^k}{k^k}
    \]
    as the right hand side is a lower bound for the number of copies of $K_k$ colored $c^*$ before recoloring. We conclude that
    \[
    B \ge (1+\gamma)^{-2}(1-\beta)\frac{n^{k-2}}{k^{k-2}} = e^{-2\gamma} \frac{n^{k-2}}{k^{k-2}},
    \]
    by our definition of $\gamma$. It suffices to combine our bounds for $A$ and $B$ together with \eqref{eq:gammarequirement} to conclude that $A < B$, as desired.
\end{proof}

%\ra{Decide whether we want a Concluding Remarks section. Things that might go in there: difficulty of proving \cref{thm:iter-k-partite} for $k\in\{4, 5, 6\}$; and the exponential-to-doubly-exponential transition.}\hy{I think we do}

\section{Concluding Remarks}\label{sec:conclusion}
The original conjecture of Erd\H os and Hajnal~\cite{EH72} is simply that $r_k(s, g_k(s)+1; t)$ is exponential in some positive power of $t$. While \cref{thm:main} settles that conjecture, the power of $t$ in the exponent remains in question. The previous partial results by Conlon, Fox, and Sudakov~\cite{CFS} (for $k=3$ and some values of $s$) and Mubayi and Razborov~\cite{MR21} (for $s>k\geq 4$) established $r_k(s, g_k(s)+1; t) = 2^{\Omega(t)}$. We believe that this should hold for $k=3$ and all $s$ as well.
\begin{conj}
    For all $s > 3$, we have $r_3(s, g_3(s)+1; t) = 2^{\Omega(t)}$.
\end{conj}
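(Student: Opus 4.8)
The plan is to reduce the conjecture to a construction problem in exactly the way \cref{thm:main} was reduced to \cref{thm:iter-k-partite}. It suffices to exhibit, for arbitrarily large $N$, a red/blue coloring of $K_N^{(3)}$ in which every $2$-tight component of the red $3$-graph is tripartite and the largest blue clique has size $O(\log N)$. Indeed, \cref{thm:iter-k-partite} applied with $n=s$ forces such a coloring to have at most $g_3(s)$ red edges among any $s$ vertices, so it certifies $r_3(s,g_3(s)+1;t)>N$ as long as $t$ exceeds the (logarithmic) blue clique bound; taking $N$ as large as $2^{\Omega(t)}$ then gives the conjecture uniformly in $s$. In other words, the entire content of the conjecture is the improvement of the bound $O((\log N)^{k/2})$ of \cref{thm:blackbox} (from \cite{CFG+Y}) to $O(\log N)$ in the case $k=3$: the reduction itself contributes nothing new beyond what is already in the paper.

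For the construction I would work recursively along the skeleton of the iterated blowup of an edge, which handles the tripartiteness constraint automatically. Identify the vertices with the leaves of a balanced ternary tree of depth $m=\log_3 N$, and classify a triple $\{x,y,z\}$ by the first level $i$ at which the three leaves do not all lie in a common subtree. If they land in three distinct subtrees at level $i$, color the triple red: this part alone is precisely the balanced iterated blowup, whose $2$-tight components are complete tripartite $3$-graphs. The crux is the \emph{degenerate} triples, where two of the three leaves stay together and one splits off at level $i$. Coloring all of them blue recreates blue cliques of size $N^{\log_3 2}$ (one may keep two of the three subtrees at every node), while coloring all of them red destroys tripartiteness (the complete ``two-in-$A$, one-in-$B$'' $3$-graph is not tripartite once $|A|>3$). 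Instead, one natural attempt is to color the degenerate triples through an auxiliary Ramsey-good object installed at each branching node — for example, at a node with subtrees $A,B,C$, use a good $2$-uniform Ramsey coloring of a quotient of $A$ to decide the color of a triple consisting of a pair from $A$ and a vertex from $B$ — chosen so that no red degenerate triple ever links two distinct transversal components or lies inside a non-tripartite red component. The heuristic for why this should yield $O(\log N)$ rather than $O((\log N)^{3/2})$: a blue clique is then forced to ``pay'' at each of the $m$ levels both for using at most two of the three subtrees \emph{and} for avoiding the auxiliary graph there, and a careful accounting should make each level cost only $O(1)$ to the blue clique size. A random choice of all the auxiliary colorings, followed by a union bound over the $\binom{N}{C\log N}$ candidate blue cliques, is probably the cleanest way to make this rigorous; one should also check that classical stepping-up constructions are not usable here, since they do not preserve tripartiteness of tight components.

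The main obstacle is precisely the design of the degenerate-triple coloring, which must simultaneously (i) keep every red $2$-tight component tripartite — by \cref{thm:iter-k-partite} an extremely rigid demand, essentially forcing iterated-blowup-like structure everywhere — and (ii) be spread out enough that no $\omega(\log N)$-set is entirely blue. These requirements pull in opposite directions: (i) wants almost every mixed triple blue, (ii) forbids it, so the auxiliary structure has to live in the thin gap between them, and one must verify that introducing red degenerate triples never fuses two transversal components nor creates a non-tripartite component within a level. Making the recursion close with the correct constant — so that the per-level loss is genuinely $O(1)$ and the union bound survives — is the quantitative heart of the matter, and I expect that step, rather than the reduction or any individual structural claim, to be where the real difficulty lies. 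A secondary question is whether a matching upper bound $r_3(s,g_3(s)+1;t)=2^{O(t)}$ holds, which would require an efficient coloring argument for every bounded non-tripartite $2$-tightly connected $3$-graph and seems comparably delicate.
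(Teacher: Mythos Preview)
This statement appears in the paper as an open \emph{conjecture}, not a theorem; the paper gives no proof and explicitly identifies the $t^{2/3}$ exponent in \cref{thm:main} as an artifact of \cref{thm:blackbox}, remarking that any sharpening of that blackbox would propagate. Your reduction is therefore exactly the one the paper points to: the conjecture is equivalent to improving the blue-clique bound in \cref{thm:blackbox} from $O((\log N)^{3/2})$ to $O(\log N)$.

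Your proposed construction, however, runs into a structural obstruction that is fatal rather than merely ``to be verified''. You color every transversal triple (one leaf in each of the three subtrees at the splitting level) red, and then hope to color some degenerate triples red via an auxiliary gadget. But once all transversal triples at a level are red, the resulting tight component is the complete tripartite $3$-graph on $A\times B\times C$, and its only valid tripartition is $(A,B,C)$ itself: fixing the color of one $a_1\in A$ and one $b_1\in B$ forces every $c\in C$ to the third color via $\{a_1,b_1,c\}$, which in turn forces all of $B$ and then all of $A$. Now any degenerate triple $\{a_1,a_2,b\}$ with $a_1,a_2\in A$, $b\in B$ shares the pair $\{a_1,b\}$ with the transversal edge $\{a_1,b,c\}$, so coloring it red merges it into that same tight component and immediately breaks tripartiteness, since $a_1,a_2$ are forced into the same part. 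Hence under your scheme \emph{every} degenerate triple must be blue, and you are back to the $N^{\log_3 2}$ blue clique you already flagged. The auxiliary $2$-uniform coloring cannot rescue this: it can only decide which degenerate triples to make red, and the answer is forced to be ``none''.

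To get anywhere along these lines you would have to abandon the complete iterated blowup as the red backbone and instead thin out the transversal red edges so that the top-level tight components are genuinely smaller and admit tripartitions that split $A$. But then the blue-clique accounting no longer reduces cleanly level by level, and it is not clear that \cref{thm:iter-k-partite} still closes the argument. This tension is precisely why the gap between $t^{2/3}$ and $t$ remains open.
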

This exponent of $\Omega(t^1)$ would be best possible.
Our result has $t^{2/3}$ inside the exponent, which is a consequence of its dependence on \cref{thm:blackbox}. Any improvement in the exponent there would yield an improvement in the exponent in \cref{thm:main}.

An adjacent open problem in this direction is to resolve the main conjecture from \cite{CFG+Y}, which can be thought of as a strengthening of our main theorem. %\ra{Check this wording}

\begin{conj}[\cite{CFG+Y}]
For any $3$-graph $H$, there exists a constant $c$ such that $r(H, K_n^3) \leq n^c$ for all $n$ if and only if $H$ is iterated tripartite.
\end{conj}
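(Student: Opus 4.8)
The plan is to treat the two implications separately. The forward implication---if $H$ is iterated tripartite then $r(H, K_n^3) = n^{O(1)}$---is the classical theorem of Erd\H os and Hajnal \cite{EH72}: in a coloring of $K_N^{(3)}$ with no blue $K_n^3$ one greedily builds three large vertex classes all of whose transversal triples are red, then recurses inside each class to embed the three ``parts'' of an iterated blowup containing $H$. So the content lies entirely in the converse: \emph{if $H$ is not iterated tripartite, then $r(H, K_n^3)$ is superpolynomial in $n$}. I would begin by partitioning the edges of $H$ into its tight components $T_1,\dots,T_m$.

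The easy case is that some $T_i$ is not tripartite. As $T_i$ is then $2$-tightly connected and non-tripartite, \cref{thm:blackbox} (which is \cite[Theorem 2.1]{CFG+Y}) supplies, for every $N$, a red/blue coloring of $K_N^{(3)}$ with every red $2$-tightly connected subgraph tripartite and the largest blue clique of size $O((\log N)^{3/2})$; this has no red copy of $T_i$, hence none of $H$, so $r(H, K_n^3) \ge 2^{\Omega(n^{2/3})}$. We may therefore assume every $T_i$ is tripartite---hence each is individually iterated tripartite---while $H$ as a whole is not. This says precisely that the tripartite structures of the $T_i$ cannot be merged into one ternary hierarchical clustering of $V(H)$ in which every edge is ``transversal at exactly one level'' (its three endpoints share a block at every higher level, split into three distinct sibling blocks at one level, and share a block at every lower level). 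The task becomes: for every $n$, construct a coloring of $K_N^{(3)}$ with $N$ superpolynomial in $n$, no blue $K_n^3$, and no red copy of $H$.

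The construction I would try has the same recursive, layered shape as the one behind \cref{thm:blackbox}---a balanced ternary laminar family on $[N]$ of depth $\Theta(\log N)$, with a triple colored red exactly when its three vertices first occupy three distinct sibling blocks---but with the per-level colorings chosen more carefully, for instance via independent random orientations at each level as in the random-tournament argument of \cref{sec:tournaments}, so that every red tight component is forced to be both tripartite and ``aligned'' with the laminar family at its own scale. One then argues that in any red copy of $H$ each tight component $T_i$ lies either entirely inside one block of some level or entirely across three sibling blocks of some level; propagating this down the tree produces a ternary hierarchical clustering of $V(H)$ certifying that $H$ \emph{is} iterated tripartite, a contradiction. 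The blue clique size is controlled by the depth together with \cref{thm:blackbox}, so $N$ can be taken at least quasi-polynomial in $n$.

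The hard part will be the combinatorial engine this requires: a robust structural statement about $3$-graphs whose tight components are all tripartite but which are not iterated tripartite---an embedding-level companion to \cref{thm:iter-k-partite}---together with a construction certifying non-embeddability in red. Even the case $m=2$ (a union of two tightly connected pieces) needed substantial work in \cite{CFG+Y}; the genuinely difficult configurations with $m\ge 3$ appear to be those whose components share vertices in a ``circular'' pattern, so that no single component can be demoted below the others in the hierarchy, and handling these plausibly demands a supersaturated or stability version of \cref{thm:iter-k-partite}, the random-tournament machinery of \cref{sec:tournaments}, and a delicate gluing of the per-level colorings. A secondary, quantitative difficulty is optimizing the exponent $c$ in the resulting bound $r(H, K_n^3) \ge 2^{\Omega(n^c)}$: matching the value $c=2/3$ that \cref{thm:blackbox} gives in the $2$-tightly-connected case, for arbitrary non-iterated-tripartite $H$, is not obviously within reach of this scheme.
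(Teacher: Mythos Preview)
This statement is an \emph{open conjecture} in the paper, not a proven theorem. It appears in the concluding remarks as ``an adjacent open problem,'' explicitly attributed to~\cite{CFG+Y}, and the paper offers no proof of it. So there is no ``paper's own proof'' to compare against.

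Your outline correctly identifies the known pieces: the forward implication is the classical Erd\H os--Hajnal argument; the case where some tight component $T_i$ is non-tripartite follows immediately from \cref{thm:blackbox}; and the paper records (in the discussion around \cref{thm:blackbox}) that \cite{CFG+Y} also handled the case where $H$ is the union of two tightly connected subgraphs. You are right that the remaining case---all $T_i$ tripartite, $H$ not iterated tripartite, $m\ge 3$ components with overlapping vertex sets---is the crux, and you are candid that your proposed laminar/random-tournament construction is a heuristic rather than a proof.

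The genuine gap is exactly where you place it: the ``combinatorial engine'' showing that in your layered coloring any red copy of $H$ would induce a ternary hierarchical clustering certifying iterated tripartiteness. This step is not a routine extension of \cref{thm:iter-k-partite} or the tournament machinery of \cref{sec:tournaments}; those results are extremal (edge-counting) statements, whereas what you need is an \emph{embedding} obstruction, and the circular-overlap configurations you flag are precisely the ones for which no such obstruction is currently known. In short, your sketch is a reasonable research plan for an open problem, not a proof, and the paper does not claim otherwise.
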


%\xh{I think we should also restate the main conjecture of https://arxiv.org/pdf/2411.13812 here and state that it is somehow morally a strengthing of our theorem.}

%Our main results, \cref{thm:main} and the corresponding statement for $k\geq 7$, follow quickly from the Tur\'an type result \cref{thm:iter-k-partite}, which in turn follows from the extremal coloring result \cref{thm:monochromatic-k-cliques}. For $k=3$, much of our method is based on counting cyclic triangles in tournaments, which does not extend to capture the recursive structure of iterated $k$-partite graphs for $k > 3$. On the other hand, our techniques for $k\geq 7$ depend on the Loomis-Whitney inequality and convexity, and the bounds we obtain from these arguments become stronger as $k$ grows. We believe that the proof could be pushed down to $k=6$ with more work, but the calculations appear to fall apart for $k \leq 5$. \xh{Motion to cut this paragraph, I think we've said most of it already earlier.}

%Although \cref{conj:main} is now fully proven, 
We believe \cref{thm:iter-k-partite} and the corresponding \cref{thm:monochromatic-k-cliques} to be interesting in their own right, but did not find a simple way to extend them to uniformities $4$ through $6$. %and while our proof for $k\geq 7$ could plausibly extend to $k=6$ with more work, the calculations appear to fall apart for $k\geq 5$. \ra{Worth keeping the stuff after ``and while"?} 
Hence, we pose the following:

\begin{conj}
    For $k\in\{4, 5, 6\}$, any $k$-partite coloring of the complete graph $K_n$ has at most $g_k(n)$ monochromatic copies of $K_k$.
\end{conj}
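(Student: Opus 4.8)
The natural plan is to mimic the higher-uniformity argument of \cref{sec:higher uniformity} as closely as possible. One proceeds by induction on $n$, with base case $n\in[k,k(k-1)]$ supplied for all $k\geq 3$ by \cref{lem:k to k(k-1)}. For $n>k(k-1)$ it suffices, exactly as in the deduction of \cref{thm:monochromatic-k-cliques} for $k\geq 7$, to prove the structural statement: in any $k$-partite coloring of $K_n$ maximizing the number of monochromatic $K_k$'s, the most common color $c^*$ spans a complete $k$-partite subgraph. Indeed, given this, every monochromatic $K_k$ not of color $c^*$ lies inside one of the $k$ parts of $c^*$, and the induction hypothesis bounds these by $\sum_j g_k(s_j)$, for a total of $\prod_j s_j+\sum_j g_k(s_j)\leq g_k(n)$.

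So one would focus on proving the analog of \cref{lem:big n k} for $k\in\{4,5,6\}$. The recoloring scheme is the same: recolor every edge crossing the $k$-partition of $c^*$ with $c^*$, and show each recolored edge gains monochromatic $K_k$'s. The lower bound $B\geq e^{-2\gamma}\frac{n^{k-2}}{k^{k-2}}$ on the number of monochromatic $K_k$'s through a recolored pair $uv$ after recoloring carries over verbatim from \cref{lem:LW ineq time} and the size constraint on the two parts containing $u$ and $v$. The entire difficulty is in the upper bound $A$ on the number of monochromatic $K_k$'s through $uv$ before recoloring: the proof of \cref{lem:big n k} bounds $A$ by the count of monochromatic $K_{k-2}$'s in colors $\ne c^*$ and applies Loomis--Whitney with denominator $\binom{k-2}{2}$. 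For $k=4$ this is vacuous ($\binom{2}{2}=1$, so one only recovers ``$A\leq$ number of edges in the color of $uv$''), and for $k=5,6$ the resulting constant $\left(\frac{k}{k-3}\right)^{k-2}$ is far too large; in each case \eqref{eq:gammarequirement} fails, by a multiplicative factor of order $20$ or so when $k=4$. No amount of sharpening the auxiliary constants $\alpha,\beta,\gamma$ repairs this --- the loss is genuinely in the crude bound on $A$.

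Thus the crux is obtaining a genuinely better handle on the monochromatic $K_k$'s destroyed by the recoloring. Two avenues seem worth pursuing. The first is to abandon the pointwise inequality $A<B$ and instead argue globally, via a weighting/double-counting argument in the spirit of \cref{lemma: recoloring}: one would distribute the destroyed $K_k$'s across the bad edges with suitable multiplicities, use that the total number of non-$c^*$ edges is only $\alpha\binom{n}{2}$ with $\alpha$ bounded away from $1$, together with the constraint $\sum_c t_c\le g_k(n)$ on the non-$c^*$ clique counts, and conclude that the net change in the number of monochromatic $K_k$'s under the recoloring is nonnegative. The second, likely cleaner for large $n$, is to first establish a stability version --- that a $k$-partite coloring of $K_n$ with $(1-o(1))g_k(n)$ monochromatic $K_k$'s must be within $o(n^2)$ edges of an iterated blowup --- and then run a local-correction argument from the near-extremal structure, which is the route the authors note would work for $k=3$ and large $n$ using \cite{BL23}. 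The main obstacle is that neither input is presently available for $k\in\{4,5,6\}$: the relevant Tur\'an density, let alone a stability statement, is unknown, and, as the $k=3$ case already showed, the weakness of low-uniformity convexity may force a substantial new idea --- an analog of the tournament/orientation trick, or new flag-algebra bounds --- rather than a routine adaptation of the $k\geq 7$ method.
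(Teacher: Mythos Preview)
The statement you were given is a \emph{conjecture}, not a theorem: the paper explicitly poses it as open and provides no proof. Your proposal correctly recognizes this --- you do not claim to prove the statement, and your analysis of why the $k\ge 7$ argument breaks down is accurate. In particular, you are right that \cref{lem:k to k(k-1)} supplies the base case for all $k\ge 3$, and that the failure point is entirely in the analog of \cref{lem:big n k}: the bound on $A$ via Loomis--Whitney applied to monochromatic $K_{k-2}$'s in the off-colors is vacuous for $k=4$ and too weak for $k=5,6$, so inequality \eqref{eq:gammarequirement} cannot be salvaged by tuning the constants $\alpha,\beta,\gamma$.

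Your discussion of possible avenues (a global weighting argument in the spirit of \cref{lemma: recoloring}, or stability plus local correction) is reasonable speculation, and matches the paper's own remark in the conclusion that stability results in the style of \cite{BL23} would handle large $n$ for $k=3$. But to be clear: what you have written is a (correct) diagnosis of the obstruction and a sketch of programs, not a proof. Since the paper itself offers no proof, there is nothing further to compare against.
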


We remark that the idea to pass from edge colorings to cyclic triangles in tournaments is partially inspired by the recent breakthrough of Balogh and Luo determining the Tur\'an densities of long tight cycles minus an edge \cite{BL23}. %\cite[Claim 5.13]{BL23}. 
Recall that a tight cycle is a $3$-graph with vertex set $[\ell]$ and edge set $\{\{1, 2, 3\}, \{2, 3, 4\}, \ldots, \{\ell-2, \ell-1, \ell\}, \{\ell-1, \ell, 1\}, \{\ell, 1, 2\}\}$. Denote by $C_\ell^-$ the tight cycle minus one edge.
Notice that any $3$-graph with each tight component being tripartite automatically forbids $C_\ell^-$ for any $\ell$ not divisible by $3$, so their results imply \cref{thm:iter-k-partite} holds asymptotically. 
It is possible to directly adapt their techniques to show that the extremal colorings in \cref{thm:iter-k-partite} have spanning monochromatic complete tripartite subgraphs for very large $n$, thus giving an alternative proof of our induction step in this regime. Because of the significant technical difficulties at small $n$, we did not pursue a complete proof along these lines. %\xh{I edited this discussion.} %, while to show \cref{thm:monochromatic-k-cliques} even just for large $n$, we need to prove the structural statement for all values of $n$. %, and it seems difficult to extend Balogh and Luo's argument to all smaller $n$.
%We nonetheless borrow some idea from \cite{BL23}: their structural result comes from a stability result about $C_5^-$-free orientable hypergraphs (\cite[Proposition 4.1]{BL23}) which they prove by examining the corresponding tournament, and we considered some tournaments that can be constructed from the edge coloring. \xh{The above two-paragraph discussion is kind of wishy-washy, shorten it somehow.} 
%\ra{Attempted to shorten above discussion}

As a final remark, we note that Erd\H os and Hajnal \cite{EH72} conjectured more about the growth of $r_k(s, e; t)$: that after the polynomial-to-exponential transition at $e = g_k(t)+1$, there is another transition from exponential to doubly exponential in a power of $t$ at some well-defined $e = h_k^{(2)}(s)$, followed by yet another transition to triply exponential at $e=h_k^{(3)}(s)$, and so forth until finally $r_k(s, e; t)$ transitions from $\operatorname{twr}_{k-2}$ to $\operatorname{twr}_{k-1}$ in a power of $t$ at some $e = h_k^{(k-2)}(s) < \binom{s}{k}$. Neither Erd\H os and Hajnal nor subsequent authors wagered a guess as to what $h_k^{(i)}(s)$ might be for any $i \geq 2$, except for the specific case $s=k+1$; the conjecture for that case was verified by Mubayi and Suk~\cite{MS20} when $k\geq 6$. Perhaps the next step in this area of research is to determine (or at least bound) the transition point from exponential to doubly exponential. 

\begin{problem}
    For $k\ge 4$, determine the function $h_k^{(2)}(s)$ such that $r_k(s, h_k^{(2)}(s)-1; t) = 2^{t^{O(1)}}$ while $r_k(s, h_k^{(2)}(s); t) = 2^{2^{t^{\Omega(1)}}}$, if one exists.
\end{problem}

\paragraph{Acknowledgments.} We are grateful to David Conlon, Jacob Fox, Dhruv Mubayi, Jiaxi Nie, Maya Sankar and Yuval Wigderson for stimulating conversations and helpful comments on an earlier version of this manuscript. The first author was supported in part by a Georgia Tech ARC-ACO fellowship. The third author was supported by NSF grant DMS - 2246682.

\newpage %Delete if too much white space? 

\begin{appendices}
\section{Proof of \texorpdfstring{\cref{lemma:elementary}}{F Lemma}}\label{sec:proof of F lemma}
%\ra{Todo: play around with appendix format, see weak saturation paper}
In this section, we prove the following about the function $F$ from \cref{def:F}.
\flemma*

\begin{proof}%[Proof of \cref{lemma:elementary}]
    We first notice that for any $a_1,b_1,\ldots, a_\ell,b_\ell$, we have
    \[\sum_{i\neq j}a_ib_j = \sum_ia_i\sum_jb_j - \sum_ia_ib_i.\]
    Therefore
    \[F(A,B,C) = AB - \sup_{(a,b)\vdash(A,B,C)}\sum_{i}a_ib_i.\]
    
    By compactness, the supremum is achieved for each fixed length $\ell\in\NN$ of the composition with $\ell C\geq A+B$.
    We may also assume that among the maximizers, the composition maximizes the lexicographical order of $(a_1+b_1,a_2+b_2,\ldots, a_\ell+b_\ell)$ as well.
    Note that if $a_\ell=b_\ell=0$ then we may simply discard them and decrease $\ell$.
    Hence we may assume without loss of generality that $a_\ell+b_\ell>0$.

    Let us first show that $a_i+b_i=C$ if $i<\ell$.
    Assume for the sake of contradiction that $a_i+b_i<C$ for some $i<\ell$.
    By the choice of the composition, we have $0<a_\ell+b_\ell\leq a_i+b_i<C$.
    We may assume, without loss of generality, that $a_\ell\leq a_i$ and $b_\ell>0$.
    Note that for sufficiently small $\varepsilon$, we have $b_\ell-\varepsilon>0$, $a_i+b_i+\varepsilon < C$ and 
    \[a_i(b_i+\varepsilon)+a_\ell(b_\ell-\varepsilon) \geq a_ib_i+a_\ell b_\ell.\]
    Since our sequence is a maximizer, we see that the equality must hold and so the sequence after modifications at $b_i$ and $b_\ell$ must remain a maximizer.
    However, $a_i+b_i+\varepsilon>a_i+b_i$ and $a_j+b_j$ remains unchanged for all $j<i$, which is a contradiction to the lexicographical ordering of $(a_1 + b_1, \ldots, a_\ell+b_\ell)$.
    Therefore we must have $a_i+b_i=C$ for all $i<\ell$.
    As a consequence, we must have $\ell = \lceil (A+B)/C\rceil$ as $(\ell-1)C < A+B$ and $\ell C\geq A+B$.

    Next, for any $i\neq j$, if $a_i-b_i<a_j-b_j$, we show that $a_j=0$ or $b_i=0$.
    Otherwise, choose $\varepsilon>0$ small enough so that $a_j-\varepsilon, b_i-\varepsilon>0$.
    Then we have
    \[(a_i+\varepsilon)(b_i-\varepsilon)+(a_j-\varepsilon)(b_j+\varepsilon) = a_ib_i+a_jb_j+\left((a_j-b_j)-(a_i-b_i)\right)\varepsilon-2\varepsilon^2,\]
    which is strictly greater than $a_ib_i+a_jb_j$ for sufficiently small $\varepsilon$.
    This is a contradiction.
    Therefore we must either have $a_j=0$ or $b_i=0$.

    To show that $a_i-b_i=a_j-b_j$ for all $i<j$ unless $j=\ell$, it suffices (by the above) to show that $a_i,b_i\neq 0$ for all $i<\ell$.
    Assume for the sake of contradiction that $a_i=0$ for some $i<\ell$.
    Then $b_i=C$.
    Moreover, for any $j\neq i$, we have that either $a_i-b_i<a_j-b_j$ or $a_j=0$ and $b_j=C$ as well.
    The former case forces $a_j=0$ or $b_i=0$, which shows that $a_j=0$ must hold as $b_i=C$.
    However, this shows that $A = \sum_{j\in[\ell]}a_i = 0$, which is a contradiction. A similar contradiction occurs if $b_i=0$ for some $i<\ell$.

    Now if $a_i-b_i \neq a_\ell-b_\ell$ for some $i<\ell$, then we know that one of $a_\ell,b_\ell$ must be zero.
    Moreover, $a_\ell-b_\ell$ must be between $0$ and $a_i-b_i$: to see this, assume that $b_\ell=0$ (the other case is analogous).
    If $a_\ell > a_i-b_i$, then we know that $a_\ell-b_\ell>a_i-b_i$ and so either $a_\ell=0$ or $b_i=0$, both of which lead to a contradiction.

    Finally, to show the first item, it remains to show that those conditions already allow us to compute a minimizer.
    Let $D=a_i-b_i$ for any $i<\ell$, and $D' = a_\ell-b_\ell$.
    Then we know that $D'=D$ or $D'$ lies strictly between $0$ and $D$, where the latter only happen when $\min(a_\ell,b_\ell)=0$.
    We also know that $a_i+b_i=C$ for any $i<\ell$ and $a_\ell+b_\ell = (A+B)-(\ell-1)C$.
    Therefore, $D$ and $D'$ together already determine at most one possible $a_1,b_1,\ldots, a_\ell,b_\ell$.
    It remains to show that there is exactly one possible pair $(D,D')$ that leads to a valid $a_1,b_1,\ldots, a_\ell,b_\ell$.

    Observe that $(\ell-1)D+D' = A-B$.
    If $\abs{(A-B)/\ell} > (A+B)-(\ell-1)C$, then we must have $D\neq D'$: otherwise, we have $\abs{a_\ell-b_\ell} = \abs{D'} = \abs{(A-B)/\ell} > (A+B)-(\ell-1)C = a_\ell+b_\ell$, which is a contradiction.
    In the case where $D\neq D'$, we must have $\abs{D'} = (A+B)-(\ell-1)C$ and $\textup{sgn}(D') = \textup{sgn}(D) =\textup{sgn}(A-B)$.
    This uniquely determines $D'$ and so $D$.

    Now if $\abs{(A-B)/\ell}\leq (A+B)-(\ell-1)C$, we show that $D=D'$.
    Otherwise, we have $D'$ lies strictly between $0$ and $D$, and $\abs{D'} = (A+B)-(\ell-1)C$ as $\min(a_\ell,b_\ell)=0$.
    However, this forces $\abs{D}\leq \abs{D'}$, which is a contradiction.
    Therefore we have $D=D'=(A-B)/\ell$, showing that $(D,D')$ can be uniquely determined in this case as well.
    This concludes the proof of the first item.

    To show the second item, simply note that if $A$ is decreased to $A'$, then for every $(A,B,C)$-composition $(a,b)$, we can simply replace $a_i$ by $\frac{A'}{A}a_i$ to get an $(A',B,C)$-composition.
    This shows that $F(A,B,C)\geq F(A',B,C)$ whenever $A>A'$.
    A similar argument works for $B$.
    Lastly, for any $C<C'$, we know that any $(A,B,C)$-composition $(a,b)$ is also an $(A,B,C')$-composition, showing that $F(A,B,C)\geq F(A,B,C')$, as desired.

    To show the last item, we first observe that we may assume $A>0$ as otherwise the inequality trivially holds.
    Now note that for any $(A,B,C)$-composition $(a,b)$, we may replace $b_i$ with $\frac{B'}{B}b_i$ so that the it becomes an $(A,B',C)$-composition.
    Since $\sum_{i\neq j}a_ib_j$ becomes $\frac{B'}{B}\sum_{i\neq j}a_ib_j$, we have the desired inequality.
\end{proof}

\section{Verifying assumptions of the lemmas}\label{sec:verifying assumptions}
%In this section, we finish the proof that \cref{asm:contradiction} cannot hold for any value of $n$. 
Recall that in \cref{sec:degree sequences}, we showed that \cref{asm:contradiction} cannot hold with $n=13, 14, 16,$ or $17$. For all other values of $n$ up to $699$, we use a computer program to verify the assumptions of \cref{lemma:possible-X-Y}, \cref{lemma:Second-step}, and \cref{lemma:Third-step} and then check that all colorings $\chi$ satisfying the lemmas in \cref{sec:tournaments} have $\tau \leq 2\min(|X^*|, |Y^*|, |U|)$. Then, from \cref{lem:tau-bad} and \cref{lemma: recoloring} we conclude that \cref{asm:contradiction} cannot hold. Our program 
% \xh{not sure if this matters but I find the word "code" unprofessional, perhaps we can use "program" or "algorithm" everywhere?} 
can be found at \href{https://github.com/rubenascoli01/Polynomial-to-exponential-transition}{\texttt{github.com/rubenascoli01/Polynomial-to-exponential-transition}}%\xh{Can we make this clickable?}%\ra{Decide whether to include the code in the appendix}
, and we elaborate on how it works in the first subsection below. Then, the second subsection is dedicated to showing (without computer assistance) that for $n\geq 700$, the assumptions of the lemmas in \cref{sec:tournaments} hold and, once again, we have $\tau\leq 2\min(|X^*|, |Y^*|, |U|)$. %This allows us to use \cref{lemma: recoloring} to conclude that \cref{asm:contradiction} cannot hold for any $n\geq 700$.

\subsection{Explanation of Python program}
%\xh{I am not convinced most of this section needs to be in the paper, versus spread between the readme and the comments of the python code itself.}

Recall the notation $\tilde d(n) = \frac{n(n+1)(n-1)}{3} - 8(g_3(n)+1)$. Define $\Delta_{\max}$ to be the largest nonnegative integer satisfying
\begin{equation}\label{Deltamax} \Delta_{\max}^2 \leq \tilde d(n).\end{equation}
We now describe how our computer program algorithmically arrives at a contradiction to \cref{asm:contradiction}, given the tools in this paper to help prove that $\cG_{\Delta, P}$ is empty for some integers $\Delta$ and $P$.
%This will be most useful for designing an algorithm that proves a contradiction with \cref{asm:contradiction} for small $n$.
\begin{lemma}\label{lem:algorithm}
To contradict \cref{asm:contradiction}, it suffices to show that the following procedure succeeds.
Initialize $\Delta^{(0)}=-1$ and $P^{(0)} = 4(d(n)-1)$.
At time $t\geq 1$, set $\Delta^{(t)}$ to be the maximum integer in $\{\Delta^{(t-1)}+1,\ldots, \Delta_{\max}\}$ so that we can prove $\cG_{\Delta^{(t)}, P^{(t-1)}}$ is empty.
The procedure succeeds if $\Delta^{(t)} = \Delta_{\max}$, and it fails if $\Delta^{(t)}$ does not exist. If neither of these holds, then set
\[P^{(t)}=\left\lfloor \frac{\tilde d(n)-\left(\Delta^{(t)}+1\right)^2}{2}\right\rfloor.\] 
If $P^{(t)} \geq P^{(t-1)}$, then the procedure fails.  Otherwise, $t$ increases and the procedure continues to the next iteration.
\end{lemma}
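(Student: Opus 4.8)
The plan is to assume \cref{asm:contradiction} holds and show that the procedure can never reach its success condition $\Delta^{(t)} = \Delta_{\max}$; the lemma then follows by contraposition. Two ingredients drive the whole argument. First, by \cref{cor:calGempty} there is a \emph{fixed} pair of nonnegative integers $(\Delta, P)$ with $P = \min(P(\Delta), 4(d(n)-1))$ for which $\cG_{\Delta, P}$ is nonempty. Second, the families $\cG_{\Delta,P}$ are monotone: $\cG_{\Delta', P'} \subseteq \cG_{\Delta, P}$ whenever $\Delta' \leq \Delta$ and $P' \leq P$, since increasing $\Delta$ or $P$ only relaxes the two inequalities $\delta_c(v) \leq \Delta$ and $2\pc_2 + \pc_3 \leq P$ in the definition of $\cG_{\Delta,P}$. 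I would open with the preliminary observation that this fixed $\Delta$ satisfies $\Delta \leq \Delta_{\max}$: since $P \geq 0$ and $P \leq P(\Delta) = \lfloor (\tilde d(n) - \Delta^2)/2\rfloor$, we get $P(\Delta) \geq 0$, hence $\Delta^2 \leq \tilde d(n)$, which is exactly the defining inequality \eqref{Deltamax} of $\Delta_{\max}$.

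The core of the proof is an induction establishing the invariant: whenever the procedure runs long enough to define $\Delta^{(t)}$ (for $t \geq 1$ this presupposes it has not terminated at an earlier step and that $\Delta^{(t)}$ exists), one has $\Delta^{(t)} < \Delta$. For the base case, $\Delta^{(0)} = -1 < 0 \leq \Delta$ and $P^{(0)} = 4(d(n)-1) \geq P$. For the inductive step at time $t$, I would first show $P^{(t-1)} \geq P$: for $t = 1$ this is the base case bound, and for $t \geq 2$ the procedure having continued past step $t-1$ means $P^{(t-1)} = \lfloor (\tilde d(n) - (\Delta^{(t-1)}+1)^2)/2\rfloor$, and since $-1 \leq \Delta^{(t-1)} < \Delta$ by the inductive hypothesis (note $\Delta^{(t)} \geq \Delta^{(t-1)}+1$ keeps all iterates $\geq -1$), we get $(\Delta^{(t-1)}+1)^2 \leq \Delta^2$, so $P^{(t-1)} \geq P(\Delta) \geq P$. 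Now suppose toward a contradiction that $\Delta^{(t)} \geq \Delta$. The procedure sets $\Delta^{(t)}$ only to a value for which it has a valid proof that $\cG_{\Delta^{(t)}, P^{(t-1)}}$ is empty; combined with $\Delta \leq \Delta^{(t)}$, $P \leq P^{(t-1)}$ and monotonicity, this gives $\cG_{\Delta, P} \subseteq \cG_{\Delta^{(t)}, P^{(t-1)}} = \emptyset$, contradicting the first ingredient. Hence $\Delta^{(t)} < \Delta$, closing the induction.

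Finally, if the procedure succeeded it would do so at some time $t$ with $\Delta^{(t)} = \Delta_{\max}$; but the invariant gives $\Delta^{(t)} < \Delta \leq \Delta_{\max}$, a contradiction. So under \cref{asm:contradiction} the procedure cannot succeed, which is the contrapositive of the claim. One can additionally remark (though it is not needed for the logical implication) that the procedure always halts, because $\Delta^{(t)}$ strictly increases and is capped at $\Delta_{\max}$, and whenever it continues $P^{(t)}$ strictly decreases while remaining nonnegative, since $\Delta^{(t)} \neq \Delta_{\max}$ forces $(\Delta^{(t)}+1)^2 \leq \Delta_{\max}^2 \leq \tilde d(n)$.

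I do not expect a genuine obstacle here: the content is essentially the monotonicity of $\cG_{\Delta,P}$ together with \cref{cor:calGempty}. The only care required is bookkeeping — precisely tracking when $P^{(t)}$ and $\Delta^{(t)}$ are defined as the branching procedure proceeds, and using that "we can prove $\cG_{\Delta^{(t)}, P^{(t-1)}}$ is empty" (via the lemmas of \cref{sec:tournaments}) really does certify that set is empty, so that the monotone inclusion may be invoked.
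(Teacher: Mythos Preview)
Your proposal is correct and essentially the same as the paper's proof. Both arguments rest on the monotonicity $\cG_{\Delta,P}\subseteq\cG_{\Delta',P'}$ for $\Delta\leq\Delta'$, $P\leq P'$ together with \cref{cor:calGempty}, and both extract the key inequality $P^{(t-1)}\geq P(\Delta)\geq P$ from $\Delta^{(t-1)}<\Delta$; the only difference is that the paper argues directly (assume success at time $T$, pick the minimal $t$ with $\Delta^{(t)}\geq\Delta$, and derive a contradiction) whereas you run the contrapositive as an induction showing $\Delta^{(t)}<\Delta$ for all $t$, which is the same step unwound.
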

\begin{proof}
Suppose that the procedure succeeds at time $T$.
We have that $\Delta^{(T)} = \Delta_{\max}$.

We claim it suffices to show that for any nonnegative integers $\Delta, P$ satisfying the conditions in \cref{cor:calGempty}, we can find $t\in[T]$ so that $\Delta\leq \Delta^{(t)}$ and $P\leq P^{(t-1)}.$ Indeed, if this holds, then since $\cG_{\Delta, P} \subseteq \cG_{\Delta^{(t)}, P^{(t-1)}}$ and our procedure showed at time $t$ that $\cG_{\Delta^{(t)}, P^{(t-1)}}$ is empty, we conclude that $\cG_{\Delta, P}$ is empty. \cref{cor:calGempty} tells us that if \cref{asm:contradiction} holds, then for some $\Delta, P$ satisfying the conditions of the corollary, $\cG_{\Delta, P}$ is nonempty, yielding a contradiction.

To find this, given $\Delta$ and $P$, let $t$ be the smallest positive integer with $\Delta^{(t)} \geq \Delta$.
This positive integer exists as we must have $\Delta \leq \Delta_{\max} = \Delta^{(T)}$.
It remains to show that $P\leq P^{(t-1)}.$
This is clear if $t=1$.
If $t>1$, then by the minimality of $t$, we know that $\Delta\geq \Delta^{(t-1)}+1$.
Therefore
\[\Delta^2+2P\geq \left(\Delta^{(t-1)}+1\right)^2+2P,\]
and so if $P > P^{(t-1)}$ then the condition $P \leq P(\Delta)$ of the \cref{cor:calGempty} is violated.
\end{proof}
%At this point, it is worth noting that for $n\leq 699$, we use computer code to go through the algorithm outlined by \cref{cor:algorithm}.  For $50 \leq n \leq 699$, the procedure succeeds in the first iteration, i.e. $\cG_{\Delta_{\max}, 4(d(n)-1)}$ is empty. For $n\geq 700$, the same holds, but we show this by hand. See \cref{sec:verifying assumptions} for details.

Our computer program executes the algorithm outlined in \cref{lem:algorithm}. Comments in the code explain how we use the lemmas in \cref{sec:tournaments} to argue that a particular $\cG_{\Delta, P}$ is empty. %The call to the function \texttt{no\_counterexample} is where we check if we can prove that $\cG_{\Delta, P}$ is empty; this is where the bulk of the computational work happens. We proceed from there as outlined in \cref{lem:algorithm} and as explained by the comments in that section of code.

We now elaborate on a part of the program inside the function \texttt{max\_bad\_edges}. To use \cref{lem:tau-bad}, it suffices to obtain an upper bound on $\max_{X^*}b_U$ and $\max_{Y^*}b_U$.
Our program uses slightly different strategies for $n \geq 70$ and for $n < 70$. For $n \geq 70$, we use the following. Recall the definition of $\tau$ from \eqref{eq:coeffs and tau}.
\begin{lemma}\label{lem:bmaxfromx} Suppose that
\begin{equation}\label{eq:not_enough_link}
\bip(\abs{X^*})+\bip(\abs{Y^*})+\bip(\abs{U})+\tau\leq g_3(n)-g_3(n-1).\end{equation}
Let $b_{\max}$ be the largest nonnegative integer $b$ such that $b\leq \abs{U}$ and 
\begin{align}\label{eq: b max equation}\left(\abs{U}-b\right)\abs{Y^*}+\bip(\abs{X^*}-1)+\bip(b)+\tau-b\geq g_3(n)-g_3(n-1)+1.\end{align} Then, $b_{U}(x)\leq b_{\max}$ for every $x\in X^*$.
\end{lemma}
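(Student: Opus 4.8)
The strategy is a refined version of the vertex-degree counting already used in the paper (cf.\ \cref{lem:each-vertex-triangles} and \cref{lemma:u-lower-bound}): fix $x\in X^*$, set $b:=b_U(x)$, and bound the number of monochromatic triangles through $x$ from above by exactly $(|U|-b)|Y^*|+\bip(|X^*|-1)+\bip(b)+\tau-b$. Since \cref{lem:each-vertex-triangles} forces $x$ to lie in at least $g_3(n)-g_3(n-1)+1$ monochromatic triangles, this upper bound then shows that $b$ satisfies \eqref{eq: b max equation}; as $b_{\max}$ is by definition the largest integer $\le|U|$ with this property, we conclude $b_U(x)=b\le b_{\max}$. (When $\bip(|Y^*|)\ge 1$, hypothesis \eqref{eq:not_enough_link} makes the displayed expression drop below $g_3(n)-g_3(n-1)+1$ once $b$ approaches $|U|$, which is both why $b_{\max}<|U|$ and why that regime is harmless.)

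To prove the upper bound I would partition the monochromatic triangles through $x$ according to the colour relative to $c^*$ and to which of the three parts $X^*,Y^*,U$ of the $c^*$-tripartition the two non-$x$ vertices lie in. First, a monochromatic $c^*$-triangle through $x$ has one vertex in $Y^*$ and one vertex in $N_{c^*}(x)\cap U$, a set of size exactly $|U|-b$, so there are at most $(|U|-b)\big(|Y^*|-b_{Y^*}(x)\big)$ of them. Second, for $c\ne c^*$, monochromatic $c$-triangles through $x$ with both non-$x$ vertices in $X^*\setminus\{x\}$ are $c$-edges inside $N_c(x)\cap X^*$, so summing over $c$ and using superadditivity of $\bip$ bounds them by $\bip(|X^*|-1)$. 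Third, since the edge joining them would otherwise lie inside $U$, a monochromatic triangle through $x$ with both non-$x$ vertices in $U$ cannot have colour $c^*$, hence it has colour $c\ne c^*$ with both vertices among the $b$ vertices of $B_x:=\{u\in U:\chi(xu)\ne c^*\}$, for a total of at most $\bip(b)$. Fourth, in every other monochromatic triangle $xw_1w_2$ the vertices $w_1,w_2$ are non-$c^*$-neighbours of $x$ (a $c^*$-edge at $x$ would force the triangle's colour), so $w_1,w_2$ lie in $\big(X^*\setminus\{x\}\big)\cup B_x\cup\{y\in Y^*:\chi(xy)\ne c^*\}$; provided $w_1,w_2$ are not both in $Y^*$, the edge $w_1w_2$ opposite $x$ crosses two of the parts and carries the triangle's colour $\ne c^*$, so charging each such triangle to this edge is injective. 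By \cref{lem:tau-bad} there are at most $\tau$ edges across the parts not coloured $c^*$, of which the $b_{Y^*}(x)+b$ incident to $x$ are never charged, so these triangles number at most $\tau-b_{Y^*}(x)-b$.

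The one remaining family — monochromatic triangles of colour $c\ne c^*$ with both non-$x$ vertices inside $Y^*$ — is the crux, because the edge opposite $x$ now lies inside a single part and is not counted by $\tau$. The surplus accumulated above (using $(|U|-b)(|Y^*|-b_{Y^*}(x))$ rather than $(|U|-b)|Y^*|$, and $\tau-b_{Y^*}(x)-b$ rather than $\tau-b$) totals $b_{Y^*}(x)\,(|U|-b+1)$, so it suffices to bound this family by $b_{Y^*}(x)\,(|U|-b+1)$. Each such triangle is a $c$-edge inside $N_c(x)\cap Y^*$, hence there are at most $\sum_{c\ne c^*}|N_{c,1}(x)\cap Y^*|\cdot|N_{c,2}(x)\cap Y^*|$ of them, and since $|N_{c,j}(x)|\le M$ for all $c,j$ this is at most $M\sum_{c\ne c^*}\min\big(|N_{c,1}(x)\cap Y^*|,|N_{c,2}(x)\cap Y^*|\big)\le\tfrac{M}{2}\,b_{Y^*}(x)$. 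It then remains to verify the arithmetic inequality $M\le 2(|U|-b)+2$ (so that $\tfrac{M}{2}b_{Y^*}(x)\le b_{Y^*}(x)(|U|-b+1)$), which I expect to follow for $n\ge 70$ by combining $M\le\tfrac{|X|+|Y|+\Delta}{2}\le\tfrac{n-|U|+\Delta}{2}$ (\cref{lemma:First-step} together with the admissibility of $(|X|,|Y|)$ from \cref{lemma:possible-X-Y}) with the lower bounds on $|U|$ supplied by \cref{lemma:u-lower-bound} and \cref{lem:overlineM}; in any range where this direct estimate is tight, the same conclusion can instead be reached by a recolouring argument — recolour the $b$ edges from $x$ to $U$ by $c^*$ and check that the total number of monochromatic triangles does not decrease, contradicting the minimality of the number of non-$c^*$ edges in \cref{def:v*c*}. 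Summing the five bounds — $(|U|-b)(|Y^*|-b_{Y^*}(x))$, $\bip(|X^*|-1)$, $\bip(b)$, $\tau-b_{Y^*}(x)-b$, and $b_{Y^*}(x)(|U|-b+1)$ — gives precisely $(|U|-b)|Y^*|+\bip(|X^*|-1)+\bip(b)+\tau-b$, as required.
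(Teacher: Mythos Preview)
Your decomposition into five cases is correct and the bookkeeping for cases 1--4 matches the paper exactly. The gap is in case 5. You need the family of non-$c^*$ monochromatic triangles through $x$ with both other vertices in $Y^*$ to be at most the surplus $b_{Y^*}(x)(|U|-b+1)$, and you propose to do this via $\tfrac{M}{2}b_{Y^*}(x)\le b_{Y^*}(x)(|U|-b+1)$, i.e.\ $M\le 2(|U|-b)+2$. But this inequality is not established and is false precisely in the regime you are trying to exclude: you have no a~priori upper bound on $b=b_U(x)$ (that is what the lemma is supposed to produce), so $|U|-b$ could be much smaller than $M/2$. Your appeal to \cref{lemma:u-lower-bound} and \cref{lem:overlineM} controls $|U|$, not $|U|-b_U(x)$, and the backup recolouring argument is only a sketch. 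The hypothesis \eqref{eq:not_enough_link} never enters your argument in a substantive way, which is a sign that something is missing.

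The paper's route is both simpler and avoids this circularity. Instead of bounding case~5 by $\tfrac{M}{2}b_{Y^*}(x)$, bound it by $\bip(b_{Y^*}(x))$. Then the total upper bound
\[
\bip(b_U(x))+\bip(b_{Y^*}(x))+\bip(|X^*|-1)+(|U|-b_U(x))(|Y^*|-b_{Y^*}(x))+\tau-b_U(x)-b_{Y^*}(x)
\]
is convex in $b_{Y^*}(x)$ on $[0,|Y^*|]$, hence maximised at an endpoint. At $b_{Y^*}(x)=|Y^*|$ the expression is at most $\bip(|U|)+\bip(|Y^*|)+\bip(|X^*|)+\tau$, which hypothesis \eqref{eq:not_enough_link} says is $\le g_3(n)-g_3(n-1)$; since $x$ lies in at least $g_3(n)-g_3(n-1)+1$ monochromatic triangles, the maximum must occur at $b_{Y^*}(x)=0$, and evaluating there gives exactly \eqref{eq: b max equation} with $b=b_U(x)$. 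This is where \eqref{eq:not_enough_link} is actually used.
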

\begin{proof}
Fix a vertex $x\in X^*$. 
For any monochromatic triangle $xpq$ that contains $x$, we have three cases:
    \begin{itemize}
        \item $\chi(xp)=\chi(xq) = \chi(pq) = c$.
        \item $\chi(xp)=\chi(xq)=\chi(pq)\neq c$ and $c(p)=c(q)$.
        \item $\chi(xp)=\chi(xq)=\chi(pq)\neq c$ and $c(p)\neq c(q)$.
    \end{itemize}
    There are at most $(\abs{U}-b_U(x))(\abs{Y^*}-b_{Y^*}(x))$ triangles of the first type.
    For the second one, by using the fact that the link of the triangles is bipartite, we know that there are at most
    \[\bip(b_U(x))+\bip(b_{Y^*}(x))+\bip(\abs{X^*}-1)\]
    such triangles.
    Lastly, any triangle of the third type corresponds to an edge $pq$ across $U,X^*,Y^*$ not incident to $x$ with color not equal to $c$.
    There are at most $\tau-b_U(x)-b_{Y^*}(x)$ such triangles.
    As a consequence,
    \[\bip(b_U(x))+\bip(b_{Y^*}(x))+\bip(\abs{X^*}-1)+(\abs{U}-b_U(x))(\abs{Y^*}-b_{Y^*}(x))+\tau-b_U(x)-b_{Y^*}(x)\geq g_3(n)-g_3(n-1)+1.\]
    The left hand side is convex in $b_{Y^*}(x)$ as $\bip(b_{Y^*}(x))$ is convex in $b_{Y^*}(x)$ and the rest is linear in $b_{Y^*}(x)$.
    Therefore it is maximized when $b_{Y^*}(x)=0$ or $b_{Y^*}(x) = \abs{Y^*}$.
    We now show that it cannot be maximized at $b_{Y^*}(x)=\abs{Y^*}$.
    Otherwise, we must have
    \[\bip(b_U(x))+\bip(\abs{Y^*})+\bip(\abs{X^*}-1)+\tau-b_U(x)\geq g_3(n)-g_3(n-1)+1.\]
    As $\bip(t)$ is increasing, we know that
    \[\bip(b_U(x))+\bip(\abs{Y^*})+\bip(\abs{X^*}-1)+\tau-b_U(x)\leq \bip(\abs{U})+\bip(\abs{X^*})+\bip(\abs{Y^*})+\tau,\]
    which contradicts our assumption \eqref{eq:not_enough_link}.
    Thus, plugging in $b_{Y^*}(x)=0$, we have
    \begin{align*} &\bip(b_U(x))+\bip(b_{Y^*}(x))+\bip(\abs{X^*}-1)+(\abs{U}-b_U(x))(\abs{Y^*}-b_{Y^*}(x))+\tau-b_U(x)-b_{Y^*}(x)\\&\leq \bip(b_U(x))+\bip(\abs{X^*}-1)+(\abs{U}-b_U(x))\abs{Y^*}+\tau-b_U(x).\end{align*}
    By our definition of $b_{\max}$, we know that $b_U(x)\leq b_{\max}$ for every $x\in X^*$.
\end{proof}
Note that as a consequence of this proof, if \eqref{eq:not_enough_link} holds but there is no $0 \leq b \leq |U|$ satisfying \eqref{eq: b max equation}, then we can discard this case as no $x\in X^*$ can be in enough monochromatic triangles. 

%We remark that for $n\leq 699$, the code checks the assumption \eqref{eq:not_enough_link} in the \texttt{max\_bad\_edges} function, and if it does not hold, we simply allow this case to be considered for the final value returned by \texttt{max\_bad\_edges}. 

%\ra{Probably better to format next part as a lemma, but I'm not sure how as there are several steps...}

For $n < 70$, we need a slightly more complicated method to find a good bound on $\max_{X^*}b_U$. 
This will be a two-step process.
We first try to bound $\max_{X^*}b_U$ in a similar fashion as what we did for $n\geq 70$, but this time with a more refined approach.

\begin{lemma}\label{lem:bmaxnlessthan70}
    Let $b_{\max}$ be the largest nonnegative integer $b$ such that $b\leq \abs{U}$ and there exists
    \[b'\in \left\{(\abs{Y^*}-(\abs{U}-b+\Delta))_+, \min(\abs{Y^*}, \abs{Y^*}-(\abs{U}-b-\Delta))\right\}\] so that
\[\left(\abs{U}-b\right)\left(\abs{Y^*}-b'\right)+\bip(\abs{X^*}-1)+\bip(b)+\bip(b')+\tau-b-b'\geq g_3(n)-g_3(n-1)+1.\]
Then $b_U(x)\leq b_{\max}$ for all $x\in X^*$.
\end{lemma}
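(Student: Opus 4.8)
The plan is to follow the proof of \cref{lem:bmaxfromx} verbatim up to its final step, and then replace the concluding convexity argument (which there used the hypothesis \eqref{eq:not_enough_link}) by a sharper one. Fix a vertex $x\in X^*$ and abbreviate $b=b_U(x)$ and $b'=b_{Y^*}(x)$. Exactly as in \cref{lem:bmaxfromx}, I would split the monochromatic triangles through $x$ into three cases --- triangles entirely of color $c^*$; triangles of some color $\ne c^*$ whose other two vertices lie in a common part of the $c^*$-tripartition; and triangles of some color $\ne c^*$ whose other two vertices lie in distinct parts --- and bound each using the tripartiteness of the color classes and the bound of at most $\tau$ edges across $U,X^*,Y^*$ not colored $c^*$ (from \cref{lem:tau-bad}). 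This yields that the number of monochromatic triangles containing $x$ is at most
\[\bip(b)+\bip(b')+\bip(\abs{X^*}-1)+(\abs{U}-b)(\abs{Y^*}-b')+\tau-b-b',\]
and since $\chi\in\cG_{\Delta,P}$ this quantity is at least $g_3(n)-g_3(n-1)+1$.

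The one new ingredient is a two-sided bound on $b'$ in terms of $b$. Because $x\in X^*$, the color-$c^*$ neighborhood of $x$ is contained in $U\cup Y^*$ and its two bipartition classes have sizes $\abs{U}-b$ and $\abs{Y^*}-b'$; hence $\delta_{c^*}(x)=\bigl|(\abs{U}-b)-(\abs{Y^*}-b')\bigr|$, which is at most $\Delta$ by the definition of $\cG_{\Delta,P}$. Combined with $0\le b'\le\abs{Y^*}$, this confines $b'$ to the interval of integers with endpoints $(\abs{Y^*}-(\abs{U}-b+\Delta))_+$ and $\min\bigl(\abs{Y^*},\abs{Y^*}-(\abs{U}-b-\Delta)\bigr)$.

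To conclude, I would observe that for fixed $b$ the displayed upper bound is a convex function of $b'$: the term $\bip(b')$ is (discretely) convex, and all remaining terms, including $(\abs{U}-b)(\abs{Y^*}-b')$, are affine in $b'$. A convex function attains its maximum over an interval at an endpoint, so one of the two endpoint values above, say $b'_0$, makes the displayed expression with $b'_0$ substituted for $b'$ at least as large as its value at $b'=b_{Y^*}(x)$, hence at least $g_3(n)-g_3(n-1)+1$. Since also $b\le\abs{U}$, this is precisely the condition defining $b_{\max}$, so $b_U(x)=b\le b_{\max}$; as $x\in X^*$ was arbitrary, we are done. I do not expect any real obstacle here --- the triangle count is lifted directly from \cref{lem:bmaxfromx}, and the only points requiring care are extracting the correct interval endpoints from $\delta_{c^*}(x)\le\Delta$ and noting that convexity in $b'$ reduces the check to those two endpoints (playing the role that the dichotomy $b'\in\{0,\abs{Y^*}\}$ together with \eqref{eq:not_enough_link} played in \cref{lem:bmaxfromx}).
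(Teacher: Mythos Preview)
Your proposal is correct and essentially identical to the paper's own proof: both lift the triangle-count inequality from \cref{lem:bmaxfromx}, use $\delta_{c^*}(x)\le\Delta$ to confine $b_{Y^*}(x)$ to the stated interval, and invoke convexity in $b_{Y^*}(x)$ to reduce to the two endpoints. The paper presents these steps in the same order with the same justifications.
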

\begin{proof}
    From the previous proof, we know that
    \begin{align}\label{eq:upperbound-triangle}&\left(\abs{U}-b_U(x)\right)\left(\abs{Y^*}-b_{Y^*}(x)\right)+\bip(\abs{X^*}-1)+\bip(b_U(x))+\bip(b_{Y^*}(x))+\tau-b_U(x)-b_{Y^*}(x)\end{align}
    is at least $g_3(n)-g_3(n-1)+1$.
    We now replace $b_{Y^*}(x)$ with some $b'$ so that the inequality still holds and either $b'=(\abs{Y^*}-(\abs{U}-b_U(x)+\Delta))_+$ or $b'=\min(\abs{Y^*}, \abs{Y^*}-(\abs{U}-b_U(x)-\Delta))$.
    To do so, denote by $I_{b_U(x)}$ the interval with those two values as endpoints.
    Note that 
\[\abs{(\abs{Y^*}-b_{Y^*}(x))-(\abs{U}-b_U(x))}=\delta_{c^*}(x)\leq \Delta\]
and so $b_{Y^*}(x)\geq \abs{Y^*}-(\abs{U}-b_U(x)+\Delta)$ and $b_{Y^*}(x)\leq \abs{Y^*}-(\abs{U}-b_U(x)-\Delta)$.
In other words, $b_{Y^*}(x)\in I_{b_U(x)}$.
Since the expression \eqref{eq:upperbound-triangle} is convex in $b_{Y^*}(x)$, we may replace $b_{Y^*}(x)$ with one of the endpoints of $I_{b_U(x)}$ while keeping it at least $g_3(n)-g_3(n-1)+1$.
By the definition of $b_{\max}$, we then see that $b_U(x)\leq b_{\max}$.
\end{proof}

The second step for $n<70$ is to bootstrap our bound on $b_U(x)$ using part (b) of \cref{lemma:bipartite-to-tripartite}. 
To do so, we need to have $b_U(x)\leq \abs{U}/2$.
The next lemma lists two criteria that imply this.

\begin{lemma}\label{lem:bootstrap-bU}
    Set 
    \[b_{U,Y^*}= \left\lfloor\frac{\left(\abs{U}b_{XY}+\left(\floor{P/2}-b_{XY}\right)\right)\abs{Y^*}}{X_{\min}{Y_{\min}}}\right\rfloor.\]
    Suppose that at least one of the following two statements holds.
    \begin{enumerate}
        \item The $b_{\max}$ from \cref{lem:bmaxnlessthan70} is at most $\abs{U}/2$.
        \item We have the inequality
        \[F\left(\abs{Y^*}, \abs{U}-\left\lfloor\frac{P - 2(b_{XY}-\abs{Y^*})_++2b_{U,Y^*}}{2\abs{Y^*}}\right\rfloor, \frac{\abs{X^*}+\abs{Y^*}+\Delta}{2}\right)>P - 2(b_{XY}-\abs{Y^*})_++b_{U,Y^*}.\]
    \end{enumerate}
    Then for every $x\in X^*$,
    \[b_U(x)\leq \left\lfloor\frac{\left\lfloor\frac{P}{2}\right\rfloor-(b_{XY}-\abs{Y^*})_++b_{U,Y^*}}{|Y^*|}\right\rfloor.\]
\end{lemma}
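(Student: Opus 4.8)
The plan is to apply \cref{lemma:bipartite-to-tripartite} with $S_1=Y^*$, $S_2=U$, and $v=x$ ranging over $X^*$ (note that $c^*(x)\notin\{c^*(Y^*),c^*(U)\}$, since $X^*,Y^*,U$ are distinct parts of the $c^*$-tripartition): first part~(a) to force $b_U(x)\le\tfrac12|U|$, and then part~(b) to turn that into the stated numerical bound on $b_U(x)$. To invoke \cref{lemma:bipartite-to-tripartite} with the parameter $b=b_{U,Y^*}$ I need two preliminary facts. The first is that there are at most $b_{U,Y^*}$ edges between $Y^*$ and $U$ not colored $c^*$. The second is the pair of inequalities $2\pc_2(x,Y^*,U)+\pc_3(x,Y^*,U)\le P-2(b_{XY}-|Y^*|)_+$ and $\pc_2(x,Y^*,U)\le\lfloor P/2\rfloor-(b_{XY}-|Y^*|)_+$, valid for every $x\in X^*$.

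For the first fact I would write the number of non-$c^*$ edges between $Y^*$ and $U$ as $\sum_{u\in U}b_{Y^*}(u)=\sum_{u\in U}b_Y(u)+\sum_{y\in\overline Y}b_U(y)$, bound the first sum by $\frac{|U|b_{XY}+p}{|X|}$ using \cref{lemma:Second-step} and the second by $\frac{|Y|\pc_2(U,X,\overline Y)+(|U|b_{XY}+p)|\overline Y|}{|X||Y|}$ using \cref{lem:sum-Xy-bound}, and add them. Using $|Y|\le|Y^*|$, using that $b_{XY}$, $p$, and $\pc_2(U,X,\overline Y)$ count disjoint families of $2$-precyclic triangles (so that $p+\pc_2(U,X,\overline Y)\le\lfloor P/2\rfloor-b_{XY}$, exactly as in the proof of \cref{lem:tau-bad}), and then $|X|\ge X_{\min}$, $|Y|\ge Y_{\min}$ from \cref{lem:overlineM}(b), the sum collapses to at most $\frac{|Y^*|(|U|b_{XY}+\lfloor P/2\rfloor-b_{XY})}{X_{\min}Y_{\min}}$, which is $\le b_{U,Y^*}$ after taking the floor of an integer quantity. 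For the second fact, observe that the $b_{XY}=\pc_2(v^*,X,Y)$ triangles are exactly the $v^*ac$ with $a\in X$, $c\in Y$, $\chi(ac)\ne c^*$; such a triangle can meet $x$ only if $x=a$ (as $v^*\in U$ and $c\in Y^*$ lie in parts other than $X^*$), and there are at most $|Y|\le|Y^*|$ of those, so at least $(b_{XY}-|Y^*|)_+$ of these $2$-precyclic triangles avoid $x$. Since each such triangle contributes $2$ to $2\pc_2+\pc_3$ and $1$ to $\pc_2$, and none of them is counted in $\pc_2(x,Y^*,U)$ or $\pc_3(x,Y^*,U)$, the two displayed inequalities follow from $2\pc_2+\pc_3\le P$ and $\pc_2\le\lfloor P/2\rfloor$ (both valid since $\chi\in\cG_{\Delta,P}$).

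Next I would deduce $b_U(x)\le\tfrac12|U|$ for all $x\in X^*$. If the first hypothesis holds, then $b_U(x)\le b_{\max}\le|U|/2$ directly by \cref{lem:bmaxnlessthan70}. If instead the second hypothesis holds, apply \cref{lemma:bipartite-to-tripartite}(a) with $b=b_{U,Y^*}$ and $Q=P-2(b_{XY}-|Y^*|)_+$, which is a nonnegative integer because $(b_{XY}-|Y^*|)_+\le b_{XY}\le\lfloor P/2\rfloor$: the hypothesis $2\pc_2(x,Y^*,U)+\pc_3(x,Y^*,U)\le Q$ is the second preliminary fact, the admissibility of $b$ is the first preliminary fact, and the needed inequality $F(|Y^*|,|U|-\lfloor\tfrac{Q+2b}{2|Y^*|}\rfloor,M)>Q+b$ follows from the second hypothesis together with $M\le\frac{|X^*|+|Y^*|+\Delta}{2}$ (from \cref{lemma:First-step} and $|X|\le|X^*|$, $|Y|\le|Y^*|$) and the fact that $F$ is decreasing in its third argument (\cref{lemma:elementary}(b)). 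Either way $b_U(x)\le\tfrac12|U|$, so \cref{lemma:bipartite-to-tripartite}(b) with $S_1=Y^*$, $S_2=U$, $v=x$, $b=b_{U,Y^*}$ gives $\pc_2(x,Y^*,U)\ge|Y^*|b_U(x)-b_{U,Y^*}$; combining with $\pc_2(x,Y^*,U)\le\lfloor P/2\rfloor-(b_{XY}-|Y^*|)_+$ and using integrality of $b_U(x)$ yields $b_U(x)\le\lfloor\tfrac{\lfloor P/2\rfloor-(b_{XY}-|Y^*|)_++b_{U,Y^*}}{|Y^*|}\rfloor$, as claimed.

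The step I expect to be the main obstacle is the first preliminary fact: one has to route the count of non-$c^*$ edges between $U$ and $Y^*$ through \cref{lemma:Second-step}, \cref{lem:sum-Xy-bound}, and the disjointness bookkeeping for the various $2$-precyclic triangle families, so that a rather unwieldy sum telescopes cleanly down to $b_{U,Y^*}$. Everything after that is bookkeeping-heavy but routine application of \cref{lemma:bipartite-to-tripartite} followed by a final rearrangement.
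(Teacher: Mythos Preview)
Your proposal is correct and follows essentially the same route as the paper: both bound the non-$c^*$ edges between $U$ and $Y^*$ by combining \cref{lemma:Second-step} and \cref{lem:sum-Xy-bound} with the disjointness relation $p+\pc_2(U,X,\overline Y)\le\lfloor P/2\rfloor-b_{XY}$ and the inequalities $|X|\ge X_{\min}$, $|Y|\ge Y_{\min}$, then apply \cref{lemma:bipartite-to-tripartite}(a) with $S_1=Y^*$, $S_2=U$, $b=b_{U,Y^*}$, $Q=P-2(b_{XY}-|Y^*|)_+$ to force $b_U(x)\le |U|/2$, and finish with part~(b). Your write-up is in fact slightly more explicit than the paper's in two places: you separately record the bound $\pc_2(x,Y^*,U)\le\lfloor P/2\rfloor-(b_{XY}-|Y^*|)_+$ needed for the final rearrangement, and you spell out why the $b_{XY}$ triangles that avoid $x$ are disjoint from $\pc_2(x,Y^*,U)$.
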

\begin{proof}
    Let us first show that either criterion implies $b_U(x)\leq \abs{U}/2$ for all $x\in X^*$.
    By \cref{lem:bmaxnlessthan70}, the first criterion clearly implies $b_U(x)\leq \abs{U}/2$.
    Now suppose that the second criterion holds, and we fix some $x\in X^*$.
    Apply \cref{lemma:bipartite-to-tripartite} with $v=x$, $S_1=Y^*$, $S_2 = U$, $b=b_{U,Y^*}$, $Q = P - 2(b_{XY}-\abs{Y^*})_+$ and $M = (|X^*|+|Y^*|+\Delta)/2$, which is an obvious upper bound on $(\abs{X}+\abs{Y}+\Delta)/2$.
    In order to apply it in this way, we need to first upper bound the number of edges not colored $c^*$ between $U$ and $Y^*$.
    Using the notation introduced right before \cref{lem:overlineM}, we can bound this by 
\[\sum_{u\in U}b_Y(u)+\sum_{y\in \overline{Y}}b_U(y) \leq \frac{\abs{U}b_{XY}+p+\pc_2(U, X, \overline{Y})+\frac{(\abs{U}b_{XY}+p)\abs{\overline{Y}}}{\abs{Y}}}{\abs{X}} \leq \frac{(\abs{U}b_{XY}+p)\abs{Y^*}+\pc_2(U, X, \overline{Y})\abs{Y}}{X_{\min} Y_{\min}},\]
and we just need to use the fact that $p+\pc_2(U, X, \overline{Y})\leq \lfloor\frac{P}{2}\rfloor-b_{XY}$ to upper bound this expression by $b_{U,Y^*}$.
We also need to show that there can be at most $P - 2(b_{XY}-\abs{Y})_+$ precyclic triangles that $x$ form with $U$ and $Y^*$, where $2$-precyclic triangles are counted twice.
Note that $v^*$ already forms $b_{XY}$ $2$-precyclic triangles with $X$ and $Y$, and there can be at most $\abs{Y}$ that use $x$.
Therefore there are at least $(b_{XY}-\abs{Y})_+$ $2$-precyclic triangles that do not use $x$ at all.
This shows the desired bound, and part (a) of \cref{lemma:bipartite-to-tripartite} now gives that $b_U(x)\leq \abs{U}/2$, as desired.

Now by \cref{lemma:bipartite-to-tripartite}(b), we have 
\[\abs{Y^*}b_U(x) - b_{U,Y^*}\leq \left\lfloor\frac{P}{2}\right\rfloor-(b_{XY}-\abs{Y^*})_+.\]
Rearranging gives the desired inequality.
\end{proof}
% This can be checked in two ways.
% First, if the integer $b_{\max}$ computed above satisfies $b_{\max}\leq \abs{U}/2$, then the desired bound holds.
% Otherwise, we try to use \cref{lemma:bipartite-to-tripartite} with $S_1=Y^*$, $S_2 = U$, 
% \[b=b_{U,Y^*}\stackrel{\textup{def}}{=} \left\lfloor\frac{\left(\abs{U}b_{XY}+\left(\floor{P/2}-b_{XY}\right)\right)\abs{Y^*}}{X_{\min}{Y_{\min}}}\right\rfloor,\]
% \[Q = P - 2(b_{XY}-\abs{Y^*})_+,\] and $M = (|X^*|+|Y^*|+\Delta)/2$, which is an obvious upper bound on $(|X|+|Y|+\Delta)/2$.
% To apply \cref{lemma:bipartite-to-tripartite} with this set of parameters, we need to first upper bound the number of edges not colored $c^*$ between $U$ and $Y^*$.

% \]
% we thus update $b_{\max}$ accordingly.

\subsection{Verifying the assumptions for \texorpdfstring{$n\geq 700$}{n>=700}}
%As mentioned in \cref{sec:tournaments}, 
For $n\geq 700$, we show by hand via \cref{lemma: recoloring} that $\cG_{\Delta_{\max}, 4(d(n)-1)}$ is empty. This mostly amounts to determining the orders of magnitude of the various quantities mentioned in the lemmas in \cref{sec:tournaments} and confirming with direct computations that the lower order terms are insignificant. 

Throughout this section, all logarithms are base $2$ and we do not round any numbers unless specified. We begin with the following estimates on $d(n)$ and $\Delta_{\max}$. 
\begin{lemma}\label{lem:d(n) bound}
    For $n\geq 700$, we have $d(n) \leq 0.06n\log n$ and $\Delta_{\max} \leq 0.76\sqrt{n\log n}$.
\end{lemma}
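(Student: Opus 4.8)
The plan is to prove the two bounds in \cref{lem:d(n) bound} by unwinding the recursion for $d(n)$ from \cref{lem:d(n) recursion} and by using the explicit formula $\tilde d(n) = \frac{n(n+1)(n-1)}{3} - 8(g_3(n)+1)$ together with the relation $d(n) = T(n) - g_3(n)$.

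\textbf{Step 1: Bounding $d(n)$.} The six cases of \cref{lem:d(n) recursion} all express $d(m)$ for $m \in \{6x-2, \ldots, 6x+3\}$ as a sum of the form $\alpha\, d(2x-1) + \beta\, d(2x) + \gamma\, d(2x+1) + (\text{error})$ where $\alpha+\beta+\gamma = 3$ and the error term is either $0$ or $x = m/6 + O(1)$. So roughly speaking, $d(m)$ is at most $3$ times the max of $d(\cdot)$ at arguments near $m/3$, plus a linear error $\approx m/6$. I would set up an induction: assume $d(j) \le c\, j \log j$ for all $700 \le j < n$ (handling $j \le 700$ and the initial segment as a finite base case, using $d(j)=0$ for small $j$ and monotone-ish growth). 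Write $n = 6x + \rho$ with $\rho \in \{-2,\ldots,3\}$; then $d(n) \le 3\, c\, (2x+1)\log(2x+1) + x$. Since $2x+1 \le n/3 + 2$ and $\log(2x+1) \le \log n - \log 3 + o(1) < \log n - 1.58 + o(1)$, we get $3(2x+1)\log(2x+1) \le 3 \cdot \frac{n}{3}(\log n - 1.58) + O(\log n) = n\log n - 1.58 n + O(\log n)$, so $d(n) \le c\, n\log n - 1.58\, c\, n + n/6 + O(c\log n)$. For this to be $\le c\, n\log n$ we need $1.58\, c \ge 1/6 + o(1)$, i.e. $c \ge 0.106 + o(1)$; this is safely satisfied by $c = 0.06$? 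No --- wait, $0.06 \cdot 1.58 = 0.0948 < 1/6 \approx 0.167$, so the naive bound fails and one must iterate the recursion twice (applying it to $d(2x\pm 1)$ again) to gain a factor close to $9$ against an error of order $n/6 + n/2$. Iterating twice: $d(n) \lesssim 9\, c\,(n/9)\log(n/9) + (\text{linear error} \approx n/6 + 3\cdot n/6) = c\,n\log n - c\,n\log 9 + \frac{2n}{3} + \cdots$; now we need $c\log 9 = 3.17\,c \ge 2/3$, i.e. $c \ge 0.21$, still not enough. The honest approach is to iterate $k$ times until the accumulated linear error (a geometric-type sum $\sum 3^{i}\cdot(\text{arg}/6) \approx \frac{k\,n}{6}$ since each level contributes $\approx n/6$ total) is dominated: after $k$ levels $d(n) \lesssim c\,n\log n - c\,k\,n\log 3 + \frac{k n}{6} + (\text{base case }3^k \cdot O(1))$, and choosing $k$ a small constant makes $-ck\log 3 + k/6 < 0$ once $c > \frac{1}{6\log 3} = 0.105$; since $0.06 < 0.105$ this still does not close! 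So the actual argument must be sharper: one uses that the error term $x$ only appears in three of the six residue classes, and that summed over a full period the contribution is smaller; more carefully one tracks the \emph{total} error as $\sum_{t\ge 0} 3^{-t}\cdot(\text{number of }x\text{-type terms at level }t)\cdot 3^t \cdot \frac{n}{6\cdot 3^t}$. I would compute this sum precisely --- it telescopes to something like $\frac{n \log_3 n}{c'}$ for an explicit $c'$ --- and verify $0.06$ works, \emph{or} more likely the paper simply means: iterate the recursion down to the base case $n < 700$ (finitely many values of $d$, all computable), tracking that $d(n) \le 0.06 n\log n$ holds with room to spare because the true growth constant of $d$ is known to be around $\frac{1}{6\log_2 3} \approx 0.105$... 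I would resolve the exact constant by a direct careful summation rather than the crude bound, and if $0.06$ is genuinely correct it is because the error terms $x$ are further reduced by the fact that $d(6x)=3d(2x)$ and $d(6x\pm3)=3d(2x\pm1)$ contribute \emph{no} error, cutting the effective error rate. This bookkeeping is the main obstacle.

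\textbf{Step 2: Bounding $\Delta_{\max}$.} By definition $\Delta_{\max}$ is the largest integer with $\Delta_{\max}^2 \le \tilde d(n)$, so $\Delta_{\max} \le \sqrt{\tilde d(n)}$. I would expand $\tilde d(n) = \frac{n(n+1)(n-1)}{3} - 8 g_3(n) - 8 = \frac{n^3-n}{3} - 8 g_3(n) - 8$. Recall $T(n) = \frac{n(n^2-1)}{24}$ or $\frac{n(n^2-4)}{24}$, so $24\, T(n) = n^3 - n$ (odd case) or $n^3 - 4n$ (even case); hence $\frac{n^3-n}{3} = 8\,T(n) + \varepsilon_n$ with $\varepsilon_n \in \{0, \frac{8n}{3} - \text{something}\}$, i.e. $\varepsilon_n = O(n)$. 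Therefore $\tilde d(n) = 8(T(n) - g_3(n)) + O(n) = 8\, d(n) + O(n)$. Combining with Step 1, $\tilde d(n) \le 8 \cdot 0.06\, n\log n + O(n) = 0.48\, n\log n + O(n) \le 0.578\, n\log n$ for $n \ge 700$ (absorbing the $O(n)$ term, since $\log n \ge \log 700 > 9$, so $O(n) \le (\text{const}/9)\, n\log n$ is small). Then $\Delta_{\max} \le \sqrt{0.578\, n\log n} = 0.760\ldots \sqrt{n\log n}$, giving the claimed $0.76\sqrt{n\log n}$. The key inputs here are just the explicit formulas for $T(n)$ and $\tilde d(n)$, the identity $\tilde d(n) = 8d(n) + O(n)$, Step 1, and the size of $n$; all arithmetic is routine once Step 1 is in hand, so essentially all the difficulty is concentrated in pinning down the constant $0.06$ in Step 1.
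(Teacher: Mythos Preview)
Your Step 2 is correct and matches the paper exactly: from the explicit formula for $T(n)$ one gets $\tilde d(n) \leq n + 8d(n)$, and then $\Delta_{\max} \leq \sqrt{n + 8 \cdot 0.06\, n \log n} \leq 0.76\sqrt{n\log n}$ for $n \geq 700$.

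Step 1 has a genuine gap. You correctly discover that the one-step induction only closes when $c \geq \frac{1}{6\log_2 3} \approx 0.105$, and your further observation---that iterating $k$ times contributes error $\approx n/6$ per level against a gain of $cn\log_2 3$ per level, so the deficit $k\,(n/6 - cn\log_2 3)$ never turns negative for $c<0.105$---is also right. You then point toward the fix (the $+x$ error only appears in certain residue classes) but do not carry it out, and you end by conceding that ``bookkeeping is the main obstacle.'' As written, your argument does not establish the constant $0.06$. (A small correction: the error term appears in only \emph{two} of the six classes, $n \equiv \pm 1 \pmod 6$, not three.)

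The paper's approach differs in one essential respect: rather than a self-contained analytic argument, it \emph{computer-verifies} the bound $d(n) \leq 0.05891\,n\log n$ on the finite range $200 \leq n < 600$ and then claims the recursion propagates it. This computer-assisted base case is the main ingredient missing from your proposal. That said, the paper's written induction step (``similar statements hold with the other five recursive equations'') appears to suffer from exactly the obstruction you identified on the cases $n \equiv \pm 1 \pmod 6$: e.g.\ at $n = 6x-1$ one needs $x \leq c_0(6x-1)\log\tfrac{6x-1}{2x}$, which forces $c_0 \gtrsim \tfrac{1}{6\log 3} \approx 0.105$, not $0.05891$. So the paper's argument, taken literally, is also incomplete at this point; a fully rigorous proof must exploit more of the residue structure (and indeed the remark $\limsup d(n)/(n\log n) = \tfrac{1}{12\log 3} \approx 0.0526$ confirms $0.06$ is the right target, but the naive induction cannot see it).
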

\begin{proof}
Our program checks that $d(n) \leq 0.05891n\log n$ holds for $200\leq n < 600$; see Section 3 of the program. To prove the first part of the lemma, we %to show that $d(n) \leq 0.05891n\log n$ for all $n\geq 600$, 
show that for any $n$, if $d(n') \leq c_0 n'\log n'$ holds for all $\floor{n/3} \leq n' < n$, then $d(n) \leq c_0 n\log n$ holds as well. Indeed, this is apparent from \cref{lem:d(n) recursion}. For example, if $d(2x-1)\leq c_0(2x-1)\log(2x-1)$ and $d(2x) \leq c_0(2x)\log(2x)$, then $d(6x-2) = 2d(2x-1)+d(2x) \leq 2c_0(2x-1)\log(2x-1)+c_0(2x)\log(2x) \leq c_0(6x-2)\log(6x-2)$. Similar statements hold with the other five recursive equations defining $d(n)$.

Next, from \eqref{Deltamax}, %\hy{maybe also refer to the definition of $\Tilde{d}(n)$?} \ra{I put a reminder near \eqref{Deltamax}} 
we have
\begin{align}\label{eq:Deltamaxtilded}\Delta_{\max}^2 \leq \tilde d(n) = \frac{1}{3}n(n+1)(n-1)-8(T(n)-d(n)+1) \leq n+8d(n),\end{align} so by our estimate on $d(n)$, we have $\Delta_{\max} \leq \sqrt{n+8\cdot 0.05891n\log(n)}\leq 0.76\sqrt{n\log n}$ for $n\geq 700$. 
\end{proof}

We remark that a more careful analysis of the recursion in \cref{lem:d(n) recursion} shows that $$\limsup_{n\to\infty} \frac{d(n)}{n\log n} = \frac{1}{12\log 3} \approx 0.0526,$$ so the above lemma is close to optimal.

%We next estimate $\Delta_{\max}$. \ra{Include this in above lemma?}
%\xh{Here and throughout, if we only need it for $n\ge 700$ then just state it for $n\ge 700$.}

Now, we verify that the assumptions of \cref{lemma:possible-X-Y},
%\cref{lemma:First-step},
\cref{lemma:Second-step}, and \cref{lemma:Third-step} hold for $n\geq 700$, and that we can take $\eta = 1/8$.

\begin{lemma}\label{lem:assumptions>=700}
Let $n\geq 700$, $\Delta = \Delta_{\max},$ and $P=4(d(n)-1)$. The following hold:
\begin{enumerate}[(a)]
    \item $\lceil\frac{12g_3(n)}{n(n-1)}\rceil\geq \lceil n/2\rceil$.
    %\item $\frac{n^2}{16}-\frac{\Delta^2}{4}\geq 16(d(n)-1)$.
    \item For all admissible pairs $(s,t)$ (as defined in \cref{def:admissible-pair}), we have
    \[F\left(s,t-\left\lfloor \frac{P}{2s}\right\rfloor, \left\lfloor\frac{s+t+\Delta}{2}\right\rfloor\right)>P.\]
    \item We have $\tilde d(n) < n^2/4$, and for all admissible pairs $(s,t)$ %\hy{probably should replace it by all $s,t$ satisfying the previous item, or just define the set of possible $s,t$'s.}
    and nonnegative integers $k$ with $0\leq k\leq \floor{P/2}$, the following holds: If there exists a minimum positive integer $w$ at most $n-s-t$ such that 
    \begin{align}\label{eq:w-req}s\max\left(t-\frac{k}{s}-w,0\right)^2+t\max\left(s-\frac{k}{t}-w,0\right)^2+(s-t)^2\leq \tilde d(n)-4k,\end{align}
    then the minimum such $w$ satisfies
    \begin{align}\label{eq:w-ineq1}F\left(s,w-\frac{\lfloor\frac{P}{2}\rfloor-k+\frac{wk}{t}}{s},\left\lfloor\frac{s+t+\Delta}{2}\right\rfloor\right)>P-2k+\frac{wk}{t}\end{align}
    and
    \begin{align}\label{eq:w-ineq2}F\left(w,s-\frac{\lfloor\frac{P}{2}\rfloor-k+\frac{wk}{t}}{w},\left\lfloor\frac{s+t+\Delta}{2}\right\rfloor\right)>P-2k+\frac{wk}{t}.\end{align}
    \item $\frac{\floor{P/2}}{\abs{X}\abs{Y}} \leq 1/8$. %\ra{adjust proof of this item (incorporate old proof of lemma:First-step); order has swapped so this is last instead of second.}
\end{enumerate}
\end{lemma}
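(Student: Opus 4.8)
Throughout write $\log=\log_2$. The plan is to turn each of (a)--(d) into an inequality comparing a term of order $n^2$ (or larger) with a term of order $O(n^{3/2}\sqrt{\log n})$ (or smaller), and then verify that inequality for $n\ge700$ by substituting the explicit estimates of \cref{lem:d(n) bound} at $n=700$ and checking that the gap between the two sides is increasing in $n$. The standing inputs are $d(n)\le0.06\,n\log n$ and $\Delta=\Delta_{\max}\le0.76\sqrt{n\log n}$ from \cref{lem:d(n) bound}; the bound $\Delta_{\max}^2\le\tilde d(n)\le n+8d(n)$ from \eqref{eq:Deltamaxtilded}; $P=4(d(n)-1)\le0.24\,n\log n$; and $g_3(n)=T(n)-d(n)\ge\tfrac{n(n^2-4)}{24}-d(n)$. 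First dispose of the degenerate case $d(n)=0$ (by \cref{prop:sum-and-diff-of-exp}, $n$ a power of $3$ or a sum/difference of two powers of $3$): then $P=-4<0$, so no coloring satisfies $2\pc_2+\pc_3\le P$ and $\cG_{\Delta,P}$ is empty; moreover (a) is a statement about $g_3(n)$ only, in (b),(c) the function $F$ is always $\ge0>P$, the second assertion of (c) is vacuous since $0\le k\le\lfloor P/2\rfloor=-2$ is impossible, and (d) has a negative left side --- so all four parts hold trivially. Henceforth $d(n)\ge1$, $P\ge0$. For any admissible pair $(s,t)$, parts (a),(b) of \cref{def:admissible-pair} give $n/2\le s+t$ and $|s-t|\le\Delta$, hence $\min(s,t)\ge n/4-\Delta/2$ and $st\ge\frac{(s+t)^2-\Delta^2}{4}\ge\frac{n^2}{16}-\frac{\Delta^2}{4}$; and in (c) we only consider pairs for which a valid $w\le n-s-t$ exists, so we may also assume $s+t\le n-1$.

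Parts (a), (d), and the first assertion of (c) are immediate. For (a), since $\lceil x\rceil\ge\lceil n/2\rceil$ whenever $x>\tfrac{n-1}{2}$, it suffices that $\tfrac{12g_3(n)}{n(n-1)}>\tfrac{n-1}{2}$, i.e. $g_3(n)>\tfrac{n(n-1)^2}{24}$; using $g_3(n)\ge\tfrac{n^3-4n}{24}-d(n)$ this reduces to $d(n)<\tfrac{n(2n-5)}{24}$, which follows from $d(n)\le0.06\,n\log n$ for $n\ge700$. For the first assertion of (c), $\tilde d(n)\le n+8d(n)\le n+0.48\,n\log n<n^2/4$ for $n\ge700$. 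For (d), it suffices to show $\lfloor P/2\rfloor\le\tfrac18st$ for every admissible $(s,t)$: this includes $(|X|,|Y|)$, which is admissible by \cref{lemma:possible-X-Y} once (a) holds, and since $\lfloor P/2\rfloor\le2d(n)\le0.12\,n\log n$ while $st\ge\tfrac{n^2}{16}-\tfrac{\Delta^2}{4}=\tfrac{n^2}{16}-O(n\log n)$, the inequality holds for $n\ge700$.

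For the substantive assertion of (c), fix an admissible $(s,t)$, an integer $0\le k\le\lfloor P/2\rfloor$, and suppose the minimum positive integer $w_0\le n-s-t$ satisfying \eqref{eq:w-req} exists. Isolating in \eqref{eq:w-req} the term $\min(s,t)\bigl(\max(s,t)-\tfrac{k}{\min(s,t)}-w_0\bigr)_+^2\le\tilde d(n)$ gives
\[
w_0\ \ge\ \max(s,t)-E,\qquad E:=\frac{k}{\min(s,t)}+\sqrt{\frac{\tilde d(n)}{\min(s,t)}}=O(\log n),
\]
since $k\le\lfloor P/2\rfloor=O(n\log n)$, $\min(s,t)=\Omega(n)$, $\tilde d(n)=O(n\log n)$; in particular $w_0\ge n/4-O(\log n)>1$. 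To verify \eqref{eq:w-ineq1} and \eqref{eq:w-ineq2} at this $w_0$ we use the following sharpening of the naive bound: if $C<A+B\le2C$, then by \cref{lemma:elementary}(a) the infimum defining $F(A,B,C)$ is attained at a composition of length $2$, whence $\sum a_ib_i=\tfrac14\bigl(\sum(a_i+b_i)^2-\sum(a_i-b_i)^2\bigr)\le\tfrac14\bigl(C^2+(A{+}B{-}C)^2-\tfrac12(A{-}B)^2\bigr)$, so
\[
F(A,B,C)\ \ge\ \frac{C(A+B-C)}{2}-\frac{(A-B)^2}{8}.
\]
In \eqref{eq:w-ineq1} the second argument is $w_0-\tfrac{\lfloor P/2\rfloor-k+w_0k/t}{s}\ge\max(s,t)-\delta$, where $\delta:=E+\tfrac{\lfloor P/2\rfloor}{s}+\tfrac{(n-s-t)\lfloor P/2\rfloor}{st}=O(\log n)$ (using $w_0\le n-s-t$, $k\le\lfloor P/2\rfloor$), and it is positive. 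Using monotonicity of $F$ (\cref{lemma:elementary}(b)) to replace this argument by $\max(s,t)-\delta$ and the third argument $\lfloor\tfrac{s+t+\Delta}{2}\rfloor$ by $\tfrac{s+t+\Delta}{2}$, then the displayed bound (one checks $\lceil(A{+}B)/C\rceil=2$ here), and finally $s+\max(s,t)\ge s+t$, $|A-B|\le\Delta$, $s+t\ge n/2$, we get
\[
F\ \ge\ \frac{(s+t+\Delta)\bigl((s+t)-\Delta-2\delta\bigr)}{8}-\frac{\Delta^2}{8}\ \ge\ \frac{(s+t)^2-(s+t)(\Delta+2\delta)-\Delta^2}{8}\ \ge\ \frac{n^2}{32}-O\!\bigl(n^{3/2}\sqrt{\log n}\bigr).
\]
The same estimates (with the two $F$-arguments swapped, and with $s$ replaced in one slot by the slightly smaller $s-O(\log n)$) give the identical lower bound for the $F$ in \eqref{eq:w-ineq2}. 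On the other side, for both inequalities the right-hand side is $P-2k+\tfrac{w_0k}{t}\le P+\tfrac{(n-s-t)\lfloor P/2\rfloor}{t}\le P+\tfrac{(n/2)(P/2)}{n/4-\Delta/2}=O(n\log n)$, using $w_0\le n-s-t\le n/2$ and $t\ge n/4-\Delta/2$. Thus both \eqref{eq:w-ineq1} and \eqref{eq:w-ineq2} reduce to $\tfrac{n^2}{32}-O(n^{3/2}\sqrt{\log n})>0$, which holds for $n\ge700$: one substitutes the explicit bounds on $\Delta$, $\lfloor P/2\rfloor$, and the $O(\log n)$ bounds on $E,\delta$ at $n=700$, and observes that after dividing by $n$ the gap $\tfrac{n}{32}-c_1\sqrt{n\log n}-c_2\log n$ has positive derivative for $n\ge700$, so it stays positive.

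The only genuinely delicate point is part (c). The naive bound $F(A,B,C)\ge AB-\tfrac{C(A+B)}{4}$ is too lossy at $n=700$ (it can even be negative in the worst sub-case $s+t\approx n/2$, $s\approx n/4-\Delta/2$, where $\Delta$ and $E,\delta$ are not yet small relative to $n/2$), which is exactly why the length-$2$ refinement above is needed; and one must keep honest track of the constants in the $O(n^{3/2}\sqrt{\log n})$ error term at $n=700$. Everything else --- parts (a),(b),(d) and the numeric half of (c) --- is routine substitution of the estimates of \cref{lem:d(n) bound}, and the passage from $n=700$ to all $n\ge700$ is the same monotonicity argument in each case.
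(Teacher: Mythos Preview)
Your approach is essentially the same as the paper's: both dispatch (a), (d), and $\tilde d(n)<n^2/4$ by direct substitution of the bounds from \cref{lem:d(n) bound}, and both handle (b) and the substantive part of (c) by showing the minimizing composition in $F$ has length $\ell=2$ and then computing $F$. Your closed-form inequality $F(A,B,C)\ge\tfrac{C(A+B-C)}{2}-\tfrac{(A-B)^2}{8}$ (valid for $\ell=2$ via $\sum(a_i-b_i)^2\ge\tfrac12(A-B)^2$) is a cleaner packaging of that computation than the paper's explicit evaluation of $a_1b_2+a_2b_1$, and your bound $w_0\ge\max(s,t)-E$ matches the paper's $w\ge s-1.2\log n$, $w\ge t-1.2\log n$.

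Two small issues worth flagging. First, when you apply the $F$-bound in (c) after replacing the second argument by $\max(s,t)-\delta$, you assert $|A-B|\le\Delta$; in fact $A-B=s-\max(s,t)+\delta$ so $|A-B|\le\Delta+\delta$, not $\Delta$. This is harmless (the extra $O(\log n)$ is absorbed into the error term) but should be corrected. Second, you never actually write out part (b), calling it ``routine''; your $F$-bound method does dispose of it (with $A=s$, $B=t-\lfloor P/(2s)\rfloor$, $|A-B|\le\Delta+O(\log n)$, and the same $\ell=2$ check), but you should say so explicitly rather than leaving it implicit. The paper carries out the worst-case numerical substitutions for each part directly rather than appealing to a derivative-monotonicity argument; your monotonicity sketch is fine in principle but you have not actually exhibited the constants $c_1,c_2$ and verified the inequality at $n=700$.
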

\begin{proof}
% Let's examine (a) first. Recall that $g_3(n) = T(n) - d(n)$. If $n$ is even, the left hand side is at least $\frac{1}{2}\frac{n(n^2-4) - 24d(n)}{n(n-1)}$ and the right hand side is $n/2$. Note that $24d(n) \leq 1.44n\log n \leq 0.03n(n-1)$ for $n\geq 700$, so it suffices to show that $\frac{n^2-4}{n-1} - 0.03\geq n$; indeed, this is a linear inequality in $n$ that holds for $n\geq 700$. If instead $n$ is odd, the left hand side is $\ceil{\frac{1}{2}\frac{n(n^2-1) - 24d(n)}{n(n-1)}}$ and the right hand side is $(n+1)/2$. Similarly to the even case, because of the ceiling, it suffices to show that $\frac{1}{2}\left(\frac{n^2-1}{n-1} - 0.03\right) > \frac{1}{2}(n +1)-1$, or $n+1 - 0.03 > n-1$, which obviously holds.

%\xh{Edited part (a) below:}

We examine (a) first. Recall that $g_3(n) = T(n) - d(n)$. For $n\ge 700$, we have $24d(n) \le 1.44n \log n \le 0.03 n (n-1)$ by \cref{lem:d(n) bound}, and so for $n$ even we have
\[
\left\lceil\frac{12g_3(n)}{n(n-1)}\right\rceil\geq \left\lceil0.5 \left(\frac{n^2-4}{n-1}\right) - 0.03\right\rceil \ge 0.5n,
\]
as desired. A similar calculation holds for $n$ odd.

For (b), %we shall use $A, \B, C$ when describing arguments to the function $F$ to avoid confusion with $B = 4(d(n)-1)$. 
since $F(A,B,C)$ is increasing in $A$ and $B$ and decreasing in $C$, it suffices to verify the inequality where we lower the first two arguments or raise the third argument. We know that  $s+t \geq 0.5n$ and $|s-t| \leq \Delta$. By \cref{lem:d(n) bound}, we have $s \geq 0.25n-0.38\sqrt{n\log n}$ and so $$\left\lfloor\frac{4(d(n)-1)}{2s}\right\rfloor \leq \frac{0.24n\log n}{0.5n-0.76\sqrt{n\log n}} \leq 0.6\log n$$% \xh{This looks awful inline}
for $n\geq 700$. With this in mind, set $A = s$ and $B = t-0.6\log n$. Also, the third argument is at most $C=0.5(s+t)+0.38\sqrt{n\log n}$. It thus suffices to show that $F(A, B, C) > 0.24n\log n$. 

By \cref{lemma:elementary}, the infimum in the definition of $F$ is achieved by having $\ell=\ceil{(A+B)/C}$ terms for each of $a_i$ and $b_j$. Clearly $(A+B)/C \leq 2$ since $A+B \leq s+t$ and $C \geq 0.5(s+t)$; also $(A+B)/C > 1$ since $s+t \geq 0.5n$ and $(0.5n - 0.6\log n)/(0.25n + 0.38\sqrt{n\log n}) > 1$ for $n\geq 700$.
%Furthermore, $|(A-B)/2| \leq |s-t|/2+0.305\log n \leq 0.385\sqrt{n\log n} + 0.305\log n$, which is less than $(A+B)-(2-1)C = (s+t)/2 - 0.385\sqrt{n\log n} - 0.61\log n\geq n/4 - 0.385\sqrt{n\log n} - 0.61\log n$ as long as $n\geq 108$.

Hence, the infimum is obtained by letting %$a_1-b_1=a_2-b_2 = (A-B)/2 = (s-t)/2+0.305\log n$, giving 
$a_1 = 0.5s + 0.19\sqrt{n\log n} + 0.15\log n$, $b_1 = 0.5t + 0.19\sqrt{n\log n} - 0.15\log n$, $a_2 = 0.5s - 0.19\sqrt{n\log n} - 0.15\log n$, and $b_2 = 0.5t - 0.19\sqrt{n\log n} - 0.45\log n$. Thus $$F(A,B,C) = a_1b_2 + a_2b_1 =0.5s(t-0.6\log n) - 0.0722n\log n - 0.114\sqrt{n\log^3 n} -0.045\log^2n.$$ 
To minimize this subject to the constraints $s+t\geq 0.5n$ and $|s-t|\leq 0.76\sqrt{n\log n}$, we choose $s =0.25n + 0.38\sqrt{n\log n}$ and $t=0.25n - 0.38\sqrt{n\log n}$. Substituting these into the above, the resulting expression is greater than $0.24n\log n$ when $n \geq 700$.

For the first statement of (c), we compute $$\tilde d(n) = \frac{1}{3}n(n+1)(n-1)-8(T(n)-d(n)+1) \leq n+8d(n) \leq n+0.48n\log n,$$ which is less than $n^2/4$ for $n\geq 700$, as required.

For the second statement of (c), we first obtain a lower bound for $w$. 
As seen in the previous paragraph, the right hand side of \eqref{eq:w-req} is at most $n+0.48n\log n$% \hy{why? I think this is not right?}
. The constraints $|s-t|\leq 0.76\sqrt{n\log n}$ and $s+t\geq 0.5n$ imply $s, t\geq 0.2 n$ when $n\geq 700$. We have $k\leq \floor{P/2} \leq 2d(n) \leq 0.12n\log n$, so $k/s, k/t \leq 0.6\log n$. Thus, $w \geq t-1.2\log n$ and $w\geq s-1.2\log n$. Indeed, otherwise, the left hand side of \eqref{eq:w-req} would be at least $0.2n(0.6\log n)^2> n+0.48n\log n$ since $n \geq 700$, causing the left hand side to exceed the right hand side.

Now we lower bound $F$ to verify \eqref{eq:w-ineq1} and \eqref{eq:w-ineq2}, in a similar manner to what we did for part (b).

For both \eqref{eq:w-ineq1} and \eqref{eq:w-ineq2}, the left hand side is increasing with $w$. (In particular, the coefficient of $w$ in the second argument in \eqref{eq:w-ineq1} is $1-\frac{k}{st} \geq 1-\frac{2d(n)}{st}\geq 1-\frac{0.12n\log n}{0.06n^2} > 0$ for $n \geq 700$.) So, we may assume $w$ is as small as possible when lower bounding the left hand side, i.e. $w=t-1.2\log n$. For \eqref{eq:w-ineq1}, this means $wk/t \leq k$, so $w-(2d(n)-k+wk/t)/s \geq w-0.12n\log n / (0.2 n) = t-1.8\log n$. With this in mind, set $A_1 = s$ and $B_1 = t-1.8\log n$. For \eqref{eq:w-ineq2}, the bounds $w\geq s-1.2\log n$, $w\geq t-1.2\log n$, and $s+t\geq 0.5n$ imply that $w\geq 0.25n - 1.2\log n \geq 0.2n$ when $n\geq 700$, so the second argument is at least $s-0.6\log n$ (following the same logic as before). So, set $A_2 = t-1.2\log n$ and $B_2=s-0.6\log n$. For both inequalities, set $C = 0.5(s+t) + 0.38\sqrt{n\log n}$. 

We bound the right hand side, $P-2k+wk/t$, independently of our assumption above that $w$ is the smallest allowed value. Indeed, we just use that $w\leq n-(s+t)\leq n-0.5n = 0.5n$, so since $t\geq 0.2 n$, we have $wk/t \leq 2.5k$. Hence the right hand side is at most $4d(n) + 0.5k \leq 5d(n) \leq 0.3n\log n$.

We are ready to verify \eqref{eq:w-ineq1}. In the notation of \cref{lemma:elementary}, we have $\ell=2$, and we obtain %$|(A_1-B_1)/2| \leq |s-t|/2 + 1.3\log n/2 \leq 0.38\sqrt{n\log n} + 0.65\log n$, which is less than $(A_1+B_1)-(2-1)C = (s+t)/2-1.3\log n - 0.385\sqrt{n\log n} \geq n/4-1.3\log n - 0.385\sqrt{n\log n}$ as long as $n\geq 165$. Hence, we are in the case $a_1-b_1=a_2-b_2 = (A_1-B_1)/2 = (s-t)/2+1.3\log n/2$, giving 
$a_1 = 0.5s + 0.19\sqrt{n\log n} + 0.45\log n$, $b_1 = 0.5t + 0.19\sqrt{n\log n} - 0.45\log n$, $a_2 = 0.5s - 0.19\sqrt{n\log n} - 0.45\log n$, and $b_2 = 0.5t - 0.19\sqrt{n\log n} - 1.35\log n$. Thus, $$F(A_1,B_1,C) = a_1b_2+a_2b_1 = 0.5s(t-1.8\log n) - 0.0722n\log n - 0.342\sqrt{n\log^3 n} - 0.405\log^2(n)
.$$ 
To minimize this subject to the constraints $s+t\geq 0.5n$ and $|s-t|\leq 0.76\sqrt{n\log n}$, we choose $s = 0.25n+0.38\sqrt{n\log n}$ and $t=0.25n - 0.38\sqrt{n\log n}$. Substituting these into the above, the resulting expression is greater than $0.3n\log n$ when $n\geq 700$.

Verifying \eqref{eq:w-ineq2} is similar. We have $\ell=2$, %and $|(A_2-B_2)/2| \leq 0.385\sqrt{n\log n} + 0.05\log n$, which is less than $(A_2+B_2)-(2-1)C$, so we are in the case $D=D' = (A_2-B_2)/2 = (t-s)/2 - 0.05\log n$, giving 
$a_1 = 0.5t + 0.19\sqrt{n\log n} - 0.15\log n$, $b_1 = 0.5s + 0.19\sqrt{n\log n} + 0.15\log n$, $a_2 = 0.5t - 0.19\sqrt{n\log n} - 1.05\log n$, $b_2 = 0.5s - 0.19\sqrt{n\log n} - 0.75\log n$. Thus, $$F(A_2, B_2, C) = a_1b_2 + a_2b_1 = 0.5st - 0.3t\log n - 0.6s\log n - 0.0722n\log n-0.342\sqrt{n\log^3 n} - 0.045\log^2n.$$ Again, this is minimized when $s = 0.25n +0.38\sqrt{n\log n}$ and $t = 0.25n - 0.38\sqrt{n\log n}$, and then the quantity is more than $0.3n\log n$ for $n\geq 700$.

For (d), since $P = 4(d(n)-1)$, it suffices to show $d(n)-1\leq \frac{1}{16}\abs{X}\abs{Y}$.
\cref{lem:d(n) bound} implies that the inequality 
\begin{equation*}\frac{n^2}{16}-\frac{\Delta^2}{4}\geq 16(d(n)-1)\end{equation*} holds as long as $\frac{n^2}{16} - \frac{(0.76)^2n\log n}{4} \geq 0.96n\log n$, which is true for $n\geq 700$.
It then suffices to show that
\[\abs{X}\abs{Y}\geq \frac{n^2}{16}-\frac{\Delta^2}{4}.\]
The assumption of \cref{lemma:possible-X-Y} (which is (a) of this lemma) is met, so we have
\[\abs{X}\abs{Y} = \frac{1}{4}\left((\abs{X}+\abs{Y})^2-(\abs{X}-\abs{Y})^2\right)\geq \frac{1}{4}\left(\left(\frac{n}{2}\right)^2-\Delta^2\right),\]
which proves the desired inequality.
\end{proof}

Finally, we use \cref{lem:overlineM}, \cref{lem:tau-bad}, and \cref{lem:bmaxfromx} to show that in all allowable situations, there are at most $2\min(|X^*|, |Y^*|, |U|)$ edges across $X^*, Y^*, U$ not colored $c^*$, which contradicts \cref{lemma: recoloring} and finishes the proof.

\begin{lemma}
Let $n\geq 700, \Delta = \Delta_{\max},$ and $P = 4(d(n)-1)$. We always have $\tau \leq 2\min(|X^*|, |Y^*|, |U|)$.
\end{lemma}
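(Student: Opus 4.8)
The plan is to run an order‑of‑magnitude analysis of every quantity in \cref{lem:overlineM} and \cref{lem:tau-bad}, pinning down constants with \cref{lem:d(n) bound}, \cref{lem:assumptions>=700}, and the estimate $g_3(n)-g_3(n-1)+1=\tfrac{n^2}{8}+o(n^2)$; here $P=4(d(n)-1)$ so $\lfloor P/2\rfloor=2(d(n)-1)\le 0.12\,n\log n$ and $\Delta=\Delta_{\max}\le 0.76\sqrt{n\log n}$, and we may take $\eta=\tfrac18$. \textbf{Step 1 (crude sizes).} The assumptions of \cref{lemma:possible-X-Y} hold (\cref{lem:assumptions>=700}(a)), so $(|X|,|Y|)$ is admissible: $|X|+|Y|\ge\lceil n/2\rceil$ and $\big||X|-|Y|\big|\le\Delta$, giving $|X^*|,|Y^*|\ge|X|,|Y|\ge n/4-\Delta/2\ge 0.2n$ and $|U|=n-|X^*|-|Y^*|\le\lfloor n/2\rfloor$. \textbf{Step 2 ($\overline M$ is tiny).} From \eqref{eq:overlineM}, $\overline M\le\frac{\lfloor P/2\rfloor}{(1-\eta)(n/4-\Delta/2)}\big(1+\tfrac1{|U|}\big)=O(\log n)$, hence $\overline{X}_{\max},\overline{Y}_{\max}\le\overline M=O(\log n)$ and $X_{\min}=|X^*|-\overline{X}_{\max}\ge|X^*|-O(\log n)$, $Y_{\min}\ge|Y^*|-O(\log n)$.

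\textbf{Step 3 (sharp sizes).} Feed these into \cref{lem:overlineM}(c) and (d), using $\tilde d(n)=\frac{n(n+1)(n-1)}{3}-8(g_3(n)+1)\le n+8d(n)=O(n\log n)$ (see \eqref{eq:Deltamaxtilded}) and $X_{\min},Y_{\min}=\Omega(n)$. Part (c) forces $(1-\eta)Y_{\min}-|U|$ and $(1-\eta)X_{\min}-|U|$ to be $O(\sqrt{\log n})$, hence $|U|\ge(1-\eta)\max(X_{\min},Y_{\min})-O(\sqrt{\log n})\ge\tfrac78\cdot\tfrac{X_{\min}+Y_{\min}}{2}-O(\sqrt{\log n})\ge\tfrac{7}{16}(n-|U|)-O(\log n)$, i.e. $|U|\ge\tfrac{7}{23}n-O(\log n)$. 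Part (d) forces $(1-\eta)|U|-|X^*|$ and $(1-\eta)|U|-|Y^*|$ to be $O(\sqrt{\log n})$, so $\min(|X^*|,|Y^*|)\ge\tfrac78|U|-O(\sqrt{\log n})$, and \cref{lem:overlineM}(a) gives $\big||X^*|-|Y^*|\big|\le\overline M+\Delta=O(\sqrt{n\log n})$. Combining, $|U|,|X^*|,|Y^*|$ are all $\Theta(n)$ with $|U|\in[\tfrac{7n}{23}-o(n),\tfrac{4n}{11}+o(n)]$ and $|X^*|,|Y^*|=\tfrac{n-|U|}{2}+O(\sqrt{n\log n})$; in particular $\min(|X^*|,|Y^*|,|U|)\ge 0.25n$ for $n\ge 700$. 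Moreover, using $|U|\le\tfrac87\min(|X^*|,|Y^*|)+O(\log n)$,
\[
C[b_{XY}]=1+\tfrac{\overline{X}_{\max}}{X_{\min}}+\tfrac{\overline{Y}_{\max}}{Y_{\min}}+\tfrac{|U|(|X^*|+|Y^*|)}{X_{\min}Y_{\min}}\le 1+\tfrac{16}{7}+o(1)<4 .
\]

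\textbf{Step 4 (the obstacle: $b_{XY}$ is small).} This is the heart. First verify the hypothesis \eqref{eq:not_enough_link} of \cref{lem:bmaxfromx}: $\bip(|X^*|)+\bip(|Y^*|)+\bip(|U|)\le\tfrac14\big(|X^*|^2+|Y^*|^2+|U|^2\big)\le 0.09n^2+o(n^2)$ over the ranges of Step 3, while $\tau\le C[b_{XY}]\lfloor P/2\rfloor+O(\log^2 n)=O(n\log n)$ (as $b_{XY}\le\lfloor P/2\rfloor$) and $g_3(n)-g_3(n-1)=\tfrac{n^2}{8}(1-o(1))$, so \eqref{eq:not_enough_link} holds. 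Thus \cref{lem:bmaxfromx} and its mirror image (swapping $X^*\leftrightarrow Y^*$) apply; if the $b_{\max}$ they produce failed to exist the case would be impossible, since every vertex of $\chi\in\cG_{\Delta,P}$ lies in at least $g_3(n)-g_3(n-1)+1$ monochromatic triangles and $X^*,Y^*\neq\emptyset$. They give $\max_{X^*}b_U\le b_{\max}$ and $\max_{Y^*}b_U\le b_{\max}'$, where $b_{\max}$ (resp. $b_{\max}'$) is the largest integer $b\le|U|$ with $\phi(b)\ge g_3(n)-g_3(n-1)+1$ for $\phi(b)=(|U|-b)|Y^*|+\bip(|X^*|-1)+\bip(b)+\tau-b$ (resp. $\phi'$, with $X^*,Y^*$ swapped). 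Since $|U|<2\min(|X^*|,|Y^*|)$, one has $\phi(b)-\phi(b+1)=|Y^*|+1-\big(\bip(b+1)-\bip(b)\big)\in(0,|Y^*|+1]$, so $\phi$ and $\phi'$ are strictly decreasing on $[0,|U|]$. Evaluating $\phi$ at an integer near $|U|/2$ and using $|U|\le\tfrac87|X^*|+O(\log n)$ and the Step 3 sizes (heuristically, all parts $\approx n/3$ give $\phi(|U|/2)\approx(\tfrac12+\tfrac14+\tfrac1{16})(\tfrac n3)^2=\tfrac{13}{144}n^2$) yields $\phi(|U|/2),\phi'(|U|/2)\le cn^2+o(n^2)$ for an explicit $c<\tfrac18$, i.e. they lie below $g_3(n)-g_3(n-1)+1$ by a margin $\Omega(n^2)$. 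Since $\phi$ drops by at most $|Y^*|+1\le n$ per step while $\phi(b_{\max})\ge g_3(n)-g_3(n-1)+1$, this forces $b_{\max}\le|U|/2-\Omega(n)$, and likewise $b_{\max}'$. Hence $q:=\max_{X^*}b_U+\max_{Y^*}b_U\le b_{\max}+b_{\max}'$ satisfies $|U|-q\ge\Omega(n)$, and \eqref{eq:bounding bad 1} gives $(|U|-q)b_{XY}\le\lfloor P/2\rfloor$, whence $b_{XY}\le\frac{\lfloor P/2\rfloor}{\Omega(n)}=O(\log n)$.

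\textbf{Step 5 (conclusion).} Plugging $b_{XY}=O(\log n)$ into \eqref{eq:coeffs and tau}: $C[b_{XY}]\,b_{XY}=O(\log n)$; $C[\lfloor P/2\rfloor-b_{XY}]=(1+o(1))\big(\tfrac1{X_{\min}}+\tfrac1{Y_{\min}}+\tfrac1{|U|}\big)=O(1/n)$ so $C[\lfloor P/2\rfloor-b_{XY}](\lfloor P/2\rfloor-b_{XY})=O(\log n)$; and $\overline{X}_{\max}\overline{Y}_{\max}=O(\log^2 n)$. Therefore $\tau=O(\log^2 n)$, while $2\min(|X^*|,|Y^*|,|U|)\ge 0.5n$, so $\tau\le 2\min(|X^*|,|Y^*|,|U|)$ for $n\ge 700$. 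The main difficulty is Step 4: the inequality $\phi(|U|/2)<g_3(n)-g_3(n-1)+1$ has only a small constant of slack, so the lower‑order error terms — from $\Delta_{\max}$, from $\overline M$, and from replacing $\bip(x)$ by $x^2/4$ — must be tracked with care, which is presumably why the hand argument is only run for $n\ge 700$ and the smaller cases are handled by computer.
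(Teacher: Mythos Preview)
Your outline follows essentially the same route as the paper's proof: crude size bounds, then $\overline M=O(\log n)$, then sharpening via \cref{lem:overlineM}(c)(d), a weak bound $\tau=O(n\log n)$, verification of \eqref{eq:not_enough_link}, a bound on $b_{\max}$ via \cref{lem:bmaxfromx}, then $b_{XY}=O(\log n)$ from \eqref{eq:bounding bad 1}, and finally $\tau=O(\log^2 n)$. The structure, the key lemmas invoked, and the order of deductions all match.

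The only notable tactical difference is in Step~4: the paper bounds $b_{\max}\le 0.45|U|$ by checking the convex function at the two endpoints $c=0.45$ and $c=1$, whereas you observe that $\phi$ is strictly decreasing on $[0,|U|]$ (since $|U|<2|Y^*|$) and evaluate at $b=|U|/2$, then use the step‑size bound $\phi(b)-\phi(b+1)\le |Y^*|+1$ to push $b_{\max}$ down by $\Omega(n)$. That monotonicity argument is slightly cleaner and yields the same conclusion.

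What your write‑up lacks relative to the paper is the explicit constant tracking: you work at the $O(\cdot)$ level and defer the verification that everything closes at $n=700$ (``must be tracked with care''). The paper carries out exactly that verification with concrete constants ($|U|\in[0.31n,0.4n]$, $C[b_{XY}]\le 3.962$, $b_{\max}\le 0.45|U|$, $b_{XY}\le 3.871\log n$, $\tau\le 0.3n$, $m\ge 0.27n$). Since the slack in Step~4 is genuinely small (your own heuristic gives $13/144$ versus $1/8$), a complete proof does need those numbers pinned down; but you have correctly identified every ingredient and there is no missing idea.
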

%\ra{Do we need to say anything else in that lemma to make it self-contained? Like, ``if $X, Y,$ etc satisfy the conditions of \cref{lem:overlineM}" or something?}
\begin{proof}
Recall the notation of \cref{lem:tau-bad}. Our final goal is to obtain an upper bound on $\tau$, and we do this in three steps. First, we bound all the relevant quantities except $b_{XY}$, yielding a weak upper bound on $\tau$. Second, we use this weak bound with \cref{lem:bmaxfromx} to bound $\max_{X^*}b_U$ and $\max_{Y^*}b_U$. Third, those bounds imply a strong upper bound on $b_{XY}$ via \cref{lem:tau-bad}, yielding a stronger upper bound on $\tau$.

We first obtain a crude lower bound on $|U|$. By \cref{lemma:u-lower-bound} and the bounds on $w$ in the proof of \cref{lem:assumptions>=700}(d), we have that $|U| \geq |X|-1.2\log n$ and $|U| \geq |Y|-1.2\log n$. Since $|X|+|Y| \geq 0.5n$, at least one of them is at least $0.25n$, so $|U| \geq 0.25n - 1.2\log n \geq 0.23 n$ for $n\geq 700$. 

We now upper bound the quantity $\overline{M}$, defined in \eqref{eq:overlineM}. By %\cref{lemma:First-step} and 
\cref{lem:assumptions>=700}(d), we can take $\eta = 1/8$. Using the bounds $\floor{P/2} \leq 2d(n) \leq 0.12n\log n$, $b_{XY} \geq 0$, $\Delta \leq 0.76\sqrt{n\log n}$, and $|U| \geq 0.23n$, we obtain:
$$\overline{M} \leq \frac{0.12n\log n}{0.875\left(0.25n-0.38\sqrt{n\log n}\right)}\left(1+\frac{1}{0.23n}\right) \leq 0.7\log(n) 
\leq 0.01n$$ for $n\geq 700$. 

Next, we bound $|X^*|, |Y^*|, X_{\min}$, and $Y_{\min}$. By \cref{lem:overlineM}(a), we have $||X^*|-|Y^*|| \leq \overline{M}+\Delta \leq 0.01n + 0.76\sqrt{n\log n} \leq 0.1 n$ since $n\geq 700$. So, since also $|X^*|+|Y^*|\geq 0.5n$, we have $|X^*|, |Y^*| \geq 0.2n$. Also, $\overline{X}_{\max}, \overline{Y}_{\max}\leq \overline{M} \leq 0.01n$, so $X_{\min}, Y_{\min} \geq 0.19n$.

We claim that $|U|\leq 0.4n$. Suppose not; then $\frac{7}{8}|U|\geq 0.35n$ and at least one of $|X^*|$ and $|Y^*|$ is at most $0.3n$. Without loss of generality, say $|X^*|\leq 0.3n$; now with $\eta=1/8$ (which is valid by \cref{lem:assumptions>=700}(d)) the second term in the left hand side of \cref{lem:overlineM}(d) is at least $0.19n \cdot (0.05n)^2$, which is greater than the right hand side of at most $\tilde d(n) \leq n+0.48n\log n$ when $n\geq 700$, a contradiction.

We now use our bounds so far to tighten these estimates further. %Next, note that $|U|(|X^*|+|Y^*|) \leq 0.25n^2$ since $|U|+|X^*|+|Y^*|=n$. 
We claim $|X^*| \geq 0.27n$. If not, then since $||X^*|-|Y^*||\leq 0.1n$, we have $|Y^*| < 0.37n$, so $|U|> 0.36n$. But then the second term of the left hand side of \cref{lem:overlineM}(d) is at least $0.19n\cdot(0.045n)^2$, which is bigger than the right hand side of at most $n+0.48n\log n$ for $n\geq 700$. Thus $X_{\min} \geq 0.27n - \overline{M} \geq 0.26n$. Symmetrically, $Y_{\min}\geq 0.26$. Furthermore, since $|U| \leq 0.4n$, we know $|X^*|$ and $|Y^*|$ must average at least $0.3n$, so $X_{\min}$ and $Y_{\min}$ must average at least $0.3n-0.01n=0.29n$, which shows that $X_{\min}Y_{\min} \geq 0.26n \cdot 0.32n \geq 0.083n^2$. Putting these bounds together yields 
\[C[b_{XY}] \leq 1+\frac{0.01n}{0.26n}+\frac{0.01n}{0.32n} +\frac{0.4n\cdot 0.6n}{0.083n^2} \leq 3.962.\]

Since $|U| \geq |X|-1.2\log n$ and $|U|\geq |Y|-1.2\log n$, and we also have $|X^*|\leq X_{\min}+0.01n$ and $|Y^*|\leq Y_{\min}+0.01n$, we have that $n = |U|+|X^*|+|Y^*| \leq 3|U|+0.02n+2.4\log n$.
This gives $|U| \geq 0.31n$.

We now bound $C[\floor{P/2}-b_{XY}]$. The first factor is at most $1+\frac{0.01}{0.26}+\frac{0.01}{0.32} \leq 1.07$. The second factor is at most $\frac{1}{n}(\frac{1}{0.26}+\frac{1}{0.32}+\frac{1}{0.31})\leq \frac{10.2}{n}.$ Hence $C[\floor{P/2}-b_{XY}] \leq 10.92/n$.

We now obtain a weak upper bound on $\tau$. Since $10.92/n < 3.962$, we have $$C[b_{XY}]\cdot b_{XY} + C[\floor{P/2}-b_{XY}]\cdot(\floor{P/2}-b_{XY}) \leq 3.962 \floor{P/2} \leq 3.962 \cdot 2d(n).$$
Thus, $\tau \leq 3.962\cdot 0.12n\log n+ (0.01 n)^2 \leq 0.01 n^2$ for $n\geq 700$. 

To improve this, we need to bound $b_{XY}$ using \eqref{eq:bounding bad 1} from \cref{lem:tau-bad}, so we first use \cref{lem:bmaxfromx} to get a good bound on $\max_{X^*}b_U$ and $\max_{Y^*} b_U$. We first verify \eqref{eq:not_enough_link}. By convexity of $\bip(x)$, to maximize the left hand side of \eqref{eq:not_enough_link} subject to $0.31n \leq |U|\leq 0.4n$, $||X^*|-|Y^*||\leq0.1 n$, and $|X^*|+|Y^*|+|U| = n$, we set $|U| = 0.4n, |X^*| = 0.35n,$ and $|Y^*|=0.25n$, yielding $$\bip(|X^*|)+\bip(|Y^*|)+\bip(|U|)+\tau \leq 0.25(0.35^2+0.25^2+0.4^2)n^2 + 0.01n^2 \leq 0.1 n^2.$$ On the other hand, the right hand side of \eqref{eq:not_enough_link} is $T(n)-d(n)-T(n-1)+d(n-1) \geq (n^2/8-n/4)-0.06n\log n\geq 0.123 n^2$ for $n\geq 700$. This verifies \eqref{eq:not_enough_link}.

Thus, by \cref{lem:bmaxfromx}, $\max_{X^*}b_U$ is at most $b_{\max}$, the largest nonnegative integer $b$ such that $b\leq \abs{U}$ and \begin{equation}\label{eq:bmax} \left(\abs{U}-b\right)\abs{Y^*}+\bip(\abs{X^*}-1)+\bip(b)+\tau-b\geq g_3(n)-g_3(n-1)+1.\end{equation} We claim that $b_{\max} \leq 0.45|U|$.

Suppose $b = c|U|$ for some $c\in[0,1]$. Then the left hand side of \eqref{eq:bmax} is at most $(1-c)|U||Y^*| + 0.25|X^*|^2 + 0.25c^2|U|^2 + 0.01n^2$, while the right hand side is at least $0.123n^2$. For fixed $|U|, |X^*|,$ and $|Y^*|$, our bound on the left hand side is quadratic in $c$ and the leading coefficient is positive, meaning that it is convex in $c$. Consider first $c=1$, which makes the expression simplify to $0.25(|X^*|^2+|U|^2)+0.01n^2$. With all our constraints, this quantity is maximized by letting $|U| = 0.4n$ and $|X^*| = 0.35n$, but then the expression is still less than $0.1 n^2$, so it cannot exceed the right hand side. Thus for fixed $|X^*|, |Y^*|$, and $|U|$, %we assume our bound on the left hand side is maximized by $c=0$. In particular %\hy{Hmmm I don't see why this is an in particular. I think the following is a consequence that the inequality holds when $c=1$ as verified before?}
if we show that the left hand side cannot exceed the right hand side when $c=0.45$, then the left hand side cannot exceed the right hand side for any $c\in [0.45, 1]$ by convexity. This is what we do next. 

When $c=0.45$, our bound on the left hand side of \eqref{eq:bmax} is $0.55|U||Y^*|+0.25|X^*|^2+0.25(0.45)^2|U|^2+0.01n^2$. For fixed $|U|$, $|X^*|+|Y^*|$ is fixed, so the expression is quadratic in $|X^*|$ with positive leading coefficient and is thus maximized either when $|X^*|$ is as large as possible or as small as possible.
Let $\abs{U} = \alpha n$.
Since $||X^*|-|Y^*||\leq 0.1n$, we thus only need to check $|X^*| = (0.55-0.5\alpha )n, |Y^*| = (0.45-0.5\alpha )n$ and vice versa. 
When  $|X^*| = (0.55-0.5\alpha)n$ and $|Y^*| = (0.45-0.5\alpha)n$, we have
\[0.55|U||Y^*|+0.25|X^*|^2+0.25(0.45)^2|U|^2+0.01n^2 =(- 0.161875 \alpha^2+0.11\alpha+0.085625)n^2\leq 0.105n^2\]
by direct computation.
When $|X^*| = (0.45-0.5\alpha)n$ and $|Y^*| = (0.55-0.5\alpha)n$, we similarly have
\[0.55|U||Y^*|+0.25|X^*|^2+0.25(0.45)^2|U|^2+0.01n^2 =(- 0.161875 \alpha^2+0.19\alpha+0.060625)n^2\leq 0.121n^2.\]
Since the right hand side is at least $0.123n^2$, this completes the proof that $\max_{X^*}b_U\leq 0.45|U|$. Symmetrically, $\max_{Y^*}b_U \leq 0.45|U|$ as well. 

Now, \eqref{eq:bounding bad 1} tells us that $(1-0.45-0.45)|U| b_{XY} \leq 2d(n)$, so since $|U|\geq 0.31n$, we have $0.031b_{XY}\leq 2d(n)/n \leq 0.12\log n$, so $b_{XY} \leq 3.871\log n$. 

Putting all of our bounds together and using $\overline{X}_{\max}, \overline{Y}_{\max} \leq \overline{M} \leq 0.7\log n$, we obtain
$$\tau \leq 3.962\cdot 3.871\log n + \frac{10.92}{n}\cdot 0.12n\log n + (0.7\log n)^2,$$ which is at most $0.3n$ when $n\geq 700$. Since $m = \min(|X^*|, |Y^*|, |U|) \geq 0.27n$, we have $\tau \leq 2m$, which completes the proof.
\end{proof}
\end{appendices}
\end{document}